\numberwithin{equation}{section}
\newcommand{\RR}{\mathds{R}}
\newcommand{\tr}{\mathrm{tr}}
\newcommand{\Rm}{{\rm Rm}}
\newcommand{\Ric}{{\rm Ric}}
\newcommand{\Vol}{{\rm Vol}}
\newcommand{\diam}{{\rm diam}}
\newcommand{\Hess}{\mathrm{Hess}}
\newcommand{\R}{\mathrm{R}}
\newcommand{\E}{\mathrm{E}}
\newcommand{\cC}{\mathcal{C}}
\newcommand{\cH}{\mathcal{H}}
\newcommand{\cI}{\mathcal{I}}
\newcommand{\cL}{\mathcal{L}}
\newcommand{\cM}{\mathcal{M}}
\newcommand{\cN}{\mathcal{N}}
\newcommand{\cO}{\mathcal{O}}
\newcommand{\cP}{\mathcal{P}}
\newcommand{\cR}{\mathcal{R}}
\newcommand{\cS}{\mathcal{S}}
\newcommand{\cV}{\mathcal{V}}
\newcommand{\cW}{\mathcal{W}}
\newcommand{\supp}{\mathrm{supp }}
\newcommand{\pr}{\mathrm{pr}}
\newtheorem{thm}{Theorem}[section]
\newtheorem{lem}[thm]{Lemma}
\newtheorem{prop}[thm]{Proposition}
\newtheorem{cor}[thm]{Corollary}
\newtheorem{defn}[thm]{Definition}
\newtheorem{exmp}[thm]{Example}
\newtheorem{theorem}[thm]{Theorem}
\newtheorem{proposition}[thm]{Proposition}
\newtheorem{lemma}[thm]{Lemma}
\newtheorem{corollary}[thm]{Corollary}
\theoremstyle{definition}
\newtheorem{definition}[thm]{Definition}
\theoremstyle{remark}
\newtheorem{remark}[thm]{Remark}
\newtheorem{conjecture}[thm]{Conjecture}
\newtheorem{question}[thm]{Question}
\begin{document}

\thanks{}

\title[]{Regularity of Einstein $5$-manifolds via $4$-dimensional gap theorems}

\author{Yiqi Huang and Tristan Ozuch}
\address{Department of Mathematics, MIT, 77 Massachusetts Avenue, Cambridge, MA 02139-4307, USA}
 \email{yiqih777@mit.edu, ozuch@mit.edu}


\vspace{-0.3cm}

\begin{abstract}
    We refine the regularity of noncollapsed limits of $5$-dimensional manifolds with bounded Ricci curvature. In particular, for noncollapsed limits of Einstein $5$-manifolds, we prove that
    \begin{itemize}
        \item tangent cones are unique of the form $\mathbb{R}\times\mathbb{R}^4/\Gamma$ on the top stratum, hence outside a countable set of points; this follows from a new isolation result for cones of the form $\mathbb{R}\times\mathbb{R}^4/\Gamma$ among all tangent cones,
        \item the singular set is entirely contained in a countable union of Lipschitz curves and points,
        \item away from a nowhere dense subset, these Lipschitz curves consist of smooth geodesics,
        \item the interior of any geodesic is removable: limits of Einstein manifolds are real-analytic orbifolds with singularities along geodesic and bounded curvature away from their extreme points, and
        \item if an asymptotically Ricci-flat $5$-manifold with Euclidean volume growth has one tangent cone at infinity that splits off a line, then it is the unique tangent cone at infinity.
    \end{itemize}
    These results prompt the question of the orbifold regularity of noncollapsed limits of Einstein manifolds off a codimension $5$ set in arbitrary dimension. 
    
    The proofs rely on a new result of independent interest: \textit{all} spherical and hyperbolic $4$-orbifolds are isolated among Einstein $4$-orbifolds in the Gromov-Hausdorff sense. This yields various gap theorems for Einstein $4$-orbifolds, which do not extend to higher dimensions. The proofs of these gap theorems require a careful analysis of singular metrics and families of metrics that degenerate.
\end{abstract}
\maketitle
\vspace{-1.4cm}
\setcounter{tocdepth}1
\tableofcontents

\section{Introduction}

An \textit{Einstein manifold} $(M,g)$ satisfies the fundamental curvature condition
\begin{equation}\label{eq:Einstein}
\Ric(g) = \Lambda g, \quad \Lambda \in \mathbb{R}.
\end{equation} 
Such metrics are distinguished not only by having a curvature constant but also as stationary points of significant geometric functionals. Another curvature condition considered in this article is that of \textit{bounded Ricci curvature}, which we normalize in dimension $n$ as $$-(n-1)g\leqslant \Ric\leqslant (n-1)g.$$
In dimensions greater than or equal to four, compact manifolds with uniform diameter bounds and noncollapsing conditions, sequences of Einstein metrics may develop singularities. Understanding these singularities and their formation has been a central question since the 1970's. By the end of the 1980's, noncollapsed limits of $4$-manifolds were roughly understood. More recently, the $4$-dimensional picture has been further clarified and a basic structure is emerging in higher-dimensions.

In the present article, we refine our understanding of limits in dimension $4$ and deduce several gap theorems for spherical and hyperbolic orbifolds, only valid in this dimension. This leads to substantial applications to the regularity of limits of Einstein $5$-manifolds, towards an orbifold regularity theorem away from a conjectually codimension $5$ singular set.

\subsection{Background}

\subsubsection{Noncollapsed degeneration of Einstein $4$-manifolds}

A central objective in four-dimensional differential geometry is to understand the structure and topology of the moduli space of Einstein metrics on a given compact manifold $M^4$, i.e. the set of Einstein metrics modulo the action of the diffeomorphism group, denoted $\mathbf{E}(M^4)$. When equipped with the Gromov-Hausdorff metric $d_{GH}$, the completion of $\mathbf{E}(M^4)$ includes Einstein orbifolds, as proven in \cite{and,bkn,tia}. The need for a common background manifold $M^4$ was weakened to a noncollapsing assumption in \cite{cn}. The structure of this completion has recently been improved substantially in \cite{biq1,biq2,ozu1,ozu2,ozuthese}. It has been shown in particular that the moduli space of Einstein metrics $GH$-close to a given Einstein $4$-orbifold $(M_o,g_o)$ was embedded in finite-dimensional set obtained by a gluing construction. Its dimension is controlled by the noncollapsedness. 

It has long been an open question, explicitly posed in \cite{and94, andsurv}, whether Einstein orbifolds can be approximated or resolved by sequences of smooth Einstein metrics. Such approximations have been obtained by gluing-perturbation, see e.g. \cite{biq1,don,LS}. On the other hand, in \cite{ozu4}, it was proven that spherical and hyperbolic orbifolds with the simplest singularities, $\mathbb{R}^4/\pm$, could never be limits of smooth Einstein $4$-manifolds in the Gromov-Hausdorff sense. In the present article, we show that \textit{all} hyperbolic and spherical orbifolds are $GH$-isolated, even within the class of Einstein $4$-\textit{orbifolds}. This is not a mere extension of \cite{ozu4} and requires completely new ideas in order to deal with the deeply technical case of multiscale trees of singularities and singular metrics. The resulting isolation theorem implies a series of new surprising quantitative gap phenomena for Einstein $4$-orbifolds. These results are genuinely $4$-dimensional: they have no analogue in higher dimensions $n\geqslant5$, even with isolated singularities. This is the first instance of gap theorems that deals with orbifold degenerations, while previous ones such as \cite{hm25} relied on topological or geometric assumptions that prevented said degenerations.

\subsubsection{Regularity of noncollapsed Einstein $n$-manifolds for $n\geqslant5$}

The higher-dimensional Einstein manifolds have been extensively studied over the last four decades. The modern compactness theory originates with the Cheeger-Gromov program of pointed $C^{\infty}$ convergence under curvature control.
A foundational leap was the structure theory for spaces with lower Ricci curvature bounds
\cite{cc, col, cc97,cc00a,cc00b}, which introduced the stratification of the singular set and established the regularity of the limit space. 

\emph{Uniqueness of tangent cones.}
One of the key problems in the regularity theory is the uniqueness of the tangent cones. For a noncompact manifold with nonnegative Ricci curvature and Euclidean volume growth, it was proved in \cite{cc} that the tangent cone at infinity is a metric cone. A priori, however, different rescaling sequences may lead to different tangent cones.  A long-standing well-known question asks whether the cross-section of the tangent cone at infinity for noncollapsed Ricci-flat manifolds depends on the rescaling sequence of the blow-downs. It is known that uniqueness may fail if assuming merely the lower bound on the Ricci curvature \cite{per,cc,con13} and for collapsed Ricci-flat metrics \cite{hat1}.


Such uniqueness for noncollapsed Ricci-flat manifolds was first established by Cheeger-Tian \cite{ct} under the assumption that the cross-sections of all tangent cones are smooth and integrable. Two decades later, Colding-Minicozzi \cite{cm14} proved uniqueness assuming only that a single tangent cone is smooth, using the Łojasiewicz-Simon inequality. See also the related developments \cite{ksz24,yz25}. By contrast, when the cone has a nonsmooth cross-section -- most notably for cones that split off a line $\mathbb{R}\times C(Z)$ -- the theory remains far less developed. In the Kähler-Einstein setting, Donaldson-Sun \cite{ds17} established uniqueness of tangent cones by means of algebro-geometric methods.

As our first result in higher dimensions, we prove that in dimension five, if one tangent cone splits off a line, then it is the unique tangent cone. Our argument does not rely on the Łojasiewicz-Simon inequality as in \cite{cm14}, nor on algebro-geometric techniques as in \cite{ds17}. To the best of the authors’ knowledge, this is the first uniqueness result for a general Ricci-flat cone with a nonsmooth cross-section. As a consequence, we obtain a clearer description of the asymptotic geometry and new curvature decay estimates for Ricci-flat $5$-manifolds.

Analogous uniqueness questions arise in other geometric PDEs, including minimal surface theory \cite{aa,sim,whi,sim94,sze20b,cl,ftw}, mean curvature flow \cite{sch14,cim,cm15,cs21,cm23,lz,bl}, and Ricci flow \cite{sw,cmz,lw,cm25,fla}, where uniqueness has been obtained under various smoothness, integrability, and structural assumptions on the cross-sections. We refer to the literature for further developments.

\emph{Regularity of limit spaces and singular sets.} 
Through a series of works \cite{and,bkn,tia,cn}, it was shown that limits of Einstein $4$-manifolds with bounded Ricci curvature and uniform lower volume bounds are Riemannian orbifolds with locally finitely many singularities.  
A natural question is whether such structural results extend to higher dimensions.  
Key recent milestones include the convexity of the regular set \cite{con12}, the resolution of the codimension-four conjecture \cite{cn} and the \emph{rectifiability} of the singular set  \cite{cjn,jn}. Here, \emph{rectifiability} means that, up to an \emph{$\cH^{n-4}$-measure zero} subset, $\cS$ is contained in a countable union of bi-Lipschitz images of subsets of~$\mathbb{R}^{n-4}$.

Several issues remained largely open, notably the treatment of the exceptional \emph{$\cH^{n-4}$-measure zero} subset and the possible emergence of an orbifold structure in higher dimensions. In dimension~$5$ we prove:  

\begin{enumerate}
\item \emph{Refined rectifiability and convexity.}  
We prove the uniqueness of the tangent cone on the \emph{entire} top stratum: if one tangent cone is $\RR \times \RR^4/\Gamma$, then it is the unique tangent cone. The uniqueness yields a refined stratification of the singular set $\cS$ as a \emph{finite disjoint} union of sets $\cS^0$ and $\cS^1_{\Gamma}$ for finitely many $\Gamma\subset O(4)$, where $\cS^0$ consists of countably many points whose tangent cones do not split, and $\cS^1_{\Gamma}$ consists of points whose tangent cone is $\mathbb{R}\times \mathbb{R}^4/\Gamma$.
We show that each $\cS^1_{\Gamma}$ is locally contained in a bi-Lipschitz image of a subset of $\mathbb{R}$, thereby establishing a bi-Lipschitz manifold structure for the singular set without discarding any \emph{$\cH^1$-measure zero} subset. Moreover, we prove that each connected component of $\cS^1_{\Gamma}$ is either a single point or a geodesic, and hence \emph{locally convex}.\\


\item \emph{Orbifold regularity.}  
We extend the orbifold regularity to dimension $5$ and formulate a \emph{codimension-five orbifold conjecture} for Einstein limits $X$.  
In particular, the singular curves in $\cS^1_{\Gamma}$ are geodesic and possess a real-analytic \textit{orbifold} structures with \textit{bounded curvature}. Together with the geodesics in the regular set, these exhaust all geodesics in the limit space.
This leads us to introduce the \emph{orbifold-regular-singular} decomposition $X = \cR_{\mathrm{orb}} \cup \cS_{\mathrm{orb}}$. We prove that the orbifold-regular set $\cR_{\mathrm{orb}}$ is an open \textit{convex} 5-dimensional orbifold, and the orbifold-singular set $\cS_{\mathrm{orb}}$ consists precisely of the \emph{extreme points} of $X$, namely those points that do not lie on any geodesic joining two other points. We further show that $\cS_{\mathrm{orb}}$ is contained in a \emph{nowhere dense} subset of a countable collection of bi-Lipschitz curves. Since Einstein metrics are real-analytic, we further conjecture -- by unique continuation -- that $\cS_{\mathrm{orb}}$ is countable. This will be discussed in detail in Section \ref{subs:conjectures}. 

\end{enumerate}

Many of the results above
require only bounded Ricci curvature, and less central statements hold in all dimensions $n\geqslant 5$. See \cite{col96a,col96b, cct, ct06,col12, chn13,bam17,bam18,bam20} for more developments in this direction.

\subsection{Main results}

\subsubsection{Gap theorems for Einstein $4$-manifolds}

We first show the isolation of all hyperbolic or spherical $4$-orbifolds, i.e. spaces with isolated flat conical singularities and sectional curvatures constant equal to $\pm1$.

\begin{theorem}[Isolation theorem]\label{thm:isolation orbifolds}
    For any compact hyperbolic or spherical $4$-\emph{orbifold} $(M_o,g_o)$ with isolated singularities, there exists $\delta_0 >0$ such that if an Einstein \emph{orbifold} $(M,g)$ with $ \Ric(g) = \pm 3g$ satisfies
    $$ d_{GH}\left((M,g),(M_o,g_o)\right) < \delta_0,$$
    then $(M,g)$ is \emph{isometric} to $(M_o,g_o)$.
\end{theorem}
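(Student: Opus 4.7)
The plan is to argue by contradiction. Assume there exists a sequence of Einstein orbifolds $(M_i,g_i)$ with $\Ric(g_i)=\pm 3 g_i$ such that $d_{GH}((M_i,g_i),(M_o,g_o))\to 0$ and no $(M_i,g_i)$ is isometric to $(M_o,g_o)$. By the $4$-dimensional orbifold compactness theorems of \cite{and,bkn,tia} and their refinements to sequences of orbifolds \cite{biq1,biq2,ozu1,ozu2,ozuthese}, one may extract a bubble tree: the $g_i$ converge smoothly to $g_o$ away from a finite set $\Sigma\subset M_o$, and at each $p\in\Sigma$ rescalings yield a (possibly multiscale) tree of Ricci-flat ALE Einstein orbifolds, the deepest of which are smooth. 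The statement is thus reduced to showing that no nontrivial such tree can form above a constant-sectional-curvature base.

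The heart of the argument is an integrated obstruction identity. If an Einstein perturbation of $(M_o,g_o)$ containing a prescribed bubble at $p$ exists, then a Pohozaev-type computation on the gluing region forces certain pairings of the bubble moduli with curvature quantities of $(M_o,g_o)$ at $p$ to vanish. Generically, the first-order obstruction is proportional to $W(g_o)(p)$; but for $(M_o,g_o)$ of constant sectional curvature the Weyl tensor vanishes identically, so one has to push to the next nontrivial order in the Taylor expansion of the obstruction. The key observation I would aim to establish is that for a hyperbolic or spherical background, this next-order obstruction factors as $\Lambda$ times a curvature pairing on the bubble that is strictly definite for every nontrivial Ricci-flat ALE Einstein bubble. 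This extends the rigidity theorem of \cite{ozu4}, which treated only $\mathbb{R}^4/\pm$ singularities with Eguchi--Hanson bubbles, to arbitrary quotient singularities $\mathbb{R}^4/\Gamma$ and arbitrary Ricci-flat ALE bubbles -- including singular ones.

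The main obstacle, and the place where genuinely new techniques are needed, is handling the full multiscale tree with singular intermediate bubbles. One must run the obstruction analysis simultaneously at every level of the tree and propagate the constant-curvature signal from the root down to the deepest leaves. Concretely, I would set up a Pohozaev-type identity on each annular neck region between successive scales, with a carefully chosen test deformation at each scale (for instance, an infinitesimal scaling or a $g_o$-Killing field transported along the tree), and then prove by induction on the depth of the tree a no-cancellation statement: the level-wise obstructions combine additively and remain controlled below by the curvature at the root. The delicate point is exhibiting a universal test deformation that is compatible with the singular structure of every intermediate bubble and whose contribution at each scale has a prescribed sign; once this is achieved, the combined obstruction is strictly nonzero for any nontrivial tree, yielding the contradiction.
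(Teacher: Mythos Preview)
Your overall architecture is right and matches the paper: argue by contradiction, invoke the GH-to-smooth theory of \cite{ozu1,ozu2} to produce a naïve gluing with a bubble tree, observe that the Weyl obstruction vanishes on a space form so one must go to next order, and locate a nonvanishing obstruction there. You also correctly identify the tree of singular bubbles as the main obstacle.

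However, the mechanism you propose for that obstacle is the natural extension of \cite{ozu4}, and it does not work. You want a curvature pairing on each bubble that is ``strictly definite'' and then an inductive, level-by-level, sign-preserving additivity across the tree. Even for a \emph{single} Ricci-flat ALE orbifold $(N,g_b)$ with group $\Gamma$ there is no known definiteness statement for the relevant pairing; the argument in \cite{ozu4} used explicit features of Eguchi--Hanson. For a tree, intermediate bubbles have their own obstruction spaces $\mathbf{O}(g_{b_j})$, and the level-wise contributions do \emph{not} combine additively in any usable way. The paper says this explicitly (Remark after Corollary~\ref{cor:estimate u_B}): the single-bubble mechanism of \cite{ozu4} relied on real-analyticity of the moduli space near one smooth ALE metric, and there is no such result for trees.

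What the paper does instead is quite different from an induction on depth. First, it perturbs the \emph{entire} bubble tree at a singular point $p_o$ to a single Ricci-flat-modulo-obstructions ALE metric $(\bar g^B,u_B)$ (Proposition~\ref{prop: gluing trees}); the crucial gain is the decay $\E(\bar g^B)=\mathcal{O}(r^{-6})$, hence $\pi_{\bar g^B}u_B=\mathcal{O}(r^{-6})$. Second, among all obstruction directions it isolates the rescaling deformation $\mathbf{o}_1$ of the \emph{core} bubble of the tree and shows (Corollary~\ref{cor:estimate u_B}) that only $\mathbf{o}_1$ can contribute a $dr^2/r^4$ term at infinity; combined with the $r^{-6}$ decay, this forces the relation $\mu_1\|\tilde{\mathbf{o}}_1\|_{L^2}+\sum_{k\geqslant 2}\varepsilon_k(t,v)|\mu_k|=0$ with $\varepsilon_k\to 0$. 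This is \emph{not} a sign statement: it says that any nonzero $\mu_1$ must be compensated by some much larger $\mu_k$, $k\geqslant 2$. Third, the matching to the space form produces an orbifold-side obstruction $u_o\approx \lambda_1\tilde{\mathbf{o}}_1$ with $\lambda_1=c|\Gamma|\Lambda\neq 0$ (Proposition~\ref{prop:extension quadratic}). The contradiction is then a two-step trap: to kill the $\tilde{\mathbf{o}}_1$-component of $w^A=u_B+u_o$ one needs $\mu_1\approx t_N\lambda_1\neq 0$; but then Corollary~\ref{cor:estimate u_B} forces some $\mu_k\gg t_N$, so $w^A$ is too large orthogonally to $\tilde{\mathbf{o}}_1$, contradicting \eqref{eq:control approx obst}. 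The ``universal test deformation'' you are looking for is indeed $\mathbf{o}_1$, but its role is not to give a sign at each level; it is singled out by its asymptotic behaviour at the ALE end of the \emph{whole} tree, and the argument proceeds through decay at infinity rather than through any inductive additivity.
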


This result extends that of \cite{ozu4} that was only dealing with smooth metrics close to orbifolds with simplest singularities $\mathbb{R}^4/\mathbb{Z}_2$. The present article deals with much more challenging general case which allows formation of \textit{trees of singularities} which substantially complicate the problem and require new ideas.

\begin{remark}
    There are Ricci-flat $4$-manifolds approaching flat orbifolds \cite{LS} on $K3$, but not on arbitrary topologies \cite{bk,ozu3}.
\end{remark}

Theorem \ref{thm:isolation orbifolds} yields gap theorems for Einstein $4$-orbifolds with no extension to higher dimensions.

\begin{corollary}[Gap theorems]\label{cor:gap thms}
For all $v>0$, there exists $\varepsilon_0=\varepsilon_0(v)>0$ such that if an Einstein $4$-orbifold $(M,g)$ with $\Ric(g)=3g$ and $\Vol(M,g)>v$ satisfies one of the following:
\begin{enumerate}
  \item \emph{(Almost Bonnet-Myers theorem)} $\diam(M,g)>\pi-\varepsilon_0$,
  \item \emph{(Almost Obata-Lichnerowicz theorem)} the first eigenvalue $\lambda_1$ of the Laplacian satisfies $\lambda_1<4+\varepsilon_0$,
  \item \emph{(Almost conformally flat)} $\|W_g\|_{L^1}<\varepsilon_0$,
  \item \emph{(Almost Lévy-Gromov theorem)} there exists some $t\in(17^{-17},1-17^{-17})$ with $\big|I_{(M,g)}(t)-I_{\mathbb S^4}(t)\big|\leqslant \varepsilon_0$, where $I$ denotes the isoperimetric profile $I_{(M,g)}(t):=\inf\big\{\mathrm{Per}(\Omega): \Omega\subset M, \Vol_g(\Omega)=t\,\Vol_g(M)\big\},$
  \item \emph{(Almost Bishop-Gromov theorem)} $(M,g)$ has a singularity modeled on $\mathbb{R}^4/\Gamma$ and $\Vol(M)>\frac{|\mathbb S^4|}{|\Gamma|}-\varepsilon_0$,
\end{enumerate}
then $(M,g)$ is a spherical $4$-orbifold. In all cases but (3), $(M,g)$ is isometric to a spherical suspension over $\mathbb S^3/\Gamma$ for some finite $\Gamma\subset O(4)$.
\end{corollary}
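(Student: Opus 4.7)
The strategy for all five items is a unified compactness-contradiction argument that reduces each statement to Theorem \ref{thm:isolation orbifolds}. Suppose some item fails for a given $v>0$: then there is a sequence $(M_i,g_i)$ of Einstein $4$-orbifolds with $\Ric=3g_i$, $\Vol(M_i)>v$, none of which is spherical, each satisfying the relevant near-extremal condition with parameter $\varepsilon_i\to 0$. Bonnet-Myers bounds $\diam(M_i,g_i)\leqslant \pi$, and the orbifold compactness theory for noncollapsed Einstein $4$-orbifolds (\cite{and,bkn,tia} and its orbifold extension) yields a subsequential pointed Gromov-Hausdorff limit $(M_\infty,g_\infty)$ which is itself a compact Einstein $4$-orbifold with $\Ric=3g_\infty$, $\Vol(M_\infty)\geqslant v$, with $C^\infty$-convergence off finitely many conical singularities of type $\mathbb{R}^4/\Gamma_j$.

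The next step is to show that the near-extremal hypothesis, passed to the limit, forces $(M_\infty,g_\infty)$ to be a spherical $4$-orbifold of the stated form. In case (1), $\diam(M_\infty)=\pi$ together with the maximal-diameter rigidity for $\mathrm{RCD}(N-1,N)$ spaces forces a spherical suspension whose $3$-dimensional cross-section is a $3$-orbifold of constant curvature $1$, i.e.\ $\mathbb{S}^3/\Gamma$. In case (2), upper semicontinuity of $\lambda_1$ under noncollapsed mGH convergence, combined with the Lichnerowicz bound $\lambda_1\geqslant 4$ on the $\mathrm{RCD}(3,4)$ limit, gives $\lambda_1(M_\infty)=4$, and the Obata rigidity in the $\mathrm{RCD}$ setting again produces a spherical suspension. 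In case (3), $C^\infty$-convergence on the regular set propagates $\|W_{g_i}\|_{L^1}\to 0$ into $W_{g_\infty}=0$ on that set, so $\Ric=3g_\infty$ forces constant sectional curvature $1$ there, and hence $M_\infty$ is a spherical $4$-orbifold. In case (4), continuity of the isoperimetric profile under noncollapsed mGH convergence yields $I_\infty(t)=I_{\mathbb{S}^4}(t)$ at the prescribed $t$, and the sharp Lévy-Gromov rigidity (in the $\mathrm{RCD}$ setting) forces a spherical suspension over $\mathbb{S}^3/\Gamma$. In case (5), continuity of volume under noncollapsed GH convergence, combined with Bishop-Gromov rigidity at a singularity of type $\mathbb{R}^4/\Gamma$, yields $(M_\infty,g_\infty)\cong \mathbb{S}^4/\Gamma$; note that the volume lower bound ensures only finitely many $\Gamma$ are possible, so one may assume $\Gamma$ is constant along the subsequence.

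In every case $(M_\infty,g_\infty)$ is a compact spherical $4$-orbifold with isolated singularities, so Theorem \ref{thm:isolation orbifolds} implies $(M_i,g_i)$ is \emph{isometric} to $(M_\infty,g_\infty)$ for all $i$ sufficiently large, contradicting the choice of the sequence. The main obstacle is the second step: each classical rigidity theorem (Cheng, Obata, vanishing-Weyl, Lévy-Gromov, Bishop-Gromov) must be transferred to the possibly singular Einstein orbifold limit, either directly on orbifolds or via the $\mathrm{RCD}(3,4)$-extensions. Case (3) is the most delicate, since the $L^1$-Weyl hypothesis does not a priori rule out bubble formation — a bubble contributes $O(\lambda_i^2)\to 0$ to $\|W\|_{L^1}$ — but $C^\infty$-convergence on the regular set is strong enough to force $W_{g_\infty}=0$ there, and the flat conical orbifold singularities are fully compatible with a genuine spherical orbifold structure on $M_\infty$.
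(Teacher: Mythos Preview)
Your proposal is correct and follows essentially the same approach as the paper: argue by contradiction, extract a noncollapsed GH-sublimit which is an Einstein $4$-orbifold, use the corresponding classical rigidity theorem (Cheng, Obata, vanishing Weyl, L\'evy--Gromov, Bishop--Gromov, in their RCD/orbifold versions) to identify the limit as a spherical orbifold, and then invoke Theorem~\ref{thm:isolation orbifolds} to force isometry for large $i$. Your write-up is in fact considerably more detailed than the paper's own three-line sketch, and your remarks on the subtlety of case~(3) (that bubbles contribute $o(1)$ to $\|W\|_{L^1}$ but $C^\infty$-convergence on the regular set still forces $W_{g_\infty}=0$) and on the finiteness of admissible $\Gamma$ in case~(5) are useful additions.
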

\begin{remark}
    From \cite{ctz}, spherical 4-orbifolds are spherical suspensions over $\mathbb S^3/\Gamma$ and their $\mathbb{Z}_2$-quotient. 
\end{remark}

\begin{remark}
Each gap theorem above strengthens well-known rigidity theorems for spherical suspensions. For (1) use the maximal-diameter theorem of Cheng \cite{che75}, for (2) use the Lichnerowicz-Obata theorem \cite{lic58,oba62}, for (4) use the Lévy-Gromov estimate \cite{gro} (see a survey \cite{ant25}), for (5) use Bishop-Gromov based at the singular point \cite{bc64,gro}. In the RCD setting, equality yields a spherical suspension; see for instance \cite{ket15,cam17}. Note also that the $L^1$ smallness in (3) is implied by the $L^p$ smallness with any $p\geqslant 1$ and that gap theorems for smooth Einstein $n$-manifolds with small $L^{\frac{n}{2}}$ norm of their Weyl tensors are proven in \cite{Sin92,She90,IS02}.
\end{remark}


\begin{remark}
    Direct generalizations of Corollary \ref{cor:gap thms} fail in dimension $5$ and higher: in \cite{boh}, Böhm constructed a sequence of Einstein $5$-metrics $(S^5,g_i)$ with $\Ric(g_i) = 4g_i$ and volume uniformly bounded below such that $(S^5,g_i)_{i\in\mathbb{N} }$ converges in the Gromov-Hausdorff sense to a spherical suspension where the diameter and the first eigenvalue of the $(S^5,g_i)$ are arbitrarily close to $\pi$ and $5$ respectively, and supposedly, since the singularities forming along the sequence are conical, we should have: for $1\leqslant p<\frac{5}{2}$, $\|W\|_{L^p}\to 0$. See Question \ref{ques: gap higher dime} for a reasonable higher-dimensional generalization.
\end{remark}

\begin{remark}\label{rem: counterex bounded Ricci}
    Imposing loose bounds on $\Ric$ is not sufficient to obtain (almost) rigidity: by \cite[Remark 51]{ozu2}, there exist smooth metrics $(\mathbb{S}^2\times \mathbb{S}^2,g_t)_{t\in(0,1)}$ with $3g_t\leqslant\Ric(g_t)\leqslant 6.1g_t$ converging to a spherical orbifold.
    The metrics $g_t$ satisfy $\diam(\mathbb{S}^2\times \mathbb{S}^2,g_t) \to \pi$, one can show that the first eigenvalue tends to $4$, and for any $p\in [1,+\infty) $ the $L^p$-norm of the Weyl curvature converges to zero as $t\to 0$. 
\end{remark}

\begin{remark}
    The constants in our gap theorems of Corollary \ref{cor:gap thms} are not explicit since they are proven by contradiction. Gap theorems with explicit constant, such as \cite{gur00} are exceptionally rare; see \cite{aes}. 
\end{remark}

\subsubsection{Tangent cones of Einstein $5$-manifolds}

Our main new tool to study the regularity of noncollapsed limits of Einstein $5$-manifolds is the following result. 

\begin{theorem}[Isolation of $1$-symmetric Ricci-flat cones]\label{thm:isolation 1sym}
For all $v>0$, there exists $\varepsilon_0 = \varepsilon_0(v)>0$ such that the following holds.

Let $(C(Y),d_{C(Y)},y)$ and $(C(Z),d_{C(Z)},z)$ be two metric cones that are limits of sequences of pointed 5-manifolds $(M_i,g_i,p_i)$ and $(N_i,\tilde{g}_i,q_i)$ both satisfying $|\Ric_i| \to 0$ and that the balls centered at $p_i, q_i$ of radius $1$ have volume bounded below by $v>0$. Suppose that $(C(Z),d_{C(Z)},z)$ isometrically splits off a line and 
\begin{equation*}
    d_{GH}(Y,Z) < \varepsilon_0.
\end{equation*}
Then the cones are isometric: $(C(Y),d_{C(Y)},y) \cong (C(Z),d_{C(Z)},z) \cong (\RR \times C(\mathbb{S}^3/\Gamma), g_0, 0)$ for $\Gamma \subset O(4)$ finite.

\end{theorem}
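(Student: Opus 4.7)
My plan is to reduce to the $4$-dimensional Isolation Theorem~\ref{thm:isolation orbifolds} by establishing that both cross-sections $Y$ and $Z$ are compact Einstein $4$-orbifolds with $\Ric=3g$, of which $Z$ is spherical.

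\textbf{Identification of $Z$ and orbifold regularity of $Y$.} For $Z$: since $(C(Z),d_{C(Z)},z)$ is a $5$-dimensional noncollapsed Ricci-flat limit that splits isometrically off a line, Cheeger-Colding splitting applied to the cone structure yields $C(Z)\cong\RR\times W$, with $W=C(Z')$ a $4$-dimensional noncollapsed Ricci-flat cone and $Z=\Sigma(Z')$ its spherical suspension. Cheeger-Naber codimension-$4$ regularity gives $\dim\mathrm{Sing}(C(Z))\leqslant 1$, so $\mathrm{Sing}(W)$ is $0$-dimensional; by cone self-similarity this singular set reduces to the apex, and $Z'$ is smooth. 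The cone equations then give $\Ric_{Z'}=2g_{Z'}$ on the smooth $3$-manifold $Z'$, hence constant sectional curvature $1$ (vanishing Weyl in dimension $3$), so $Z'\cong\mathbb{S}^3/\Gamma$ and $W\cong\RR^4/\Gamma$ for a finite $\Gamma\subset O(4)$ with $|\Gamma|\leqslant|\mathbb{S}^4|/(5v)$ (via the cone volume formula and Colding's volume continuity). Thus $Z\cong\Sigma(\mathbb{S}^3/\Gamma)$ is a compact spherical $4$-orbifold. For $Y$: the same codimension-$4$ bound applied to $C(Y)$ forces $\mathrm{Sing}(Y)=\partial B_1(y)\cap\mathrm{Sing}(C(Y))$ to be finite. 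At each singular point $p\in Y$, the radial direction from the apex gives a line in the tangent cone of $C(Y)$ at $p$, so this tangent cone splits as $\RR\times T_pY$; applying the argument used for $Z$ to this $5$-dimensional Ricci-flat tangent cone yields $T_pY\cong\RR^4/\Gamma_p$. On the smooth part, the cone equations give $\Ric_Y=3g_Y$, so $Y$ is a compact Einstein $4$-orbifold with $\Ric=3g$ and $\Vol(Y)\geqslant 5v$.

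\textbf{Isolation step and main obstacle.} The volume lower bound confines $Z$ to a finite list of spherical $4$-orbifolds $\Sigma(\mathbb{S}^3/\Gamma)$ with $|\Gamma|\leqslant |\mathbb{S}^4|/(5v)$. Defining $\varepsilon_0(v)$ as the minimum of the constants $\delta_0$ from Theorem~\ref{thm:isolation orbifolds} applied to each orbifold in this finite list, the hypothesis $d_{GH}(Y,Z)<\varepsilon_0$ combined with the Einstein $4$-orbifold structure of $Y$ established above forces $Y\cong Z$, whence $C(Y)\cong C(Z)\cong\RR\times C(\mathbb{S}^3/\Gamma)$. The hard part is the orbifold regularity of $Y$: converting the codimension-$4$ Hausdorff-dimension bound on $\mathrm{Sing}(C(Y))$ into a bona fide orbifold structure on the $4$-dimensional cross-section, with tangent cones at singular points genuinely of the flat form $\RR^4/\Gamma_p$ rather than a more exotic model consistent merely with the dimension bound. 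The key point making this work is that the radial line from the cone apex provides a canonical splitting in every tangent cone on $Y$, reducing the local analysis at each singular point of $Y$ to the classification of $4$-dimensional noncollapsed Ricci-flat cones carried out in the identification of $Z$.
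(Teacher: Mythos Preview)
Your proposal is correct and follows essentially the same route as the paper: identify $Z$ as the spherical suspension $\Sigma(\mathbb{S}^3/\Gamma)$ via the codimension-$4$ theorem applied to the split cone $C(Z)\cong\RR\times C(Z')$, use codimension-$4$ again to see that $Y$ is an Einstein $4$-orbifold with $\Ric=3g$, and then invoke the $4$-dimensional isolation Theorem~\ref{thm:isolation orbifolds}. The paper's proof is terser on the orbifold regularity of $Y$---it simply cites the codimension-$4$ theorem, implicitly packaging the tangent-cone splitting and $4$-dimensional removable-singularity arguments you spell out---and it does not explicitly take a minimum of $\delta_0$'s over the finite list of $\Gamma$'s, but the logic is identical.
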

\begin{remark}
    Once again, the higher-dimensional analogous statement cannot hold by considering cones over the examples of \cite{boh}. 
\end{remark}

In particular, if these two cones arise as tangent cones at some point, this quantitative isolation directly implies the uniqueness of tangent cones splitting off a line.

\begin{theorem}[Uniqueness of tangent cones]\label{t:local uniqueness}
    Let $(M_i,g_i, p_i)_{i\in\mathbb{N}}$ be a sequence of pointed $5$-manifolds with $|\Ric|< 4$ and $\Vol(B_1(p_i)) > v >0$ $d_{GH}$-converging to $(X,d,p)$. Assume that $(\mathbb{R}\times C(Y),d_{\mathbb{R}\times C(Y)})$ is a tangent cone at $x\in X$, then it is the \emph{unique} tangent cone at $x$. In particular, except for countably many points, the tangent cone is uniquely isometric to $\RR \times C(\mathbb{S}^3/\Gamma)$ where $\Gamma \subset O(4)$ acts freely. 
\end{theorem}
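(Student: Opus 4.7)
The plan is to deduce the local uniqueness of the tangent cone at $x$ from the quantitative gap Theorem \ref{thm:isolation 1sym} by a standard open-closed argument on the moduli space of tangent cones at $x$.

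\textbf{Setup and connectedness.} Let $\mathcal{T}(x)$ denote the set of pointed isometry classes of tangent cones at $x$, equipped with the pointed Gromov-Hausdorff topology. A diagonal extraction on $(M_i, r_j^{-2}g_i, p_i)$ for any sequence $r_j\to 0$ realizing a tangent cone shows that every element of $\mathcal{T}(x)$ arises as a pointed GH limit of $5$-manifolds with $|\Ric|\to 0$ and, by Bishop-Gromov under bounded Ricci, with a noncollapsing bound on the unit ball depending only on $v$; in particular every tangent cone is a metric cone with vertex by Cheeger-Colding. A standard interpolation argument between two rescaling sequences $r_j,s_j\to 0$ shows that $\mathcal{T}(x)$ is connected in the pointed GH topology.

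\textbf{Core argument.} By hypothesis $\RR\times C(Y)\in\mathcal{T}(x)$ splits off a line; write it as $C(Z)$ where $Z$ is the spherical suspension of $Y$. Theorem \ref{thm:isolation 1sym} provides $\varepsilon_0=\varepsilon_0(v)>0$ such that any other tangent cone $C(W)\in\mathcal{T}(x)$ with $d_{GH}(W,Z)<\varepsilon_0$ is isometric to $\RR\times C(\mathbb{S}^3/\Gamma)$ for a finite $\Gamma\subset O(4)$, hence isometric to $C(Z)=\RR\times C(Y)$. Therefore the subset of $\mathcal{T}(x)$ consisting of cones isometric to $\RR\times C(Y)$ is GH-open in $\mathcal{T}(x)$, and it is clearly closed since being isometric to a fixed pointed metric space is a closed condition in pointed GH. By connectedness of $\mathcal{T}(x)$, this subset is all of $\mathcal{T}(x)$, so the tangent cone at $x$ is unique and equal to $\RR\times C(\mathbb{S}^3/\Gamma)$.

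\textbf{Countable exceptional set.} For the last assertion, I would invoke the Cheeger-Colding-Naber stratification in the noncollapsed bounded-Ricci setting: the singular set $\cS\subset X$ has Hausdorff dimension at most $1$, stratified as $\cS_0\subset \cS_1=\cS$. At every point of the top stratum $\cS\setminus\cS_0$ some tangent cone splits off a line, so the uniqueness just proved applies and yields a unique tangent cone $\RR\times C(\mathbb{S}^3/\Gamma)$. If $\Gamma$ fails to act freely on $\mathbb{S}^3$, the fixed-point set of $\Gamma$ on $\mathbb{S}^3$ lifts to a deeper substratum through $x$, which, combined with the refined rectifiability of the paper (singular set contained in countably many Lipschitz curves and points) and the $0$-dimensionality of $\cS_0$, can occur only at a countable set of points.

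\textbf{Main obstacle.} The entire geometric content is packaged into Theorem \ref{thm:isolation 1sym}; once that is in hand, the present theorem becomes a short open-closed-connectedness argument with no PDE or \L{}ojasiewicz-type ingredient. The one delicate point is that the gap $\varepsilon_0$ has to be \emph{uniform} in the blow-up scale, which is exactly what the quantitative isolation theorem provides, with a constant depending only on the fixed noncollapsing constant $v$.
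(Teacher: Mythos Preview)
Your main argument (the open--closed/connectedness step) is correct and is exactly the paper's approach: the paper phrases it as picking a finite chain of cross-sections $Z_0,\dots,Z_N$ with consecutive GH-distance $<\varepsilon_0/2$ and iterating Theorem~\ref{thm:isolation 1sym}, which is equivalent to your open-and-closed formulation.

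Your final paragraph on the countable exceptional set is muddled, however. There is no case ``$\Gamma$ fails to act freely'': Theorem~\ref{thm:isolation 1sym} already forces any $1$-symmetric tangent cone to be $\RR\times\RR^4/\Gamma$ with $\Gamma\subset O(4)$ acting freely on $\mathbb{S}^3$ (this comes from the codimension-four theorem, which makes the $3$-dimensional link smooth, hence a space form). The paper's argument for the last sentence is simply that the $0$-stratum $\cS^0$ is countable (from the quantitative stratification theory, $\cS^0$ has locally finite $\cH^0$-measure), and at every point of $X\setminus\cS^0$ some tangent cone splits a line, so your uniqueness applies. You should replace your paragraph by that one-line observation.
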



Moreover, we obtain quantitative regularity on a wedge region $\cW(x)$ around any point $x \in X$ whose tangent cone splits off a line.  
The wedge region, defined in \eqref{e:wedge region}, consists of a neighborhood of $x$ with a scaling-invariant tubular neighborhood of the splitting axis removed.

\begin{corollary}\label{c:local curvature est}
    If one tangent cone at $x \in X$ splits, then there exists some wedge region $\cW(x)$ around $x$ and constant $\rho>0$ such that the harmonic radius (see for instance \cite{and90} for a definition), $r_h(y)$, at the point $y \in \cW(x)$ has a lower bound $r_h(y) \ge \rho \cdot  d(x,y)$. If we further assume that the sequence $M_i$ is \emph{Einstein}, then we have the curvature estimate $|\Rm|(y) =o \big( d(x,y)^{-2} \big)$ in $\cW(x)$. 
\end{corollary}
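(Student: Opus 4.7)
The plan is to combine the tangent-cone uniqueness just established in Theorem \ref{t:local uniqueness} with the standard $\varepsilon$-regularity theorem for noncollapsed manifolds with bounded Ricci curvature, applied on the part of small rescaled balls that stays away from the splitting axis.

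First, I would observe that by Theorem \ref{t:local uniqueness} the unique tangent cone at $x$ is $\mathbb{R}\times\mathbb{R}^{4}/\Gamma$ with $\Gamma\subset O(4)$ finite acting freely on $\mathbb{S}^{3}$; this cone is flat on the complement of its axis $\mathbb{R}\times\{0\}$. Uniqueness implies that the rescaled pointed spaces $(X,r^{-1}d,x)$ converge in pointed Gromov--Hausdorff distance to this cone as $r\to 0$. Given $y\in\cW(x)$, set $r=d(x,y)$. By the definition of the wedge region in \eqref{e:wedge region}, in the rescaled picture $y$ remains at a fixed angular distance $\alpha=\alpha(\cW)>0$ from the axis, so for $r$ sufficiently small a ball $B_{\alpha/2}(y)$ in $r^{-1}d$ is arbitrarily GH-close to a Euclidean ball of the same radius.

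Next, I would invoke the $\varepsilon$-regularity theorem of Anderson / Cheeger--Colding / Cheeger--Naber along the smooth approximating sequence $M_{i}$: for noncollapsed $5$-manifolds with $|\Ric|<4$, any ball sufficiently GH-close to a Euclidean ball of its own radius has harmonic radius bounded below by a definite fraction of that radius. Applied to $y_{i}\in M_{i}$ with $y_{i}\to y$ in the rescaled metric $r^{-2}g_{i}$, and passing to the limit on the regular part of $X$, this yields $r_{h}(y)\geqslant\rho(\alpha)\cdot d(x,y)$ in $(X,d)$, which is the first assertion. For the curvature decay when the $M_{i}$ are Einstein, I would then upgrade the GH convergence on the wedge to $C^{\infty}$ convergence: once the harmonic radius is bounded below, the Einstein equation in harmonic coordinates provides uniform $C^{k,\alpha}$ bounds on the rescaled metric, so the balls $B_{\alpha/2}(y)$ in $(X,r^{-1}d)$ converge smoothly to their Euclidean counterparts in the tangent cone. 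Since the limit is flat away from the axis, the rescaled curvature at $y$ tends to $0$; unpacking the rescaling gives $d(x,y)^{2}\,|\Rm|(y)\to 0$ as $y\to x$ inside $\cW(x)$, which is exactly $|\Rm|(y)=o\!\left(d(x,y)^{-2}\right)$.

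The hard part will be securing \emph{uniformity} of the $\varepsilon$-regularity constants across all scales $r\to 0$ and all points $y\in\cW(x)$ simultaneously: without the uniqueness part of Theorem \ref{t:local uniqueness}, different blow-down sequences could a priori produce different singular models and no common $\rho$ would be available. With uniqueness in hand, a standard compactness-contradiction argument along any sequence $r_{n}\to 0$ with $y_{n}\in\cW(x)$ and $d(x,y_{n})=r_{n}$ delivers a common $\rho$, and in the Einstein case forces $r_{n}^{2}|\Rm|(y_{n})\to 0$, closing the proof.
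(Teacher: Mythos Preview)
Your proposal is correct and follows essentially the same route as the paper: use the uniqueness of the tangent cone from Theorem \ref{t:local uniqueness} to obtain uniform closeness to $\mathbb{R}\times\mathbb{R}^4/\Gamma$ at all small scales, then apply $\varepsilon$-regularity away from the axis. The only cosmetic difference is that the paper phrases the ``away from the axis $\Rightarrow$ close to Euclidean'' step via the cone-splitting Lemma \ref{l:cone splitting} together with the $(n-3,\varepsilon)$-symmetry version of $\varepsilon$-regularity (Theorem \ref{t:eps_regularity}), whereas you argue it directly by GH-closeness of rescaled balls to Euclidean balls; both formulations are equivalent here.
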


We next describe the asymptotic geometry of complete, noncollapsed, asymptotically Ricci-flat $5$-manifolds with Euclidean volume growth.  
When a tangent cone at infinity splits off a line, then it is \emph{unique}.

\begin{theorem}\label{t:uniqueness at infinity}
    Let $(M,g,p)$ be a 5-manifold with  Euclidean volume growth and satisfy $\sup_{B_{2R}(p) \setminus B_R(p)}|\Ric| = o(R^{-2})$. If one of the tangent cones at infinity splits, then this is the \emph{unique} tangent cone at infinity.
\end{theorem}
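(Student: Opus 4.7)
The plan is to combine the quantitative isolation theorem \ref{thm:isolation 1sym} with the classical connectedness of the space of tangent cones at infinity, in direct analogy with the local statement of Theorem \ref{t:local uniqueness}. First I would consider the rescaled pointed sequences $(M, R^{-2}g, p)$ for $R \to \infty$. The Euclidean volume growth hypothesis implies that the unit ball in $R^{-2}g$ has volume bounded below by some $v > 0$ uniformly in $R$, while the decay assumption $\sup_{B_{2R}(p)\setminus B_R(p)}|\Ric_g| = o(R^{-2})$ rescales (by $|\Ric_{\lambda^{-2}g}| = \lambda^2 |\Ric_g|$ applied dyadically) to $|\Ric_{R^{-2}g}| \to 0$ on every fixed compact subset of $M \setminus \{p\}$. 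By the Cheeger-Colding cone structure theorem for noncollapsed asymptotically Ricci-flat spaces with Euclidean volume growth, every element of the set $\mathcal{T}_\infty(M,g,p)$ of tangent cones at infinity is therefore a Ricci-flat metric cone $(C(Y), d_{C(Y)}, 0)$.

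Second, I would invoke the classical fact that $\mathcal{T}_\infty(M,g,p)$ is \emph{connected} in the pointed Gromov-Hausdorff topology: the rescaling map $R \mapsto (M, R^{-2}g, p)$ is continuous into the space of pointed metric spaces, so its $\omega$-limit set as $R \to \infty$ is connected. By hypothesis this set contains a cone $C_0 = \mathbb{R} \times C(Y_0)$ splitting off a line. For any other $C = C(Y) \in \mathcal{T}_\infty$, I would realize $C_0$ and $C$ as limits of two sequences of rescalings of $(M,g,p)$ and apply Theorem \ref{thm:isolation 1sym} with parameter $v$: this produces an $\varepsilon_0 = \varepsilon_0(v) > 0$ such that $d_{GH}(Y, Y_0) < \varepsilon_0$ forces $C \cong C_0 \cong \mathbb{R} \times C(\mathbb{S}^3/\Gamma)$ for some finite $\Gamma \subset O(4)$. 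Hence $\{C_0\}$ is open in $\mathcal{T}_\infty$; connectedness then yields $\mathcal{T}_\infty = \{C_0\}$, which is the claimed uniqueness.

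The main technical point, and the place where care is needed, is to arrange that Theorem \ref{thm:isolation 1sym} applies to two tangent cones of the \emph{same} manifold $(M,g)$: one needs a diagonalization producing a single sequence of pointed $5$-manifolds with unit balls of volume at least $v$ and $|\Ric| \to 0$ on compacts, whose subsequential cross-sections realize $Y_0$ and $Y$ and are Gromov-Hausdorff close. The dyadic form of the hypothesis $|\Ric_g| = o(R^{-2})$ together with the Euclidean volume growth are exactly what make such a diagonal construction work uniformly in the scale. Beyond this, the proof is essentially a rescaling-at-infinity analogue of the proof of Theorem \ref{t:local uniqueness}.
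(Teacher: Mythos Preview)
Your proposal is correct and follows essentially the same route as the paper: both proofs combine the connectedness of the space of tangent cones at infinity with the isolation theorem \ref{thm:isolation 1sym}. The paper phrases the connectedness step as a finite $\varepsilon_0/2$-chain $Y_0,\dots,Y_N$ of cross-sections and iterates Theorem \ref{thm:isolation 1sym} along the chain, while you phrase it as an open-and-closed argument in $\mathcal{T}_\infty$; these are equivalent, and your remark about verifying the hypotheses of Theorem \ref{thm:isolation 1sym} via the rescaled Ricci decay and uniform volume lower bound is exactly the point that makes the argument go through.
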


Finally, we obtain curvature control near infinity.

\begin{corollary}\label{c:infinity est}
    Let $(M,g,p)$ be a 5-manifold with  Euclidean volume growth and satisfy $\sup_{B_{2R}(p) \setminus B_R(p)}|\Ric| = o(R^{-2})$. If one of the tangent cones at infinity splits, then there exists some wedge region around infinity $\cW(\infty)$ and constant $\rho >0$ such that the harmonic radius at the point $y \in \cW(\infty)$ has a lower bound $r_h(y) \ge \rho \cdot  d(p,y)$. If we further assume that the manifold $M$ is \emph{Ricci-flat}, then we have the curvature estimate $|\Rm|(y) = o\big( d(p,y)^{-2} \big)$ in $\cW(\infty)$. 
\end{corollary}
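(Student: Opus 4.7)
The plan is to treat this corollary as the analog at infinity of Corollary \ref{c:local curvature est}: once the uniqueness of tangent cones at infinity is known from Theorem \ref{t:uniqueness at infinity}, the pointwise estimates on the wedge region follow from a scale-invariant $\varepsilon$-regularity argument applied to the blow-down rescalings, mirroring the local argument but with the role of $x$ replaced by the point at infinity.

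First I would fix the unique tangent cone at infinity provided by Theorem \ref{t:uniqueness at infinity}, which is of the form $C_\infty = \mathbb{R} \times C(Y)$ and, after identifying $Y$ with a spherical space form quotient as in Theorem \ref{thm:isolation 1sym}, has singular set $\Sigma_\infty \subset C_\infty$ containing the splitting axis. By uniqueness, the rescaled pointed manifolds $(M, R^{-2} g, p)$ converge in pointed Gromov-Hausdorff sense to $(C_\infty, 0)$ along every sequence $R \to \infty$. Using these GH approximations I would pull back a scale-invariant tubular neighborhood of $\Sigma_\infty$ to $M$ to produce a candidate singular set $\Sigma \subset M$ and define, for suitable $\theta > 0$ and $R_0 \gg 1$,
\[
\cW(\infty) := \{ y \in M : d(p, y) \geq R_0,\ d(y, \Sigma) \geq \theta \, d(p, y) \},
\]
with $R_0$ chosen so that the GH approximation is uniformly good on every dyadic annulus $B_{2R}(p) \setminus B_R(p)$ for $R \geq R_0$.

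Next, for any $y \in \cW(\infty)$ set $R_y := d(p,y)$ and consider the rescaled pointed manifold $(M, R_y^{-2} g, y)$. Uniqueness at infinity ensures that along any sequence $y_k \in \cW(\infty)$ with $R_{y_k} \to \infty$, these rescalings converge in pointed GH sense to $(C_\infty, y_\infty)$ with $y_\infty$ at distance at least $\theta$ from $\Sigma_\infty$, hence a \emph{smooth} point of $C_\infty$ where the cone is locally flat. The decay hypothesis $\sup_{B_{2R}(p) \setminus B_R(p)} |\Ric| = o(R^{-2})$ yields that the rescaled Ricci curvature is $o(1)$ uniformly on the rescaled unit ball around $y$. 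I would then apply Anderson's harmonic radius $\varepsilon$-regularity theorem \cite{and90} in the rescaled ball of radius $\theta/4$ around $y$ to obtain a uniform lower bound on the rescaled harmonic radius at $y$; rescaling back gives $r_h(y) \geq \rho \, d(p,y)$.

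For the Ricci-flat conclusion, I would upgrade the $C^{1,\alpha}$-convergence obtained above to smooth convergence via standard elliptic regularity for Einstein metrics in harmonic coordinates. Since $C_\infty$ is locally flat at $y_\infty$, this gives $R_y^2 |\Rm|(y) \to 0$ as $R_y \to \infty$, equivalently $|\Rm|(y) = o\big( d(p,y)^{-2} \big)$ on $\cW(\infty)$. The main subtlety is arranging $\cW(\infty)$ to be scale-invariantly compatible with the blow-down convergence across all dyadic annuli — this is precisely where the \emph{uniqueness} of the tangent cone at infinity (rather than mere subsequential convergence) is essential, and a diagonal argument over dyadic annuli closes this gap. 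The remaining steps are a direct noncompact adaptation of the proof of Corollary \ref{c:local curvature est}.
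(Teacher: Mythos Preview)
Your proposal is correct and follows essentially the same route as the paper: use the uniqueness of the tangent cone at infinity (Theorem~\ref{t:uniqueness at infinity}) to get uniform closeness to $\mathbb{R}\times\mathbb{R}^4/\Gamma$ at all large scales, then apply an $\varepsilon$-regularity theorem at points a definite scale-invariant distance from the splitting axis. The paper's version is even terser---it simply invokes \eqref{e:harmonic radius lower bound} and \eqref{e:curvature scale estimate} after setting $R=2d(p,y)$---and the wedge region is defined scale-by-scale as $\bigcup_{R\ge R_p}\{y\in B_R(p)\setminus B_{\tau R}(\cL_R)\}$ rather than via a global pulled-back singular set $\Sigma$; this sidesteps your acknowledged subtlety about aligning the axes $\cL_R$ across scales. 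The only other minor difference is that the paper routes the regularity through the cone-splitting Lemma~\ref{l:cone splitting} plus the Cheeger--Naber $\varepsilon$-regularity Theorem~\ref{t:eps_regularity}, whereas you invoke Anderson's harmonic-radius theorem directly at a smooth limit point; both yield the same conclusion here.
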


\subsubsection{Singular set of noncollapsed limits of Einstein $5$-manifolds, and orbifold structure}

We refine the structure of the singular set $\cS$, and the regularity of the Gromov-Hausdorff limit space $X$. By the uniqueness of $1$-symmetric tangent cones \ref{t:local uniqueness}, we can further stratify the singular set $\cS(X)$ as follows: 
\begin{theorem}\label{t:new stratification}
    There exists some $N_0 = N_0(v)$ depending on the noncollapsing constant $v$ such that
\begin{equation}
    \cS(X) = \cS^0 \sqcup \bigsqcup_{2 \leqslant |\Gamma| \leqslant N_0} \cS^1_{\Gamma}, 
\end{equation}
where $\cS_{\Gamma}^1$ is the set of points whose unique tangent cone is $\RR \times \RR^4/\Gamma$, $\Gamma\subset O(4)$ and $\cS^0$ is a set of countably many points where no tangent cone splits. 
\end{theorem}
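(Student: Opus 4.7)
The plan is to combine the uniqueness Theorem \ref{t:local uniqueness} with the codimension-four regularity of \cite{cn, cjn} and a Bishop-Gromov volume comparison. First recall the Cheeger-Colding stratification $\cS^0 \subseteq \cS^1 \subseteq \cdots \subseteq \cS^{n-1} = \cS$, where $\cS^k$ denotes the points admitting no $(k+1)$-symmetric tangent cone. In dimension $n = 5$, the codimension-four theorem of \cite{cn, cjn} forces $\cS(X) = \cS^1(X)$, so the singular set decomposes as $\cS(X) = \cS^0 \sqcup (\cS^1 \setminus \cS^0)$, where $\cS^0$ is exactly the set of singular points at which no tangent cone splits off a line.

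For any $x \in \cS^1 \setminus \cS^0$ some tangent cone at $x$ splits off a line, and Theorem \ref{t:local uniqueness} promotes this into uniqueness: the tangent cone at $x$ is isometric to $\RR \times \RR^4/\Gamma$ for a uniquely determined finite $\Gamma \subset O(4)$ acting freely on $\mathbb{S}^3$ (necessarily nontrivial, otherwise $x$ would be regular). Partitioning $\cS^1 \setminus \cS^0$ by the conjugacy class of $\Gamma$ in $O(4)$ produces the decomposition
\begin{equation*}
    \cS^1 \setminus \cS^0 = \bigsqcup_{|\Gamma| \geq 2} \cS^1_{\Gamma}.
\end{equation*}
The uniform bound $|\Gamma| \leq N_0(v)$ follows from Bishop-Gromov monotonicity: the volume density of $\RR \times \RR^4/\Gamma$ at its vertex equals $1/|\Gamma|$ by a direct Fubini computation; in noncollapsed Ricci limits this density coincides with the pointed volume density of $X$ at $x$, which is bounded below by a constant $c(v) > 0$ depending only on the noncollapsing constant $v$.

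To conclude we show $\cS^0$ is countable. Since $\cS^0$ is the bottom stratum, the rectifiability theorem of \cite{cjn, jn} applies: each $\cS^k$ is $k$-rectifiable with locally finite $\cH^k$-measure. Applied at $k = 0$, this says $\cS^0$ has locally finite counting measure, i.e.~$\cS^0 \cap B_r(p)$ is finite for every $r > 0$. Combined with the $\sigma$-compactness of the noncollapsed Ricci limit $X$, this yields countability, and completes the decomposition.

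The only nontrivial step is the passage from ``some tangent cone splits'' to ``the tangent cone is unique'', supplied by Theorem \ref{t:local uniqueness} and ultimately resting on the new isolation Theorem \ref{thm:isolation 1sym}; the expected obstacle is entirely packaged there, since once uniqueness is available, both the stratification and the bound on $|\Gamma|$ follow formally from the Cheeger-Colding-Naber machinery in dimension $n = 5$.
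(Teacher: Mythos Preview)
Your proof is correct and follows the same route as the paper: the decomposition is a formal consequence of Theorem \ref{t:local uniqueness}, the codimension-four theorem, Bishop--Gromov/volume convergence for the bound on $|\Gamma|$, and the countability of $\cS^0$. One minor over-attribution: \cite{cjn} proves each stratum $\cS^k$ is $k$-rectifiable, but local finiteness of $\cH^k$-measure is \emph{not} established there for general $k$ (it is only known for the top stratum, via \cite{jn}); however you do not actually need it, since $0$-rectifiable already means countable, and in any case the countability of $\cS^0$ is a more elementary fact going back to Cheeger--Colding.
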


Our first result shows that locally each $\cS^1_{\Gamma}$ is contained in the bi-Lipschitz image of some subset of $\RR$. In particular, this refines the classical rectifiability statement of \cite{cjn,jn}, eliminating the need to discard an \emph{$\mathcal{H}^{1}$-measure zero} subset:

\begin{theorem}[Singular set contained in Lipschitz manifolds]\label{thm:lips curve sing}
    Let $(M_i,g_i, p_i)_{i\in\mathbb{N}}$ be a sequence of pointed $5$-manifolds with $|\Ric|< 4$ and $\Vol(B_1(p_i)) > v >0$. Let $(X,d,p)$ be a Gromov-Hausdorff limit of the sequence. For any $x\in \cS^1_{\Gamma}$, there exists some $r>0$ and some function $\phi$ such that $\phi:B_{r}(x) \cap \cS^1_{\Gamma} \to \RR$ is bi-Lipschitz. In particular, the singular set $\cS$ satisfies: There exist countably many points $q_j$ and measurable subsets $Z_i \subset \cS^1_{\Gamma_i}$  with bi-Lipschitz maps $\phi_i: Z_i \to \RR$ such that 
    \begin{equation}
        \cS = \bigcup_{i=1}^{\infty} Z_i \,\,\cup\,\, \bigcup_{j=0}^{\infty} \{q_j\}. 
    \end{equation}
\end{theorem}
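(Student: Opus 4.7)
The plan is to use Theorem \ref{t:new stratification} to reduce the claim to a rectifiability statement on each stratum $\cS^1_\Gamma$. The countable set $\cS^0$ directly supplies the isolated points $\{q_j\}$, and since only finitely many conjugacy classes of $\Gamma\subset O(4)$ satisfy $|\Gamma|\leqslant N_0$, it suffices to show that for each fixed $\Gamma$, the stratum $\cS^1_\Gamma$ is covered by countably many bi-Lipschitz images of subsets of $\RR$.

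Fix $x\in\cS^1_\Gamma$. By Theorem \ref{t:local uniqueness}, the tangent cone at $x$ is uniquely isometric to $\RR\times\RR^4/\Gamma$. Quantifying this via the isolation result of Theorem \ref{thm:isolation 1sym} (through the standard compactness--contradiction upgrade), I expect to produce a scale $r(x)>0$ and a small $\epsilon_0>0$ such that for every $r\leqslant r(x)$ the ball $B_r(x)$ is $\epsilon_0 r$-Gromov--Hausdorff close to $\RR\times\RR^4/\Gamma$, with a splitting-axis direction $L_r(x)$, and these directions converge to a well-defined line $L(x)$ through $x$ as $r\to 0$. Since the singular locus of the model cone $\RR\times\RR^4/\Gamma$ is exactly its splitting axis, applying the approximation at scale $r=d(x,y)$ forces any other singular point $y\in\cS^1_\Gamma\cap B_{r(x)/2}(x)$ to lie within distance $C\epsilon_0\,d(x,y)$ of the line $x+L(x)$.

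The scale-by-scale control, now available at \emph{every} point of $\cS^1_\Gamma$, feeds a Reifenberg-type construction in the spirit of Cheeger--Jiang--Naber \cite{cjn,jn}: consecutive splitting directions $L_{2^{-k}}(x)$ differ by $O(\epsilon_0)$ by the quantitative cone-splitting/stability of Theorem \ref{thm:isolation 1sym}, and integrating these defects across dyadic scales produces a Lipschitz curve $\gamma_x:I\to X$ whose image contains $\cS^1_\Gamma\cap B_{r(x)/2}(x)$. The upper Lipschitz bound follows from the linear bound $\dist(y,x+L(x))\leqslant C\epsilon_0\,d(x,y)$ combined with the projection onto $L(x)$; the reverse bi-Lipschitz bound is obtained by noting that this projection, being $(1-C\epsilon_0)$-close to the identity along the curve, separates distinct singular points by a comparable amount. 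The decisive improvement over the standard CJN framework, which must discard an $\cH^1$-measure zero subset, is that Theorem \ref{t:local uniqueness} furnishes the uniform cone-approximation at \emph{every} $x\in\cS^1_\Gamma$ and \emph{every} scale, not merely $\cH^1$-almost everywhere, so no null set needs to be removed.

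Finally, $\{B_{r(x)/2}(x)\}_{x\in\cS^1_\Gamma}$ is an open cover of $\cS^1_\Gamma$ inside the separable metric space $X$; extracting a countable subcover produces countably many bi-Lipschitz subsets $Z_i\subset\cS^1_\Gamma$ with $\bigcup_i Z_i=\cS^1_\Gamma$. Taking the union over the finitely many admissible $\Gamma$ and adjoining $\cS^0=\{q_j\}$ yields the stated decomposition. The main obstacle in this plan is the third paragraph: the explicit Reifenberg-type patching of scale-wise line approximations into a genuinely bi-Lipschitz curve, in particular establishing the reverse Lipschitz inequality via an $\epsilon$-regularity argument ensuring that nearby singular points project to well-separated points of $L(x)$. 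Everything else is formal once the uniqueness of tangent cones, which is the new $4$-dimensional input via Theorems \ref{thm:isolation orbifolds}--\ref{thm:isolation 1sym}, is in hand.
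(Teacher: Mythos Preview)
Your overall architecture matches the paper exactly: decompose $\cS=\cS^0\sqcup\bigsqcup_\Gamma\cS^1_\Gamma$ via Theorem~\ref{t:new stratification}, upgrade the uniqueness of tangent cones to a uniform symmetry statement at \emph{all} scales below some $r(x)$ (the paper's Proposition~\ref{p:uniform symmetry}), build a local bi-Lipschitz map on each $B_{r_x}(x)\cap\cS^1_\Gamma$, and finish by separability. You have also correctly identified the decisive new input: the isolation theorem removes the need to discard any $\cH^1$-null set.

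Where you diverge from the paper---and where your sketch has a real gap---is the construction of the bi-Lipschitz map. You propose a Reifenberg-type patching of the approximate axes $L_{2^{-k}}(x)$, followed by ``projection onto $L(x)$.'' Two problems: first, $L(x)$ lives in the tangent cone, not in $X$, so there is no canonical projection in the limit space; second, Reifenberg-flat approximations with defect $O(\epsilon_0)$ at every dyadic scale yield in general only a bi-H\"older parametrization, not bi-Lipschitz, since the errors are not summable. The paper avoids this entirely by using a single $\varepsilon$-splitting function $u:B_{r_x}(x)\to\RR$ (a harmonic function coming from Theorem~\ref{t:almost splitting}) as the map itself. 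The bi-Lipschitz estimate for $u$ on $\cS^1_\Gamma$ is then obtained scale by scale: for $y,z\in\cS^1_\Gamma$ with $d(y,z)=r/2$, cone-splitting and $\varepsilon$-regularity force both into $B_{\varepsilon r}(\cL_{y,2r})$, and a compactness argument shows $u$ differs from the local splitting function $u_{y,r}$ by at most $\varepsilon r$ on $B_{2r}(y)$ (up to sign), whence $\big||u(y)-u(z)|-d(y,z)\big|\leqslant C\varepsilon\, d(y,z)$. This is Proposition~\ref{p:local Lipschitz}. The splitting-function approach is both simpler and sidesteps the bi-H\"older obstacle you flagged as ``the main obstacle.''
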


{
\begin{remark}
    This resolves, in dimension $5$ and under a two-sided Ricci bound, the analogue of \cite[Conjecture 2.15]{nab20}. The original conjecture concerns lower Ricci bounds.
\end{remark}
}




The points $\{q_j\}$ in fact coincide with the set of $0$-symmetric singularities $\cS^0$. Theorem \ref{thm:lips curve sing} says that all of the $1$-symmetric singular points lie on Lipschitz curves. It turns out that we understand the singularities completely when they exhaust such a curve in $\cS^1_{\Gamma}$: they are orbifold singularities along geodesics and the curvature stays bounded. In other words: the curve of singularities in $\cS^1_{\Gamma}$ is \emph{removable}. More generally, given \emph{any} curve inside the singular set $\cS$, we have the following structure theorem:

\begin{theorem}[Structure of curves of singularities]\label{mthm: orbifold regularity} 
Let $(M_i,g_i,p_i)$ be a sequence of pointed \emph{Einstein} $5$-manifolds with $|\Ric|<4$ and $\Vol(B_1(p_i))>v>0$. Let $(X,d,p)$ be a Gromov-Hausdorff limit of the sequence. Assume that there exists a unit-speed curve $\gamma:I = (-\varepsilon,\varepsilon)\to X$ contained in the singular set, $\gamma(I)\subset\mathcal{S}(X)$, for some $\varepsilon>0$. Then, there exists a relatively closed \emph{nowhere dense} subset $\cI \subset I$ such that $I = \cI \sqcup \bigsqcup_{j =1}^{\infty} (s_j, s_{j+1})$ satisfies:
\begin{itemize}
    \item on each $(s_j,s_{j+1})$, there exists a finite group $\Gamma_j\subset O(4)$ such that at all of the points in $\gamma(s_j,s_{j+1})$, the tangent cone  is unique and isometric to $\mathbb{R}\times \mathbb{R}^4/\Gamma_j$,
    \item there exist an open sets $U_j\subset X$ with $\gamma\big((s_j,s_{j+1})\big)\subset U_j$ such that on $U_j\setminus\gamma$ the Einstein metric extends \emph{real-analytically} across $\gamma$ as a real-analytic $5$-orbifold metric with singularities $\mathbb{R}\times \mathbb{R}^4/\Gamma_j$ along $\gamma$. In particular, the curvature stays bounded.
    \item $\gamma_{|(s_j,s_{j+1})}$ is a unit-speed geodesic of $X$. 
\end{itemize}
If $\gamma(I)\subset\mathcal{S}^1_\Gamma$, then $\cI=\emptyset$, $\gamma$ is a geodesic and the metric extends as a real-analytic orbifold across it.
\end{theorem}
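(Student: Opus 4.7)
The proof is local along the curve, followed by a global bookkeeping argument. At each $s_0$ with $\gamma(s_0)\in\cS^1_\Gamma$ I first construct a real-analytic Einstein orbifold extension in a neighborhood $U_{s_0}$ of $\gamma(s_0)$; I then show that the local group $\Gamma$ is locally constant along $\gamma$, glue the neighborhoods $U_{s_0}$ over each maximal $\Gamma$-component $(s_j,s_{j+1})$, and finally identify the complement $\cI$ with $\gamma^{-1}(\cS^0)$ and show it is nowhere dense.

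\textbf{Local orbifold extension.} Fix $s_0$ with $\gamma(s_0)\in\cS^1_\Gamma$. Theorem~\ref{t:local uniqueness} pins down the unique tangent cone at $\gamma(s_0)$ as $\mathbb{R}\times\mathbb{R}^4/\Gamma$, and Corollary~\ref{c:local curvature est} supplies a wedge region $\cW(\gamma(s_0))$ with harmonic radius $r_h(y)\geqslant\rho\,d(\gamma(s_0),y)$ and curvature decay $|\Rm|(y)=o\bigl(d(\gamma(s_0),y)^{-2}\bigr)$. Away from $\gamma$ the limit is therefore a smooth Einstein manifold with harmonic coordinate charts of scale comparable to the distance to $\gamma$. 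This is the $5$-dimensional analogue of the data driving the removable-singularity theorem of \cite{and,bkn,tia} for isolated $4$-orbifold points. Combining the scale-invariant curvature bound with uniqueness of the transverse cone $\mathbb{R}^4/\Gamma$ at every nearby singular point, one builds orbifold harmonic charts straddling $\gamma$ and deduces that the Einstein metric extends as an orbifold Einstein metric on a neighborhood $U_{s_0}$ with $\Gamma$-isotropy along the singular arc. Real-analyticity is then automatic by Morrey--DeTurck--Kazdan, since in orbifold harmonic coordinates the Einstein equation is elliptic with analytic coefficients.

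\textbf{Geodesic character, gluing, global partition.} Inside $U_{s_0}$ the singular arc is the fixed locus of the isotropy representation $\Gamma\hookrightarrow O(4)$ acting on the smooth $\Gamma$-cover, a $1$-dimensional totally geodesic submanifold. Since $\gamma|_{U_{s_0}}$ is unit-speed, continuous and contained in this arc, it parametrizes the arc by arclength and is therefore a unit-speed geodesic. Set
\[
\cU := \gamma^{-1}(\cS^1) = \bigsqcup_\Gamma \gamma^{-1}(\cS^1_\Gamma), \qquad \cI := I\setminus \cU.
\]
The existence of $U_{s_0}$ forces every point of $\gamma$ sufficiently close to $s_0$ that lies in $\cS$ to lie on the singular arc of the orbifold $U_{s_0}$, i.e.\ in $\cS^1_\Gamma$. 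Hence each $\gamma^{-1}(\cS^1_\Gamma)$ is open in $I$, so $\cU$ is open and decomposes into maximal open intervals $(s_j,s_{j+1})$, each carrying a single group $\Gamma_j$ by connectedness. Compatible overlaps of the neighborhoods $U_{s_0}$ for $s_0\in(s_j,s_{j+1})$ glue them into one orbifold neighborhood $U_j\supset\gamma\bigl((s_j,s_{j+1})\bigr)$ with the required properties.

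\textbf{Nowhere density of $\cI$ and main obstacle.} Since $\gamma(I)\subset\cS$, the complement $\cI$ equals $\gamma^{-1}(\cS^0)$, and is closed because $\cU$ is open. By Theorem~\ref{t:new stratification} the stratum $\cS^0$ is countable. If $\cI$ contained an open subinterval $J$, then $\gamma(J)$ would be a connected subset of the countable set $\cS^0$, hence a single point -- contradicting the fact that a unit-speed curve cannot be constant on an interval. Thus $\cI$ is nowhere dense. When $\gamma(I)\subset\cS^1_\Gamma$, one has $\cU=I$, $\cI=\emptyset$, and the construction yields a single global orbifold neighborhood. The main technical obstacle is the local orbifold extension: unlike the isolated $4$-dimensional case of \cite{and,bkn,tia}, the singular stratum here is extended, so the removable-singularity argument must be performed uniformly along $\gamma$. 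This requires orbifold harmonic charts compatible both with motion along the axis and with the transverse $\mathbb{R}^4/\Gamma$ slices, and an upgrade of the scale-invariant bound $|\Rm|=o(d^{-2})$ and the uniqueness of the transverse cone to an honest orbifold-sense $L^\infty$ curvature bound.
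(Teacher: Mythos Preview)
Your argument has a genuine circularity. You attempt to construct the orbifold neighborhood $U_{s_0}$ at a \emph{single} point $\gamma(s_0)\in\cS^1_\Gamma$ and then use its existence to deduce that $\gamma^{-1}(\cS^1_\Gamma)$ is open in $I$. But any construction of $U_{s_0}$---whether by the paper's horizontal gluing or by a removable-singularity argument---requires already knowing the singular structure in a full neighborhood of $\gamma(s_0)$, in particular that an entire arc of $\gamma$ near $s_0$ lies in $\cS^1_\Gamma$. The estimate you quote from Corollary~\ref{c:local curvature est}, namely $|\Rm|(y)=o\bigl(d(\gamma(s_0),y)^{-2}\bigr)$, holds only on the wedge $\cW(\gamma(s_0))$, which by definition \emph{excludes} a scale-invariant tube around the splitting axis; that tube is precisely where the nearby points $\gamma(t)$ sit, and you have no curvature control there. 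More concretely, nothing you invoke rules out a sequence $t_n\to s_0$ with $\gamma(t_n)\in\cS^1_{\Gamma'}$ for some $|\Gamma'|<|\Gamma|$: lower semicontinuity of the volume density yields only $\theta(\gamma(t_n))\geqslant|\Gamma|^{-1}$ for $t_n$ near $s_0$, not equality, so the ``milder'' singularity $\Gamma'$ is perfectly consistent. In that scenario $\gamma^{-1}(\cS^1_\Gamma)$ is not open at $s_0$, and your bootstrap collapses. You flag this as ``the main technical obstacle'' in the last paragraph, but it is not a technical obstacle to be overcome later; it is a gap in the logical order of the proof.

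The paper resolves this by reversing the order: decompose $I$ \emph{first}, then build orbifold charts. The volume density $\cV(t):=\theta(\gamma(t))$ is lower semicontinuous and, by the noncollapsing bound, takes only finitely many values; hence on any open $J\subset I$ the top value of $\cV|_J$ is attained on a nonempty open subset, so the set where $\cV$ is locally constant is open and dense and its complement $\cI$ is closed and nowhere dense. This uses only Bishop--Gromov and volume continuity. On each resulting interval $(s_j,s_{j+1})$ the density is constant, and Corollary~\ref{c:curve constant volume ratio}---which relies solely on the isolation Theorem~\ref{thm:isolation 1sym}, not on any orbifold chart---forces the tangent cone to be $\mathbb{R}\times\mathbb{R}^4/\Gamma_j$ at \emph{every} point of $\gamma((s_j,s_{j+1}))$. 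Only now, with the whole arc known to lie in $\cS^1_{\Gamma_j}$, does the machinery of Section~\ref{s:orbifold structure} apply: uniform symmetry along the arc (Lemma~\ref{l:uniform symmetry along curves}), horizontal gluing of model charts (Lemma~\ref{l:Horizontal Gluing}), the Hardy--Moser iteration upgrading the transverse bound $|\Rm|\leqslant\delta|y|^{-2}$ to an honest $L^\infty$ bound (Theorem~\ref{t:bound on Rm}), and the analytic orbifold chart (Theorem~\ref{t:analytic chart}). Your countability argument for the nowhere density of $\gamma^{-1}(\cS^0)$ is correct and pleasant, but $\gamma^{-1}(\cS^0)$ is in general strictly smaller than the $\cI$ one actually needs: a point of $\cS^1_\Gamma$ accumulated by points of $\cS^1_{\Gamma'}$ with $|\Gamma'|<|\Gamma|$ must also land in $\cI$.
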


\begin{corollary}
    Each connected component $\Omega$ of $\cS^1_{\Gamma}$ is either a single point or a geodesic, and thus \emph{locally convex}. That is, for every $x\in \Omega$ there exists $r>0$ such that for any $y\in \Omega\cap B_{r}(x)$, every minimizing geodesic $\gamma$ joining $x$ and $y$ is contained in $\Omega$.
\end{corollary}

\begin{remark}
    In general, the subset $\cI \subset I$ in Theorem \ref{mthm: orbifold regularity} is non-empty and one cannot expect convexity of the entire singular set $\cS$. For example, let $C(Z)$ be a Ricci-flat cone that does not split, where $Z$ is an Einstein $4$-orbifold with at least two singular points. Then the singular set is not locally convex around the vertex. 
\end{remark}

Theorem \ref{mthm: orbifold regularity} characterizes any curve of singularities up to a nowhere dense subset: they are open geodesic segments. We now turn to the converse and completely characterize neighborhoods of open geodesics of our limit spaces.

By establishing the sharp H\"older continuity of tangent cones along geodesics in a seminal work \cite{con12}, Colding-Naber, and later generalized by Deng \cite{deng25}, classified the geodesics (meaning locally minimizing, in the metric sense) with one regular point: the whole interior part is regular. Combining Theorems \ref{t:local uniqueness}, \ref{mthm: orbifold regularity} and their estimates, we can now fully classify the geodesics by one of their interior points: 
\begin{theorem}[Classification of geodesics]\label{t:limit geodesic} 
    Let $\sigma:(-\varepsilon, \varepsilon)\to X$ be a geodesic in a noncollapsed Einstein $5$-dimensional limit space $(X,d)$ with $|\Ric_{M_i}| \leqslant 4$. Then either $\sigma\big((-\varepsilon, \varepsilon) \big)$ is regular, or the entire $\sigma\big((-\varepsilon, \varepsilon)\big)\subset\cS^1_{\Gamma}$ for some $\Gamma \subset O(4)$ and the interior of the curve is removable in the sense of Theorem \ref{mthm: orbifold regularity}.  
\end{theorem}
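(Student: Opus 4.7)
The plan is to run a continuity/trichotomy argument along the interior of $\sigma$, combining the uniqueness of $1$-symmetric tangent cones from Theorem \ref{t:local uniqueness} with Colding-Naber's Hölder continuity of tangent cones along geodesics in noncollapsed Ricci limit spaces \cite{con12}, and then to invoke the final removability statement of Theorem \ref{mthm: orbifold regularity}.

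First I would dichotomize on the existence of a regular interior point. If some $t_0 \in (0,l)$ satisfies $\sigma(t_0) \in \cR$, then the Colding-Naber/Deng classification of geodesics with one regular interior point \cite{con12,deng25} directly gives $\sigma\big((0,l)\big) \subset \cR$, which is the first alternative. Henceforth assume $\sigma\big((0,l)\big) \subset \cS$.

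Next, for every $t \in (0,l)$ the curve $\sigma$ produces a splitting line in every tangent cone at $\sigma(t)$, so any such tangent cone splits off an $\RR$ factor. Applying Theorem \ref{t:local uniqueness}, the tangent cone at $\sigma(t)$ is \emph{unique} and of the form $\RR \times C(\mathbb{S}^3/\Gamma_t)$ for some finite, freely acting, nontrivial $\Gamma_t \subset O(4)$. By Theorem \ref{t:new stratification} one has $|\Gamma_t| \leqslant N_0$, and since finite subgroups of $O(4)$ of bounded order fall into finitely many conjugacy classes, the map $t \mapsto \RR \times C(\mathbb{S}^3/\Gamma_t)$ takes values in a finite, hence $d_{GH}$-discrete, set of model cones. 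Colding-Naber's Hölder continuity of tangent cones along a minimizing limit geodesic forces this map to be continuous in $d_{GH}$ on $(0,l)$; a continuous map from a connected interval to a finite discrete set is constant, so there is a single nontrivial $\Gamma \subset O(4)$ with $\sigma\big((0,l)\big) \subset \cS^1_\Gamma$.

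The conclusion now follows from the final sentence of Theorem \ref{mthm: orbifold regularity} applied to the unit-speed curve $\sigma|_{(0,l)} \subset \cS^1_\Gamma$: $\cI = \emptyset$ and the Einstein metric extends real-analytically across $\sigma\big((0,l)\big)$ as a $5$-orbifold with singularities of type $\mathbb R \times \mathbb R^4/\Gamma$, which is precisely the removability required by the second alternative. The main expected obstacle is step three: promoting Colding-Naber's usual formulation, stated as a Hölder bound on compact subintervals, to genuine $d_{GH}$-continuity at every interior $t$. This would be handled by applying their estimate on each $[\varepsilon, l-\varepsilon] \subset\subset (0,l)$, obtaining continuity at every interior point, and noting that the whole interior is an increasing union of such subintervals. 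A subtler point is to verify that two cones $\RR \times C(\mathbb{S}^3/\Gamma_1)$ and $\RR \times C(\mathbb{S}^3/\Gamma_2)$ associated to inequivalent $\Gamma_i$ are uniformly $d_{GH}$-separated within the relevant class; this follows either from the finiteness of conjugacy classes noted above, or more robustly from the quantitative isolation provided by Theorem \ref{thm:isolation 1sym}, which makes the constancy of $\Gamma_t$ automatic once tangent cones at nearby interior points are sufficiently close.
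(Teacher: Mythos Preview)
Your proof is correct and follows essentially the same route as the paper: H\"older continuity of tangent cones along geodesics (Theorem~\ref{t:continuous tangent cone}) combined with the isolation of $1$-symmetric cones (Theorem~\ref{thm:isolation 1sym}) to pin down a constant $\Gamma$ along the interior, then the final clause of Theorem~\ref{mthm: orbifold regularity} for removability. The one variation is your second step, where you argue directly that \emph{every} interior point has a splitting tangent cone (via the line produced by blowing up $\sigma$ and the Cheeger--Colding splitting theorem), whereas the paper instead uses countability of $\cS^0$ to locate a \emph{single} starting point $t_0$ with $\sigma(t_0)\in\cS^1_\Gamma$ and then propagates outward by the open-and-closed argument; the paper's version is slightly leaner since it avoids invoking the splitting theorem, while yours gives a more direct geometric reason why $\cS^0$ cannot meet the interior of a geodesic.
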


This implies that the entire interior of a geodesic has isometric tangent cones $\RR \times \RR^4/\Gamma$ for some fixed $\Gamma \subset O(4)$ (with $\Gamma=\{id\}$ corresponding to regular points). In particular, $\cS^0$ cannot lie on the interior of any geodesic. This yields a dimension-five Einstein-manifold version of the uniqueness of tangent cones along geodesics in the spirit of the conjecture of Burago proved by Petrunin \cite{bgp92, petrunin98} for Alexandrov spaces. 

Finally we discuss the orbifold regularity of the limit space $X$. Theorem \ref{mthm: orbifold regularity} says the ``interior" of each $\cS^1_{\Gamma}$ is locally a geodesic along which the metric extends real-analytically. Then it is natural to introduce the \emph{orbifold-regular-singular} decomposition as follows: 
\begin{equation}\label{e:orbifold-reg-sing}
    \mathcal{R}_{\mathrm{orb}} = \mathcal{R}\,\sqcup\bigsqcup_{{2\leqslant|\Gamma|\leqslant N_0}}\mathring{\mathcal{S}}^1_{\Gamma} \quad \text{ and } \quad \cS_{\mathrm{orb}} = X \setminus \cR_{\mathrm{orb}},
\end{equation}
where $\cR$ is the regular set, and $\mathring{\mathcal{S}}^1_{\Gamma} \equiv \{ x\in \cS^1_{\Gamma}~|~ $there exists some curve $\sigma:(-\varepsilon, \varepsilon) \to \cS^1_{\Gamma}$ with $\sigma(0) = x \}.$ 

Combining the refined rectifiability Theorem \ref{thm:lips curve sing}, the orbifold structure Theorem \ref{mthm: orbifold regularity} and the classification of the geodesics Theorem \ref{t:limit geodesic}, we can prove the following partial orbifold-regularity of $X$:
\begin{corollary}
The orbifold-regular subset $\cR_{\mathrm{orb}}$ and the orbifold-singular subset $\cS_{\mathrm{orb}}$ defined in \eqref{e:orbifold-reg-sing} satisfy the following:
\begin{itemize}
    \item \emph{(Orbifold structure) } $\cR_{\mathrm{orb}}$ is a real-analytic Einstein $5$-orbifold. 
    \item \emph{(Convexity) } $\cR_{\mathrm{orb}}$ is convex: any geodesic with endpoints in $\mathcal{R}_{\mathrm{orb}}$ is contained in $\mathcal{R}_{\mathrm{orb}}$.
    \item \emph{(Extreme points) } $\cS_{\mathrm{orb}}$ is exactly the set of \emph{extreme points of $(X,d)$}, i.e. the set of points that do not lie on a geodesic between two other points. In particular, the curvature of $(X,d)$ is bounded around every non-extreme point (i.e. points in $X\setminus\cS_{\mathrm{orb}}=\mathcal{R}_{\mathrm{orb}}$).
    \item \emph{(Nowhere denseness) } There exist countably many points $\{q_j \}$, measurable subsets $Z_i \subset X$ and bi-Lipschitz maps $\phi_i:Z_i \to \RR$ with $\phi_i(Z_i)$ \emph{nowhere dense} in $\RR$, such that $\cS_{\mathrm{orb}} = \bigcup_i Z_i \cup \bigcup_j\{q_j\}$. 
\end{itemize}
\end{corollary}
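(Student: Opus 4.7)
The plan is to deduce the four bullets from the refined rectifiability (Theorem \ref{thm:lips curve sing}), the orbifold structure along curves (Theorem \ref{mthm: orbifold regularity}), and the classification of geodesics (Theorem \ref{t:limit geodesic}). For the orbifold structure, the metric is smooth real-analytic Einstein on $\cR$ by standard elliptic regularity of Einstein metrics in Cheeger--Colding--Naber harmonic charts. For any $x\in\mathring{\cS}^1_\Gamma$, the defining curve $\sigma$ through $x$ lies entirely in $\cS^1_\Gamma$, so the last assertion of Theorem \ref{mthm: orbifold regularity} provides an open neighborhood of $\sigma$ carrying a real-analytic $5$-orbifold chart with $\RR\times\RR^4/\Gamma$ singularities along $\sigma$. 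The union of such charts covers $\cR_{\mathrm{orb}}$, making it open, and on overlaps the two real-analytic Einstein orbifold structures agree on the common regular part, hence everywhere by local uniqueness of real-analytic Einstein orbifolds; this gives the global orbifold structure on $\cR_{\mathrm{orb}}$ with bounded curvature.

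Convexity is immediate from Theorem \ref{t:limit geodesic}: for a geodesic $\sigma:[0,l]\to X$ with endpoints in $\cR_{\mathrm{orb}}$, the interior is either contained in $\cR$ or in some $\cS^1_\Gamma$; in the latter case $\sigma_{|(0,l)}$ is itself a curve in $\cS^1_\Gamma$ through each interior point, placing the interior in $\mathring{\cS}^1_\Gamma\subset\cR_{\mathrm{orb}}$. For the extreme points characterization, every $x\in\cR_{\mathrm{orb}}$ is interior to some geodesic -- a short chart geodesic if $x\in\cR$, and the defining curve (which is a geodesic by Theorem \ref{mthm: orbifold regularity}) if $x\in\mathring{\cS}^1_\Gamma$ -- so $x$ is not extreme. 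Conversely, if $x\in\cS_{\mathrm{orb}}$ were interior to a geodesic, Theorem \ref{t:limit geodesic} would force the interior, and in particular $x$, to lie in $\cR_{\mathrm{orb}}$, a contradiction. The curvature bound near non-extreme points then follows from the orbifold structure of (1).

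The main obstacle is nowhere denseness. By Theorem \ref{thm:lips curve sing} one has $\cS_{\mathrm{orb}}\subset\bigcup_i(Z_i\cap\cS_{\mathrm{orb}})\cup\bigcup_j\{q_j\}$ with $\phi_i:Z_i\to\RR$ bi-Lipschitz and $Z_i\subset\cS^1_{\Gamma_i}$, so it suffices to show $\phi_i(Z_i\cap\cS_{\mathrm{orb}})$ is nowhere dense in $\RR$ for each $i$. Assume for contradiction that its closure contains an open interval $(a,b)$. Since $\phi_i^{-1}$ is bi-Lipschitz and $X$ is complete, it extends uniquely to a bi-Lipschitz map $\overline{\phi_i^{-1}}:\overline{\phi_i(Z_i)}\to X$; its restriction to $(a,b)$ lands in the closed singular set $\cS$, and arclength reparametrization produces a unit-speed curve $\widetilde\gamma:(a',b')\to\cS$. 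Applying Theorem \ref{mthm: orbifold regularity} to $\widetilde\gamma$ yields a nowhere dense $\cI\subset(a',b')$ whose complement is a countable disjoint union of open intervals on each of which $\widetilde\gamma$ takes values in $\mathring{\cS}^1_{\Gamma_j}\subset\cR_{\mathrm{orb}}$ for some $\Gamma_j$. On the other hand, by the contradiction assumption $\phi_i(Z_i\cap\cS_{\mathrm{orb}})\cap(a,b)$ is dense in $(a,b)$, so after reparametrization a dense $T\subset(a',b')$ satisfies $\widetilde\gamma(T)\subset\cS_{\mathrm{orb}}$. Density of the open components of $(a',b')\setminus\cI$ then forces some $t\in T$ to lie in one of them, giving $\widetilde\gamma(t)\in\cR_{\mathrm{orb}}\cap\cS_{\mathrm{orb}}=\emptyset$, the desired contradiction. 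The countably many points of $\cS_{\mathrm{orb}}$ not covered by the $Z_i$'s are absorbed into $\{q_j\}$, completing the proof.
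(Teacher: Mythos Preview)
Your proof is correct and follows the approach the paper intends: the corollary is stated without detailed proof, only the remark that it follows by ``combining the refined rectifiability Theorem~\ref{thm:lips curve sing}, the orbifold structure Theorem~\ref{mthm: orbifold regularity} and the classification of the geodesics Theorem~\ref{t:limit geodesic}'', and your argument supplies exactly those details. The only place worth tightening is the nowhere-denseness argument: you should record explicitly that the extension $\overline{\phi_i^{-1}}$ remains bi-Lipschitz on $\overline{\phi_i(Z_i)}$ (so that arclength reparametrization yields a genuine unit-speed injective curve to which Theorem~\ref{mthm: orbifold regularity} applies), and that $(a',b')\setminus\cI$ is open dense, hence meets the dense set $T$.
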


In particular, we show that the orbifold-regular set $\cR_{\mathrm{orb}}$ is homeomorphic to a real-analytic variety, thus confirming a conjecture of Naber \cite[Conjecture 6.4]{nab14} away from extreme points in dimension $5$.

\subsubsection{Singular set of higher dimensional noncollapsed limits}

In higher dimensions we can
\emph{distinguish} the $(n-4)$-stratum from the $(n-5)$-stratum, and this separation already
forces a uniqueness phenomenon.

\begin{theorem}\label{thm:higher dimension unique}
Let $(M_i,g_i,p_i)_{i\in\mathbb{N}}$ be a sequence of pointed $n$-manifolds with
$| \Ric_{g_i}| \leqslant n-1$ and $\Vol(B_1(p_i))\geqslant v>0$, and let $(X,d,p)$ be a
Gromov-Hausdorff limit. Fix $x\in X$. Assume that every tangent cone at $x$ splits off an $\mathbb{R}^{n-5}$-factor and that at least one
tangent cone at $x$ splits off $\mathbb{R}^{n-4}$. Then the tangent cone at $x$ is unique isometric to $\mathbb{R}^{n-4}\times \mathbb{R}^4/\Gamma$ for some $\Gamma\subset O(4)$.
\end{theorem}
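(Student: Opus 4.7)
The plan is to reduce the $n$-dimensional uniqueness problem to the five-dimensional isolation Theorem \ref{thm:isolation 1sym} via an openness–closedness argument on the space of tangent cones at $x$, which by Colding-Naber \cite{con12} is connected in the pointed Gromov-Hausdorff topology. The hypothesis that every tangent cone at $x$ splits off an $\RR^{n-5}$-factor makes the problem effectively five-dimensional in the transverse direction, while the existence of one tangent cone splitting off $\RR^{n-4}$ supplies the candidate for the unique limit. Let $T^\ast$ be such a tangent cone, so $T^\ast\cong\RR^{n-4}\times C(Z)$ with $C(Z)$ a four-dimensional noncollapsed Ricci-flat metric cone. By the codimension-four regularity of Cheeger-Naber \cite{cn}, $C(Z)\cong \RR^4/\Gamma$ for some finite $\Gamma\subset O(4)$, so
\begin{equation*}
T^\ast \cong \RR^{n-4}\times \RR^4/\Gamma \cong \RR^{n-5}\times \bigl(\RR\times C(\mathbb{S}^3/\Gamma)\bigr),
\end{equation*}
whose transverse five-dimensional factor $\RR\times C(\mathbb{S}^3/\Gamma)$ is Ricci-flat and splits off a line.

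Next I would show that the set $\mathcal{T}_0:=\{T\in\mathcal{T}(x) : T\cong T^\ast\}$ is open inside the set $\mathcal{T}(x)$ of all tangent cones at $x$. By hypothesis, every $T\in\mathcal{T}(x)$ decomposes as $T\cong\RR^{n-5}\times C(Y_T)$ with $C(Y_T)$ a five-dimensional noncollapsed Ricci-flat cone. Using the splitting-map technology of Cheeger-Naber \cite{cn,cjn} and a diagonal sequence drawn from the original approximating $n$-manifolds, each $C(Y_T)$ arises as a pointed Gromov-Hausdorff limit of five-dimensional transverse slices satisfying $|\Ric|\to 0$ and uniform volume lower bounds. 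Theorem \ref{thm:isolation 1sym}, applied with reference cone $C(Y_{T^\ast})=\RR\times C(\mathbb{S}^3/\Gamma)$, then yields some $\varepsilon_0>0$ such that any $C(Y_T)$ of this form within Gromov-Hausdorff distance $\varepsilon_0$ of $\RR\times C(\mathbb{S}^3/\Gamma)$ must be isometric to it, forcing $T\cong T^\ast$. Hence a GH-neighborhood of $T^\ast$ in $\mathcal{T}(x)$ lies in $\mathcal{T}_0$.

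Closedness of $\mathcal{T}_0$ in $\mathcal{T}(x)$ is automatic from the stability of isometry under pointed Gromov-Hausdorff limits. Since Colding-Naber \cite{con12} established that $\mathcal{T}(x)$ is connected in the pointed Gromov-Hausdorff topology and $T^\ast\in\mathcal{T}_0\ne\emptyset$, one concludes $\mathcal{T}_0=\mathcal{T}(x)$, proving that every tangent cone at $x$ is isometric to $T^\ast$.

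The main obstacle lies in the slicing reduction used in the openness step: producing, for each transverse factor $C(Y_T)$, an approximating sequence of five-dimensional manifolds that genuinely satisfies the hypotheses of Theorem \ref{thm:isolation 1sym}, namely $|\Ric|\to 0$ and uniform noncollapsing. This requires combining the almost-splitting theorem with Cheeger-Naber's splitting maps to extract five-dimensional transverse submanifolds along which curvature and volume are controlled. A cleaner alternative is to directly extend Theorem \ref{thm:isolation 1sym} to $n$-dimensional cones of product form $\RR^{n-5}\times C(Y)$: since the flat factor is rigid under small perturbations, the five-dimensional proof should go through with an $\RR^{n-5}$-parameter carried along. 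Either way, the combination of this extended isolation with Colding-Naber connectedness closes the argument.
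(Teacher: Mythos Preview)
Your high-level strategy---connectedness of the space of tangent cones combined with an isolation result---is exactly what the paper does. The difference lies in \emph{which} isolation theorem is invoked, and this matters.

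You aim to apply Theorem~\ref{thm:isolation 1sym} (isolation of $1$-symmetric five-dimensional Ricci-flat cones) to the transverse five-dimensional factors $C(Y_T)$. As you correctly identify, this creates a genuine obstacle: Theorem~\ref{thm:isolation 1sym} is stated for cones arising as limits of five-manifolds with $|\Ric|\to 0$, and manufacturing such an approximating sequence for the transverse factor requires a slicing argument that you leave unresolved. The paper bypasses this entirely. Since every tangent cone at $x$ has the form $\RR^{n-5}\times C(Y)$ and is a noncollapsed Ricci-flat limit cone, the codimension-four theorem forces each cross-section $Y$ to be an Einstein $4$-orbifold with isolated singularities. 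The distinguished tangent cone splitting off $\RR^{n-4}$ has $Y_0$ equal to a spherical suspension over $\mathbb{S}^3/\Gamma$. Now the paper applies Theorem~\ref{thm:isolation orbifolds} (isolation of spherical $4$-orbifolds among Einstein $4$-orbifolds) directly to the four-dimensional cross-sections $Y$, together with connectedness of the space of cross-sections. No approximating five-manifolds are needed, because Theorem~\ref{thm:isolation orbifolds} is an intrinsic statement about Einstein $4$-orbifolds.

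In short: replace your appeal to Theorem~\ref{thm:isolation 1sym} on the five-dimensional cones by an appeal to Theorem~\ref{thm:isolation orbifolds} on their four-dimensional links, and the obstacle you flag disappears.
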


\begin{cor}

    Let $(X,d)$ be a noncollapsed limit of  $6$-manifolds with $|\Ric|\leqslant 5$. Let $p = \gamma(0)$ be a point lying on an open unit-speed geodesic $\gamma: (-\varepsilon,\varepsilon)\to X$ for some $\varepsilon>0$. Assume that \emph{one} tangent cone at $p$ splits off $\mathbb{R}^{n-4}$. Then, there exists $\Gamma\subset O(4)$ such that for any $s\in(-\varepsilon,\varepsilon)$, the tangent cone at $\gamma(s)$ is unique and isometric to $\mathbb{R}^2\times\mathbb{R}^4/\Gamma$. 
\end{cor}

\subsection{Questions and conjectures}\label{subs:conjectures}

Motivated by the new $5$-dimensional results of the present articles, we propose a program towards an orbifold regularity theory off a \textit{codimension $5$} set. This is analogous to the energy minimizing  harmonic maps: while in this context, the maps are \textit{controllably} smooth away from a codimension $2$ set, it was proven in \cite{su82} that the limits are in fact smooth away from a codimension $3$ set.

\subsubsection{Dimension 5}

 Einstein metrics being real-analytic, one might conjecture by unique continuation that $\mathcal{S}_{\mathrm{orb}}$ cannot have accumulation points. 

\begin{question}
    Are there noncollapsed limits of Einstein $5$-manifolds with isolated singular points in $\mathcal{S}^1$? If singular points in $\mathcal{S}^1$ accumulate at a point, do they all lie on a common singular curve?
\end{question}

\begin{remark}
Motivated by the Calabi-Yau singular models \cite{hn,sze19}, it is reasonable to expect that singularities of the form $\mathbb{R}^{2k}\times\mathbb{R}^4/\Gamma$ in Einstein limits can be isolated; the same may plausibly hold for $\mathbb{R}\times\mathbb{R}^4/\Gamma$. The question is whether they can accumulate without forming curves. See also \cite{sze21} in the context of minimal surfaces.
\end{remark}

Ruling out the accumulation of singular points, possibly by using the real-analyticity of the Einstein equation, that do not lie on curves of $\mathcal{S}$ would essentially prove the following conjecture.

\begin{conjecture}[Codimension-$5$ conjecture in dimension $5$]\label{conj:codim5 dim 5}
    Let $(X,d)$ be a Gromov-Hausdorff limit of noncollapsed Einstein $5$-manifolds. Then, the set $\mathcal{S}_{\mathrm{orb}}=X\setminus\mathcal{R}_{\mathrm{orb}}$ is countable. In particular, away from a countable set of points, $(X,d)$ has the structure of a smooth Einstein $5$-orbifold with singularities of the form $\mathbb{R}\times \mathbb{R}^4/\Gamma$ along countably many \emph{geodesics}. 
\end{conjecture}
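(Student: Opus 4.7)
The plan is to decompose $\cS_{\mathrm{orb}}$ into countable pieces using the structure already established, and then to close the remaining gap via real-analytic unique continuation across orbifold-singular strata. First, by Theorems \ref{t:new stratification} and \ref{thm:lips curve sing}, one has $\cS = \cS^0 \cup \bigcup_i Z_i$ with $\cS^0$ countable and each $Z_i$ bi-Lipschitz to a subset of $\RR$ lying in a single stratum $\cS^1_{\Gamma_i}$. By the definition of $\mathring{\cS}^1_\Gamma$, a point $x\in Z_i\cap\cS_{\mathrm{orb}}$ corresponds to an element of $\phi_i(Z_i\cap\cS^1_{\Gamma_i})\subset\RR$ that is not interior in $\phi_i(Z_i\cap\cS^1_{\Gamma_i})$. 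So it suffices to show, for each $Z_i$, that this set of non-interior points is countable. After geodesic reparameterization (justified by Theorem \ref{t:limit geodesic}), Theorem \ref{mthm: orbifold regularity} provides the decomposition $I=\mathcal{I}\sqcup\bigsqcup_j(s_j,s_{j+1})$ with a group $\Gamma_j$ and a real-analytic orbifold extension on $U_j$. The set $\{s_j\}_j$ is automatically countable, so the issue reduces to showing $\mathcal{I}=\{s_j\}_j$, i.e.\ ruling out \emph{Cantor-type} accumulation of transition points.

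The core step is to upgrade Theorem \ref{mthm: orbifold regularity} via real-analyticity. Suppose $s_\infty\in\mathcal{I}$ is an accumulation point of $\{s_j\}$. Since there are only finitely many possible local models $\Gamma_j$ (Theorem \ref{t:new stratification}), by pigeonhole one may extract two adjacent intervals $(s_{j-1},s_j)$ and $(s_j,s_{j+1})$ arbitrarily close to $s_\infty$ carrying the \emph{same} group $\Gamma$. Theorem \ref{mthm: orbifold regularity} then supplies two real-analytic orbifold Einstein metrics on $U_{j-1}$ and $U_j$, both with singular model $\mathbb{R}\times\mathbb{R}^4/\Gamma$ along $\gamma$. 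Passing to a common orbifold normal-form neighborhood of $\gamma(s_j)$ and lifting to the $\Gamma$-cover, the two metrics become genuine real-analytic Einstein metrics near a common transverse slice. Invoking Aronszajn-type real-analytic unique continuation for $\Ric(g)=\Lambda g$ in harmonic coordinates, the two lifts should agree on the intersection and thus glue to a single real-analytic orbifold Einstein metric across $s_j$, contradicting $s_j\in\mathcal{I}$. This forces $\mathcal{I}=\{s_j\}_j$; summing over the countably many $Z_i$ and adding $\cS^0$ then yields countability of $\cS_{\mathrm{orb}}$.

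The main obstacle, I expect, is making the orbifold unique continuation argument across singular strata fully rigorous. The orbifold charts on $U_{j-1}$ and $U_j$ are produced by \emph{separate} blow-up arguments in Theorem \ref{mthm: orbifold regularity}, so a priori they are only known to be compatible in a $C^{k,\alpha}$ quasi-isometric sense; showing that their lifts to the $\Gamma$-cover agree up to an isometric identification of the singular normal fiber — and thus produce an honest real-analytic continuation — demands a quantitative compatibility that goes beyond what Theorem \ref{mthm: orbifold regularity} delivers. In practice this would require an orbifold refinement of the Cheeger-Naber $\varepsilon$-regularity that tracks orbifold frames coherently across scales near a transition point, together with uniqueness of the local orbifold normal form transverse to the singular curve. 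This step is, to my understanding, the genuinely open technical issue, and I expect it to consume the bulk of any rigorous proof of Conjecture \ref{conj:codim5 dim 5}.
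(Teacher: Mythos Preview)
This statement is posed as an open \emph{conjecture} in the paper; there is no proof to compare against. The paper itself flags the missing ingredient in the discussion immediately preceding the conjecture.

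Your overall strategy---reduce to the structure of $\cS^1_\Gamma\setminus\mathring{\cS}^1_\Gamma$ and invoke real-analytic unique continuation---is precisely the heuristic the paper suggests. However, your specific argument misidentifies the obstruction. Suppose, as you propose, two intervals $(s_{j-1},s_j)$ and $(s_j,s_{j+1})$ adjacent at $s_j\in\cI$ both carry the group $\Gamma$. By the final clause of Theorem~\ref{mthm: orbifold regularity} (if $\gamma(I)\subset\cS^1_\Gamma$ then $\cI=\emptyset$), the point $\gamma(s_j)$ \emph{cannot} lie in $\cS^1_\Gamma$: its tangent cone is strictly worse---either $\RR\times\RR^4/\Gamma'$ with $|\Gamma'|>|\Gamma|$, or a non-splitting cone in $\cS^0$. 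Hence there is no $\Gamma$-orbifold chart at $\gamma(s_j)$, lifting to a $\Gamma$-cover does not yield a smooth metric there, and Aronszajn-type unique continuation for smooth Einstein metrics is simply unavailable across that point. The obstacle is not chart compatibility between the two sides; it is that one must \emph{remove} a singularity of strictly deeper type sitting between two $\Gamma$-orbifold regions. Whether such deeper singularities can be isolated, let alone accumulate, is exactly the question the paper raises just before the conjecture and leaves open.

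A secondary issue: the ``geodesic reparameterization'' you invoke via Theorem~\ref{t:limit geodesic} is not available for arbitrary bi-Lipschitz images $Z_i$, and in any case that theorem forces the interior of any geodesic into a single $\cS^1_\Gamma$, whence Theorem~\ref{mthm: orbifold regularity} already gives $\cI=\emptyset$ along it. So there is nothing Cantor-like to rule out along genuine geodesics; the open problem lives entirely in whether $\cS^1_\Gamma$ itself can have isolated or Cantor-like pieces, not in gluing orbifold charts along curves already known to sit in a single stratum.
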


\begin{remark}
     In dimension $4$, while the control of the degeneration cannot be better than codimension $4$, the limit spaces are \textit{real-analytic orbifolds} without singularities up to taking local finite covers. For $6$-dimensional Kähler-Einstein metrics, Section 5 of \cite{DS} proves that the link of any tangent cone is a 5D Sasaki-Einstein orbifold, whose singular set is finitely many periodic Reeb orbits, and the local quotient by the Reeb action is a Kähler-Einstein 4-orbifold.
\end{remark}

Motivated by the Calabi-Yau models \cite{li19,cr23,sze19,sze20}, with tangent cones of the form $\mathbb{R}^{2k}\times\mathbb{R}^4/\Gamma$, a natural question is the following one.

\begin{question}
    Are there complete $5$-dimensional Ricci-flat metrics whose asymptotic cones split a line at infinity while the interior metrics do not split a line?
\end{question}

\subsubsection{Higher dimensions}

In higher dimensions, we have less evidence, but based on our $5$-dimensional results, we ask natural questions of isolation and gap theorems. 

\begin{question}[Higher-dimensional isolation of spherical orbifolds and gap theorem]
    Given $(M_o^n,g_o)$ a spherical orbifold with codimension $4$ singularities. Is there a sequence of smooth Einstein $n$-metrics converging to $(M_o^n,g_o)$ in the Gromov-Hausdorff sense? 
\end{question}

\begin{question}\label{ques: gap higher dime}
    For every $n \ge 4, v>0$, does there exist $\varepsilon_0 = \varepsilon_0(n,v)>0$ such that if $(M_o^n,g_o)$ with $\Vol(g_o)>v$ and $\Ric(g_o) = (n-1)g_o$ satisfies $\lambda_{n-3}<n+\varepsilon_0$ with $\lambda_{k}$ the $k^{th}$ eigenvalue of the Laplace-Beltrami operator?
\end{question}

\begin{exmp}
    The simplest example of such an orbifold is $(M_o^5,g_o)$ the quotient of $\mathbb{S}^5\subset \mathbb{R}^4\times \mathbb{R}^2$ by the involution $(-\operatorname{Id},\operatorname{Id})$ with a circle of $\mathbb{R}\times\mathbb{R}^4/\mathbb{Z}_2$ singularities. The orbifold $(M_o^5,g_o)$ is a $5$-dimensional example, but it is also $\mathbb{S}^1$-invariant. 
\end{exmp}

\begin{remark}
    A natural tentative would be to glue in a rescaled cylinder $\mathbb{S}^1$ times the Eguchi-Hanson metric and perturb in an $\mathbb{S}^1$-invariant way. This is however impossible by our isolation theorem \ref{thm:isolation orbifolds} since the $\mathbb{S}^1$-invariance reduces the problem to that of desingularizing a spherical $4$-orbifold by Einstein $4$-orbifolds.
\end{remark}

\begin{question}[Codimension-$5$ conjecture]\label{ques:codim 5}
Let $(X,d)$ be a Gromov-Hausdorff limit of noncollapsed Einstein $n$-manifolds. Then there exists a closed set $\mathcal{S}_{\mathrm{orb}}\subset X$ of Hausdorff codimension at least $5$ such that $\mathcal{R}_{\mathrm{orb}}:=X\setminus\mathcal{S}_{\mathrm{orb}}$ is a smooth Einstein $n$-orbifold. Moreover, the singular set in $\mathcal{R}_{\mathrm{orb}}$ is a countable union of totally geodesic codimension $4$ submanifolds, and each point on this set admits a neighborhood modeled on
$\mathbb{R}^{n-4}\times \mathbb{R}^4/\Gamma$ for some finite $\Gamma\subset O(4)$.
\end{question}

\begin{remark}
The full singular locus of $X$ is still expected to have (optimal) codimension $4$, but outside the codimension-$5$ “bad set” $\mathcal{S}_{\mathrm{orb}}$ all singularities are \emph{removable} in the sense that the metric extends as a real-analytic Einstein orbifold metric with bounded curvature across the codimension-$4$ strata. See, the analytic removability and regularity mechanisms in \cite{su82,bkn,and}.
\end{remark}

\emph{Towards a proof in all dimensions.}
The present paper essentially proves the iteration from $n-1=4$ to $n=5$ of a plausible, long-term strategy in order to answer Question \ref{ques:codim 5} affirmatively by an iteration on the dimension. Assume that noncollapsed limits of $(n-1)$-dimensional Einstein metrics have codimension $5$ regularity in the sense of Question \ref{ques:codim 5}.
\begin{enumerate} 
    \item \emph{Isolation of higher-dimensional spherical orbifolds.} Show that $(n-1)$-dimensional spherical suspensions over $\mathbb{S}^3/\Gamma$ are quantitatively GH-isolated among noncollapsed limits of $(n-1)$-dimensional Einstein metrics with $\Ric = (n-2)$. 
  \item \emph{Isolation of $(n - 4)$-symmetric cones.} Deduce that cones whose links are spherical suspensions over $\mathbb{S}^3/\Gamma$ (equivalently, local models $\mathbb{R}^{n-4}\times \mathbb{R}^4/\Gamma$) are quantitatively isolated among Ricci-limit cones.
  \item \emph{Singular set contained in Lipschitz submanifolds.} Show that $\mathcal{S}^{n-4}$ is contained in a union of Lipschitz codimension $4$ submanifolds. 
  \item \emph{Removable codimension $4$ singularities.} Show that subsets of $\mathcal{S}^{n-4}$ that exhaust $(n-4)$-dimensional subsets of the Lipschitz maps must be removable.
  \item \emph{Codimension $5$ singularities.} Show that the remaining singularities in $\mathcal{S}^{n-4}$ form a subset of Hausdorff dimension $(n-5)$.
\end{enumerate}

\subsection{Strategy and Organization}

The article is divided into two parts, the first one concerns gap theorems for Einstein orbifolds, and the second the regularity of noncollapsed Einstein $5$-manifolds.

\subsubsection{Isolation theorem for Einstein $4$-manifolds}

We first outline the proof of the isolation theorem for spherical and hyperbolic Einstein $4$-orbifolds, Theorem \ref{thm:isolation orbifolds}.

\noindent\textbf{Setup and refined approximation.}
Assume towards a contradiction that there exists a sequence of Einstein manifolds $(M_i,g_i)_i$ converging in the Gromov-Hausdorff (GH) sense to a spherical or hyperbolic Einstein $4$-orbifold $(M_o,g_o)$. By the GH-to-smooth theory of \cite{ozu1,ozu2}, each $g_i$ is smoothly approximated by a naïve gluing metric $g_i^D$ capturing all bubble trees of ALE Ricci-flat pieces. A key new input is a \emph{refined} obstruction space adapted to these trees. We first glue and perturb every bubble tree to a \emph{Ricci-flat modulo obstructions} configuration, and then upgrade $g_i^D$ to an \emph{Einstein modulo obstructions} approximation $g_i^A$ with an explicit refined obstruction $w_i^A$ supported on the bubble region.

\noindent\textbf{Quantitative control.}
In appropriate, divergence-free gauges, the linearization of the Einstein (mod. obstructions) map is invertible between specific weighted Hölder spaces, hence, by inverse function theorem,
\begin{equation}\label{eq:strat inv fct thm}
  \|g_i^A-g_i\|+\|w_i^A\|\;\lesssim\;\|\Ric(g_i^A)-\Lambda g_i^A + w_i^A\|,
\end{equation}
where $\Lambda\in\{\pm3\}$ is the Einstein constant of $(M_o,g_o)$. Thus, to reach a contradiction, it suffices to prove that $\|w_i^A\|$ cannot be dominated by the right-hand side.

\noindent\textbf{Decomposition of the obstruction.}
A precise analysis of the refined obstruction shows a canonical splitting
  $$w_i^A = u_i^B + u_i^o,$$
where $u_i^B$ is the obstruction to perturbing the bubble trees toward Ricci-flat metrics, and $u_i^o$ is the obstruction created when matching the trees to the core orbifold $(M_o,g_o)$. When $(M_o,g_o)$ is spherical or hyperbolic, then $u_i^o$ is nonvanishing. The main conceptual step of the proof then consists in proving that if $u_i^B$ compensates $u_i^o$, then it contains a much larger piece orthogonal to $u_i^o$. In either case this contradicts \eqref{eq:strat inv fct thm}:
  $$\|w_i^A\|\  \gg\ \|\Ric(g_i^A)-\Lambda g_i^A + w_i^A\|.$$ 

\noindent\textbf{Conclusion.} No Einstein orbifold can be GH-close to $(M_o,g_o)$ unless it is isometric to it.

\subsubsection{Regularity of Einstein $5$-manifolds}

We then focus on the regularity of Einstein $5$-manifolds. 

\noindent\textbf{Uniqueness of tangent cones.}
In the 5-dimensional limit space, we first establish an isolation theorem for $1$-symmetric cones, building on the gap theorems for Einstein $4$-orbifolds.  
This, in particular, yields the uniqueness of $1$-symmetric tangent cones.  
Combined with $\varepsilon$-regularity, we then obtain quantitative curvature estimates on wedge regions around such points, and analogous results hold for tangent cones at infinity and their asymptotic curvature behavior. These results are proved in Section \ref{subs:uniqueness}.

\noindent \textbf{Refined rectifiability.} 
The isolation theorem implies that $1$-symmetry at a given scale at some point $x\in \cS^1_{\Gamma}$ propagates to \emph{uniform symmetry} across all smaller scales.  
Combined with the neck decomposition developed in \cite{jn,cjn}, this uniform symmetry allows one to identify the local $1$-stratum $S^1_{\Gamma}$ near $x$ with the center $\cC_0$ of an associated neck region. A bi-Lipschitz coordinate is then constructed by using a splitting function to order points along 
$\cC_0$ and pushing forward the Ahlfors regular measure associated to the neck region onto a subset of $\RR$ via the ordering.   
A countable selection of such parametrizations yields a covering of $\cS^1$ by bi-Lipschitz images of subsets of $\mathbb{R}$, thereby establishing a \emph{refined rectifiability} of the singular set without discarding any exceptional subsets.  
Moreover, we show that along each curve with constant volume density, the tangent cones remain identical. The proof is given in Section \ref{subs:rectifiable}.

\noindent \textbf{Orbifold structure.}
Along a Lipschitz curve of singularity type $\RR \times \RR^4/\Gamma$, GH-close model charts to $\mathbb{R}\times \mathbb{R}^4/\Gamma$ can be \emph{glued horizontally} along the direction parallel to the singular curve. Controlling the gluing error uniformly is subtle: it requires compatibility of the local $\Gamma$-actions on adjacent charts. Crucially, the transition maps are constrained to be approximated only by the reflection or translation along the singular axis $\RR$ and rotation preserving the $\Gamma$-action on $\RR^4/\Gamma$ -- a property that fails in flat $\RR^5$. These glued charts then yield $C^0$ orbifold-type coordinates on punctured neighborhoods. 

To control the curvature in these necks, we apply Hardy-type inequalities and a Moser iteration for subsolutions of the following equation
\begin{equation}\label{eq: strat curvature eq}
    \Delta u \gtrsim -|y|^{-2}u
\end{equation}
with $|y|$ the distance to the singular curve. Equation \eqref{eq: strat curvature eq} is satisfied by $u=|\Rm|^{1-\alpha}$ for some $\alpha>0$ by 1) $\varepsilon$-regularity and our isolation theorem for $\mathbb{R}\times\mathbb{R}^4/\Gamma$ cones, 2) the differential equation satisfied by curvature on Einstein metrics, 3) an improved Kato inequality. This gives uniform \emph{pointwise} curvature bounds. Once curvature is uniformly bounded, we upgrade the coordinate regularity to $C^{1,\alpha}$, and then to analyticity via the Einstein equations.  
This establishes the analytic orbifold structure along the singular curves. Thanks to these coordinates, the singular curve, which is the fixed-point set of the $\Gamma$-action, is a geodesic. The details are presented in Section \ref{s:orbifold structure}.

\subsection{Acknowledgements}

The first author is grateful to Tobias Colding for suggesting the uniqueness problem and for his patient guidance and many insightful discussions. The first author also thanks Wenshuai Jiang for his support and helpful conversations. The authors would like to thank Max Hallgren and Wenshuai Jiang for very valuable comments that helped us improve a step of the proof of Theorem \ref{thm:lips curve sing} in Section \ref{subs:rectifiable}.

During this project, Yiqi Huang was partially supported by a Simons Dissertation Fellowship and by NSF Grant DMS--2405393, and Tristan Ozuch was partially supported by the National Science Foundation under grant DMS--2405328.

\part{Isolation and gap theorems for Einstein $4$-orbifolds}

\section{Preliminaries on the degeneration of Einstein $4$-manifolds}

\subsection{Einstein orbifolds and their infinitesimal deformations}

For $\Gamma$ a finite subgroup of $O(4)$ acting freely on $\mathbb{S}^3$, let us denote $(\mathbb{R}^4\slash\Gamma,e)$ the flat cone obtained by the quotient by the action of $\Gamma$, and $r_e:= d_e(.,0)$.

\begin{defn}[$4$-orbifold (with isolated singularities)]\label{orb Ein}
    We will say that a metric space $(M_o,g_o)$ is an orbifold of dimension $4$ if there exists $\varepsilon_0>0$ and a finite number of points $(p_j)_j$ of $M_o$ called \emph{singular} such that we have the following properties:
    \begin{enumerate}
        \item the space $(M_o\backslash\{p_j\}_j,g_o)$ is a Riemannian manifold of dimension $4$,
        \item for each singular point $p_j$ of $M_o$, there exists a neighborhood of $p_j$, $ U_j\subset M_o$, a finite subgroup acting freely on $\mathbb{S}^3$, $\Gamma_j\subset O(4)$, and a diffeomorphism $ \Phi_j: B_e(0,\varepsilon_0)\subset\mathbb{R}^4\slash\Gamma_j \to U_j\subset M_o $ for which, the pull-back of $\Phi_j^*g_o$  on the covering $\mathbb{R}^4$ is smooth.
    \end{enumerate}
\end{defn}

\begin{defn}[The function $r_o$ on an orbifold]\label{ro}
    Given $S_o\subset M_o$ a subset of the singularities, we define $r_o$, a function on $M_o$ { smooth away from $S_o$} satisfying $r_o:= (\Phi_j)_* r_e$ on each $U_j$ with $U_j\cap S_o\neq \emptyset$, and such that on $M_o\backslash U_j$, we have $\varepsilon_0\leqslant r_o<1$ (the different choices will be equivalent for our applications). We will denote, for $0<\varepsilon\leqslant\varepsilon_0$, $$M_o(\varepsilon):= \{r_o>\varepsilon\} = M_o\backslash  \Big(\bigcup_j \Phi_j\Big(\overline{B_e(0,\varepsilon)}\Big) \Big).$$
\end{defn}
We also define $\mathbf{O}(g_o)$ the space of traceless and divergence-free infinitesimal Einstein deformations of $g_o$. 

\subsection{Ricci-flat ALE spaces and their deformations}

Let us now turn to ALE Ricci-flat metrics.

\begin{defn}[ALE orbifold (with isolated singularities)]\label{def orb ale}
    An ALE orbifold of dimension $4$, $(N,g_{b})$ is a metric space for which there exists $\varepsilon_0>0$, singular points $(p_j)_j$ and a compact $K\subset N$ for which we have:
    \begin{enumerate}
        \item $(N,g_b)$ is an orbifold of dimension $4$,
        \item there exists a compact subset $K\subset N$ and a diffeomorphism $\Psi_\infty: (\mathbb{R}^4\slash\Gamma_\infty)\backslash \overline{B_e(0,\varepsilon_0^{-1})} \to N\backslash K$, for $\Gamma_\infty\subset O(4)$ such that $$r_e^l|\nabla^l(\Psi_\infty^* g_b - {e})|_{{e}}\leqslant C_l r_e^{-4}.$$
    \end{enumerate}
\end{defn}

\begin{defn}[The function $r_{b}$ on an ALE orbifold]\label{rb}
Given $S_b\subset N$ a subset of the singularities, we define $r_b$, a function on $N$ { smooth away from $S_b$} satisfying $r_b:= (\Psi_j)_* r_e$ on each $U_j$ with $U_j\cap S_b\neq \emptyset$, and $r_{b}:= (\Psi_\infty)_* r_e$ on $U_\infty = N\backslash K$, and such that $\varepsilon_0\leqslant  r_{b}\leqslant \varepsilon_0^{-1}$ on the rest of $N$ (the different choices are equivalent for our applications).

For $0<\varepsilon\leqslant\varepsilon_0$, we will denote $$N(\varepsilon):= \{\varepsilon<r_b<\varepsilon^{-1}\} = N\backslash  \Big(\bigcup_j \Psi_j\Big(\overline{B_e(0,\varepsilon)}\Big) \cup \Psi_\infty \Big((\mathbb{R}^4\slash\Gamma_\infty)\backslash B_e(0,\varepsilon^{-1})\Big)\Big).$$
\end{defn}

We define $\mathbf{O}(g_b)$ to be the set of infinitesimal Ricci-flat deformations which are transverse and traceless.

\subsection{Naïve desingularizations}\label{reecriture controle}

We recall the definition of a naïve desingularization of an orbifold, \cite{ozu1}.

\subsubsection{Gluing of ALE spaces to orbifold singularities}

    Let $0<2\varepsilon<\varepsilon_0$ be a fixed constant, $t>0$ a scale of gluing, $(M_o,g_o)$ an orbifold and $\Phi: B_e(0,\varepsilon_0)\subset\mathbb{R}^4\slash\Gamma \to U$ a local chart of Definition \ref{orb Ein} around a singular point $p\in M_o$. Let also $(N,g_b)$ be an ALE orbifold asymptotic to $\mathbb{R}^4\slash\Gamma$, and $\Psi_\infty: (\mathbb{R}^4\slash\Gamma)\backslash B_e(0,\varepsilon_0^{-1}) \to N\backslash K$ a chart at infinity of Definition \ref{def orb ale}.
    
    For $s>0$, define $\phi_s: x\in \mathbb{R}^4\slash\Gamma\to sx \in \mathbb{R}^4\slash\Gamma$. For $t<\varepsilon_0^4$, we define $M_o\#N$ as $N$ glued to $M_o$ thanks to the diffeomorphism $$ \Phi\circ\phi_{\sqrt{t}}\circ\Psi^{-1} : \Psi(A_e(\varepsilon_0^{-1},\varepsilon_0t^{-1/2}))\to \Phi(A_e(\varepsilon_0^{-1}\sqrt{t},\varepsilon_0)).$$ Consider moreover $\chi:\mathbb{R}^+\to\mathbb{R}^+$, a $C^\infty$ cut-off function supported on $[0,2]$ and equal to $1$ on $[0,1]$.
    
\begin{defn}[Naïve gluing of an ALE space to an orbifold]\label{def naive desing}
    We define a \emph{naïve gluing} of $(N,g_b)$ at scale $0<t<\varepsilon^4$ to $(M_o,g_o)$ at the singular point $p$, which we will denote $(M_o\#N,g_o\#_{p,t}g_b)$ by putting $g_o\#_{p,t}g_b=g_o$ on $M_o(\varepsilon)=M_o\backslash U$, $g_o\#_{p,t}g_b=tg_b$ on $N(\varepsilon)=K$, and on $\mathcal{A}(t,\varepsilon):=A_e(\varepsilon^{-1}\sqrt{t},2\varepsilon)$,
    \begin{equation}\label{naive gluing metrics}
        g_o\#_{p,t}g_b :=  t\chi(t^{-\frac14}r_e)\phi_{t^{-1/2}}^*(\Psi_\infty^*g_b) + \Big(1-\chi(t^{-\frac14}r_e)\Big)\Phi^*g_o.
    \end{equation}
    Given two tensors $h_o$ and $h_b$ on $M_o$ and $N$, we similarly define the \textit{naïve gluing}:
    \begin{equation}\label{naive gluing tensors}
        h_o\#_{p,t}h_b := t \chi(t^{-\frac14}r_e)\phi_{t^{-1/2}}^*\Psi_\infty^*h_b + \Big(1-\chi(t^{-\frac14}r_e)\Big)\Phi^*h_o
    \end{equation}
\end{defn}

In general, one has to desingularize by \textit{trees} of Ricci-flat ALE orbifolds. Consider $(M_o,g_o)$ an Einstein orbifold (the index $o$ stands for orbifold) and $S_o$ a \textit{subset} of its singular points, and $(N_j,g_{b_j})_j$ (the index $b_j$ stands for $j$-th bubble) a family of Ricci-flat ALE spaces asymptotic at infinity to $\mathbb{R}^4\slash\Gamma_j$ and $(S_{b_j})_j$ subsets of their singular points. Now, assume that there is a one-to-one map  $p:j\mapsto p_j\in S_o\cup \bigcup_k S_{b_k}$, where the singularity at $p_j$ is $\mathbb{R}^4\slash\Gamma_j$. We will call $D:= \big((M_o,g_o,S_o),(N_j,g_{b_j},S_{b_j})_j, p\big)$ a \emph{desingularization pattern}.

\begin{defn}[Naïve (partial) desingularization by a tree of singularities]\label{def total desing}
    Let $0<2\varepsilon<\varepsilon_0$, $D$ be a desingularization pattern for $(M_o,g_o)$, and let $0<t_j<\varepsilon^4$ be relative gluing scales. The metric $g^D_t$ is then the result of the following finite iteration: 
    \begin{enumerate}
        \item start with a deepest bubble $(N_j,g_{b_j})$, that is, $j$ such that $S_{b_j}= \emptyset$,
        \item if $p_j\in N_k$, replace $(N_k,g_{b_k},S_{b_k})$ and $(N_j,g_{b_j},\emptyset)$ in $D$ by $(N_k\#N_j,g_{b_k}\#_{p_j,t_j}g_{b_j},S_{b_k}\backslash\{p_j\})$ and restrict $p$ as $l\to p_l$ for $l\neq j$ in $D$ and consider another deepest bubble. The same works if $p_j\in M_o$.
        \item choose another deepest bubble and do the same.
    \end{enumerate}
    
    For $t = (t_j)_j$, if $N_j$ is glued to $p_j\in N_{j_1}$, and $N_{j_1}$ is glued to $p_{j_1}\in N_{j_2}$, ..., $N_{j_{k-1}}$ is glued to $p_{j_{k-1}}\in N_{j_k}$, which is glued to $M_o$, we define the absolute scale $T_j:= t_jt_{j_1}t_{j_2}...t_{j_k}$. This way, on each $N_j(\varepsilon)$,  $g^D_t|_{N_j(\varepsilon)}=T_jg_{b_j}$.
\end{defn}

\begin{remark}
    The above definition considers \textit{partial desingularizations} by considering \textit{subsets} $S_o$ and $S_{b_j}$ of all singularities. The resulting metric is only smooth if $S_o$ and the $S_{b_j}$ are \textit{all} of the singularities.
\end{remark}

\begin{defn}[Tree of singularities forming at a given singular point]\label{def:tree of sing at po}
    Let $p_1\in S_o$ be a singular point of $M_o$ (resp. $p_1\in S_{b_j}$), and consider $J_{p_1}$ the set of indices $j_0$ such that in the above desingularization pattern, there is a \textit{path} from $j_0$ to $p_1$, that is a set $j_1,\dots,j_{m}$ such that $p(j_k)\in S_{b_{j_{k+1}}}$ for $0\leqslant k\leqslant m$, and $p(j_m)=p_1$. 
    
    We call $\mathcal{B}_{p_1} = \left((N_j,g_{b_j},S_{b_j},t_j)_j,p_1\right)$ the \emph{tree of singularities} at $p_1$. It yields a topology $N_{J_{p_1}}$ given by the above gluing and admits an ALE metric $g^B_{t}$ equal to $t^{-1}_{p^{-1}(p_1)}g^D_{t}$ where both metrics are defined.

    We additionally call $(N_{j_m},g_{b_{j_m}})$ the core orbifold of the tree. It plays the role of $(M_o,g_o)$ in all of the previous constructions.
\end{defn}

 Let $(M,g^D)$ a naïve desingularization of an Einstein orbifold $(M_o,g_o)$ by a tree of ALE Ricci-flat orbifolds $(N_j,g_{b_j})_j$ with relative scales $t_j>0$. The manifold $M$ is covered as $M = M_o^t\cup \bigcup_jN_j^t$, where
\begin{equation}\label{def Mot}
    M_o^t: = M_o \backslash \Big(\bigcup_j \Phi_j(B_e(0,t_j^{\frac14})) \Big),
\end{equation}
 where $t_j>0$ is the relative gluing scale of $N_k$ at the singular point $p_k\in M_o$, and where 
 \begin{equation}\label{def Njt}
     N_j^t:= \Big(N_j\backslash \Psi_\infty \Big((\mathbb{R}^4\slash \Gamma_\infty)\backslash B_e(0, 2t_j^{-\frac14}) \Big)\Big) \backslash \Big(\bigcup_k \Psi_k(B_e(0,t_k^{\frac14}) \Big).
 \end{equation}
On $M_o^{16t}\subset M_o^t$, we have $g^D = g_o$ and on each $N_j^{16t}\subset N_j^t$, we have $g^D = T_j g_{b_j}$. By Definition \ref{def naive desing}, on the intersection $N_j^t\cap M_o^t$ we then have $\sqrt{T_j}r_{b_j} = r_o $, and on the intersection $N_j^t\cap N_k^t$, we have $\sqrt{T_j}r_{b_j} = \sqrt{T_k}r_{b_k}$.

\begin{defn}[Function $r_D$ on a naïve desingularization]\label{definition rD}
    On a naïve desingularization $(M,g^D)$, we define a smooth function $r_D$ in the following way:
    \begin{enumerate}
        \item $r_D = r_o$ on $M_o^t$,
        \item $r_D = \sqrt{T_j}r_{b_j}$ on each $N_j^t$.
    \end{enumerate}
\end{defn}

{
As in \cite[Definitions 1.11, 1.12]{ozu2}, we decompose our orbifold $M$ into particular regions and define associated cut-off functions.

\begin{defn}[{Neck regions, $\mathcal{A}_k(t,\varepsilon)$}]\label{def neck region}
    Let $(N_k,g_{b_k})$ be a Ricci-flat ALE orbifold of the above tree of singularities. We define $\mathcal{A}_k(t,\varepsilon)$ as the connected region with $\varepsilon^{-1}\sqrt{T_k}<\sqrt{T_k}r_{b_k}=r_D<\varepsilon t_k^{-\frac1{2}}\sqrt{T_k}$ with a nonempty intersection with $N_k^t$.
\end{defn}
\vspace{-0.4cm}
\begin{figure}[h]
\centering\includegraphics[width=7cm]{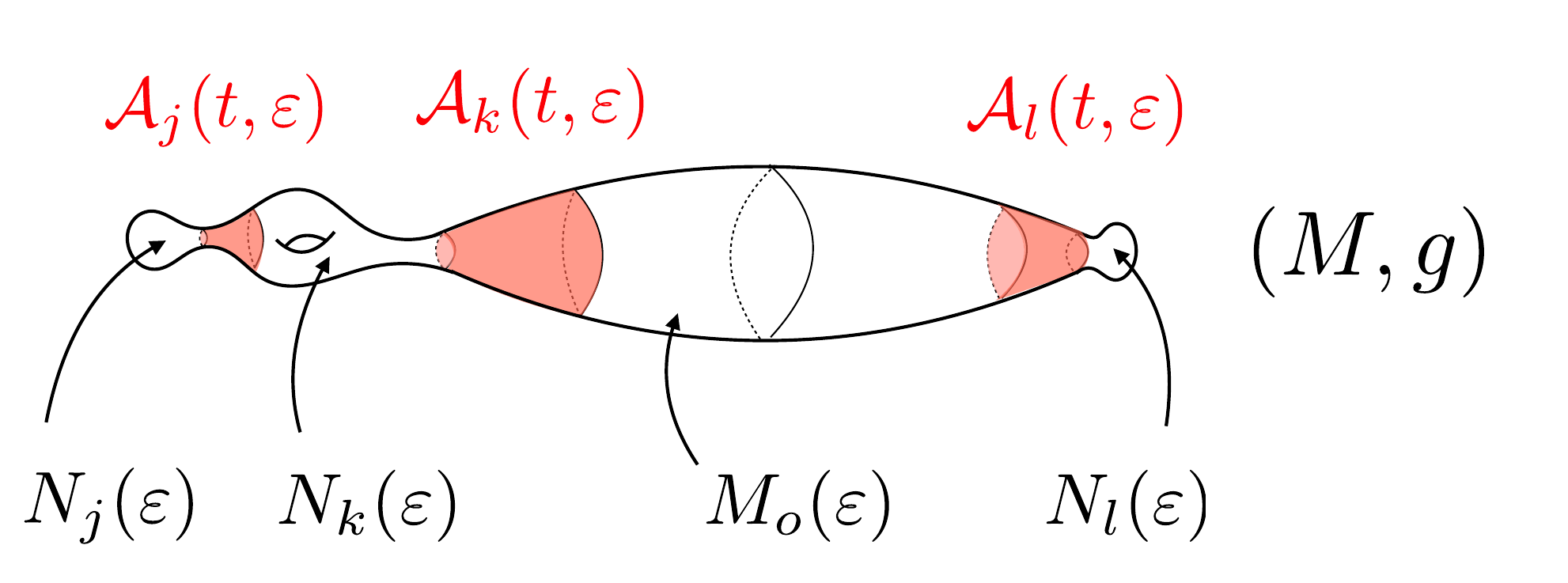}
\end{figure}

\vspace{-0.4cm}
\begin{defn}[Cut-off functions $\chi_{M_o^t}$, $\chi_{N_j^t}$, $\chi_{\mathcal{A}_k(t,\varepsilon)}$ and $\chi_{B(p_k,\varepsilon)}$]\label{def cutoffs all}

    We define the following cut-off functions thanks to the cut-off function $\chi$ used in Definition \ref{def naive desing}.
    \begin{itemize}
        \item $\chi_{M_o^t}$, equal to $1$ on $M_o^{16t}$ and equal to $1-\chi(t_k^{-\frac14}r_o)$ on each annulus $\mathcal{A}_k(t,\varepsilon)$. It is supported on $M_o^t$.
        \item $\chi_{N_j^t}$, equal to $1$ on $N_j^{16t}$ and equal to $1-\chi(t_k^{-\frac14}r_{b_j})$ on each annulus $\mathcal{A}_k(t,\varepsilon)$ at its singular points and $\chi(t_j^{\frac14}r_{b_j})$ in a neighborhood of infinity. It is supported on $N_j^t$.
        \item $\chi_{\mathcal{A}_k(t,\varepsilon)}$ equal to $1$ on $\mathcal{A}_k(t,\frac1{2}\varepsilon)$, and equal to $\chi(\varepsilon^{-1}t_k^{\frac1{2}}r_{b_k}) - \chi(\varepsilon r_{b_k})$. It is supported on $\mathcal{A}_k(t,\varepsilon)$.
        \item $\chi_{B(p_k,\varepsilon)}$ for $p_k\in M_o$ equal to $1$ on $r_o<\varepsilon$ equal to $\chi(\varepsilon^{-1}r_o)$ around $p_k$. It is supported in  supported in $r_o<2\varepsilon$ around $p_k$.
    \end{itemize}
\end{defn}}

\subsubsection{Refined approximate kernel of the linearization}\label{sec: refined obst}

One of the subtleties of our proof is that we need to carefully estimate the divergence of our tensors at several stages. This requires a refined notion of \textit{obstructions} from that of \cite[Definition 4.5]{ozu2}.

\textbf{Extending elements of $\mathbf{O}(g_o)$ or of $\mathbf{O}(g_{b_j})$ to deeper bubbles.}

\begin{defn}[Extension of elements of $\mathbf{O}(g_o)$ or of $\mathbf{O}(g_{b_j})$]\label{def: ext obst}
\,
\begin{enumerate}
    \item Consider an element of $\mathbf{o}\in\mathbf{O}(g_o)$ (resp. of $\mathbf{o}\in\mathbf{O}(g_{b_j})$).
    \item For each $p_{[0]}\in S_o$ (resp. $p_{[0]}\in S_{b_j}$) consider the constant $2$-tensor $H_{[0]}:=\mathbf{o}(p_{[0]})$ on the cone.
    \item Consider $j_{[0]} = p^{-1}(p_{[0]})$ and the Ricci-flat ALE metric $(N_{j_{[0]}},g_{b_{j_{[0]}}})$ and consider the unique symmetric $2$-tensor $ \mathbf{h}_{j_{[0]}}$ asymptotic to $H_{[0]}$ (obtained as in \cite[Lemma 2.7] {ozu3}) and satisfying:
    
    \begin{equation}
        P_{g_{b_{j_{[0]}}}}\mathbf{h}_{j_{[0]}} = 0,\qquad\delta_{g_{b_{j_{[0]}}}}\mathbf{h}_{j_{[0]}} = 0,\qquad\operatorname{tr}_{g_{b_{j_{[0]}}}}\mathbf{h}_{j_{[0]}} = 0.
    \end{equation}
    \item We then iterate: if for $k\geqslant 0$, $\mathbf{h}_{j_{[k]}}$ is defined, consider $p_{[k]}\in S_{b_{j_{[k]}}}$, and $j_{[k+1]} = p^{-1}(p_{[k]})$ and the constant $2$-tensor $H_{[k]}:=\mathbf{h}_{j_{[k]}}(p_{[k]})$. We then define $\mathbf{h}_{j_{[k+1]}}$ satisfying
    \begin{equation}
        P_{g_{b_{j_{[k+1]}}}}\mathbf{h}_{j_{[k+1]}} = 0,\qquad\delta_{g_{b_{j_{[k+1]}}}}\mathbf{h}_{j_{[k+1]}} = 0,\qquad\operatorname{tr}_{g_{b_{j_{[k+1]}}}}\mathbf{h}_{j_{[k+1]}} = 0.
    \end{equation}
\end{enumerate}
We therefore have tensors $\mathbf{h}_k$ defined on each $(N_k,g_{b_k})$ for $k\in J_{p(j)}$.

\end{defn}

\textbf{Projecting elements of $\mathbf{O}(g_o)$ or of $\mathbf{O}(g_{b_j})$ to the naïve desingularization $(M,g^D)$.}

We define the projection of an element $\mathbf{o}_o\in \mathbf{O}(g_o)$ on the naïve desingularization:
$$\pr(\mathbf{o}_o) := \tilde{\mathbf{o}}_{o,t}: = \chi_{M_o^t}\mathbf{o}_o+\sum_{k} \chi_{N_k^t}\mathbf{h}_k, $$
where the $\mathbf{h}_k$ are defined in Definition \ref{def: ext obst}. Similarly, for $\mathbf{o}_{j}\in \mathbf{O}(g_{b_j})$,  
$$\pr(\mathbf{o}_j) := \tilde{\mathbf{o}}_{j,t} := \chi_{N_j^t}\mathbf{o}_j+\sum_{k\in J_{p(j)}} \chi_{N_k^t}\mathbf{h}_k. $$

\begin{defn}[Space of refined obstructions]\label{def obst tronq}
    Let $(M,g^D_{t})$ be a naïve desingularization of a (compact or ALE) Einstein orbifold $(M_o,g_o)$. On $M$, we will denote 
    $$\tilde{\mathbf{O}}(g^D):=\Big\{\tilde{\mathbf{o}}_{o,t} + \sum_j\tilde{\mathbf{o}}_{j,t}, \;\mathbf{o}_o\in \mathbf{O}(g_o), \; \mathbf{o}_j\in \mathbf{O}(g_{b_j})\Big\}.$$ 
    We have also defined a one-to-one map $\pr: \mathbf{O}(g_o)\sqcup\bigsqcup_j\mathbf{O}(g_{b_j})\to \tilde{\mathbf{O}}(g^D)$.
\end{defn}

The reason to refine our obstructions is to improve how divergence-free they are. This also makes them better approximations of infinitesimal Einstein deformations, i.e. their image by the linearization operator $P_{g^D}$, {defined in \eqref{eq:def bar P}}, are smaller than in \cite[Lemma 4.5]{ozu2}. In particular, the proofs of \cite{ozu2} work with the above Definition \ref{def obst tronq} for $\tilde{\mathbf{O}}(g^D)$.

\begin{lemma}\label{lem: div P obst}
    Whether $g_o$ is compact or ALE, we have the following controls on the elements of $\tilde{\mathbf{o}}\in\tilde{\mathbf{O}}(g^D)$: for $C>0$ depending on $g_o$ and $g_{b_j}$, with $t_{\max} = \max_jt_j$ and with the function spaces of Section \ref{sec:function spaces},
\begin{equation}\label{eq:control operators on tilde o}
\| \delta_{g^D} \tilde{\mathbf{o}} \|_{r_D^{-1}C^{1,\alpha}_\beta}< C t_{\max}^{\frac{2-\beta}4} \| \tilde{\mathbf{o}} \|_{C^{2,\alpha}_{\beta,*}},\quad \| \delta_{g^D} \tilde{\mathbf{o}} \|_{r_D^{-3}C^{1,\alpha}_\beta}< C t_{\max}^{\frac{2-\beta}4} \|\tilde{\mathbf{o}} \|_{L^2(g^B)}\quad\text{and }\quad
\| P_{g^D} \tilde{\mathbf{o}} \|_{r_D^{-2}C^{0,\alpha}_\beta}< C t_{\max}^{\frac{2-\beta}4} \| \tilde{\mathbf{o}} \|_{C^{2,\alpha}_{\beta,*}}.
\end{equation}
\end{lemma}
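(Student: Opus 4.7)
The three bounds in \eqref{eq:control operators on tilde o} are linear in $\tilde{\mathbf{o}}$, so by Definition \ref{def obst tronq} it suffices to bound a single projected tensor $\pr(\mathbf{o})$, where $\mathbf{o}$ is either $\mathbf{o}_o\in\mathbf{O}(g_o)$ or some $\mathbf{o}_j\in\mathbf{O}(g_{b_j})$. Fix such an $\mathbf{o}$ and set $\tilde{\mathbf{o}}:=\pr(\mathbf{o})$. On the \emph{pure regions} $M_o^{16t}$ and each $N_k^{16t}$, the metric $g^D$ agrees with $g_o$ or $T_k g_{b_k}$ and the partition-of-unity factors $\chi_{M_o^t},\chi_{N_k^t}$ equal $1$ while all others vanish. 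There $\tilde{\mathbf{o}}$ coincides with either $\mathbf{o}_o$ or one of the tensors $\mathbf{h}_k$ of Definition \ref{def: ext obst}, each of which is transverse-traceless and in the kernel of its Lichnerowicz operator. Hence $\delta_{g^D}\tilde{\mathbf{o}}=0$ and $P_{g^D}\tilde{\mathbf{o}}=0$ pointwise off the necks.

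\emph{Neck analysis.} The remaining support of $\delta_{g^D}\tilde{\mathbf{o}}$ and $P_{g^D}\tilde{\mathbf{o}}$ lies in the neck regions $\mathcal{A}_k(t,\varepsilon)$ of Definition \ref{def neck region}. In the flat cone coordinates around the corresponding singular point $p_k$, the two tensors being glued are asymptotic to the common constant symmetric $2$-tensor $H:=H_{[k-1]}$ from Definition \ref{def: ext obst}: on the outer side $A-H = O(r_o)$ by Taylor expansion of a smooth orbifold tensor at a smooth point of the cone, while on the inner side $B-H = O(r_{b_k}^{-2})$ by the ALE decay inherent to the construction of $\mathbf{h}_k$ in Definition \ref{def: ext obst}. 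Since $\chi_{M_o^t}+\chi_{N_k^t}=1$ on the neck,
\begin{equation*}
\tilde{\mathbf{o}} \;=\; H \,+\, \chi_{M_o^t}(A-H) \,+\, \chi_{N_k^t}(B-H).
\end{equation*}
As $H$ is flat-parallel, $\delta_e H = 0$ and $P_e H = 0$, so $\delta_{g^D}H$ and $P_{g^D}H$ are of order $|g^D - e|$ and $|\partial(g^D-e)|$, both of which decay by the naïve gluing formula \eqref{naive gluing metrics}. The remaining contributions split as (i) derivatives of $\chi_{M_o^t},\chi_{N_k^t}$ hitting $A-H$ or $B-H$, and (ii) $\chi$ multiplied by $\delta_{g^D}(A-H)$ or $P_{g^D}(A-H)$, which one compares to the vanishing $\delta_{g_o}A,\;P_{g_o}A$ (resp.\ $\delta_{g_{b_k}} B,\;P_{g_{b_k}} B$) via the same metric mismatch $g^D - g_o$ (resp.\ $g^D - T_k g_{b_k}$) supported on the neck.

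\emph{Scale bookkeeping.} The neck $\mathcal{A}_k(t,\varepsilon)$ spans the annulus $r_D\in[\varepsilon^{-1}\sqrt{T_k},\,\varepsilon t_k^{-1/2}\sqrt{T_k}]$, in which $r_o = r_D\asymp t_k^{1/4}\sqrt{T_\text{parent}}$ up to a factor $\varepsilon$; combining the decay rates above with the $C^{2,\alpha}_{\beta,*}$-norm of $\mathbf{o}$ and evaluating the weighted $C^{0,\alpha}_\beta$ and $r_D^{-\ell}C^{1,\alpha}_\beta$ norms ($\ell=1,2$) over this region produces the announced factor $t_k^{(2-\beta)/4}$, bounded above by $t_{\max}^{(2-\beta)/4}$ after summing over the finitely many necks. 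The three estimates differ only in the target weight (reflecting the orders of differentiation in $\delta$ versus $P$) and in the source norm: the sharpened $r_D^{-3}$-weighted bound on $\delta_{g^D}\tilde{\mathbf{o}}$ by $\|\tilde{\mathbf{o}}\|_{L^2(g^B)}$ uses the additional fact that $\mathbf{h}_k$ is divergence-free with respect to $g_{b_k}$ (not just with respect to the flat cone metric), which trades two extra powers of $r_D$ against the $L^2$ control in the ALE end.

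\emph{Main obstacle.} The delicate point is tracking the interplay between the nested neck scales $T_k = t_k T_\text{parent}$, the decay exponent $\beta$, and the three different weights $r_D^{-1}, r_D^{-2}, r_D^{-3}$, while ensuring the refined obstructions of Definition \ref{def: ext obst} are matched pairwise so that the constant $H_{[k-1]}$ agrees on both sides of every neck. This matching, which is the reason one extends $\mathbf{o}$ through \emph{all} deeper bubbles rather than cutting off at the first level, is what improves the divergence bound relative to \cite[Lemma 4.5]{ozu2} and makes the $L^2$-to-$r_D^{-3}C^{1,\alpha}_\beta$ estimate available.
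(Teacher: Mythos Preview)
Your strategy coincides with the paper's: one observes $\delta_{g^D}\tilde{\mathbf{o}}=P_{g^D}\tilde{\mathbf{o}}=0$ on $M_o^{16t}$ and each $N_j^{16t}$, then works on the narrow transition annuli $\tfrac12\sqrt{T_k}t_k^{-1/4}<r_D<2\sqrt{T_k}t_k^{-1/4}$ using the pointwise bounds $|\nabla^l_e(g^D-e)|\leqslant Cr_D^{-l}t_k^{1/2}$ and $|\nabla^l_e(\tilde{\mathbf{o}}-H)|\leqslant Cr_D^{-l}t_k^{1/2}$, from which $|\nabla^l(\delta_{g^D}\tilde{\mathbf{o}})|\leqslant Cr_D^{-1-l}t_k^{1/2}$ and the analogous $P$-bound follow and are then read off in the weighted norms.

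Two details need tightening. First, your Taylor bound $A-H=O(r_o)$ is \emph{weaker} than what the computation requires: in the transition strip $r_o\asymp t_k^{1/4}$, the cut-off contribution $(\nabla\chi)\cdot(A-H)$ is then $O(r_D^{-1}\cdot r_D)=O(1)$, which in the $r_D^{-1}C^{1,\alpha}_\beta$ norm gives only $t_k^{(1-\beta)/4}$, not the announced $t_k^{(2-\beta)/4}$. The paper's proof instead asserts directly that $|\nabla^l_e(\tilde{\mathbf{o}}-H)|\leqslant Cr_D^{-l}t_k^{1/2}$ on the transition strip (its equation \eqref{eq control tensor annulus}), i.e.\ that both $A-H$ and $B-H$ are of size $t_k^{1/2}$ there; you should either match that or explain why the weaker bound suffices. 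Second, your explanation of the middle $L^2$ inequality is not the paper's mechanism: there is no ``trading of two extra powers of $r_D$'' via divergence-freeness of $\mathbf{h}_k$ (that fact is already consumed in the pure-region vanishing). The paper simply invokes that on the finite-dimensional space $\tilde{\mathbf{O}}(g^B)$ the $L^2(g^B)$ norm and the weighted $r_B^{-2}C^{l,\alpha}_\beta$ norms are equivalent with constants independent of $t$ (\cite[Remark~4.45]{ozu2}), so one may replace $\|\tilde{\mathbf{o}}\|_{C^{2,\alpha}_{\beta,*}}$ by $\|\tilde{\mathbf{o}}\|_{L^2(g^B)}$ on the right-hand side.
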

\begin{proof}
    First note that on $M^{16t}_o$ and each $N_j^{16t}$, one has $ \delta_{g^D} \tilde{\mathbf{o}} = 0 $ and $ P_{g^D} \tilde{\mathbf{o}} = 0 $. There remains to understand the element in the annular regions $M^t_o\backslash M^{16t}_o$ and $N^t_j\backslash N^{16t}_j$.

    In such regions, where $\frac1{2}\sqrt{T_k}t_k^{-\frac1{4}}<r_D<2\sqrt{T_k}t_k^{-\frac1{4}}$ one has 
    \begin{equation}\label{eq control metric annulus}
        |\nabla^l_{g_{e}}(g^D - g_{e})| \leqslant C r_D^{-l}t_k^{\frac1{2}}
    \end{equation}
 for $l\leqslant 4$ and $C>0$. At the same time, by construction, for a $g_{e}$-parallel symmetric $2$-tensor $H$, we have 
 \begin{equation}\label{eq control tensor annulus}
     |\nabla_{e}^l(\tilde{\mathbf{o}}_{o} - H)| \leqslant C r_D^{-l} t_k^{\frac1{2}},
 \end{equation}
for $l\leqslant 4$ and $C>0$ on these regions $M^t_o\backslash M^{16t}_o$ and $N^t_j\backslash N^{16t}_j$ where 
 $\frac1{2}\sqrt{T_k}t_k^{-\frac1{4}}<r_D<2\sqrt{T_k}t_k^{-\frac1{4}}$.

Combining \eqref{eq control metric annulus} and \eqref{eq control tensor annulus}, we find for $C>0$ independent of $\tilde{\mathbf{o}}$ and $l\leqslant 2$
$$ \nabla^l(\delta_{g^D} \tilde{\mathbf{o}}) \leqslant C r_D^{-1-l} t_k^{\frac1{2}}, \qquad \text{ and } \qquad \nabla^l(P_{g^D} \tilde{\mathbf{o}}) \leqslant C r_D^{-2-l} t_k^{\frac1{2}},$$
which yields the result. The control by the $L^2$ norm comes from the fact that the $L^2(g^B)$ norm and the $r_B^{-2}C^{l,\alpha}_{\beta}(g^B)$ for $l\geqslant 0$ norms are equivalent independently of $t$, see \cite[Remark 4.45]{ozu2}.
\end{proof}

\subsection{Refined compactness of noncollapsed Einstein $4$-manifolds}\label{sec:compactness dim 4}

Let us explain the results from \cite{ozu1,ozu2,ozuthese} which we will need in the rest of the article. Consider for $t= (t_j)_j$, a naïve gluing $g^D_t$ as in Definition \ref{def naive desing} and its associated space of obstructions $\tilde{\mathbf{O}}(g^D_t)$.

\subsubsection{Einstein modulo obstruction perturbation of a naïve gluing}

Define $\tilde{\delta}_{g} = \pi_{\tilde{K}_o^\perp} \delta_g$ is the $L^2(g^D_{t})$-projection of the divergence orthogonally to the set $\tilde{K}_o^\perp$ of cut-offs of the Killing vector fields of $(M_o,g_o)$, whose definition we skip, see \cite[Section 3.2]{ozu2}. The operator $\tilde{\delta}_g\delta_g^*$ is invertible in the relevant weighted space by \cite[Proposition 3.3]{ozu2}. We define $\pi_g:= 1-\delta_g^*(\delta_g\delta_g^*)^{-1}\tilde{\delta}_g$, the $L^2(dv_g)$-projection on $\ker \tilde{\delta}_g$. Thanks to the Einstein operator $\E(g) = \Ric(g)-\frac{\R(g)}{2}g$, we introduce the $g^D_t$-\textit{gauged Einstein} operator 
$$\mathbf{\Phi}_{g^D_{t}}(g) = \E(g) + \delta_g^*\tilde{\delta}_{g^D_{t}}g.$$

\begin{thm}[{\cite[Theorem 4.6 and Remark 4.56]{ozu2}}]\label{thm: pert mod obst}
    There exists $\varepsilon_0>0$ such that for any $0<t<\varepsilon_0^4$ and any $v\in\tilde{\mathbf{O}}(g^D_t)$ with $|v|\leqslant \varepsilon_0$, there is a \emph{unique} solution $(\hat g,\hat w) = (\hat g_{t,v},\hat w_{t,v}) \in C^{2,\alpha}_{\beta,*}(g^D_{t })\times \tilde{\mathbf{O}}(g^D_{t }) $ with $\|\hat g-g^D_t\|_{C^{2,\alpha}_{\beta,**}} {< \varepsilon_0}$ (the norm ${C^{2,\alpha}_{\beta,**}}$ is recalled in Definition \ref{def: Ckalphabeta*}) and to the \textit{Einstein modulo obstructions} equation close to $g^D_{t}$: satisfying $\hat g-(g^D_{t}+v) \perp_{L^2(g^D_{t})}\tilde{\mathbf{O}}(g^D_t)$ and 
\begin{equation}\label{eq:RF mod obst}
    \mathbf{\Phi}_{g^D_{t}}(\hat g) + \pi_{\hat g} \hat w =0.
\end{equation}
The resulting metric $\hat{g}$ is Einstein \emph{if and only if} the obstruction $\hat{w}$ vanishes. 
\end{thm}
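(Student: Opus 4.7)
The plan is to set up this as a standard application of the inverse function theorem in the weighted Hölder framework. We view the desired solution as the zero of the map
$$F: (g,w) \longmapsto \bigl(\mathbf{\Phi}_{g^D_{t}}(g) + \pi_{g}w,\ \pi_{L^2,\tilde{\mathbf{O}}(g^D_t)^\perp}(g-g^D_t-v)\bigr),$$
where $(g,w)$ ranges over an $\varepsilon_0$-ball around $(g^D_t+v,0)$ in $C^{2,\alpha}_{\beta,**}(g^D_t)\times \tilde{\mathbf{O}}(g^D_t)$ and the second component enforces the orthogonality constraint. The target is the corresponding weighted $C^{0,\alpha}$-space times $\tilde{\mathbf{O}}(g^D_t)^\perp$. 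A fixed point of the associated Newton iteration provides the unique pair $(\hat{g},\hat{w})$ claimed.

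The key analytic ingredient is the uniform-in-$t$ invertibility of the linearization $DF_{(g^D_t,0)}$. A computation with the divergence-free gauge shows that this linearization coincides, modulo lower order cosmological terms, with $(h,u)\mapsto (P_{g^D_t}h+u,\ \pi_{\tilde{\mathbf{O}}(g^D_t)^\perp}h)$. The approximate kernel of $P_{g^D_t}$ is precisely spanned by the refined obstructions of Definition \ref{def obst tronq}: infinitesimal Einstein deformations of $(M_o,g_o)$ and of each ALE bubble $(N_j,g_{b_j})$, suitably cut off and extended down the tree via Definition \ref{def: ext obst}. Lemma \ref{lem: div P obst} ensures that $\tilde{\mathbf{O}}(g^D_t)$ is sharply divergence-free and almost $P_{g^D_t}$-closed, at a rate $t_{\max}^{(2-\beta)/4}$. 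A Fredholm argument carried out piece by piece on $M_o^t$ and each $N_j^t$, with matching across the neck regions $\mathcal{A}_k(t,\varepsilon)$, then yields the uniform isomorphism on $\tilde{\mathbf{O}}(g^D_t)^\perp\cap \ker \tilde{\delta}_{g^D_t}$ between the weighted source and target. This is the hard step: the geometry degenerates, and only the precise choice of weights $\beta$ and the refined obstructions make the estimate $t$-independent.

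With uniform invertibility established, I would control the two remaining errors. First, the initial error $\mathbf{\Phi}_{g^D_t}(g^D_t+v)$ decomposes as the gluing error $\E(g^D_t)$, which is supported in the annular regions and is of size $O(t_{\max}^{(2-\beta)/4})$ in the weighted target, plus $D\mathbf{\Phi}_{g^D_t}(g^D_t)[v]$, which by Lemma \ref{lem: div P obst} is also of size $O(t_{\max}^{(2-\beta)/4})\|v\|$, plus a quadratic term. Second, the standard Taylor-expansion bound gives the nonlinear correction $\|\mathbf{\Phi}_{g^D_t}(g^D_t+h)-D\mathbf{\Phi}_{g^D_t}(g^D_t)[h]\|\leqslant C\|h\|^2$. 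Both estimates depend only on $\varepsilon_0$ and on the fixed orbifold and bubbles. Plugging into the Newton scheme produces the unique $(\hat{g},\hat{w})$ in the required ball.

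Finally, for the equivalence: if $\hat{w}=0$ then $\mathbf{\Phi}_{g^D_t}(\hat{g})=0$, i.e. $\E(\hat{g})+\delta_{\hat{g}}^*\tilde{\delta}_{g^D_t}\hat{g}=0$. Applying $\delta_{\hat g}$ and using the Bianchi identity $\delta_{\hat g}\E(\hat g)=0$ gives $\delta_{\hat g}\delta_{\hat g}^*(\tilde{\delta}_{g^D_t}\hat g)=0$; pairing with $\tilde{\delta}_{g^D_t}\hat g$ and integrating by parts, together with the construction of $\tilde{\delta}$ as orthogonal to Killing fields, forces $\tilde{\delta}_{g^D_t}\hat g=0$ and hence $\E(\hat g)=0$, so $\hat g$ is Einstein. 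Conversely, if $\hat g$ is already Einstein in the right gauge then $\mathbf{\Phi}_{g^D_t}(\hat g)=0$, so $\pi_{\hat g}\hat w=0$; since $\hat w\in \tilde{\mathbf{O}}(g^D_t)$ lies in the approximate kernel of $\tilde{\delta}_{g^D_t}$ by Lemma \ref{lem: div P obst}, the projection $\pi_{\hat g}$ is $O(t_{\max}^{(2-\beta)/4})$-close to the identity on this finite-dimensional space, and is therefore injective for $t$ small, giving $\hat w=0$. The main obstacle throughout is obtaining the uniform-in-$t$ invertibility of the linearization with a non-trivial obstruction space; everything else is a standard Newton-scheme computation.
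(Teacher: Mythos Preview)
Your proposal is correct and follows essentially the same approach the paper indicates: a Lyapunov--Schmidt reduction and implicit/inverse function theorem between the weighted spaces $C^{2,\alpha}_{\beta,*}$ and $r_D^{-2}C^{\alpha}_{\beta}$, with the Bianchi identity handling the gauge. The paper does not re-prove this theorem---it is quoted from \cite{ozu2} with only a one-line description of the method and a remark on Bianchi---so your sketch is in fact more detailed than what appears here; the one place where the paper's formulation is slightly cleaner is the $\hat w=0\Rightarrow$ Einstein direction, where it invokes the invertibility of $\tilde{\delta}_{\hat g}\delta_{\hat g}^*$ directly (as established in \cite[Proposition~3.3]{ozu2}) rather than your integration-by-parts argument, which only yields that $\tilde{\delta}_{g^D_t}\hat g$ is Killing for $\hat g$ and then needs an extra word to conclude it vanishes.
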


\begin{remark}
    By the Bianchi identity, taking $\tilde{\delta}_{\hat g}$ of \eqref{eq:RF mod obst} we find:
$(\tilde{\delta}_{\hat g}\delta_{\hat g}^*)\tilde{\delta}_{g^D_{t}}{\hat g} = 0,$
hence ${\hat g}$ automatically solves the gauge condition $\tilde{\delta}_{g^D_{t}}{\hat g} = 0$ by the invertibility of $\delta_{\hat g}\delta_{\hat g}^*$. We could therefore replace $\mathbf{\Phi}_{g^D_{t}}({\hat g}) + \pi_{\hat g} \hat{w} =0$ by $\E({\hat g}) + \pi_{\hat g} \hat{w} =0$ and $\tilde{\delta}_{g^D_{t}}{\hat g} = 0$.
\end{remark}

\begin{remark}\label{rem:gauged modulo obst}
    Here, we considered the gauged notion of Einstein modulo obstructions metric of \cite[Section 6]{biq1} and \cite[Remark 4.56]{ozuthese}, but different from \cite{ozu2}. 
\end{remark}

 The linearization of
$g\mapsto\mathbf{\Phi}_{g^D}(g)$ at $g^D$is
\begin{align}\label{eq:def bar P}
        \bar{P}_{{g^D}}(h):= \;d_{g^D}\mathbf{\Phi}_{g^D}(h) =& \frac1{2}\Big(\nabla^*_{g^D}\nabla_{g^D} h-2\mathring{\R}_{g^D}(h)+\Ric_{g^D}\circ h+h\circ \Ric_{g^D}-\R_{g^D} h+ \langle \Ric_{g^D}, h \rangle_{g^D} {g^D} \Big)\nonumber\\
        &-2 \delta_{g^D}^*\delta_{g^D}h+2\delta^*_{{g^D}}\tilde{\delta}_{g^D} h-(\delta_{g^D}\delta_{g^D} h) {g^D} + (\Delta_{g^D} \textup{tr}_{g^D}h) {g^D} - \nabla_{g^D}^2 \textup{tr}_{g^D} h,
\end{align}
in dimension $4$. If $g^D$ were an Einstein metric and $h$ a divergence-free and traceless symmetric $2$-tensor, then the operator $\bar{P}_{g^D}$ would reduce to $P_{g^D}:= \frac1{2}\nabla_{g^D}^*\nabla_{g^D} - \mathring{\R}_{g^D}.$ The proof of Theorem \ref{thm: pert mod obst} relies on a Lyapunov-Schmidt reduction and an application of the implicit function theorem between Banach spaces $C^{2,\alpha}_{\beta,*}$ and $r_D^{-2}C^{\alpha}_{\beta}$. There is a \textit{quantitative} uniqueness associated with Theorem \ref{thm: pert mod obst} as it comes from an application of the implicit function theorem, see \cite[Section 9]{biq1} and \cite[Section 4]{ozu2}.

\begin{cor}\label{cor: quantitative uniqueness desing}
    There exists $C >0$ independent on $t,v$ so that if an \textit{approximation} $(g^A_{t ,v },w^A_{t ,v })\in C^{2,\alpha}_{\beta,*}(g^D_{t })\times \tilde{\mathbf{O}}(g^D_{t })$ satisfies:  $\|g^A_{t ,v }-g^D_{t }\|_{C^{2,\alpha}_{\beta,*}(g^D_{t })}<\frac1{C},$  and $g^A_{t ,v }-g^D_{t }+v \perp_{L^2(g^D_{t })} \tilde{\mathbf{O}}(g^D_{t })$, then, one has:
\begin{equation}\label{eq:control approx obst thm}
    \|g^A_{t ,v }-\hat g_{t ,v } \|_{C^{2,\alpha}_{\beta,*}} + \|w^A_{t ,v }-\hat w_{t ,v }\|_{L^2(g^D_t)} < C \big\|\mathbf{\Phi}_{g^D_{t }}(g^A_{t ,v }) + \pi_{g^A_{t,v}}w^A_{t ,v } \big\|_{r_D^{-2}C^\alpha_{\beta}(g^D_{t })}.
\end{equation}
\end{cor}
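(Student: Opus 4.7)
The plan is to recognize Corollary \ref{cor: quantitative uniqueness desing} as the standard quantitative Lipschitz estimate that accompanies any application of the implicit function theorem, applied here to the construction of $(\hat g_{t,v},\hat w_{t,v})$ in Theorem \ref{thm: pert mod obst}. The key point is that all constants in the underlying implicit function theorem, in particular the norm of the inverse of the linearization and the modulus of continuity of the nonlinear part, can be taken uniform in the gluing scale $t$, thanks to the weighted Hölder spaces introduced in \cite[Section 4]{ozu2}.

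First, I introduce the nonlinear map
\begin{equation*}
    F_{t,v}:\;(h,w)\;\longmapsto\;\mathbf{\Phi}_{g^D_t}(g^D_t + h)\;+\;\pi_{g^D_t+h}\,w
\end{equation*}
defined on the affine Banach subspace
$\mathcal{X}_{t,v}:=\{h\in C^{2,\alpha}_{\beta,*}(g^D_t):\,h+v\perp_{L^2(g^D_t)}\tilde{\mathbf{O}}(g^D_t)\}$ times $\tilde{\mathbf{O}}(g^D_t)$, with target $r_D^{-2}C^{\alpha}_\beta(g^D_t)$. By construction, both $(h_\star,w_\star):=(\hat g_{t,v}-g^D_t,\hat w_{t,v})$ and $(h_A,w_A):=(g^A_{t,v}-g^D_t,w^A_{t,v})$ lie in this affine space, and $F_{t,v}(h_\star,w_\star)=0$. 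The linearization of $F_{t,v}$ at $(0,0)$ is, on its domain, the map $(h,w)\mapsto \bar{P}_{g^D_t}(h)+w$, which is precisely the isomorphism between the relevant spaces that underlies the proof of Theorem \ref{thm: pert mod obst}; its inverse is bounded with a norm $C_0$ independent of $t$ thanks to the scale-invariant construction of the weighted Hölder norms and the norm equivalence of \cite[Remark 4.45]{ozu2} on the finite-dimensional space $\tilde{\mathbf{O}}(g^D_t)$.

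Second, I check that $F_{t,v}$ is $C^1$ with uniformly $t$-independent Lipschitz modulus for its differential on a fixed $C^{2,\alpha}_{\beta,*}$-ball of radius $\rho_0=1/C$ around $0$. This follows from the locality and schematic form of the operators involved: $\mathbf{\Phi}_{g^D_t}(g^D_t+h)$ is a smooth algebraic expression in $h$ and its first and second derivatives, with coefficients built from $g^D_t$ and $(g^D_t+h)^{-1}$; the projection $\pi_{g^D_t+h}=\operatorname{Id}-\delta^*_{g^D_t+h}(\delta_{g^D_t+h}\delta^*_{g^D_t+h})^{-1}\tilde{\delta}_{g^D_t}$ depends smoothly on $h$ by the uniform invertibility of $\tilde{\delta}_g\delta_g^*$ proved in \cite[Proposition 3.3]{ozu2}; and the weighted Hölder spaces form a Banach algebra compatible with all these operations. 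Choosing $C$ large enough, $dF_{t,v}$ stays within $\tfrac{1}{2C_0}$ of $dF_{t,v}(0,0)$ on the ball, hence remains invertible with inverse bounded by $2C_0$.

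Finally, the conclusion is obtained by the usual mean-value argument along the segment joining $(h_A,w_A)$ and $(h_\star,w_\star)$, both contained in the ball of radius $\rho_0$ (for $h_\star$ this uses the quantitative control built into Theorem \ref{thm: pert mod obst}; for $h_A$ it is the hypothesis). Integrating $dF_{t,v}$ along this segment yields
\begin{equation*}
  \|F_{t,v}(h_A,w_A)-F_{t,v}(h_\star,w_\star)\|_{r_D^{-2}C^\alpha_\beta}\;\geqslant\;\tfrac{1}{2C_0}\,\bigl(\|h_A-h_\star\|_{C^{2,\alpha}_{\beta,*}}+\|w_A-w_\star\|_{L^2}\bigr),
\end{equation*}
which, since $F_{t,v}(h_\star,w_\star)=0$, is exactly \eqref{eq:control approx obst thm} with $C:=2C_0$. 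The main technical obstacle is not the linear algebra but the uniformity in $t$ as $t\to 0$: the metric $g^D_t$ degenerates in the neck regions, and without the scale-invariant weighted spaces and the refined obstruction space of Definition \ref{def obst tronq} one would not get a $t$-independent $C_0$ nor a $t$-independent Lipschitz modulus of $dF_{t,v}$. This uniformity is exactly what is supplied by the machinery of \cite[Section 4]{ozu2} and summarized in Lemma \ref{lem: div P obst}.
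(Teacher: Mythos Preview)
Your proposal is correct and follows exactly the approach the paper indicates: the text introduces Corollary \ref{cor: quantitative uniqueness desing} by noting that ``there is a \emph{quantitative} uniqueness associated with Theorem \ref{thm: pert mod obst} as it comes from an application of the implicit function theorem, see \cite[Section 9]{biq1} and \cite[Section 4]{ozu2},'' and then states the corollary without further proof. Your write-up fleshes out precisely this implicit-function-theorem Lipschitz estimate, with the same emphasis on $t$-uniformity of the inverse of the linearization and of the Lipschitz modulus of $dF_{t,v}$ coming from the weighted Hölder framework of \cite{ozu2}.
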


\begin{remark}
    Applied to $(g^A_{t ,0 },w^A)=(g^D_t,0)$, \eqref{eq:control approx obst thm} implies that $\|g^D_t-\hat g_{t ,0 } \|_{C^{2,\alpha}_{\beta,*}}\leqslant Ct_{\max}^{\frac{2-\beta}{4}}$.
\end{remark}
\subsubsection{Einstein metric Gromov-Hausdorff close to an Einstein orbifold}

Theorem \ref{thm: pert mod obst} is not merely a gluing-perturbation theorem: the tedious norms involved are not chosen for their convenient analytical properties but from the Gromov-Hausdorff convergence of Einstein metrics. Any Einstein metric sufficiently close to an orbifold is reached by Theorem \ref{thm: pert mod obst}.

\begin{thm}[{\cite{ozu1,ozu2}}]\label{thm: exhaustion neighb orb}
   Let $(M_o,g_o)$ be an Einstein orbifold. There exists $\delta>0$ such that if $(M,g)$ is an Einstein $4$-\textit{orbifold} satisfying 
$$d_{GH}\big((M,g),(M_o,g_o)\big)<\delta,$$ 
Then, there exists a naïve gluing $g^D_{t }$ and $ v \in \tilde{\mathbf{O}}(g^D_{t  })$ such that:
$$\hat{g}_{t,v} = \phi^*g\qquad \text{and}\qquad \hat{w}_{t,v} = 0.$$
\end{thm}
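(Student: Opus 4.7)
\textbf{Step 1: Extraction of the bubble tree.} Starting from an Einstein $4$-orbifold $(M,g)$ with $d_{GH}\big((M,g),(M_o,g_o)\big)<\delta$, apply the classical orbifold compactness of Anderson--BKN--Tian (and its multi-scale version from \cite{ozu1}) iteratively. On the ``thick'' part, $(M,g)$ converges smoothly (up to diffeomorphism) to $(M_o,g_o)$ away from $S_o$. At each singular point $p_i\in S_o$ at which curvature concentrates, rescale by the relevant scale $T^{-1/2}_i$ to obtain a Ricci-flat ALE orbifold $(N_i,g_{b_i})$ asymptotic to $\mathbb{R}^4/\Gamma_i$; iterate inside each $N_i$ to produce the full tree of bubbles $(N_j,g_{b_j})_j$ with relative gluing scales $t_j$. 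This determines a naïve desingularization pattern $D$ and a naïve gluing $g^D_t$ in the sense of Definition \ref{def total desing}.

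\textbf{Step 2: Quantitative closeness in weighted norms.} Using the smooth convergence on each core region ($M_o^{16t}$ and each rescaled $N_j^{16t}$) and patching through the neck regions $\mathcal{A}_k(t,\varepsilon)$ via the cut-off functions of Definition \ref{def cutoffs all}, build a diffeomorphism $\psi:M\to M$ such that
\[
\|\psi^*g-g^D_t\|_{C^{2,\alpha}_{\beta,*}(g^D_t)}\;\longrightarrow\;0\quad\text{as }\delta\to0.
\]
The neck estimates here are the standard point of delicate control: one shows that the annular error between $g$ and the naïve gluing decays in the weighted norm at rate dictated by the decay of $g_{b_j}$ toward its asymptotic cone and the smallness of the scales $t_j$.

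\textbf{Step 3: Gauge fixing.} Next, perturb $\psi$ to $\phi=\psi\circ\Exp_\xi$ for some small vector field $\xi$ in order to enforce the divergence gauge of Theorem \ref{thm: pert mod obst}. Concretely, solve the equation $\tilde{\delta}_{g^D_t}(\phi^*g)=0$ together with the orthogonality $\phi^*g - g^D_t + v \;\perp_{L^2(g^D_t)}\; \tilde{\mathbf{O}}(g^D_t)$ for a uniquely determined $v\in\tilde{\mathbf{O}}(g^D_t)$. The relevant operator $\tilde{\delta}_{g^D_t}\delta^*_{g^D_t}$ is uniformly invertible in the weighted space $C^{2,\alpha}_{\beta,*}$ by \cite[Proposition 3.3]{ozu2}, so an implicit function theorem argument produces such $\xi$ and $v$ with bounds controlled by the right-hand side of Step 2.

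\textbf{Step 4: Application of uniqueness.} Since $g$ is Einstein, so is $\phi^*g$, and in particular it satisfies
\[
\mathbf{\Phi}_{g^D_t}(\phi^*g) + \pi_{\phi^*g}\cdot 0 \;=\;0,
\]
so the pair $(\phi^*g,0)\in C^{2,\alpha}_{\beta,*}(g^D_t)\times\tilde{\mathbf{O}}(g^D_t)$ solves \eqref{eq:RF mod obst} with the obstruction $w=0$ and satisfies the required orthogonality relative to $v$. The quantitative uniqueness statement of Theorem \ref{thm: pert mod obst} (equivalently Corollary \ref{cor: quantitative uniqueness desing}) shows that for $\delta$ small enough the only such solution close to $g^D_t$ in $C^{2,\alpha}_{\beta,*}$ is exactly $(\hat g_{t,v},\hat w_{t,v})$. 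Therefore $\phi^*g=\hat g_{t,v}$ and $\hat w_{t,v}=0$, which is the claim.

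\textbf{Main obstacle.} The core analytic difficulty is Step 2 together with Step 3: controlling $g$ by $g^D_t$ in the weighted norm $C^{2,\alpha}_{\beta,*}$ \emph{uniformly through all neck scales simultaneously}, and simultaneously fixing the gauge when multiscale trees of ALE bubbles are present. This is where the refined obstruction space of Definition \ref{def obst tronq} is essential: the improved divergence control in Lemma \ref{lem: div P obst} is precisely what allows the implicit function theorem argument of Step 3 to close with constants independent of the depth of the bubble tree, and what ensures that the gauge condition and the orthogonality constraints are compatible in the small-$t$ regime.
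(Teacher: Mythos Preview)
The paper does not prove this statement; it is quoted verbatim from \cite{ozu1,ozu2} and used as a black box. Your outline is a faithful sketch of the strategy carried out in those references: Step~1 is the content of \cite{ozu1} (extraction of the tree of bubbles with quantitative control), Steps~2--3 are the weighted-norm gauge fixing of \cite[Section~3]{ozu2}, and Step~4 is the uniqueness clause of the implicit function theorem behind Theorem~\ref{thm: pert mod obst}. So as a proof plan it is essentially correct.

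There is, however, a misattribution in your ``Main obstacle'' paragraph. You claim that the refined obstruction space of Definition~\ref{def obst tronq} and the divergence control of Lemma~\ref{lem: div P obst} are \emph{essential} to close Steps~2--3. This is not accurate: the refined obstruction space is \emph{new to the present paper} (the authors say explicitly that it ``requires a refined notion of obstructions from that of \cite[Definition~4.5]{ozu2}''), whereas Theorem~\ref{thm: exhaustion neighb orb} was already established in \cite{ozu2} using the unrefined obstructions. The refinement is introduced here for a different purpose---to get the sharper divergence estimates needed later in Section~\ref{sec:GH isolation} when analyzing $\pi_{\bar g_B}u_B$ at infinity (Corollary~\ref{cor:estimate u_B})---and the paper notes that the proofs of \cite{ozu2} go through unchanged with the new definition. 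So your identification of the hard step (uniform weighted control across all neck scales) is right, but the tool you point to is not the one that resolves it in \cite{ozu1,ozu2}.
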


\section{Gromov-Hausdorff isolation of spherical orbifolds}\label{sec:GH isolation}

Let us assume that an Einstein metric $(M,g)$ is sufficiently close to an Einstein orbifold $(M_o,g_o)$. Then, with $\phi:M\to M$ from Theorem \ref{thm: exhaustion neighb orb}, since $(\phi^*g , 0)$ is the only solution of the equation, one has by  \eqref{eq:control approx obst thm}:
\begin{equation}\label{eq:control approx obst}
    \|g^A_{t ,v }-\phi ^*g \|_{C^{2,\alpha}_{\beta,*}} + \|w^A\|_{L^2(g^D)} < C \big\|\mathbf{\Phi}_{g^D_{t }}(g^A) + \pi_{g^A_{t,v}}w^A \big\|_{r_D^{-2}C^\alpha_{\beta}(g^D_{t })}.
\end{equation}
In particular, if $\|w^A\|_{L^2(g^D)}$ is much larger than $C\big\|\mathbf{\Phi}_{g^D_{t }}(g^A) +\pi_{g^A} w^A \big\|_{r_D^{-2}C^\alpha_{\beta}(g^D_{t })}$, we reach a contradiction as used in \cite[Section 5.3]{ozu2}. Our strategy is therefore to construct such an approximation $(g^A,w^A)=(g^A_{t ,v },w^A_{t ,v })$ and to estimate $w^A$ carefully. Constructing this approximation will be done following these steps:
\begin{enumerate}
    \item For each singular point $p_o$ of $M_o$, consider the \textit{bubble tree} composed of the Ricci-flat ALE metrics collapsing at $p_o$ and $g^B=g^B_{t }$ the glued metric so that $g^B = g^D_{t }$ as in Definition \ref{def:tree of sing at po}. 
    
    \item We start by perturbing $(g^B,0)$ to a Ricci-flat modulo obstruction metric $(\bar{g}_B,u_B)$ and show that $\Ric(\bar{g}_B)$ decays fast at infinity.  This is the content of Proposition \ref{prop: gluing trees}.
    \item We then show in Corollary \ref{cor:estimate u_B} that either $u_B$ is orthogonal to a specific \textit{rescaling obstruction} $\mathbf{o}_1$ or it is much larger than its projection on $\mathbf{o}_1$.  
    \item In Section \ref{sec:approx metric}, we then construct our approximate metric $g^A$ by gluing the various Ricci-flat modulo obstructions gluings of bubble trees, $\bar{g}_B$, to a hyperbolic or spherical orbifold $(M_o,g_o)$ while keeping track of additional obstructions $u_o\approx \lambda_1\mathbf{o}_1$ with $\lambda_1\neq 0$ this gluing induces.  
    \item On the bubble tree region, the resulting approximate obstruction is $w^A = u_B + \lambda_1\mathbf{o}_1$. The end of the proof of Theorem \ref{thm:isolation orbifolds} is in Section \ref{sec: isolation hy sph orb}. The argument is: if the projection of $w^A = u_B+\lambda_1\mathbf{o}_1$ onto $\mathbf{o}_1$ were small enough not to contradict \eqref{eq:control approx obst}, then $u_B$ would in fact be much larger than $\lambda_1\mathbf{o}_1$ and the projection of $w^A$ on the orthogonal of $\mathbf{o}_1$ would contradict \eqref{eq:control approx obst}. 
\end{enumerate}
\subsection{Gluing bubble trees and controls}\label{sec:gluing trees}

In our context, $(M,g^D_{t })$ is composed of the core orbifold $(M_o,g_o)$ and of a bubble tree at each singular point. Let us pick one such singular point $p_o\in M_o$ and denote $g^B_{t }$ the associated bubble tree metric. The Ricci-flat ALE metrics in this tree are $(N_j,g_{b_j})_{j\in J_{p_o}}$. We similarly keep track of $v ^B\in \tilde{\mathbf{O}}(g^B_{t })$, the piece of $v \in\tilde{\mathbf{O}}(g^D_{t })$, supported in the bubble tree associated with $g^B_{t }$. 

We first extend \cite[Theorem 4.6]{ozu2} and show that $g^B_{t }$ can be perturbed to a Ricci-flat modulo obstruction metric in suitable spaces. There is no need to project the divergence orthogonally to Killing vector fields. Indeed, since there are no Killing vector fields growing slower than linearly at infinities of Ricci-flat ALE metrics, the operator ${\delta}_{g^B_{t }}\delta_{g^B_{t }}^*$ (without tilda) is invertible between the relevant spaces.

\begin{proposition}\label{prop: gluing trees}
    For any $0<\alpha<1$, $0<\beta<2$ and $\varepsilon>0$, there exist $C>0$ and $\delta>0$ such that for any $t_{\max},\|v\|_{C^{2,\alpha}_{\beta,*}}<\delta$, there is a unique solution $(\Bar{g}^B_{t ,v },u^B )$ to 
\begin{itemize}
    \item $\|g^B_{t }-\Bar{g}^B_{t ,v }\|_{C^{2,\alpha}_{\beta,*}(g^D_{t })}<\varepsilon,$ 
    \item $g^B_{t }+v ^B-\Bar{g}^B_{t ,v ^B}\perp_{L^2(g^D_{t })} \tilde{\mathbf{O}}(g^\textbf{D}_{t })$,
    \item $\mathbf{\Phi}_{g^B_{t }}(\bar{g}^B_{t ,v }) + \pi_{\bar{g}^B_{t ,v }}u^B  = 0$, or equivalently: $\E(\bar{g}^B_{t ,v }) + \pi_{\bar{g}^B_{t ,v }}u^B  = 0,\text{ and}$
$\delta_{g^B_{t }}(\bar{g}^B_{t ,v }) = 0.$
\end{itemize}
where we denote $\tilde{\mathbf{O}}(g^B_{t })$ the set of elements of $\tilde{\mathbf{O}}(g^D_{t })$ supported in the bubble tree region associated to $g^B_{t }$, and where $u^B \in \tilde{\mathbf{O}}(g^B_{t })$. We have the following estimates: 
\begin{equation}\label{eq:control metric}
    \|g^B_{t }-\bar{g}^B_{t ,v }\|_{C^{2,\alpha}_{\beta,*}(g^D_{t })}\leqslant C (t^{\frac{2-\beta}4} + \|v ^B\|_{C^{2,\alpha}_{\beta,*}}^2),
\end{equation}
as well as $\E(\bar{g}_B)=\cO(r^{-6}) = o(r^{-4})$ at infinity,
hence in particular,
\begin{equation}\label{eq:decay pi uB}
    \pi_{\bar{g}_B}u_B =\cO(r^{-6})= o(r^{-4}).
\end{equation}
\end{proposition}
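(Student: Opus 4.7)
The plan is to mirror the Lyapunov–Schmidt argument behind Theorem \ref{thm: pert mod obst}, but adapted to the noncompact ALE setting of a single bubble tree. The crucial simplification is that on an ALE Ricci-flat orbifold (and hence on any tree of such built with sufficiently small gluing scales) there are \emph{no} nontrivial Killing vector fields decaying at infinity, so the operator $\delta_{g^B_t}\delta_{g^B_t}^*$ is directly invertible between the weighted H\"older spaces used in \cite[Section 3]{ozu2}, without projecting orthogonally to any $\tilde K_o$. Consequently $\tilde\delta_{g^B_t}$ can be replaced by $\delta_{g^B_t}$ throughout, and the projection $\pi_g$ is defined via the usual formula $\pi_g=1-\delta_g^*(\delta_g\delta_g^*)^{-1}\delta_g$.

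First I would analyze the linearized operator $\bar P_{g^B_t}$ given by \eqref{eq:def bar P}. Via the now-standard limiting argument from \cite[Section 4]{ozu2}, its approximate cokernel (equivalently kernel, via self-adjointness) is canonically identified with $\tilde{\mathbf{O}}(g^B_t)$ in the sense that $\bar P_{g^B_t}\colon C^{2,\alpha}_{\beta,*}(g^B_t)\cap \tilde{\mathbf{O}}(g^B_t)^{\perp_{L^2(g^B_t)}}\longrightarrow r_D^{-2}C^{\alpha}_\beta(g^B_t)\cap \tilde{\mathbf{O}}(g^B_t)^{\perp_{L^2(g^B_t)}}$ is an isomorphism with bounds uniform in $t$. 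Indeed the only obstructions on the bubble tree come from the finite-dimensional spaces $\mathbf{O}(g_{b_j})$ on each individual bubble, glued together precisely by Definition \ref{def: ext obst}. Together with the quantitative control of $\delta_{g^B_t}\tilde{\mathbf o}$ and $P_{g^B_t}\tilde{\mathbf o}$ provided by Lemma \ref{lem: div P obst}, this yields invertibility of the linearization of the map $(g,u)\mapsto\mathbf{\Phi}_{g^B_t}(g)+\pi_g u$ on the slice $\{g-g^B_t-v^B\perp_{L^2}\tilde{\mathbf O}(g^B_t)\}$.

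Next I would apply the Banach-space implicit function theorem. The initial error is controlled by the naive gluing: $\|\mathbf{\Phi}_{g^B_t}(g^B_t+v^B)\|_{r_D^{-2}C^{\alpha}_\beta}\leqslant C t^{(2-\beta)/4}+C\|v^B\|_{C^{2,\alpha}_{\beta,*}}^2$, where the first term is the gluing error on the neck annuli $\mathcal{A}_k(t,\varepsilon)$ (each bubble $(N_j,g_{b_j})$ being Ricci-flat, error is supported on the neck) and the second is the quadratic Taylor remainder. Inverting the linearization and iterating produces the unique solution $(\bar g^B_{t,v},u^B)$ with the estimate \eqref{eq:control metric}. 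Automatic solution of the gauge $\delta_{g^B_t}\bar g^B_{t,v}=0$ follows, as in Theorem \ref{thm: pert mod obst}, by applying $\delta_{\bar g^B_{t,v}}$ to the equation and invoking Bianchi together with invertibility of $\delta\delta^*$.

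The main technical step, and what I expect to be the principal obstacle, is the asymptotic decay statement $\E(\bar g_B)=\cO(r^{-6})$ at infinity of the tree. The idea is the following. Near infinity of the bubble tree, $g^B_t$ coincides with the core ALE metric $(N_{j_m},g_{b_{j_m}})$, so $g^B_t-e=\cO(r^{-4})$ with derivatives decaying accordingly. The perturbation $\bar g_B-g^B_t$ lies in $C^{2,\alpha}_{\beta,*}$, whose definition in the outer ALE zone encodes decay at least as fast as $r^{-4}$ with two derivatives gaining $r^{-2}$ each. Crucially, the obstruction $u^B$ belongs to the \emph{refined} space $\tilde{\mathbf O}(g^B_t)$: by construction (Definition \ref{def: ext obst} applied iteratively starting from the core), at infinity $u^B$ asymptotes to a decaying TT Ricci-flat deformation of the core ALE orbifold, which is $\cO(r^{-4})$; combined with the improved divergence bound $\delta_{g^B_t}\tilde{\mathbf o}=\cO(r_D^{-5})$ from Lemma \ref{lem: div P obst}, the projection $\pi_{\bar g_B}u^B$ enjoys the additional order of decay $\cO(r^{-6})$ at infinity. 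The equation $\E(\bar g_B)=-\pi_{\bar g_B}u^B$ then upgrades $\E(\bar g_B)$ to $\cO(r^{-6})$, i.e. $o(r^{-4})$, which is exactly \eqref{eq:decay pi uB}. The delicate part is keeping track of cancellations between $u^B$ and its divergence-correction so that the slowest $\cO(r^{-4})$ mode is killed; this uses in an essential way the \emph{refined} construction of $\tilde{\mathbf O}$ from Section \ref{sec: refined obst}, which was introduced precisely to provide this extra order of decay compared to the naive obstruction space of \cite[Definition 4.5]{ozu2}.
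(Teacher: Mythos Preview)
Your overall Lyapunov--Schmidt strategy is correct, and your observation that there are no Killing fields on the ALE tree (so that $\tilde\delta$ may be replaced by $\delta$) matches the paper. However, there is a genuine gap in the decay argument, and it stems from using the wrong function space.

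You run the implicit function theorem with the perturbation $h$ living in $C^{2,\alpha}_{\beta,*}$. On an ALE end this norm only imposes $|h|\lesssim r_b^{-\beta}$ with $0<\beta<2$ (Definition \ref{norme ALE}), \emph{not} $r^{-4}$; your claim that ``$C^{2,\alpha}_{\beta,*}$ in the outer ALE zone encodes decay at least as fast as $r^{-4}$'' is incorrect. The paper instead solves for $h(v)$ in the space $C^{2,\alpha}_{\beta,**}\cap\tilde{\mathbf O}(g^B)^\perp\cap\ker\delta_{g^B}$ (Definition \ref{norm orbifold ALE}), with target $(1+r_B)^{-4}C^\alpha_\beta$, and invokes Lemma \ref{lem:inverting barP with terms at infty} for the linear invertibility. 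By Remark \ref{rem:gauge **} the gauge condition kills the $H^2$ leading term, so $h(v)=\cO(r^{-4})$ with derivatives; since $g^B$, $v^B$ are already $e+\cO(r^{-4})$ near infinity, one gets $\bar g_B=e+\cO(r^{-4})$ and hence $\E(\bar g_B)=\cO(r^{-6})$ by direct expansion. The decay \eqref{eq:decay pi uB} then follows from the \emph{equation} $\pi_{\bar g_B}u^B=-\E(\bar g_B)$.

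Your argument runs this backwards: you try to show directly that $\pi_{\bar g_B}u^B=\cO(r^{-6})$ from properties of $u^B\in\tilde{\mathbf O}(g^B)$ and then deduce the decay of $\E(\bar g_B)$. This cannot work. Near infinity $u^B$ is a combination of $\tilde{\mathbf o}_k=\cO(r^{-4})$, and the projection $\pi_{\bar g_B}$ does \emph{not} improve this order: indeed the proof of Corollary \ref{cor:estimate u_B} computes $\pi_{\bar g_B}\tilde{\mathbf o}_k-\tilde{\mathbf o}_k=\delta_e^*Y^3+o(r^{-4})$, still $\cO(r^{-4})$. The statement $\pi_{\bar g_B}u^B=\cO(r^{-6})$ is a genuine \emph{constraint} on the particular coefficients $\mu_k$ in $u^B=\sum_k\mu_k\tilde{\mathbf o}_k$, forced by the equation and the choice of function spaces; it is precisely this constraint that is exploited in Corollary \ref{cor:estimate u_B} to derive \eqref{eq: estimation tree obstructions}. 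Also, Lemma \ref{lem: div P obst} controls $\delta_{g^B}\tilde{\mathbf o}$ on the neck annuli (where it is nonzero), not at the ALE end (where it already vanishes), so it does not supply the extra decay you need.
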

\begin{proof}
    For ease of notation, we will denote $g^B = g^B_{t }$, $\bar{g}_B=\bar{g}^B_{t ,v }$. The proof is an application of the implicit function theorem between Banach spaces. For each element $v\in \tilde{\mathbf{O}}(g^B)$ (in particular $v ^B$), one searches for $(g(v),u(v))$ with $g(v)=g^B+v+ h(v)$, such that 
$$\mathbf{\Phi}_{g^B}(g(v)) + \pi_{g(v)}u(v) = 0$$
where $(h(v),u(v))\in (C^{2,\alpha}_{\beta,**}\cap \tilde{\mathbf{O}}(g^B)^\perp\cap \ker{\delta_{g^B}})\times \tilde{\mathbf{O}}(g^B)$ where $C^{2,\alpha}_{\beta,**}$ is defined in Section \ref{function spaces}.

The implicit function theorem is applied to the set of equations parametrized by $v$:
$$ \Psi_{g^B}(v,h,u) = \mathbf{\Phi}_{g^B}(g^B +v +h) + \pi_{g^B +v +h}u. $$
$\Psi_{g^B}$ is a smooth map in its arguments between the Banach spaces:
$$\left(\tilde{\mathbf{O}}(g^B)\cap C^{2,\alpha}_{\beta,**}\right)\times \left(C^{2,\alpha}_{\beta,**}\cap \tilde{\mathbf{O}}(g^B)^\perp\cap \ker{\delta_{g^B}}\right)\times \left(\tilde{\mathbf{O}}(g^B)\cap r_B^{-2}C^{\alpha}_{\beta}\right)\to (1+r_B)^{-4}C^{\alpha}_{\beta}.$$

First note that as in the proof of Lemma \ref{lem: div P obst}, one has for some $C>0$ independent of $t$,
\begin{equation}\label{eq: Psi at gB}
    \|\Psi_{g^B}(0,0,0)\|_{r_D^{-2}C^\alpha_{\beta}} \leqslant C t_{\max}^{\frac{2-\beta}4}.
\end{equation}
Denote by $\pi_{\tilde{\mathbf{O}}(g^B)^\perp}$ the $L^2(g^B)$-orthogonal projection on $\tilde{\mathbf{O}}(g^B)^\perp$. We know from Lemma \ref{lem:inverting barP with terms at infty} that the linearization of $\pi_{\tilde{\mathbf{O}}(g^B)^\perp}\Psi_{g^B}$ at $(0,0,0)$, is invertible in the direction $\{0\}\times(C^{2,\alpha}_{\beta,**}\cap \tilde{\mathbf{O}}(g^B)^\perp\cap \ker{\delta_{g^B}})\times\{0\}$ with an inverse whose norm is independent of $t$. Consequently, the linearization of $\Psi_{g^B}$ is invertible in the last two components and the implicit function theorem applies. The resulting $(h(v),u(v))$ is controlled as:
$$ \|h(v)\|_{C^{2,\alpha}_{\beta,**}} + \|u(v)\|_{r_B^{-2}C^{\alpha}_{\beta}} \leqslant C \Big(t^{\frac{2-\beta}4}+ \|v\|_{C^{2,\alpha}_{\beta,*}}^2\Big). $$
By Remark \ref{rem:gauge **}, $h(v)$ has no $r^{-2}$ term in its expansion and $\nabla^lh(v)= \mathcal{O}(r^{-4-l})$. Consequently, we find $$\E\big(\bar{g}^B_{t ,v ^B}\big) =\E\left(g^B_{t ,v ^B} + v ^B +h(v ^B)\right) = \mathcal{O}(r^{-6}).$$
\end{proof}

At infinity, we assume that $g_B$ is in a set of coordinates where 
$$ g_B = g_{e} + H^4 + \mathcal{O}(r_b^{-5}),$$
where $H^4$ is a homogeneous harmonic symmetric $2$-tensor with $|H^4| = \mathcal{O}(r^{-4})$ with $\tr_{e} H^4 = 0$, $\delta_{e} H^4 = 0$ and $H^4(\partial_{r_b},\partial_{r_b}) = 0$. The existence of such coordinates is proven in \cite[Lemma 11.12]{do3}

{
Let us start by decomposing $\tilde{\mathbf{O}}(g^B)$ the refined space of obstructions from Section \ref{sec: refined obst}. Denote $(N,g_{b}) = (N_{j_o},g_{b_{j_{o}}})$, where $j_o = j_{p^{-1}(p_o)}$, the core orbifold of the bubble tree as in Definition \ref{def:tree of sing at po}. 

An infinitesimal Ricci-flat deformation of $g_b$, $\mathbf{o}_1\in\mathbf{O}(g_b)$, plays a distinct role:
$$\mathbf{o}_1 := \Hess_{g_b}u - \frac14(\Delta_{g_b}u) g_b,$$
where $u$ is the unique solution of $\Delta_{g_b}u = 8$, $u=r_b^2+o(1)$, see \cite{bh,ozuthese}. On $(N,g_{b})$,  we decompose $L^2(g_b)$-orthogonally
$$\mathbf{O}(g_b) = \mathbf{O}_0(g_b) \overset{}{\oplus} \mathbb{R} \mathbf{o}_1, \qquad \text{ and define }\qquad\tilde{\mathbf{O}}_0(g^B) := \pr(\mathbf{O}_0(g_b)) \oplus \bigoplus_{j\in J_{p_o}\backslash\{j_{o}\}}\pr(\mathbf{O}(g_{b_j}))$$ and denote  $(\tilde{\mathbf{o}}_k)_{k\geqslant 2}$ an $L^2(g^B)$-orthogonal basis of $\tilde{\mathbf{O}}_0(g^B)$. Then, there exist numbers $(\mu_k)_k$ such that 
\begin{equation}\label{eq:decomposition uB}
    u_B = \mu_1\tilde{\mathbf{o}}_1 + \sum_{k\geqslant 2}\mu_k \tilde{\mathbf{o}}_k.
\end{equation}}

\begin{corollary}\label{cor:estimate u_B}
    We have the estimate: 
    \begin{equation}\label{eq: estimation tree obstructions}
        \mu_1\|\tilde{\mathbf{o}}_1\|_{L^2(g_b)} + \sum_{k\geqslant 2}\varepsilon_k(t ,v ^B)  |\mu_k| = 0,
    \end{equation}
    where $\varepsilon_k(t ,v ^B)\to 0$ as $(t ,v ^B)\to 0$.
\end{corollary}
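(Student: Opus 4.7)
The plan is to test the Einstein-modulo-obstructions identity $\mathcal{E}(\bar{g}_B) + \pi_{\bar{g}_B} u_B = 0$ of Proposition \ref{prop: gluing trees} against the rescaling direction $\mathbf{o}_1$ itself (not just its truncation $\tilde{\mathbf{o}}_1$), integrated over the core ALE bubble $(N_{j_o}, g_{b_{j_o}})$. The point of pairing against the untruncated deformation is that $\mathbf{o}_1$ admits the representation $\mathbf{o}_1 = \delta_{g_b}^*(\tfrac{1}{2}\nabla u) - \tfrac{1}{4}(\Delta u)\, g_b$, which is tailored to a Pohozaev-type integration by parts via the second Bianchi identity $\delta_{\bar{g}_B}\mathcal{E}(\bar{g}_B) = 0$.

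For the curvature side, Bianchi collapses $\int \langle \mathcal{E}(\bar{g}_B), \delta_{g_b}^*(\tfrac{1}{2}\nabla u)\rangle$, after integration by parts on exhausting balls $B_R \subset N_{j_o}$, to a boundary term at $\partial B_R$ of size $O(R^{-6}\cdot R \cdot R^3) = O(R^{-2}) \to 0$, using the refined decay $\mathcal{E}(\bar{g}_B) = O(r^{-6})$ supplied by Proposition \ref{prop: gluing trees} together with $|\nabla u| = O(r)$ and $|\partial B_R| = O(R^3)$. The trace contribution $\int \tr_{\bar{g}_B}\mathcal{E}(\bar{g}_B)\, dv$ is, by the equation, equal to $-\int \tr(\pi_{\bar{g}_B}u_B)\, dv$, a pure divergence, so it likewise reduces to a boundary term at infinity, controlled again by decay, plus small gluing-interface contributions of order $t_{\max}^{(2-\beta)/4}$ governed by Lemma \ref{lem: div P obst}.

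For the obstruction side, self-adjointness of $\pi_{\bar{g}_B}$ gives $\langle \pi_{\bar{g}_B}u_B, \mathbf{o}_1\rangle = \langle u_B, \pi_{\bar{g}_B}\mathbf{o}_1\rangle$, and $\pi_{\bar{g}_B}\mathbf{o}_1 = \mathbf{o}_1 - \delta_{\bar{g}_B}^*(\delta_{\bar{g}_B}\delta_{\bar{g}_B}^*)^{-1}\tilde{\delta}_{\bar{g}_B}\mathbf{o}_1$ equals $\mathbf{o}_1$ up to terms controlled by $\tilde{\delta}_{\bar{g}_B}\mathbf{o}_1$, which is small since $\mathbf{o}_1$ is $\delta_{g_b}$-divergence-free and $\bar{g}_B$ is $C^{2,\alpha}_{\beta,*}$-close to $g_b$ on the core. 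Expanding $u_B = \mu_1 \tilde{\mathbf{o}}_1 + \sum_{k\geqslant 2}\mu_k \tilde{\mathbf{o}}_k$, the $k=1$ contribution gives $\mu_1\|\mathbf{o}_1\|_{L^2(g_b)}^2$ modulo cut-off errors; the $k\geqslant 2$ core contributions are small because of the $L^2(g_b)$-orthogonality $\mathbf{O}_0(g_b)\perp\mathbb{R}\mathbf{o}_1$ built into Definition \ref{def: ext obst}; and the deeper-bubble contributions are small because the support of the corresponding $\tilde{\mathbf{o}}_k$, when pulled back to the core coordinates, is a ball of radius $O(t^{1/4})$, giving a small overlap with $\mathbf{o}_1$. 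Combining the two sides and normalizing by $\|\tilde{\mathbf{o}}_1\|_{L^2(g_b)}$ yields the stated identity.

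The main subtlety is the boundary accounting: the polynomial growth $u = r_b^2 + o(1)$ on the core bubble produces a serious threat of a nonvanishing ``mass''-type boundary contribution at infinity when one integrates by parts against $\tfrac{1}{2}\nabla u$. It is precisely the refined decay $\mathcal{E}(\bar{g}_B) = O(r^{-6})$, rather than the generic $O(r^{-4})$ that a naive perturbation would produce, established in Proposition \ref{prop: gluing trees}, that suppresses this contribution and allows the Pohozaev-type argument to close. The other delicate point is checking that all cut-off and gluing-interface errors can indeed be absorbed into a single remainder $\sum_{k\geqslant 2}\varepsilon_k(t,v^B)|\mu_k|$ with $\varepsilon_k\to 0$; this is where Lemma \ref{lem: div P obst} is applied bubble by bubble, together with the fact that $\bar{g}_B - g^B_t$ is $O(t_{\max}^{(2-\beta)/4} + \|v^B\|^2)$ by Proposition \ref{prop: gluing trees}.
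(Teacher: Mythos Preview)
Your approach is genuinely different from the paper's. The paper reads off the $r_b^{-4}\,dr_b^2$ coefficient of $u_B$ at the ALE infinity of the core bubble, using \cite[Lemma 11.16]{do3} to identify $\tilde{\mathbf{o}}_1$ as the unique obstruction contributing such a term; since $\pi_{\bar g_B}u_B=\mathcal{O}(r^{-6})$, all the work goes into showing that $(\pi_{\bar g_B}-1)\tilde{\mathbf{o}}_k$ has a small $r^{-4}dr^2$ coefficient, and this is done via the vector-field duality of Appendix~B (Proposition~\ref{prop:duality vect fields
}). Your Pohozaev/Bianchi pairing with the untruncated $\mathbf{o}_1$ is a plausible alternative route, but as written it has a real gap.

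The gap is the domain mismatch. The function $u$ and the deformation $\mathbf{o}_1=\Hess_{g_b}u-2g_b$ live on the core \emph{orbifold} $(N_{j_o},g_b)$, which in the general tree case has singular points where deeper bubbles are attached; the equation $\E(\bar g_B)+\pi_{\bar g_B}u_B=0$ lives on the \emph{desingularized tree}. Your phrase ``integrated over the core ALE bubble $(N_{j_o},g_{b_{j_o}})$'' does not resolve this: one cannot simultaneously integrate over $N_{j_o}$ and pair against $\bar g_B$-quantities. If instead you integrate over the core region $N_{j_o}^t$ of the tree (the honest option), your Bianchi integration by parts acquires inner boundary terms on spheres of radius $\sim t_k^{1/4}$ around each singular point, and these are precisely where the deep-bubble obstructions $\mu_k\tilde{\mathbf{o}}_k$ ($k$ in deeper bubbles) live. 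Your claim that the interface errors are ``of order $t_{\max}^{(2-\beta)/4}$ governed by Lemma~\ref{lem: div P obst}'' is not right in form: Lemma~\ref{lem: div P obst} controls $\delta_{g^D}\tilde{\mathbf{o}}$ and $P_{g^D}\tilde{\mathbf{o}}$, not the inner boundary flux of $\E(\bar g_B)(\nabla u,\nu)$, and the latter carries a factor of $\sum_j|\mu_j|$ through $|\E(\bar g_B)|=|\pi_{\bar g_B}u_B|$. Relatedly, your assertion that deep-bubble $\tilde{\mathbf{o}}_k$ have ``small overlap with $\mathbf{o}_1$'' is not well-posed, since $\mathbf{o}_1$ is not defined on their support; and note that the paper's basis $(\tilde{\mathbf{o}}_k)_{k\geqslant2}$ of $\tilde{\mathbf{O}}_0(g^B)$ is not asserted to be $L^2(g^B)$-orthogonal to $\tilde{\mathbf{o}}_1$, so even on the core you cannot invoke orthogonality directly without further argument.

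The paper's asymptotic-coefficient approach sidesteps all of this: working only at the ALE infinity of the tree, there are no inner boundaries and no need to extend $\mathbf{o}_1$ anywhere. If you want to salvage your route you would need to (i) integrate over $N_{j_o}^t$ and bound the inner boundary terms by $\varepsilon(t,v^B)\sum_k|\mu_k|$ explicitly, and (ii) control the metric-mismatch errors from replacing $\delta_{g_b}^*$ by $\delta_{\bar g_B}^*$ at the same precision; neither is addressed in the proposal.
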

\begin{remark}
    This corollary says that if $\mu_1\neq 0$, then at least one of the other $\mu_k$ is much larger than $\mu_1$. This is an extension to trees of singularities of \cite[Proposition 1.12]{ozu4}, which relied on the real-analyticity of the moduli space of Ricci-flat modulo obstructions metrics close to a single \textit{smooth} Ricci-flat ALE metric. There is no such result for trees of Ricci-flat ALE metrics. 
\end{remark}
\begin{proof}
The proof relies on an application of \cite[Lemma 11.16]{do3} together with \eqref{eq:decomposition uB} which says that if $\bar{g}^B$ is asymptotic to $\mathbb{R}^4/\Gamma$, then $\tilde{\mathbf{o}}_1$ is the only component of $u_B$ inducing some $\frac{dr_b^2}{r_b^4}$ at infinity:
\begin{equation}\label{eq: asympt uB}
    \begin{aligned}
        u_B(\partial_{r_b},\partial_{r_b}) &= \mu_1\tilde{\mathbf{o}}_1(\partial_{r_b},\partial_{r_b}) + \sum_{k\geqslant 2}\mu_k \tilde{\mathbf{o}}_k(\partial_{r_b},\partial_{r_b}) \\
        &= C \mu_1 \frac{|\Gamma|}{|\mathbb{S}^3|} \|\tilde{\mathbf{o}}_1\|^2_{L^2(g_b)} r_b^{-4}+o(r_b^{-4}).
    \end{aligned}
\end{equation} 

 If we had the same expansion for $\pi_{\bar{g}_B}u_B(\partial_{r_b},\partial_{r_b})$, we would conclude instantly from \eqref{eq:decay pi uB} that $\mu_1 = 0$. The rest of the proof consists in controlling the $O(r_b^{-4})$ terms of $\pi_{\bar{g}_B}u_B-u_B$.

From \eqref{eq:decomposition uB}, we have
$\pi_{\bar{g}_B}u_B = \mu_1\pi_{\bar{g}_B}\tilde{\mathbf{o}}_1 + \sum_{k\geqslant 2}\mu_k \pi_{\bar{g}_B}\tilde{\mathbf{o}}_k$. Let us control $\pi_{\bar{g}_B}\tilde{\mathbf{o}}_k-\tilde{\mathbf{o}}_k = -\delta^*_{\bar{g}_B}(\delta_{\bar{g}_B}\delta^*_{\bar{g}_B})^{-1}\delta_{\bar{g}_B}\tilde{\mathbf{o}}_k$.

Each of the $\tilde{\mathbf{o}}_k$ is an element of $C^{k,\alpha}_{\beta,**}$ and $\delta_{\bar{g}_B}\tilde{\mathbf{o}}_k\in (1+r_B)^{-4}r_B^{-1}C^{1,\alpha}_\beta$ since it vanishes in a neighborhood of infinity. Consequently, by Lemma \ref{lem:controle inverse mise jauge ALE}, $(\delta_{\bar{g}_B}\delta^*_{\bar{g}_B})^{-1}\delta_{\bar{g}_B}\tilde{\mathbf{o}}_k\in r_BC^{3,\alpha}_{\beta,**}$, and in particular, $(\delta_{\bar{g}_B}\delta^*_{\bar{g}_B})^{-1}\delta_{\bar{g}_B}\tilde{\mathbf{o}}_k = -Y^3+o(r^{-3})$ at infinity for some homogeneous $Y^3\in\ker \delta_e\delta^*_e$ with $|Y^3|_e\propto r^{-3}$. This is the standard behavior of elliptic operators between weighted Hölder spaces as one crosses an exceptional value. We deduce that
\begin{equation}\label{eq:exp pi ok}
    \pi_{\bar{g}_B}\tilde{\mathbf{o}}_k-\tilde{\mathbf{o}}_k = \delta_{e}^*Y^3 + o(r^{-4})
\end{equation} 
for some harmonic vector field $Y^3$ in $r^{-3}$ at infinity. The first thing to note is that $\pi_g$ is exactly the $L^2(g^B)$-projection of $\tilde{\mathbf{o}}_k$ onto $\ker \delta_{g^B}$, so we deduce that 
$$\|\pi_{\bar{g}_B}\tilde{\mathbf{o}}_k-\tilde{\mathbf{o}}_k\|_{L^2(g^B)} \leqslant C \Big((\max_{j\in J_{p_o}} t_{j})^{\frac{2-\beta}4} + \|v ^B\|_{C^{2,\alpha}_{\beta,*}}^2\Big) \|\tilde{\mathbf{o}}_k\|_{L^2(g^B)}.$$

There remains to determine how large $Y^3$ is compared to the $L^2$-norm of $\tilde{\mathbf{o}}_k$. By definition of $\pi_{\bar{g}^B}$, the term $Y^3$ is the asymptotic term of a solution $Y = Y^3+o(r^{-3})$ of the solution of 
\begin{equation}\label{eq:Y}
    \delta_{\bar{g}^B}\delta_{\bar{g}^B}^* Y =\delta_{\bar{g}^B}(\tilde{\mathbf{o}}_k).
\end{equation}
We use Proposition \ref{prop:duality vect fields
} to control it. If in the bases of Section \ref{sec:duality vect fields} one has the decomposition $Y^3=\sum_i\alpha_i X_{a_1^-,i} + \beta_i X_{c_2^-,i} + \gamma_i X_{b_0^-,i}$, consider $X_1 = \sum_i\alpha_i X_{a_1^+,i} + \beta_i X_{b_2^+,i} + \gamma_i X_{c_0^+,i}$ the \textit{dual} harmonic linear vector field. Then for the solution $\mathcal{X}_1$ of Lemma \ref{lem harmonic vect field linear} of $\delta_{\bar{g}^B}\delta_{\bar{g}^B}^*\mathcal{X}_1=0$ asymptotic to the linear vector field $X_1$ at infinity, we have by \eqref{eq:Y} and \eqref{eq:duality vector fields}
\begin{equation}\label{eq:ibp vect}
 \left|\int_M\delta_{\bar{g}^B}(\tilde{\mathbf{o}}_k)(\mathcal{X}_1)\right| = \left|\int_M \delta_{\bar{g}^B}\delta_{\bar{g}^B}^* Y (\mathcal{X}_1) \right| = \left|B(Y^3,X_1)\right| = \sum_iC_\alpha\alpha_i ^2 + C_\beta\beta_i ^2 + C_\gamma\gamma_i^2=:\|Y^3\|^2.
\end{equation}
Here, the right-hand-side is quadratic in $\|Y^3\|$ while the left-hand-side is linear in $X_1$ (hence in $Y^3$) and multiplied by the tensor $\delta_{\bar{g}^B}(\tilde{\mathbf{o}}_k)$, which is small by \eqref{eq:control operators on tilde o}. 

More concretely, by \eqref{eq:control operators on tilde o} and \eqref{eq:control X1}, the left-hand side of \eqref{eq:ibp vect} is controlled as follows (not the absence of squre in the denominator)
$$\frac{1}{\|r_e^3Y^3\|_{L^\infty(e)}}\left|\int_M\delta_{\bar{g}^B}(\tilde{\mathbf{o}}_k)(\mathcal{X}_1)\right|\to 0\qquad \text{ as } t\to 0.$$
Since on the finite-dimensional space of homogeneous harmonic vector fields decaying like $r^3$, one has $C^{-1}\|Y^3\|^2\leqslant \|r_e^3Y^3\|_{L^\infty(e)}\leqslant C\|Y^3\|^2$, we conclude that $\|r_e^3Y^3\|_{L^\infty(e)}\to 0$ as $t\to 0$.

In particular, $|\delta_{e}^*Y^3(\partial_{r_B},\partial_{r_B})|\leqslant \varepsilon(t ,v ^B) r^{-4}$ where $\varepsilon(t ,v ^B)\to 0$ as $(t ,v ^B)\to 0$. By \eqref{eq:exp pi ok}, 
\begin{equation}\label{eq:dvp pi g ok}
\pi_{\bar{g}_B}\tilde{\mathbf{o}}_k(\partial_{r_B},\partial_{r_B}) - \tilde{\mathbf{o}}_k(\partial_{r_B},\partial_{r_B}) = \varepsilon(t ,v ^B)r_B^{-4} + o(r_B^{-4}),
\end{equation}
 Together with \eqref{eq: asympt uB}, this leads to the desired statement.
\end{proof}

\begin{remark}
    We cannot prevent the presence of new $r^{-4}$ terms since there may be terms $Y^3$ in $r^{-3}$ appearing in the vector field obtained by $(\delta_{\bar{g}_B}\delta_{\bar{g}_B}^*)^{-1}\delta_{g^B}$ when gauge-fixing our tensors $\tilde{\mathbf{o}}_k$ thanks to $\pi_{\bar{g}_B}$. An example of such a tensor $Y^3$ is $ \nabla_{e}(r^{-2}) = -2r^{-3}\partial_r$ since $\frac1{2}\Hess_{e}(r^{-2}) = \frac{3dr^2 - r^2g_{\mathbb{S}^3}}{r^4}$ where $g_{e} = dr^2 + r^2g_{\mathbb{S}^3}$. 
\end{remark}

\subsection{Construction of the approximate metric and estimates}\label{sec:approx metric}

We now construct a good approximation $g^A= g^A_{t,v}$ of the Einstein metric $g $ close to $g^D_{t }$ in order to apply Corollary \ref{cor: quantitative uniqueness desing} and to contradict \eqref{eq:control approx obst} when the orbifold is spherical or hyperbolic. We will perturb our trees of singularities $g^B$ to make them match the orbifold metric $g_o$ better, then estimate the errors.

Close to a singular point $p_o\in M_o$, there exist coordinates in which one has 
\begin{equation}\label{eq:dvp go}
    g_o = g_{e} + H_2 + \mathcal{O}(r^{3}),
\end{equation}
where $H_2$ is a homogeneous quadratic tensor with $\delta_{e}H_2=0$ and $\bar{P}_eH_2 + \lambda g_{e}=0$ if $\E(g_o)+\lambda g_o = 0$. Such coordinates are classically obtained by gauge-fixing normal coordinates. 
\begin{prop}\label{prop:extension quadratic}
    There exists a symmetric $2$-tensor $h_2$ and $u_o\in \tilde{\mathbf{O}}(g^B)$ such that:
    \begin{equation}\label{eq: ext quadratic terms}
        \left\{\begin{aligned}
            &\bar{P}_{\bar{g}^B}h_2 +\lambda \bar{g}^B ={\pi_{\bar{g}^B}}u_o^{[2]}\in {\pi_{\bar{g}^B}}\tilde{\mathbf{O}}(g^B)\\
            &h_2 = H_2 + \mathcal{O}(r_B^{-2+\varepsilon}) \text{ for all $\varepsilon>0$.}\\
        \end{aligned}\right.
    \end{equation}

    Additionally, if $H_2$ comes from a metric $g_o$ which is spherical or hyperbolic with $\Ric(g_o)=\Lambda g_o$, $\Lambda\in \mathbb{R}\backslash\{0\}$, then we have the estimate: for $c\neq 0$, one has
    \begin{equation}\label{eq:est obst tree}
        \left\|u_o^{[2]} - c\,|\Gamma|\,\Lambda \,\tilde{\mathbf{o}}_1 \right\|_{r_B^{-2}C^{\alpha}_\beta}\to 0, \text{ as $(t,v^B)\to 0$.}
    \end{equation}
\end{prop}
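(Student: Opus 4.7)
The plan is to construct $h_2$ by extending $H_2$ from the tangent cone $\mathbb{R}^4/\Gamma$ at infinity to the bubble tree via a cut-off, then correct it by inverting the linearization $\bar P_{\bar g^B}$ modulo the obstruction space $\tilde{\mathbf O}(g^B)$. The obstruction $u_o^{[2]}$ produced along the way is identified by pairing against each basis element of $\tilde{\mathbf O}(g^B)$.

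\textbf{Construction of $h_2$.} Let $\chi_\infty$ be a smooth cut-off supported in the asymptotic chart $\Psi_\infty$ of the core bubble $(N,\bar g^B)$ and equal to $1$ for $r_B$ large, and set $\tilde H_2:=\chi_\infty\,\Psi_\infty^* H_2$. Since $\bar P_e H_2+\lambda g_e=0$ by hypothesis on $H_2$, while Proposition \ref{prop: gluing trees} yields $\bar g^B-g_e=\cO(r_B^{-4})$ and $\E(\bar g^B)=\cO(r_B^{-6})$ at infinity, a pointwise comparison of $\bar P_{\bar g^B}$ with $\bar P_e$ gives
\begin{equation*}
    \bar P_{\bar g^B}\tilde H_2+\lambda\bar g^B\in r_B^{-2}C^{\alpha}_{\beta}(g^B)
\end{equation*}
for some $\beta<2$, plus a compactly supported contribution from the cut-off. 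The Fredholm theory for $\bar P_{\bar g^B}$ between $C^{2,\alpha}_{\beta,**}$ and $r_B^{-2}C^{\alpha}_{\beta}$ that underlies the proof of Proposition \ref{prop: gluing trees} (with cokernel $\tilde{\mathbf O}(g^B)$ and invertibility on the $L^2(g^B)$-orthogonal complement) then yields a unique pair $(h_2^{\mathrm{corr}},u_o^{[2]})\in C^{2,\alpha}_{\beta,**}\times\tilde{\mathbf O}(g^B)$ satisfying
\begin{equation*}
    \bar P_{\bar g^B}h_2^{\mathrm{corr}}+\bar P_{\bar g^B}\tilde H_2+\lambda\bar g^B=\pi_{\bar g^B}u_o^{[2]}.
\end{equation*}
Setting $h_2:=\tilde H_2+h_2^{\mathrm{corr}}$ gives the first line of \eqref{eq: ext quadratic terms}, and because $h_2^{\mathrm{corr}}\in C^{2,\alpha}_{\beta,**}$ decays faster than $r_B^{-2+\varepsilon}$ for every $\varepsilon>0$ (by Remark \ref{rem:gauge **}, the loss $\varepsilon$ reflecting that $-2$ is an indicial root of $\bar P_e$ on $\mathbb R^4/\Gamma$), the second line also follows.

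\textbf{Identification of $u_o^{[2]}$.} The component of $u_o^{[2]}$ along each basis element $\tilde{\mathbf o}_k$ is extracted by the $L^2(g^B)$-pairing
\begin{equation*}
    \langle u_o^{[2]},\tilde{\mathbf o}_k\rangle_{L^2(g^B)}=\langle\bar P_{\bar g^B}h_2+\lambda\bar g^B,\,\tilde{\mathbf o}_k\rangle_{L^2(g^B)},
\end{equation*}
which we integrate by parts. For $k\geqslant 2$, the tensor $\tilde{\mathbf o}_k$ lies in $\tilde{\mathbf O}_0(g^B)$, so its core-bubble component is orthogonal to the rescaling mode $\mathbf o_1$ and has no asymptotic constant part at infinity; combined with the smallness $\|\delta_{g^D}\tilde{\mathbf o}_k\|,\|P_{g^D}\tilde{\mathbf o}_k\|=\cO(t_{\max}^{(2-\beta)/4})$ from Lemma \ref{lem: div P obst}, both interior and boundary-at-infinity contributions are $o(\|\tilde{\mathbf o}_k\|_{L^2})$ as $(t,v^B)\to 0$. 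For $k=1$, the rescaling obstruction $\mathbf o_1=\Hess_{g_b}u-2g_b$ with $\Delta_{g_b}u=8$ and $u\sim r_b^2$ has leading asymptotic behavior proportional to $-2(g_b-g_e)$, whose $r^{-4}$ coefficient encodes the mass of the core Ricci-flat ALE metric relative to $\mathbb R^4/\Gamma$. Integration by parts now produces a nontrivial boundary term at infinity where the quadratic tensor $H_2$ meets this $r_b^{-4}$ mass coefficient. Using the Einstein relation $\bar P_e H_2+\lambda g_e=0$ to convert the boundary pairing into a trace involving $\lambda$ (hence $\Lambda$), a computation analogous to ADM-type identities yields
\begin{equation*}
    \langle u_o^{[2]},\tilde{\mathbf o}_1\rangle_{L^2(g^B)}=c\,|\Gamma|\,\Lambda\,\|\tilde{\mathbf o}_1\|_{L^2(g_b)}^2+o(1)
\end{equation*}
for a nonzero universal constant $c$, which produces \eqref{eq:est obst tree}.

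\textbf{Main obstacle.} The principal difficulty lies in the boundary computation of the previous paragraph. Identifying the precise proportionality constant requires carefully tracking the pairing of the quadratic tensor $H_2$ with the $r_b^{-4}$ leading coefficient of $\mathbf o_1$ at infinity, and using the spherical/hyperbolic hypothesis on $g_o$ (via $\bar P_e H_2+\lambda g_e=0$ with $\lambda\neq 0$) to extract the dependence on $\Lambda$. All other contributions must be shown to be $o(1)$ uniformly in $(t,v^B)$, which relies on the curvature decay $\E(\bar g^B)=\cO(r_B^{-6})$ from Proposition \ref{prop: gluing trees} and the gauge estimates of Lemma \ref{lem: div P obst}.
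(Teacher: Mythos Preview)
Your construction of $h_2$ is exactly the paper's: cut off $H_2$ near infinity, note $\bar P_{\bar g^B}(\chi H_2)+\lambda\bar g^B\in r_B^{-2}C^\alpha_\beta$, and invert $\bar P_{\bar g^B}$ modulo $\tilde{\mathbf O}(g^B)$ via the Fredholm package behind Proposition \ref{prop: gluing trees}.

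The identification step has the right shape but contains two imprecisions that are the actual crux. First, your reason for $k\geqslant 2$ contributing $o(1)$ is not correct: every core-bubble obstruction in $\mathbf O_0(g_b)$ decays exactly like $r_b^{-4}$ at infinity, so they \emph{all} produce nonzero boundary terms when integrated by parts against the $r^2$-growing tensor $H_2$. What singles out $\mathbf o_1$ is that it is the only basis element whose $r^{-4}$ tail has a nonvanishing \emph{radial--radial} component $\tilde{\mathbf o}_k(\partial_r,\partial_r)$; this is \cite[Lemma 11.16]{do3}, recorded in the paper as \eqref{eq: asympt uB}. The paper's boundary bilinear form $Q(H_2,O_k^4)$ reduces, precisely in the spherical/hyperbolic case and via the external computations \cite[Proposition 11.18]{do3} and \cite[(86)]{ozu2}, to $-2c\Lambda\,O_k^4(\partial_r,\partial_r)$, so the radial--radial coefficient is all that survives. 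Your phrase ``no asymptotic constant part'' misses this mechanism.

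Second, the integration by parts must be taken against $\pi_{\bar g^B}\tilde{\mathbf o}_k$ rather than $\tilde{\mathbf o}_k$, and the gauge projection adds a term $\delta_e^*Y^3+o(r^{-4})$ at infinity (equation \eqref{eq:exp pi ok}). These new $r^{-4}$ contributions can have nonzero radial--radial component even for $k\geqslant 2$; showing they are $o(1)$ as $(t,v^B)\to 0$ is exactly \eqref{eq:dvp pi g ok} from Corollary \ref{cor:estimate u_B}, proved there via the vector-field duality of Appendix \ref{sec:duality vect fields}. Your appeal to Lemma \ref{lem: div P obst} handles the interior integrals but not this boundary gauge correction.
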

\begin{proof}
    Consider $\chi$ a cut-off function supported in a region where $\bar{g}^B$ has ALE coordinates, then $\bar{P}_{\bar{g}^B}(\chi H_2)+\lambda \bar{g}^B\in r_D^{-2}C^{\alpha}_{\beta}$ for $\beta>0$, so there exists a couple of elements $(h',u_o^{[2]})\in (C^{2,\alpha}_{\beta,*}\cap \tilde{\mathbf{O}}(g^B)^\perp)\times \tilde{\mathbf{O}}(g^B)$ such that 
    $\bar{P}_{\bar{g}^B}(\chi H_2)+\lambda \bar{g}^B = \bar{P}_{\bar{g}^B}h' + {\pi_{\bar{g}^B}}u_o^{[2]},$ and defining $h_2 = \chi H_2-h'$, we solve \eqref{eq: ext quadratic terms}.

    Let us now focus on the estimate \eqref{eq:est obst tree}. This relies on the following integration by parts: as in \cite[Proposition 5.9]{ozu2}, one has
    \begin{equation}\label{eq:IBP P H2}
        \int_N \langle \bar{P}_{\bar{g}_B} (\chi H_2),\pi_{\bar{g}^B}\tilde{\mathbf{o}}_k \rangle_{\bar{g}_B} = \int_N \langle \chi H_2,\bar{P}_{\bar{g}_B}(\pi_{\bar{g}^B}\tilde{\mathbf{o}}_k) \rangle_{\bar{g}_B} + Q(H_2, O_k^4),
    \end{equation}
where $O_k^4$ is the homogeneous harmonic term such that $\pi_{\bar{g}^B}\tilde{\mathbf{o}}_k = O^4_k + o(r^{-4})$ at infinity, and $Q(H_2,O_k^4) = \int_{\mathbb{S}^3/\Gamma}(3\langle H_2,O_k^4 \rangle+O_k^4(\nabla_e\tr_eH_2,\partial_r))dv_{\mathbb{S}^3/\Gamma}$, where $\bar{g}^B$ is asymptotic to $\mathbb{R}^4/\Gamma$ at infinity.

Now a combination of \cite[Proposition 11.18]{do3} and \cite[(86)]{ozu2} shows that there exists a constant $c\neq 0$ such that if $H_2$ comes from a spherical or hyperbolic orbifold with $\Ric(g_o)=\Lambda g_o$ (or equivalently $\E(g_o)+\Lambda g_o = 0$ in dimension $4$), then one has 
$$ Q(H_2,O_k^4) = -2c\Lambda \,O_k^4(\partial_r,\partial_r). $$

Now, from \eqref{eq:dvp pi g ok} and \eqref{eq: asympt uB}, we know that if $k \neq 1$, then $r^4O_k^4(\partial_r,\partial_r)\to 0$ as $r\to+\infty$, while $r^4O_1^4(\partial_r,\partial_r)\to \frac{|\Gamma|}{|\mathbb{S}^3|} \|\mathbf{o}_1\|^2_{L^2(g_b)}$ as $r\to+\infty$.
\end{proof}

We can further expand the metrics $g_o$ and $\bar{g}^B$ in divergence-free coordinates respectively at $p_o$ and at infinity to define the homogeneous tensors $H_3$, $H_4$ and $H^4$ on Euclidean space, 
\begin{equation}
\begin{aligned}
    &g_o = g_{e} + H_2+H_3+H_4+\mathcal{O}(r_e^5),\\
    &\bar{g}^B = g_{e}+H^4+\mathcal{O}(r_e^{-5}).
\end{aligned}
\end{equation}
these tensors are homogeneous, $|H_k|_e\propto r_e^k$ and $|H^4|_e\propto r_e^{-4}$, and satisfy the equations:
\begin{equation}\label{eq: homog tensors}
    \begin{aligned}
    &\bar{P}_{e} H_2 + \lambda g_{e} = 0,\\
    &\bar{P}_{e} H_3  = 0,\\
    &\bar{P}_{e} H_4 + Q_{e}(H_2,H_2)+ \lambda H_2 = 0, \text{ and}\\
    &\bar{P}_{e} H^4 = 0,
\end{aligned}
\end{equation}
where $Q_g$ denotes the quadratic terms in the expansion $$h\mapsto \mathbf{\Phi}_g(g+h) = \bar{P}_g(h) + Q_g(h,h) + R_g(h),$$
where $R_g(h)$ contains the remaining cubic and higher order terms, see \cite[Section 14]{biq1}. The equations \eqref{eq: homog tensors} simply correspond to the expansions of $\Ric(g_o) = \Lambda g_o$ at $p_o$ and $\Ric(\bar{g}^B)=0$ at infinity.

The following result is a simplified version of \cite[Sections 2.1.1 and 2.1.2]{ozu3} and \cite[Proposition 11.10] {do3}, and follows \cite[Section 14]{biq1}. Its proof is very similar to that of Proposition \ref{prop:extension quadratic}.

\begin{prop}\label{prop:extension tensors}
    There exist tensors $h_3$, $h_4$ and $h^4$ satisfying the following properties:
    \begin{equation}\label{eq: ext cubic terms}
        \left\{\begin{aligned}
            &\bar{P}_{\bar{g}^B}h_3={\pi_{\bar{g}^B}}u_o^{[3]}\in {\pi_{\bar{g}^B}}\tilde{\mathbf{O}}(g^B)\\
            &h_3 = H_3 +  \mathcal{O}(r_B^{-1+\varepsilon}) \text{ for all $\varepsilon>0$,}\\
        \end{aligned}\right.
    \end{equation}
    \begin{equation}\label{eq: ext quartic terms}
        \left\{\begin{aligned}
            &\bar{P}_{\bar{g}^B}h_4 + Q_{\bar{g}^B}(h_2,h_2)+\lambda h_2 ={\pi_{\bar{g}^B}}u_o^{[4]} \in {\pi_{\bar{g}^B}}\tilde{\mathbf{O}}(g^B)\\
            &h_4 = H_4 + \mathcal{O}(r_B^{\varepsilon}) \text{ for all $\varepsilon>0$,}\\
        \end{aligned}\right.
    \end{equation}
\begin{equation}\label{eq: ext terms infty}
        \left\{\begin{aligned}
            &\bar{P}_{g_o}h^4 =0 \text{ if }g_o \text{ is spherical or hyperbolic}\\
            &h^4 = H^4 + \mathcal{O}(r_b^{-2-\varepsilon}) \text{ for all $\varepsilon>0$,}\\
        \end{aligned}\right.
    \end{equation}
\end{prop}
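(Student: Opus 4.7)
The plan is to follow the same Lyapunov-Schmidt strategy used in Proposition \ref{prop:extension quadratic}, handling the three tensors in turn by cutting off the model homogeneous tensors $H_3$, $H_4$, $H^4$ into the appropriate asymptotic regions and correcting by inverting $\bar{P}_{\bar{g}^B}$ (resp.\ $\bar{P}_{g_o}$) modulo the refined obstruction space $\tilde{\mathbf{O}}(g^B)$.

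For $h_3$, let $\chi$ be a cut-off function supported in an ALE end coordinate chart of $(\bar{g}^B,r_B)$ and equal to $1$ outside a slightly smaller annulus, so that $\chi H_3$ is a globally defined symmetric $2$-tensor on the bubble-tree region. Since $\bar P_e H_3 = 0$ and $\bar{g}^B - g_e = \mathcal{O}(r_B^{-4})$ with two derivatives decaying comparably, the source
\begin{equation*}
    F_3 := \bar{P}_{\bar g^B}(\chi H_3)
\end{equation*}
lies in $r_B^{-2}C^{\alpha}_{\beta}$ for any $\beta$ slightly larger than $-1$, because the leading contribution in the ALE end is of size $\mathcal{O}(r_B^{-3})$ and everything else is compactly supported. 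Applying Lemma \ref{lem:inverting barP with terms at infty} to decompose $F_3 = \bar P_{\bar g^B}h' + \pi_{\bar g^B}u_o^{[3]}$ with $(h',u_o^{[3]}) \in (C^{2,\alpha}_{\beta,*}\cap \tilde{\mathbf{O}}(g^B)^\perp)\times \tilde{\mathbf{O}}(g^B)$ and then setting $h_3 := \chi H_3 - h'$ solves the first line of \eqref{eq: ext cubic terms}. Since $h' \in C^{2,\alpha}_{\beta,*}$ and $\beta$ can be chosen arbitrarily close to $-1$ (but larger), $h'= \mathcal{O}(r_B^{-1+\varepsilon})$, which gives the claimed expansion.

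For $h_4$ the strategy is identical, but now one must include the nonlinear source: set
\begin{equation*}
    F_4 := \bar{P}_{\bar g^B}(\chi H_4) + Q_{\bar g^B}(h_2,h_2) + \lambda h_2,
\end{equation*}
and observe that the Euclidean identity $\bar P_e H_4 + Q_e(H_2,H_2) + \lambda H_2 = 0$ from \eqref{eq: homog tensors}, combined with $h_2 - H_2 = \mathcal{O}(r_B^{-2+\varepsilon})$ from Proposition \ref{prop:extension quadratic} and $\bar g^B - g_e = \mathcal{O}(r_B^{-4})$, forces $F_4 \in r_B^{-2}C^{\alpha}_{\beta}$ for all $\beta < 0$ small in absolute value. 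Decomposing $F_4 = \bar P_{\bar g^B}h'' + \pi_{\bar g^B}u_o^{[4]}$ via Lemma \ref{lem:inverting barP with terms at infty} and setting $h_4 := \chi H_4 - h''$ yields \eqref{eq: ext quartic terms}, with $h'' = \mathcal{O}(r_B^{\varepsilon})$ obtained by choosing $\beta$ arbitrarily close to $0^-$.

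Finally, $h^4$ is constructed on the \emph{orbifold} side $(M_o,g_o)$, cutting $H^4$ into a coordinate neighborhood of the singular point $p_o$; since $\bar P_e H^4 = 0$ the source $\bar P_{g_o}(\chi H^4)$ is in a space where $\bar{P}_{g_o}$ is invertible between suitable weighted Hölder spaces -- here is where the \emph{spherical/hyperbolic} hypothesis enters: on such orbifolds, $\bar P_{g_o}$ has trivial kernel among tensors with the required asymptotic decay (this is precisely the rigidity input used to prove the gap theorems), so no obstruction is needed and we can subtract a correction term of size $\mathcal{O}(r_b^{-2-\varepsilon})$ to kill the source, giving \eqref{eq: ext terms infty}. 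The main obstacle across the three constructions is the careful bookkeeping of weights to ensure that each source lies in the exact image of $\bar{P}$ modulo obstructions, and that the correction decays strictly faster than the leading homogeneous term it is meant to extend; this is essentially a matter of verifying that the exceptional weights of $\bar P_{\bar g^B}$ and $\bar P_{g_o}$ are not crossed by the chosen $\beta$, as in the proof of Proposition \ref{prop:extension quadratic}.
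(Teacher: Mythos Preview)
Your proposal is correct and follows essentially the same route the paper takes: the paper itself says the proof is ``very similar to that of Proposition \ref{prop:extension quadratic}'' and refers back to \cite[Sections 2.1.1, 2.1.2]{ozu3}, \cite[Proposition 11.10]{do3}, and \cite[Section 14]{biq1}; your cut-off-then-correct-modulo-obstructions scheme is exactly that template applied to each of $H_3$, $H_4$, $H^4$ in turn. Two small cosmetic points: (i) the numerical weights you name (``$\beta$ close to $-1$'', ``$\beta$ close to $0^-$'') use the opposite sign convention from the paper's Definition \ref{norme ALE}, where $s\in C^{k,\alpha}_\beta(g_b)$ means $|s|\lesssim r_b^{-\beta}$; and (ii) the decomposition $F=\bar P_{\bar g^B}h'+\pi_{\bar g^B}u$ is really the Lyapunov--Schmidt reduction behind Theorem \ref{thm: pert mod obst} rather than Lemma \ref{lem:inverting barP with terms at infty} per se, which only records the resulting estimate. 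Neither affects the argument.
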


\begin{remark}
    The above tensors are introduced for two reasons:
\begin{enumerate}
    \item to make the bubble metric and the orbifold metric match to a higher order in the neck region where we interpolate between them, this is why they match the asymptotic expansions of the metrics, and
    \item not to perturb the Einstein equation too much, this is why we have the tensors solve the above equations, see the resulting controls in Corollary \ref{cor: est Phi regions}.
\end{enumerate}
\end{remark}
Let us number $p_o^1,\dots,p_o^N\in S_{o}$ the singular points of $g_o$ to be desingularized and denote $(\bar{g}^B_1, u^B_1),\dots,(\bar{g}^B_{N},u^B_N)$ the Ricci-flat modulo obstructions metrics of Proposition \ref{prop: gluing trees} obtained from the associated perturbed trees of singularities. Up to renumbering the gluing scales $t_1 = t_{p^{-1}(p_o^1)},\,\dots,\, t_N = t_{p^{-1}(p_o^N)}$ to $g_o$, we assume
$$t_1\leqslant\dots\leqslant t_N.$$ We also denote $(h_{2,1},u_{o,1}),\dots, (h_{2,N},u_{o,N})$ and $(h_{3,1},u_{o,1}^{[3]},h_{4,1},u_{o,1}^{[4]},h^4_1),\dots,(h_{3,N},u_{o,N}^{[3]},h_{4,N},u_{o,N}^{[4]},h^4_N)$ the $2$-tensors constructed in \eqref{eq: ext quadratic terms} and Proposition \ref{prop:extension tensors}. Consider a cut-off function $\chi_{o,j}$ supported on of $\{r_o<2\varepsilon_0\}\cap B(p_j,1)$, equal to $1$ on $\{r_o<\varepsilon_0\}\cap B(p_j,1)$ and with derivatives controlled as $|\nabla_{g_o}^l\chi_{o,j}|\leqslant C_l r_o^{-l}$ for $\varepsilon_0>0$ of Definition \ref{orb Ein} and $C_l>0$. 

We have the following estimates.
\begin{corollary}\label{cor: est Phi regions}
    Define the metrics and obstructions perturbed to $4^{th}$ order
    \begin{equation}\label{eq:approx on Mo}
        g_o^{[4]}:= g_o + \sum_{j=1}^N t_j^2\,\chi_{o,j}h^4_j,\quad \text{ and }
    \end{equation}
\begin{equation}\label{eq:approx on bargB}
        \bar{g}^{B,[4]}_j:= \bar{g}^{B}_j+t_j \,h_{2,j}+t_j^\frac32\, h_{3,j}+t_j^2\, h_{4,j}, \qquad \bar{w}^{B,[4]}_j:= u_B - t_j u_{o,j}^{[2]} -t_j^\frac32 u_{o,j}^{[3]}-t_j^2 u_{o,j}^{[4]}.
    \end{equation}
    They satisfy the controls: for all $l\in\mathbb{N}$, there exists $C_l>0$ depending on the above tensors such that
    \begin{equation}\label{eq:control Ric go4}
        r_B^{2+l}\left|\nabla^l\left(\mathbf{\Phi}_{g^B_j}(\bar{g}^{B,[4]}_j) + \pi_{g^B_j}\bar{w}^{B,[4]}_j \right)\right|_{g^B_j} \leqslant C_l\,\,t_j^\frac{5}{2}\,r_B^5,
    \end{equation}
    \begin{equation}\label{eq:control Ric bargB4}
        r_o^{2+l}\left|\nabla^l\left(\mathbf{\Phi}_{g_o}(g_o^{[4]})+\lambda g_o^{[4]}\right)\right|_{g_o} \leqslant C_l \left(t_j^4r_o^{-8} + t_j^2\, 1_{\{\varepsilon_0<r_o<2\varepsilon_0\}}\right) \text{ on } B(p_j,3\varepsilon_0).
    \end{equation}
    where $1_{A}$ is the indicator function of a set $A$.
\end{corollary}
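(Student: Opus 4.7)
The plan is to prove both estimates by directly Taylor-expanding the gauged Einstein operator around the relevant base metrics ($\bar{g}^B_j$ on the bubble side, $g_o$ on the orbifold side) and cancelling the low-order terms against the obstructions using the defining equations from Propositions \ref{prop:extension quadratic} and \ref{prop:extension tensors}.

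For the bubble-side estimate \eqref{eq:control Ric go4}, I would write $H := t_j h_{2,j} + t_j^{3/2} h_{3,j} + t_j^2 h_{4,j}$ and expand
\begin{equation*}
\mathbf{\Phi}_{g^B_j}(\bar{g}^B_j + H) = \mathbf{\Phi}_{g^B_j}(\bar{g}^B_j) + \bar{P}_{\bar{g}^B_j}(H) + Q_{\bar{g}^B_j}(H,H) + R_{\bar{g}^B_j}(H).
\end{equation*}
The zeroth order equals $-\pi_{\bar{g}^B_j}u^B_j$ by Proposition \ref{prop: gluing trees}, and cancels the $u^B_j$-part of $-\pi_{g^B_j}\bar{w}^{B,[4]}_j$ up to the small gauge discrepancy $(\pi_{g^B_j}-\pi_{\bar{g}^B_j})u^B_j$, which is controlled by \eqref{eq:control metric}. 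Substituting \eqref{eq: ext quadratic terms}, \eqref{eq: ext cubic terms}, \eqref{eq: ext quartic terms} then cancels successively the linear contributions at orders $t_j$, $t_j^{3/2}$, $t_j^2$ in $\bar{P}_{\bar{g}^B_j}(H)$ against the remaining components of $-\pi\bar{w}^{B,[4]}_j$, and simultaneously absorbs the $\lambda$-shifts $-\lambda\bar{g}^B_j$, $\lambda h_{2,j}$ together with the quadratic $t_j^2 Q_{\bar{g}^B_j}(h_{2,j},h_{2,j})$. The leading residual is the cross term $t_j^{5/2}Q_{\bar{g}^B_j}(h_{2,j},h_{3,j})$; all other cross quadratic terms and the cubic remainder $R(H)$ carry extra factors of $t_j^{1/2}$. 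Using $h_{k,j}=H_k+o(r_B^k)$ with $|H_k|_e\sim r_B^k$ and that $Q$ is a second-order quadratic form losing two powers of $r_B$ on homogeneous inputs, one obtains $r_B^{2+l}|\nabla^l Q_{\bar{g}^B_j}(h_{2,j},h_{3,j})|_{g^B_j} \leqslant C_l r_B^5$, giving \eqref{eq:control Ric go4}.

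For the orbifold-side estimate \eqref{eq:control Ric bargB4}, since $g_o$ is Einstein one has $\mathbf{\Phi}_{g_o}(g_o)+\lambda g_o=0$, so
\begin{equation*}
\mathbf{\Phi}_{g_o}(g_o+t_j^2\chi_{o,j}h^4_j)+\lambda(g_o+t_j^2\chi_{o,j}h^4_j) = t_j^2 L_{g_o}(\chi_{o,j}h^4_j) + t_j^4 Q_{g_o}(\chi_{o,j}h^4_j,\chi_{o,j}h^4_j) + R,
\end{equation*}
where $L_{g_o}$ denotes the linearization of $\mathbf{\Phi}_{g_o}+\lambda\,\cdot\;$ at $g_o$. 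On $\{r_o<\varepsilon_0\}$ where $\chi_{o,j}\equiv 1$, the equation \eqref{eq: ext terms infty} for spherical or hyperbolic $g_o$ yields $L_{g_o}h^4_j=0$, leaving only the quadratic term. With $|h^4_j|_{g_o}\sim r_o^{-4}$ and $l$-derivatives scaling as $r_o^{-4-l}$, one obtains $r_o^{2+l}|\nabla^l Q_{g_o}(h^4_j,h^4_j)|\lesssim r_o^{-8}$, producing the $t_j^4 r_o^{-8}$ contribution. On $\{\varepsilon_0<r_o<2\varepsilon_0\}$ the commutator $[L_{g_o},\chi_{o,j}]h^4_j$ is uniformly bounded and localized to this thin annulus, yielding the $t_j^2\,\mathbf{1}_{\{\varepsilon_0<r_o<2\varepsilon_0\}}$ indicator term.

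The main obstacle lies in the bubble estimate: one must track simultaneously that (i) the linearization is taken at $\bar{g}^B_j$, which is Einstein only modulo the obstructions $u^B_j$ and not a genuine Einstein metric, and (ii) the gauge base used in $\mathbf{\Phi}_{g^B_j}$ and $\pi_{g^B_j}$ is $g^B_j$, not the perturbed $\bar{g}^B_j$ appearing in the equations of Proposition \ref{prop:extension quadratic}. Both discrepancies contribute errors that are strictly of higher order than $t_j^{5/2}$ thanks to Proposition \ref{prop: gluing trees} and Lemma \ref{lem: div P obst}, but they must be combined carefully with the asymptotic expansions $h_{k,j}=H_k+o(r_B^k)$, which themselves come from solving the elliptic equations \eqref{eq: ext quadratic terms}--\eqref{eq: ext quartic terms} across critical weights. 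The orbifold estimate is more routine once one identifies the correct linearization $L_{g_o}=\bar{P}_{g_o}+\lambda\,\mathrm{Id}$ of $\mathbf{\Phi}_{g_o}+\lambda\,\cdot\;$, which precisely annihilates the matching tensor $h^4_j$ constructed in \eqref{eq: ext terms infty}.
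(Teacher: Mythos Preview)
Your proposal is correct and follows essentially the same approach as the paper: Taylor-expand $\mathbf{\Phi}$ around $\bar{g}^B_j$ (resp.\ $g_o$), cancel the low-order terms using the defining equations \eqref{eq: ext quadratic terms}--\eqref{eq: ext terms infty}, and identify the dominant residual as $t_j^{5/2}Q_{\bar{g}^B_j}(h_{2,j},h_{3,j})$ on the bubble side and $t_j^4 Q_{g_o}(h^4_j,h^4_j)$ plus the cut-off commutator on the orbifold side. Your write-up is in fact more explicit than the paper's about the cancellation mechanism and about the gauge-base discrepancy $g^B_j$ vs.\ $\bar{g}^B_j$, which the paper's proof silently absorbs.
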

\begin{proof}
    
Let us drop the index $j$ for ease of notation. The expansion of $g\mapsto\mathbf{\Phi}_{\bar{g}^{B}}(g)$ close to $g = \bar{g}^B$ in the direction $h^{[4]}:=\bar{g}^{B,[4]}-\bar{g}^{B} =t\,h_2+t^\frac32\,h_3+t^2h_4$ yields
$$\mathbf{\Phi}_{\bar{g}^{B}}(\bar{g}^{B,[4]}) = \mathbf{\Phi}_{\bar{g}^{B}}(\bar{g}^{B}) + \bar{P}_{\bar{g}^{B}}(h^{[4]}) + Q_{\bar{g}^{B}}(h^{[4]},\,h^{[4]}) + R_{\bar{g}^{B}}(h^{[4]}), $$
and using the definitions of the tensors in \ the worst terms come from $r_B^{2+l}|\nabla^l_{\bar{g}^{B}}Q_{\bar{g}^{B}}(t\,h_2,t^\frac32\,h_3)| \leqslant t^{\frac52}r_B^5$. 

We similarly control $g_o^{[4]}$, where the worst terms come from:
$r_o^{2+l}|\nabla^l_{g_o}Q_{g_o}(t^2\,h^4\,,\,t^2\,h^4)| \leqslant t^4r_o^{-8}$
and the cut-off of $t^2h^4$ by $\chi_{o}$ which  satisfies $\varepsilon_0^{l}|\nabla_{g_o}^l\chi_{o}|\leqslant C_l$ by definition.
\end{proof}

We define the \textit{approximate Einstein modulo obstruction} $(g^A,w^A)$ iteratively as follows, using the notations of Definition 2.6.
\begin{defn}[Approximate Einstein modulo obstruction metric]\label{defn:approximate metric}
     We again assume that we ordered the scale so that $t_1\leqslant\dots\leqslant t_N$.
\begin{enumerate}
    \item Define $(g^A_0,w^A_0) = (g_o,0)$,
    \item For $j\geqslant 0$, define (using the notation $\#$ of Definition \ref{def naive desing}) $(g^A_{j+1},w^A_{j+1})$ by
    \begin{equation}
    \left\{\begin{aligned}
        g^A_{j+1} &:= \left(g^A_{j}+t_{j+1}^2\,\chi_{o,j}\,h^4_{t_{j+1}}\right)\,\,\#_{p_{j+1},t_{j+1}} \,\,g^{B,[4]}_j\\
        w^A_{j+1} &:= w^A_j\,\,\#_{p_{j+1},t_{j+1}}\,\,\left(u^B_{j+1}+u_o^{j+1}\right).
    \end{aligned}\right.
\end{equation}
\item Denote $g^A = g^A_N$.
\end{enumerate}
On $M_o^{16t}$, one has $g^A = g_o^{[4]}$.
\end{defn}

We finally measure how good an approximation our metrics $g^A_{t ,v }$ are.

\begin{prop}\label{prop:estimate approx}
    For $0<\beta<1$, we have 
    \begin{equation}\label{eq:control phi  approx}
        \left\|\mathbf{\Phi}_{g^D_{t }}(g^A) + \pi_{g^A}w^A \right\|_{r_D^{-2}C^\alpha_\beta}\leqslant C t_N^{\frac{5-\beta}4} = o(t_N).
    \end{equation}
\end{prop}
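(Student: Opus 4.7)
The plan is to decompose $M$ into three types of regions---the orbifold core $M_o^{16t}$, the deep bubble cores $\bigcup_j N_j^{16t}$, and the neck annuli $\bigcup_j\mathcal{A}_j(t,\varepsilon)$---and bound $\mathbf{\Phi}_{g^D_{t}}(g^A)+\pi_{g^A}w^A$ in the weighted norm $r_D^{-2}C^\alpha_\beta$ on each separately. The target bound $t_N^{(5-\beta)/4}$ will be dictated by the worst contribution, which lives in the necks at radius $r_D\sim t_j^{1/4}$ and corresponds to the first order of the matched asymptotic expansions between $g_o$ and the rescaled bubble metrics that the construction does \emph{not} cancel.

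On $M_o^{16t}$, $g^A=g_o^{[4]}$ and $w^A\equiv 0$ by Definition \ref{defn:approximate metric}, so estimate \eqref{eq:control Ric bargB4} applies directly: the pointwise bound $C(t_N^4 r_o^{-10}+t_N^2\,1_{\{\varepsilon_0<r_o<2\varepsilon_0\}})$ converts to a weighted norm $\lesssim t_N^{(8-\beta)/4}+t_N^2$ using $r_o\geqslant c\,t_N^{1/4}$, which is strictly dominated by $t_N^{(5-\beta)/4}$ for $\beta<1$. On each $N_j^{16t}$, $g^A$ and $w^A$ coincide, in the natural bubble chart, with $\bar g^{B,[4]}_j$ and its obstruction $\bar w^{B,[4]}_j$ up to the rescaling by $T_j$; exploiting the scale invariance of $\E$ and of the projection $\pi$ established after Theorem \ref{thm: pert mod obst} and the relation $r_D=\sqrt{T_j}\,r_B$, estimate \eqref{eq:control Ric go4} translates into a weighted bound of order $t_j^{(5-\beta)/4}\leqslant t_N^{(5-\beta)/4}$, uniformly in $j$.

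The main work is on the necks $\mathcal{A}_j(t,\varepsilon)$, where $g^A$ is an explicit $\chi$-interpolation between $g_o^{[4]}$ and the pullforward of $\bar g^{B,[4]}_j$, and similarly for $w^A$. Pulling both sides back to the common $\mathbb{R}^4/\Gamma_j$ chart, the tensors $h_2,h_3,h_4$ and $h^4$ constructed via \eqref{eq: homog tensors}, \eqref{eq: ext quadratic terms}, \eqref{eq: ext cubic terms}, \eqref{eq: ext quartic terms} and \eqref{eq: ext terms infty} are designed precisely so that the two expansions agree term by term through order $t_j^2 r^4$, and the obstructions $u_o^{[2]},u_o^{[3]},u_o^{[4]}$ absorbed into $\bar w^{B,[4]}_j$ (and thereby into $\pi_{g^A}w^A$) cancel the obstruction-valued linear residuals $\bar P_{\bar g^B}h_k$ produced by those equations. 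What survives is: (i) the quadratic cross-terms $Q_{g_e}(t_j h_2,t_j^{3/2}h_3)$, $Q_{g_e}(t_j h_2,t_j^2 h_4)$ etc., arising from the nonlinearity $\mathbf{\Phi}_{g^D_t}(g+h)=\mathbf{\Phi}_{g^D_t}(g)+\bar P(h)+Q(h,h)+R(h)$; (ii) the errors supported in $\{r_D\sim t_j^{1/4}\}$ generated by $\chi'$ and $\chi''$ when the differential operators in $\mathbf{\Phi}$ hit the cut-offs of the $\#$-gluing; (iii) the discrepancies between $\tilde\delta_{g^D_t}$ and the local $\tilde\delta_{\bar g^B_j}$ or $\tilde\delta_{g_o}$ controlled by Lemma \ref{lem: div P obst}. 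All three contributions are of size $\mathcal{O}(t_j^{5/2})$ multiplied by polynomial factors in $r_e$ of degree at most $5$, so at the neck scale $r_D\sim t_j^{1/4}$ the weighting $r_D^{2-\beta}\cdot t_j^{5/2}\cdot r_e^{5-\beta}$ evaluates to exactly $t_j^{(5-\beta)/4}$, as required.

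The hard part will be to organize these order-by-order cancellations simultaneously across the three components of $\mathbf{\Phi}_{g^D_t}(g^A)+\pi_{g^A}w^A$---the Einstein tensor $\E$, the gauge $\delta^*_{g^A}\tilde\delta_{g^D_t}$, and the obstruction projection $\pi_{g^A}$---and to keep careful track of the differences between operators computed in $g^A$, $g^D_t$, $g_o$, and $\bar g^B_j$ (these differences are higher-order thanks to \eqref{eq:control metric} and Lemma \ref{lem: div P obst}, but they propagate through the quadratic term $Q$). The nested structure of trees of singularities causes no additional difficulty at this stage, because all inner scales have already been absorbed into the construction of $\bar g^{B,[4]}_j$ and into the constants of \eqref{eq:control Ric go4}, so that the final bound only sees the top-level scales $t_1\leqslant\dots\leqslant t_N$.
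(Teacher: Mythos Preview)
Your approach coincides with the paper's: reduce to the necks via Corollary~\ref{cor: est Phi regions} and then control the neck by the first order of the asymptotic expansions that the construction leaves unmatched. Your treatment of the core regions $M_o^{16t}$ and $N_j^{16t}$ is correct and gives the right orders.

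In the neck, however, your bookkeeping is imprecise. The expression ``$r_D^{2-\beta}\cdot t_j^{5/2}\cdot r_e^{5-\beta}$'' does not evaluate to $t_j^{(5-\beta)/4}$ under any interpretation of $r_e$, and your item~(ii) includes the $\mathcal{O}(r_B^{-5})$ tail of $\bar g^B$, which is not of the form $t_j^{5/2}\times(\text{polynomial in }r_e)$ even though it has the same magnitude as the $r_o^5$ tail at the neck scale. The paper avoids the (i)--(iii) taxonomy and simply expands both metrics in the common Euclidean chart, observing that by construction they agree through $g_e+H_2+H_3+H_4+t_j^2 H^4$, so
\[
\Phi_j^* g_o^{[4]} - t_j\,\phi_{t_j^{-1/2}}^*\Psi_j^*\bar g^{B,[4]}_j \;=\; \mathcal{O}\bigl(r_o^5 + t_j^{5/2} r_o^{-5} + t_j^2 r_o^{-2\pm\varepsilon}\bigr) \;=\; \mathcal{O}\bigl(t_j^{5/4}\bigr)\quad\text{at } r_o\sim t_j^{1/4},
\]
with the analogous control on derivatives. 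Applying $\mathbf{\Phi}_{g^D_t}$ to the $\chi$-interpolation then gives an error of size $\mathcal{O}(t_j^{5/4}\cdot r_D^{-2})$, which in the $r_D^{-2}C^\alpha_\beta$ norm contributes $r_o^{2-\beta}\cdot t_j^{5/4}r_o^{-2}\big|_{r_o\sim t_j^{1/4}}=t_j^{(5-\beta)/4}$. This replaces your final sentence with a computation that actually closes.
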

\begin{remark}
    The key point here are that the control is in $t_N$ rather than in $t_{\max}$. It does not see the relative scales deep in the trees of singularities, and this is the reason why we need to perturb these trees of Ricci-flat ALE metrics into a \emph{single} Ricci-flat modulo obstruction metric in Section \ref{sec:gluing trees}.
\end{remark}
\begin{proof}
    Thanks to Corollary \ref{cor: est Phi regions}, there only remains to control the transition regions in order to prove \eqref{eq:control phi  approx}. In such a region with $t_j^\frac14<r_D<2t_j^\frac14$, $g^A$ interpolates between the following metrics satisfying for all $\varepsilon>0$,
    \begin{equation}
        \Phi^*_jg_o^{[4]} = g_{e} + H_2+ H_3+H_4 + t_j^2 H^4 + \mathcal{O}(t_j^2r_o^{-2-\varepsilon} + r_o^5) \qquad \text{ and }
    \end{equation}
    \begin{equation}
t_j\phi_{t_j^{-1/2}}\Psi^*_j(\bar{g}^{B,[4]}) =  g_{e} + t_j^2H^4+H_2+H_3+H_4+ \mathcal{O}(t_j^2r_o^{-2} + t_j^{\frac52}r_o^{-5}).
    \end{equation}
    In particular, the difference between the metrics, $\mathcal{O}(t_j^2r_o^{-2+\varepsilon} + t^{\frac52}r_o^{-5} + t_j^2r_o^{-2-\varepsilon} + r_o^5) = \mathcal{O}(t_j^\frac54)$ in the region $t_j^\frac14<r_D<2t_j^\frac14$ for small enough $\varepsilon>0$. Together with the analogous controls of the derivative, and the controls of the cut-off functions, this yield the stated result.
\end{proof}

\subsection{Isolation of hyperbolic and spherical orbifolds}\label{sec: isolation hy sph orb}

We finally show that there cannot be any other Einstein orbifold $GH$-close to a spherical or hyperbolic orbifold. The argument is by contradiction and consists in proving that $w^A$ cannot be small, contradicting \eqref{eq:control approx obst}.

\begin{proof}[Proof of Theorem \ref{thm:isolation orbifolds}]

Let $(M_o,g_o)$ be a spherical or hyperbolic orbifold, and assume that, a sequence of Einstein \textit{orbifolds} $(M_i,g_i)_{i\in\mathbb{N}}$ satisfies
$ d_{GH}\big((M_i,g_i),(M_o,g_o)\big) \to 0.$
Since $(M_o,g_o)$ is a spherical or hyperbolic orbifold, we have $\mathbf{O}(g_o) = \{0\}$, and it is isolated as an Einstein orbifold on $M_o$. Hence, up to taking a subsequence, we assume $M_i=M\neq M_o$, and there is indeed a nontrivial bubble-tree degeneration. 

By Theorem \ref{thm: exhaustion neighb orb}, for all $i$, $(g_i,0)$ is isometric to an Einstein (modulo obstruction) perturbation $\hat{g}_{t_i,v_i}$ of a naïve gluing $g^D_{t_i}$ with $v_i\in \tilde{\mathbf{O}}(g^D_{t_i})$ the metric and elements of the approximate kernel defined in Section \ref{sec:compactness dim 4}. Up to taking yet another subsequence, we can order the $N$ singular points of $M_o$ as $p_1,...,p_N$ such that
$$0<t_{1,i}\leqslant t_{2,i} \leqslant... \leqslant t_{N,i},$$
where $t_{k,i}$ is the scale of the gluing at $p_k$ in the construction of $g^A_{t_i,v_i}$ in Definition \ref{defn:approximate metric}.

    Thanks to the control of Proposition \ref{prop:estimate approx} and \eqref{eq:control approx obst}, we deduce that for any $0<\beta<1$ we have $\|w^A\|_{L^2}\leqslant C t_{N,i}^{\frac{5-\beta}4} = o(t_{N,i})$, hence denoting $\mu_k$, and $\tilde{\mathbf{o}}_k$ the elements of Section \ref{sec:gluing trees} for the last metric $\bar{g}^B_{t_i,v_i,N}$ glued in the construction of Definition \ref{defn:approximate metric}, we have the following decomposition in the orthonormal basis $(\tilde{\mathbf{o}}_k)_k$ of $\tilde{\mathbf{O}}(g^D_t)$ 
\begin{align*}
    w^A|_{N_N^{16t}}&= u_{B_N,i} - t_{N,i} u_{o,N}^{[2]} -t_{N,i}^\frac32 u_{o,N}^{[3]}-t_{N,i}^2 u_{o,N}^{[4]}\\
    &= \big(\mu_1-t_{N,i}\lambda_1 + o(t_{N,i})\big) \tilde{\mathbf{o}}_1 + \sum_k\big(\mu_k+o(t_{N,i})\big)\tilde{\mathbf{o}}_k.
\end{align*}
Now, in order to have $\|w^A|_{N_N^{16t}}\|_{L^2}=o(t_{N,i})$, since $\|\tilde{\mathbf{o}}_1\|_{L^2(g^D)}>0$ is bounded away from $0$ independently on $(t_i,v_i)$, this imposes 
    \begin{equation}\label{eq: obst 1}
        \mu_1 = t_{N,i}\lambda_1 + o(t_{N,i}), \text{ and}
    \end{equation}
    \begin{equation}\label{eq: obst k}
        \mu_k = o(t_{N,i}), \text{ for } k\geqslant2
    \end{equation}
    The first obstruction \eqref{eq: obst 1} and \eqref{eq: estimation tree obstructions} imply that 
    $$\sum_{k\geqslant 0}\varepsilon_k(t_i,v_i^B)  |\mu_k| = t_{N,i}\lambda_1 + o(t_{N,i}).$$
    Since from \eqref{eq:est obst tree}, we have $\lambda_1 \neq 0$ for hyperbolic and spherical orbifolds, this contradicts \eqref{eq: obst k}.
\end{proof}

\begin{proof}[Proof of Corollary \ref{cor:gap thms}]
    
 The proofs of the gap theorems have a similar classical strategy by contradiction:
\begin{enumerate}
 \item take a sequence of Einstein orbifolds $(M_i,g_i)$ with $\Ric(g_i) = 3g_i$, $\Vol(M_i,g_i)>v>0$ and assume one of the conditions (1)-(5) with $\varepsilon_0$ replaced by $\frac{1}{i}$,
 \item take a sublimit $(M_o,g_o)$ by compactness, and show that it must be a spherical orbifold, and
 \item conclude by Theorem \ref{thm:isolation orbifolds} that for large enough $i$, all of the $(M_i,g_i)$ are isometric to $(M_o,g_o)$.
\end{enumerate}
\end{proof}

\part{Regularity of Einstein $5$-manifolds}

In this part, we now use our $4$-dimensional isolation result to prove new regularity properties for $5$-manifolds with bounded Ricci curvature.

\section{Uniqueness of tangent cones and the refined rectifiability}

In this section, we consider the following setting: Let $(M^5_i, g_i, p_i) \xrightarrow{pGH} (X^5,d,p)$ be a sequence of noncollapsed pointed manifolds with uniformly bounded Ricci curvature
\begin{align*}
    |\Ric_i | \leqslant 4, \qquad \text{ and }\qquad
    \Vol(B_1(p_i) > v >0.
\end{align*}

Our goal is to analyze the singular structure of the noncollapsed limit $(X^5,d,p)$, in particular to establish the uniqueness of tangent cones along the top stratum $\cS^1\setminus\cS^0$ and to refine the rectifiability of the singular set $\cS(X)$. We also prove regularity results for the asymptotic geometry of Ricci-flat manifolds with Euclidean volume growth.

\subsection{Preliminaries}

We recall the standard stratification framework for $n$-dimensional noncollapsed Ricci-bounded limit spaces and several quantitative tools that will be used throughout the paper. 

Let $(M^n_i, g_i, p_i) \xrightarrow{pGH} (X^n,d,p)$ be a sequence of noncollapsed pointed manifolds with uniformly bounded Ricci curvature $|\Ric_i | \leqslant n-1,$ and $\Vol(B_1(p_i)) > v >0$. We call such $X$ as a \textit{Ricci-bounded limit space}.

\begin{definition}
    A metric space $Y$ is called \emph{$k$-symmetric} at $y$ with respect to $\cL^k$ if there exists some pointed isometry $\iota: (\RR^k \times C(Z), (0,z)) \to (Y,y)$ for some metric cone $C(Z)$ over a compact metric space $Z$ with vertex $z$ and $\cL^k = \iota(\RR^k \times \{z\})$.
\end{definition}

Following \cite{cc97}, we introduce the stratification and define the $k^{th}$-stratum to be 
\begin{equation*}
    \cS^k \equiv \{ x\in X ~|~ \text{ no tangent cone at } x \text{ is } (k+1) \text{-symmetric}. \}
\end{equation*}

\begin{remark}
    We will later refine the stratification in dimension $n=5$ using the uniqueness of the splitting tangent cones; see \eqref{e:refined stratification}. 
\end{remark}

The \emph{regular set} $\cR(X)$ consists of points whose tangent cones are uniquely $\mathbb{R}^n$, and the \emph{singular set} is $\cS(X)=X\setminus \cR(X)$. Combining the pioneering work \cite{col, cc97} and the resolution of codimension four conjecture by Cheeger-Naber \cite{cn}, we have 
\begin{theorem}[\cite{cn}]\label{t:codim four}
    The singular set $\cS(X^n)$ satisfies $\cS = \cS^{n-4}$ and $\dim(\cS) \leqslant n-4$. 
\end{theorem}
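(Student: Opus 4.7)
The second assertion $\dim\cS\leqslant n-4$ reduces to the first: once $\cS=\cS^{n-4}$ is established, the standard Cheeger-Colding stratification estimate $\dim\cS^k\leqslant k$ from \cite{cc97} closes the argument. The real content is therefore the codimension-four theorem of \cite{cn}: no point of $X$ can have a tangent cone that splits off an $\RR^{n-3}$-factor unless that cone is isometric to all of $\RR^n$. I would prove this via an $\varepsilon$-regularity statement: there exists $\varepsilon=\varepsilon(n,v)>0$ such that whenever $|\Ric|<\varepsilon$, $\Vol(B_1(p))>v$, and
\begin{equation*}
d_{GH}\!\left(B_2(p),\,B_2\big((0,y_Y)\big)\right)<\varepsilon
\end{equation*}
for some metric cone $\RR^{n-3}\times C(Y)$ with vertex $y_Y$, the harmonic radius satisfies $r_h(p)\geqslant 1$; in particular $p\in\cR$. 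This immediately implies $\cS^{n-3}\subset \cR$, and therefore $\cS=\cS^{n-4}$.

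To prove the $\varepsilon$-regularity I would argue by contradiction and extract a noncollapsed sequence $(M_i,g_i,p_i)$ with $|\Ric_i|\to 0$ converging to $\RR^{n-3}\times C(Y)$, but with $r_h(p_i)\to 0$. The Cheeger-Colding almost-splitting theorem furnishes $(n-3)$-splitting maps $h_i:B_4(p_i)\to\RR^{n-3}$ that are harmonic, with vanishing $L^2$-averages on $B_2(p_i)$ of
\begin{equation*}
\big|\langle\nabla h_i^\alpha,\nabla h_i^\beta\rangle-\delta^{\alpha\beta}\big|^2\quad\text{and}\quad |\Hess h_i^\alpha|^2,\qquad \alpha,\beta=1,\dots,n-3.
\end{equation*}

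The main new ingredient, and the heart of \cite{cn}, is a \emph{quantitative transformation theorem} that propagates this splitting from the fixed scale $1$ down to all smaller scales at a definite fraction of base points: for every $\delta>0$, at most $q\in B_1(p_i)$ and every $r\in(0,1)$ the rescaled map $r^{-1}(h_i-h_i(q))$ restricted to $B_1(q)\subset(M_i,r^{-2}g_i)$ is still a $\delta$-splitting map, once $i$ is large. Combined with a slicing argument through a regular value of $h_i$, this reduces the problem to a rigidity statement for the $3$-dimensional cross-section: a noncollapsed Ricci-limit cone $C(Y)$ of dimension $3$ whose splitting structure propagates through all scales must itself be $\RR^3$, i.e.\ $Y\cong \mathbb S^2$. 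At such base points $h_i$ then induces a full $n$-splitting at every scale, contradicting $r_h(p_i)\to 0$ through the classical $\varepsilon$-regularity of \cite{cc97} for $\RR^n$.

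The main obstacle is the transformation theorem itself, which rests on the integral Hessian bound $\int_{B_2(p_i)}|\Hess h_i^\alpha|^2\to 0$. This estimate uses the \emph{two-sided} Ricci bound essentially, through a Bochner inequality combined with pointwise gradient control for splitting functions; without the upper Ricci bound one cannot close the self-improvement loop that lets the quantitative splitting cascade to all smaller scales, and it is precisely this cascade that produces codimension four rather than merely three.
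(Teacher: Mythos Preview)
The paper does not prove this statement: it is quoted from \cite{cn} as a preliminary result in Section~4.1 and used as a black box throughout Part~II, so there is no in-paper argument to compare your plan against.

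Judged as an outline of the Cheeger--Naber proof, your scaffolding is right---reduction to $\varepsilon$-regularity, the contradiction setup with $(n-3)$-splitting maps, the transformation and slicing theorems---but the decisive step is left as a slogan. Once the contradicting limit is $\RR^{n-3}\times C(Y^2)$, the codimension-$2$ step (which you do not mention, and which is itself nontrivial) forces $Y^2$ to be a smooth surface with $\Ric=1$, hence $\mathbb{S}^2$ or $\RR P^2$; the entire remaining content is ruling out $\RR P^2$. Your sentence ``a noncollapsed Ricci-limit cone $C(Y)$ of dimension $3$ whose splitting structure propagates through all scales must itself be $\RR^3$'' names no mechanism---$C(\RR P^2)=\RR^3/\{\pm1\}$ carries no splitting to propagate. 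The actual argument in \cite{cn} performs a point-selection blow-up at the harmonic-radius scale along the singular axis; the transformation theorem (with its lower-triangular matrices $A(q,r)$, which your version omits and without which the rescaled map is generally \emph{not} $\delta$-splitting on $B_r(q)$) ensures the $\RR^{n-3}$-factor survives, so the blow-up limit is $\RR^{n-3}\times N^3$ with $N^3$ a smooth, complete, Ricci-flat---hence \emph{flat}---$3$-manifold. One then reaches a contradiction because a complete flat $3$-manifold with Euclidean volume growth is $\RR^3$ (every nontrivial finite isometry group of $\RR^3$ has a fixed point), incompatible with the asymptotic volume ratio $1/2$ inherited from $\RR^3/\{\pm1\}$. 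That $3$-dimensional rigidity is the engine of the proof and is absent from your plan.

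A smaller correction: your closing remark mislocates the role of the upper Ricci bound. Without it, already codimension-$2$ cones $\RR^{n-2}\times C(\mathbb{S}^1_\beta)$ with $\beta<2\pi$ arise as limits, so the failure is not ``four versus three'' but ``four versus two''.
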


In \cite{cjn} and \cite{jn}, they proved a structure theorem for the singular set $\cS$:

\begin{theorem}\cite{cjn,jn}\label{t:cjn}
Let $X^n$ be a noncollapsed Ricci-bouned limit space. Then:
\begin{enumerate}
    \item For \emph{$\cH^{n-4}$-a.e.} $x \in \cS(X)$, the tangent cone at $x$ is \emph{unique} and isometric to the cone $\RR^{n-4} \times C(\mathbb{S}^3/\Gamma)$.
    \item The singular set $\cS$ is \emph{$(n-4)$-rectifiable}: there exists a countable collection of $\cH^{n-4}$-measurable subsets $Z_i \subset Z$ and bi-Lipschitz maps $\phi_i : Z_i \to \RR^{n-4}$ such that $\cH^{n-4}(\cS \setminus \bigcup_i Z_i)=0$.
\end{enumerate}
\end{theorem}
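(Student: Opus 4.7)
The plan is to prove this in three stages: quantitative stratification with effective content bounds, a neck decomposition that covers the singular set up to small error by well-structured regions, and extraction of bi-Lipschitz parameterizations together with tangent cone uniqueness. I would work entirely at the level of the sequence $(M_i^n,g_i,p_i)$ and pass to the limit via standard compactness.

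First I would recall the quantitative stratum $\cS^k_{\varepsilon,r}$ of Cheeger-Naber: the set of points $x$ such that no ball $B_s(x)$ with $s\geqslant r$ is $(\varepsilon,k+1)$-symmetric. Using the cone-splitting lemma (almost $j$-symmetry plus an independent nearby almost symmetry forces almost $(j{+}1)$-symmetry) and a standard covering/pigeonhole argument on dyadic scales, one shows $\Vol(B_r(\cS^k_{\varepsilon,r})\cap B_1(p))\leqslant C(n,v,\varepsilon)\,r^{n-k-\varepsilon}$. Combined with the codimension-four $\varepsilon$-regularity theorem of \cite{cn}, this yields $\cH^{n-4+\eta}(\cS\cap B_1(p))\leqslant C(n,v,\eta)$ for any $\eta>0$. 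This is the starting point, but it is not sharp enough: we need the critical $(n-4)$-Hausdorff bound and control by $\cH^{n-4}$-measurable bi-Lipschitz charts.

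The central step, and the main obstacle, is the \emph{neck decomposition theorem} of Cheeger-Jiang-Naber. A \emph{neck region} $\cN\subset B_2(x)$ is a region equipped with a $\delta$-splitting map $u:\cN\to\RR^{n-4}$ and a center set $\cC\subset\cN$ of singular points with unique tangent cone $\RR^{n-4}\times C(\mathbb S^3/\Gamma)$, along which $\cN$ is $\delta$-close at every scale to the cone $\RR^{n-4}\times C(\mathbb S^3/\Gamma)$. I would decompose
\begin{equation*}
B_1(p)\cap \cS \;=\;\bigcup_{\alpha}\cC_{\alpha}\;\sqcup\;\bigcup_{\beta}\cS\cap B_{r_\beta}(x_\beta),
\end{equation*}
where each $\cC_\alpha$ is the center set of a neck region at some scale $s_\alpha$, each $B_{r_\beta}(x_\beta)$ is a remainder ball at a smaller scale with no $(n-4,\varepsilon)$-symmetric ball above it, and crucially
\begin{equation*}
\sum_\alpha s_\alpha^{n-4}+\sum_\beta r_\beta^{n-4}\leqslant C(n,v).
\end{equation*}
The proof of this estimate is the main analytic/combinatorial difficulty: it rests on a sharp $L^2$ Hessian estimate for splitting maps (via the superharmonic Green's function and the Cheeger-Colding differential inequalities), a Reifenberg-type characterization of the center set, and an inductive covering that trades symmetry defect for scale. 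Iterating on the remainder $B_{r_\beta}(x_\beta)$ and applying the packing estimate gives both $\cH^{n-4}(\cS\cap B_1(p))\leqslant C(n,v)$ and the decomposition of $\cS$ into countably many centers $\cC_\alpha$ up to an $\cH^{n-4}$-null set.

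Finally, within each neck region the splitting map $u$ is not merely a GH-approximation but is bi-Lipschitz on $\cC_\alpha$: the lower Lipschitz bound comes from the $L^2$ Hessian control, which prevents collapsing of the fibers of $u$ over the singular centers, and the upper bound is built into the definition of a $\delta$-splitting. Countably many such maps $u_\alpha:\cC_\alpha\to\RR^{n-4}$ then realize the rectifiability in (2). For (1), at $\cH^{n-4}$-a.e. $x\in\cS$ the rectifiable structure gives that every tangent cone splits off a factor of $\RR^{n-4}$; its cross-section $C(Z)$ is Ricci-flat with $\mathrm{Ric}_Z\equiv 2$ and volume bounded below, so by the volume rigidity/regularity of three-dimensional links $Z$ is a smooth $\mathbb S^3/\Gamma$ with $\Gamma\subset O(4)$ acting freely. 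Uniqueness of $\Gamma$ at the point $x$ follows because the volume density $\Theta(x)=\lim_{r\to 0}\Vol(B_r(x))/\omega_n r^n=|\mathbb S^3|/(|\Gamma|\cdot|\mathbb S^3|)$ is constant in $x$ along the tangent cone, and the map $\Gamma\mapsto|\Gamma|$ takes discrete values, so the connected family of tangent cones is forced to be a single point.
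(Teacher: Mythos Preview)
This theorem is not proved in the paper; it is recalled in the preliminaries (Section~4.1) as background from \cite{cjn,jn}, with no argument given. So there is no paper proof to compare against, and your sketch should be read as an outline of the original Cheeger--Jiang--Naber strategy.

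The architecture you describe---quantitative stratification, neck decomposition with content bound $\sum s_\alpha^{n-4}+\sum r_\beta^{n-4}\leqslant C$, bi-Lipschitz charts from splitting maps---is indeed that of \cite{cjn}. Two places are too vague to count as a proof, though. First, the lower bi-Lipschitz bound for $u|_{\cC_\alpha}$ is the technical core of \cite{cjn} and does not follow from an $L^2$ Hessian bound alone: it requires the transformation and non-degeneration theorems, which produce at every center point and every scale a lower-triangular matrix $T_{x,r}$ such that $T_{x,r}u$ is again a sharp splitting map with $|T_{x,r}|$ controlled; without this the map could collapse on $\cC_\alpha$. Second, your definition of a neck region already builds in that center points have \emph{unique} tangent cone $\RR^{n-4}\times C(\mathbb S^3/\Gamma)$, which makes the derivation of (1) circular. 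In \cite{cjn} the neck axioms only demand $(n-4,\delta)$-symmetry at every scale below the neck scale; tangent cones at center points then split off $\RR^{n-4}$ by passing to the limit, and uniqueness follows from connectedness of the tangent-cone space together with the discreteness of $3$-dimensional space forms. Your volume-density argument is fine once this is in place, though note the density fixes only $|\Gamma|$, so one still needs that the finitely many $\mathbb S^3/\Gamma$ of a given order are GH-isolated among Einstein $3$-manifolds.
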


Both the uniqueness of tangent cones and the bi-Lipschitz structure hold for the top stratum $\cS^{n-4}$ modulo an \emph{$\cH^{n-4}$-measure zero subset}. In dimension five we will strengthen this in the next two sections, by proving uniqueness and bi-Lipschitz structure on the entire top stratum, without discarding any measure-zero subset.

Next we recall the quantitative symmetry:

\begin{definition}
    Given a metric space $X$ and $r>0, \varepsilon > 0$, we define a ball $B_r(x) \subset X$ as \emph{$(k, \varepsilon)$-symmetric} with respect to $\cL_{x,r}$ if $d_{GH}(B_r(x), B_r(0,x')) < \varepsilon r$ for some $k$-symmetric metric cone $\RR^k \times C(Z)$ with vertex $(0,x')$, and that $\cL_{x,r} = \iota(\RR^k \times \{x'\} \cap B_r(0,x'))$ where $\iota : B_r(x') \to B_r(x)$ is the $\varepsilon r$-Gromov-Hausdorff map.
\end{definition}

One of the fundamental tools to study the structure of noncollapsed spaces with uniform Ricci curvature lower bound is the Bishop-Gromov monotonicity: for any $(M^n,g)$ with $\Ric_M \ge -\kappa g$, the volume ratio defined as follows is non-increasing in $r$:
\begin{equation*}
    \cV_x(r) = \cV^{\kappa}_x(r) \equiv \frac{\Vol(B_r(x))}{\Vol_{-\kappa}(B_r(0))},
\end{equation*}
where $\Vol_{-\kappa}(B_r(0))$ is the volume of the ball with radius $r$ in the space form with constant curvature $-\kappa$. We define the volume density at $x$ as $\lim_{r\to 0}\cV_x(r)$. When the volume ratio is constant, the space is isometric to the model space. In \cite{cc}, they proved the almost rigidity:
\begin{theorem}\label{t:almost volume cone}
    For any $\varepsilon>0$ and $\delta\leqslant \delta(n,\varepsilon)$, the following holds. If $(M^n,g)$ is a Riemannian manifold with $\Ric_M \ge -\delta g$, and $\cV_x(2) \ge (1-\delta) \cV_x(1) $, then $B_2(x)$ is $(0,\varepsilon)$-symmetric.
\end{theorem}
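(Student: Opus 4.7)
The plan is to prove this almost-rigidity by a compactness and contradiction argument, reducing it to the exact rigidity ``volume cone implies metric cone'' at the level of Ricci-limit spaces. The central input is the Cheeger-Colding rigidity theorem for metric cones.

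I would first set up the contradiction. Suppose the statement fails; then there exist $\varepsilon_0 > 0$ and a sequence of pointed $n$-manifolds $(M_i, g_i, x_i)$ with $\Ric_{g_i} \ge -\delta_i g_i$, $\delta_i \downarrow 0$, and $\cV_{x_i}(2) \ge (1-\delta_i)\cV_{x_i}(1)$, yet such that no $B_2(x_i)$ is $(0,\varepsilon_0)$-symmetric. By Gromov's precompactness theorem, after extracting a subsequence I may pass to a pointed Gromov-Hausdorff limit $(M_i, g_i, x_i) \to (X, d, x_\infty)$. Colding's volume-continuity theorem in the noncollapsed setting, together with the continuity of renormalized measures in the general setting, gives $\cV_{x_i}(r) \to \cV_{x_\infty}(r)$ for every $r > 0$.

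Passing the hypothesis to the limit yields $\cV_{x_\infty}(2) \ge \cV_{x_\infty}(1)$, while Bishop-Gromov monotonicity on the limit (which inherits $\Ric \ge 0$ since $\delta_i \to 0$) supplies the reverse inequality. Hence $\cV_{x_\infty}(2) = \cV_{x_\infty}(1)$ exactly. I would then invoke the volume-cone-to-metric-cone rigidity theorem of Cheeger-Colding: on a Ricci-limit with $\Ric \ge 0$, equality in Bishop-Gromov at two distinct radii forces $B_2(x_\infty)$ to be isometric to a truncated metric cone with vertex $x_\infty$. In particular $B_2(x_\infty)$ is $(0,0)$-symmetric, and pointed GH convergence then implies $B_2(x_i)$ is $(0,\varepsilon_0)$-symmetric for all sufficiently large $i$, contradicting the choice of the sequence.

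The main obstacle is the underlying exact rigidity, which I sketch briefly. One takes a distance-like ``radial'' function $b$, namely the harmonic approximation of $d(x_\infty,\cdot)$ on an annular region, and exploits the Bochner identity
\begin{equation*}
\tfrac{1}{2}\Delta |\nabla b|^2 = |\Hess b|^2 + \langle \nabla b, \nabla \Delta b \rangle + \Ric(\nabla b, \nabla b).
\end{equation*}
The equality case in Bishop-Gromov forces the Jacobian of the exponential map to saturate its upper bound along every radial geodesic, which after integration compels $\Hess(b^2/2) = g$ in a suitably weak sense. This Hessian identity yields a warped-product splitting $g = dr^2 + r^2 g_Z$ on $B_2(x_\infty)$, identifying it with a metric cone. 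One subtle point is that $\cV_{x_i}(1)$ might degenerate to zero in the collapsed regime; this is handled by the collapsed version of the rigidity, since $(0,\varepsilon)$-symmetry is a purely metric condition and the resulting possibly lower-dimensional cone is still $(0,0)$-symmetric.
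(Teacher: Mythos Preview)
The paper does not prove this theorem; it is stated in the preliminaries section and attributed to Cheeger--Colding \cite{cc} (their ``almost volume cone implies almost metric cone'' theorem). So there is no proof in the paper to compare against.

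That said, a brief comment on your approach. The compactness-and-contradiction scheme you outline is a legitimate way to \emph{deduce} almost-rigidity from exact rigidity on limit spaces, but you should be aware of the logical direction of the original literature: Cheeger--Colding's proof in \cite{cc} is a direct quantitative argument on smooth manifolds (harmonic replacement of the distance function, integral Bochner estimates, then Hessian pinching $\Hess(b^2/2)\approx g$ in $L^2$, giving approximate warped-product coordinates), and the exact rigidity for limit spaces was established \emph{as a consequence} of this almost-rigidity by passing to the limit. So invoking the limit-space rigidity to recover the smooth almost-rigidity is circular unless you supply an independent proof of the rigid case --- which later RCD literature does provide, but which is not lighter than the original direct argument.

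Your handling of the collapsed case is also too quick. Without a volume lower bound, Colding's volume convergence does not apply directly; one must work instead with renormalized limit measures $\nu$ (as in \cite{cc97}) and the statement that $\nu$-cones are metric cones. The conclusion ``the possibly lower-dimensional cone is still $(0,0)$-symmetric'' is correct, but getting there from $\cV_{x_\infty}(2)=\cV_{x_\infty}(1)$ requires the collapsed version of the rigidity, which is a nontrivial extension and not just a remark.
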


When multiple $0$-symmetries occur, higher symmetry follows from the quantitative cone-splitting principle of \cite{chn13}.
\begin{lemma}\label{l:cone splitting}
    Let $\varepsilon,\tau>0$ and $\delta\leqslant \delta(\varepsilon,\tau)$. Assume $\Ric_M \ge -\delta g$ and
    \begin{enumerate}
        \item $B_2(p)$ is $(k,\delta)$-symmetric with respect to $\cL_{x,2} \subset B_2(p)$.
        \item there exists some $x \in B_1(p) \setminus B_{\tau}(\cL_{x,2})$ such that $B_2(x)$ is $(0,\delta)$-symmetric.
    \end{enumerate}
    Then $B_1(p)$ is $(k+1,\varepsilon)$-symmetric.
\end{lemma}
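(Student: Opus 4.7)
The plan is to argue by contradiction using a standard pointed Gromov-Hausdorff compactness argument combined with the cone-splitting rigidity in the limit. Suppose the conclusion fails; then for some fixed $\varepsilon,\tau>0$ there exists a sequence $(M_i^n,g_i,p_i)$ with $\Ric_{g_i}\geqslant -\delta_i\,g_i$ and $\delta_i\to 0$, satisfying (1) and (2) at parameter $\delta_i$, but such that $B_1(p_i)$ fails to be $(k+1,\varepsilon)$-symmetric. I would pass to a pointed Gromov-Hausdorff subsequential limit $(X_\infty,d_\infty,p_\infty)$, noting that the sequence is automatically noncollapsed on bounded scales thanks to the $k$-symmetric ball of (1).

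Since $\delta_i\to 0$, condition (1) forces $B_2(p_\infty)$ to be exactly isometric to the ball of radius $2$ in some $k$-symmetric metric cone $\mathbb{R}^k\times C(Z)$ based at $(0,z_0)$, with a genuine splitting axis $\cL_\infty=\mathbb{R}^k\times\{z_0\}\cap B_2(p_\infty)$. Condition (2), together with the compactness of the space of pointed limits, produces a limit point $x_\infty\in B_1(p_\infty)$ with $d_\infty(x_\infty,\cL_\infty)\geqslant \tau$ such that $B_2(x_\infty)$ is an exact $0$-symmetric metric cone based at $x_\infty$. Writing $x_\infty=(a,y)$ with $a\in\mathbb{R}^k$ and $y\in C(Z)$, the condition $d_\infty(x_\infty,\cL_\infty)\geqslant\tau$ forces $y\neq z_0$ in $C(Z)$.

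I would then invoke the rigidity for metric cones with two distinct vertices: the fact that $X_\infty$ is simultaneously a cone with vertex $(0,z_0)$ and with vertex $(a,y)$ implies, by the standard cone-splitting argument (e.g.\ the volume monotonicity of Theorem \ref{t:almost volume cone} applied along the geodesic from $(0,z_0)$ to $(a,y)$, which must then extend to a full line of vertices), that $X_\infty$ splits off an additional isometric factor $\mathbb{R}$ transverse to $\cL_\infty$. Hence $B_2(p_\infty)$, and in particular $B_1(p_\infty)$, is exactly $(k+1,0)$-symmetric.

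The final step is immediate: the Gromov-Hausdorff convergence $B_1(p_i)\to B_1(p_\infty)$ then implies that $B_1(p_i)$ is $(k+1,\varepsilon)$-symmetric for all $i$ sufficiently large, contradicting the choice of the sequence. I expect the compactness and contradiction skeleton to be routine; the one substantive ingredient is the cone-splitting rigidity for metric cones with two distinct vertices in a noncollapsed Ricci-limit setting, which is where the almost volume cone theorem of Cheeger--Colding is essential and where the noncollapsing built into the hypothesis (via $k$-symmetry) is crucial.
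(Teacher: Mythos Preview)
The paper does not supply its own proof of this lemma; it is quoted as the quantitative cone-splitting principle from \cite{chn13}. Your contradiction-and-compactness argument is the standard one from that reference and is correct: pass to a pointed Gromov--Hausdorff limit with $\delta_i\to 0$, obtain an exact $k$-symmetric cone with a second cone vertex off the axis, and invoke the rigidity that a metric cone with two distinct cone points splits off the line through them.
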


Combining Lemma~\ref{l:cone splitting} with the codimension-four theorem \ref{t:codim four} yields the $\varepsilon$-regularity theorem of \cite{cn}.

\begin{theorem}[\cite{cn}]\label{t:eps_regularity}
    For all $n\in\mathbb{N},\,\,v>0$, there exists $\varepsilon_0(n,v)>0$ such that if $\Vol(B_1(p)) > v >0$, $|\Ric_{M^n}|\leqslant n-1$ and $B_2(p)$ is $(n-3,\varepsilon)$-symmetric with $\varepsilon \leqslant \varepsilon_0$, then the harmonic radius $r_h(p)$ at $p$ is greater than 1. If $M^n$ is Einstein, then we have the curvature bound $\sup_{B_1(p)} |\Rm| \leqslant 1$.  
\end{theorem}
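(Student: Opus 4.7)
I would argue by contradiction, using Gromov--Hausdorff compactness, the codimension-four theorem (Theorem \ref{t:codim four}), and Anderson's $\varepsilon$-regularity for the harmonic radius. Suppose the first assertion fails: there exist sequences $\varepsilon_i \to 0$ and pointed manifolds $(M_i^n, g_i, p_i)$ with $|\Ric_{g_i}| \leqslant n-1$, $\Vol(B_1(p_i)) > v$, and $B_2(p_i)$ being $(n-3,\varepsilon_i)$-symmetric, yet $r_h(p_i) \to 0$. After extracting a subsequence, pointed Gromov--Hausdorff compactness gives a limit $(M_i, g_i, p_i) \to (X, d, p)$, and the quantitative symmetry assumption passes to the limit, yielding an isometric splitting of the limit ball at $p$ of the form $\mathbb{R}^{n-3} \times C(Y)$ based at $p = (0, o_Y)$, where $C(Y)$ is a $3$-dimensional metric cone with vertex $o_Y$.

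Next I would identify $C(Y)$ as $\mathbb{R}^3$. Since $X$ is a noncollapsed Ricci-bounded limit, Theorem \ref{t:codim four} gives $\dim_{\mathcal{H}} \cS(X) \leqslant n-4$. The product splitting forces $\cS(X) \supseteq \mathbb{R}^{n-3} \times \cS(C(Y))$, so $\dim_{\mathcal{H}} \cS(C(Y)) \leqslant -1$, i.e. $\cS(C(Y)) = \emptyset$. In particular the vertex $o_Y$ is regular, so its unique tangent cone is $\mathbb{R}^3$; but the tangent cone at the vertex of a metric cone is the cone itself, hence $C(Y) \cong \mathbb{R}^3$ and $X \cong \mathbb{R}^n$. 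Anderson's harmonic radius estimate \cite{and90} now yields $\liminf_i r_h(p_i) \geqslant 1$ after choosing $\varepsilon_0$ small enough that the relevant scale of $C^{1,\alpha}$-convergence exceeds unity, contradicting $r_h(p_i) \to 0$.

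For the Einstein conclusion, I would work in harmonic coordinates on $B_1(p)$ (available by the first statement). There the Einstein equation $\Ric(g) = \Lambda g$ becomes a quasilinear elliptic system of the form
\begin{equation*}
-\tfrac{1}{2} g^{kl} \partial_k \partial_l g_{ij} + Q(g, \partial g) = \Lambda g_{ij},
\end{equation*}
with $C^{0,\alpha}$ coefficients and a bounded right-hand side. Schauder estimates give a $C^{2,\alpha}$-bound on $g$, and standard elliptic bootstrapping yields uniform $C^{k,\alpha}$-bounds for every $k$; in particular $|\Rm|$ is uniformly bounded on $B_1(p)$, and a further shrinking of $\varepsilon_0$ delivers the normalized bound $\sup_{B_1(p)} |\Rm| \leqslant 1$.

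The step I expect to require the most care is the rigidity $C(Y) \cong \mathbb{R}^3$: it relies crucially on the noncollapsing lower volume bound (otherwise flat quotient cones $\mathbb{R}^3/\Gamma$ with nontrivial $\Gamma$ would be consistent with the splitting and the Ricci bound), and on the full strength of the codimension-four theorem to exclude a singular vertex. Once this geometric input is secured, the remaining ingredients---semicontinuity of the harmonic radius under bounded Ricci and noncollapsing, and elliptic bootstrapping in harmonic coordinates---are standard.
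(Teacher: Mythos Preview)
Your argument is correct and follows the standard route. The paper does not give its own detailed proof of this statement---it is quoted from \cite{cn}, with only the one-line remark that it follows by combining the cone-splitting Lemma~\ref{l:cone splitting} with the codimension-four Theorem~\ref{t:codim four}. Your contradiction argument is precisely the natural implementation of this: pass to a GH limit, obtain the splitting $\mathbb{R}^{n-3}\times C(Y)$, use codimension-four to force $C(Y)\cong\mathbb{R}^3$, and invoke Anderson's $C^{1,\alpha}$-convergence for the harmonic radius. One minor imprecision worth fixing: the negation of ``$r_h(p)>1$'' for the contradiction sequence is ``$r_h(p_i)\leqslant 1$'', not ``$r_h(p_i)\to 0$''; this does not affect the argument, since once the limit ball is identified as a Euclidean ball, Anderson's estimate gives $\liminf_i r_h(p_i)>1$ anyway.
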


According to the $\varepsilon$-regularity, if a ball has enough symmetry, then it is regular enough. Consult \cite{and90} or \cite{jn} for the definition of harmonic radius.

Finally, we recall the notion of \textit{$\varepsilon$-splitting function}, which is crucial to establish the bi-Lipschitz structure for the singular set.

\begin{definition}
    Let $B_r(p) \subset M$. We say $u: B_r(p) \to \RR$ is a \textit{$\varepsilon$-splitting function} if the following holds:
    \begin{enumerate}
        \item $\Delta u = 0$;
        \item $\fint_{B_r} \big| |\nabla u|^2 - 1 \big| \leqslant \varepsilon$;
        \item $\sup_{B_r(p)} |\nabla u| \leqslant 1+\varepsilon$;
        \item $r^2 \fint_{B_r(p)} |\nabla^2 u|^2 \leqslant \varepsilon$.
    \end{enumerate}
\end{definition}

The existence of $\varepsilon$-splitting function is equivalent to the metric splitting behavior of the ball. See \cite{cc} and also \cite{cjn}.

\begin{theorem}\label{t:almost splitting}
    Let $(M^n,g)$ be a smooth Riemannian manifold with $\Ric_{M^n} \ge -\delta$. For $\varepsilon>0$, if $\delta \leqslant \delta(n,\varepsilon)$, then we have the following
    \begin{enumerate}
        \item If $d_{GH}(B_2(p), B_2(0,x')) \leqslant \delta$, with $(0,x') \in \RR \times X$ for a metric space $X$,  then there exists an $\varepsilon$-splitting function on $B_1(p)$;
        \item If there exists a $\delta$-splitting function on $B_2(p)$, then $d_{GH}(B_1(p),B_1(0,x')) \leqslant \varepsilon$ for $(0,x') \in \RR \times X$. Moreover, $(u,v) : B_1(p) \to \RR \times X$ gives an $\varepsilon$-Gromov-Hausdorff map for some $v:B_1(p) \to X$.
    \end{enumerate}
\end{theorem}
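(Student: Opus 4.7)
\textbf{Proof proposal for Theorem \ref{t:almost splitting}.}

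The plan is to follow the original Cheeger--Colding approach, centered on harmonic replacements of distance functions and on the Bochner identity, combined with the segment inequality to convert $L^2$ information into pointwise geometric statements.

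For part (1), suppose $B_2(p)$ is $\delta$-close to $B_2(0,x')\subset \mathbb{R}\times X$. Fix a large $L$ (say $L=10$), work instead on a slightly larger ball $B_R(p)$ with $R$ comparable to $L$, and pick points $q_\pm\in M$ GH-corresponding to $(\pm L,x')$. Define $b_\pm(y):=d(y,q_\pm)-L$ and $b:=\tfrac12(b_+ -b_-)$. From the line structure of the limit and GH closeness, $b_++b_-$ is uniformly $\Psi(\delta)$-small on $B_4(p)$, and the Laplacian comparison $\Delta b_\pm \leqslant (n-1)/d(\cdot,q_\pm)$ gives $\Delta(b_++b_-)\leqslant \Psi(\delta)$ (where $\Psi(\delta)\to 0$ as $\delta\to 0$). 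Now let $u$ solve the Dirichlet problem $\Delta u=0$ on $B_4(p)$ with $u=b$ on $\partial B_4(p)$. Elliptic estimates combined with the smallness of $b_++b_-$ give $\|u-b\|_{L^\infty(B_2(p))}=\Psi(\delta)$ and, since $|\nabla b|\leqslant 1$, the gradient estimate for harmonic functions yields $\sup_{B_1(p)}|\nabla u|\leqslant 1+\Psi(\delta)$. For the averaged gradient identity, integrate
\begin{equation*}
\tfrac12 \Delta |\nabla u|^2 = |\nabla^2 u|^2 + \Ric(\nabla u,\nabla u)
\end{equation*}
against a cutoff; the boundary term is controlled by $\sup|u|$ and $\sup|\nabla u|$, while the right side is bounded below by $|\nabla^2 u|^2-(n-1)\delta|\nabla u|^2$. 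This simultaneously gives $\fint_{B_1}|\nabla^2 u|^2\leqslant \Psi(\delta)$ and, by testing again with $|\nabla u|^2-1$, that $\fint_{B_1}\bigl||\nabla u|^2-1\bigr|\leqslant \Psi(\delta)$. Choosing $\delta$ small enough, $u$ is the desired $\varepsilon$-splitting function.

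For part (2), let $u$ be a $\delta$-splitting function on $B_2(p)$. Define the map
\begin{equation*}
\Phi:B_1(p)\to \mathbb{R}\times X',\qquad \Phi(y):=(u(y),\pi(y)),
\end{equation*}
where $X'$ is a level set of $u$ through $p$ (e.g.\ $u^{-1}(u(p))$ with its induced metric) and $\pi$ is constructed by flowing along $\nabla u$, or equivalently by picking a maximal set of basepoints on the zero level and using their distance functions as coordinates on $X'$. The key input is the segment inequality of Cheeger--Colding applied to $|\nabla^2 u|$: since $\fint|\nabla^2 u|^2\leqslant \delta$, for all pairs $(y_1,y_2)$ outside an exceptional set of small measure, the average of $|\nabla^2 u|$ along a minimizing geodesic between $y_1$ and $y_2$ is $\Psi(\delta)$-small. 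This forces $\nabla u$ to be $\Psi(\delta)$-parallel along such geodesics, so $d(y_1,y_2)^2 \approx |u(y_1)-u(y_2)|^2 + d_{X'}(\pi(y_1),\pi(y_2))^2$. A Vitali-type covering upgrades this to a pointwise $\Psi(\delta)$-approximate isometry on a dense set, and completeness of the GH metric gives $d_{GH}(B_1(p),B_1((0,x')))\leqslant \varepsilon$.

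The main obstacle is the upgrade from $L^2$ Hessian smallness to \emph{pointwise} metric control in part (2); this is where the segment inequality does the essential work, and it is also the step that requires the integral Ricci bound $\Ric\geqslant -\delta$ in a crucial way (via the volume comparison needed to exhibit many good pairs of points). Everything else is standard elliptic theory plus the Bochner formula, and one should keep careful track of exponents of $\delta$ (a finite polynomial in $\delta$ is enough, as all constants $\Psi(\delta)$ above are of the form $C(n)\delta^{\alpha(n)}$) so that the roles of $\delta$ and $\varepsilon$ in the statement come out correctly after choosing $\delta\leqslant\delta(n,\varepsilon)$ small.
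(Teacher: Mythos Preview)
The paper does not give its own proof of this theorem: it is quoted as a known preliminary result with the reference ``See \cite{cc} and also \cite{cjn}.'' Your sketch follows the standard Cheeger--Colding strategy that those references contain (harmonic replacement of an approximate Busemann function plus Bochner for part (1); segment inequality to pass from integral Hessian smallness to metric splitting for part (2)), so in spirit you are aligned with what the paper is invoking.

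One concrete issue in your write-up of part (1): you only have $d_{GH}(B_2(p),B_2(0,x'))\leqslant\delta$, yet you propose to pick $q_\pm$ at distance $L=10$ from $p$ inside $M$ as GH-images of $(\pm L,x')$. There is no hypothesis on $B_R(p)$ for $R\sim 10$, so those points need not exist, and even if they do you have no control on the excess $b_++b_-$ there. The standard fix is either to take $q_\pm$ near $\partial B_2(p)$ (then the Laplacian comparison gives $\Delta b_\pm\leqslant (n-1)\coth_\delta(d(\cdot,q_\pm))$, which is not small near the center, so one uses the Abresch--Gromoll excess estimate together with the almost-splitting hypothesis to control $b_++b_-$), or to rescale and state the hypothesis on a ball of definite large radius from the start. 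Either way this is routine, but as written your choice of $L=10$ is inconsistent with the hypothesis.

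For part (2), defining $\pi$ by ``flowing along $\nabla u$'' is delicate since $\nabla u$ may vanish on a set of positive codimension and the flow may leave $B_2(p)$; the Cheeger--Colding argument instead builds the GH-approximation more directly from the segment inequality and the $L^2$ Hessian bound without constructing an honest projection map. Your use of the segment inequality is the right mechanism, but the map $\Phi=(u,\pi)$ as described is not quite how the almost-splitting theorem is proved in \cite{cc}.
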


We also record the convergence of limiting harmonic functions. The result holds under more general assumptions. See \cite{cjn,jn} and also \cite{ah18,aht18,che99,din02,gms15,mn19,zz19}.

\begin{proposition}\label{p:convergence splitting function}
    Let $(M^n_i,g_i,p_i,\mu_i) \to (X,d,p,\mu)$ satisfy $\Ric_{M^n_i} \ge n-1$ and $\Vol(B_1(p_i)) > v$ and $\mu_i = |\Vol(B_1(p_i)|^{-1}\Vol$. Assume that $u_i: B_r(p_i) \to \RR$ is harmonic with $\sup_{B_r(p_i)}|\nabla u_i| \leqslant C$ for some $r>0$ and $C>0$. Then passing to a subsequence there exists some a harmonic function $u: B_r(p) \to \RR$ satisfying 
    \begin{enumerate}
        \item $u_i \to u$ uniformly on compact subsets of $B_r$. 
        \item $|\nabla u(x)| \leqslant \liminf_i ||\nabla u||_{L^{\infty}(B_s(x_i))}$ for each $x_i \to x \in X$ with $B_{2s}(x_i) \subset B_r(p_i)$. 
        \item $u_i \to u$ in $W^{1,p}$-sense for any $1<p<\infty$.
    \end{enumerate}
\end{proposition}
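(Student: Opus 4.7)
My plan is to carry out the standard Cheeger--Colding argument for passing harmonic functions through a pointed Gromov--Hausdorff limit. The starting observation is that the uniform gradient bound makes each $u_i$ a $C$-Lipschitz function on $B_r(p_i)$. Normalizing $u_i\mapsto u_i-u_i(p_i)$, we obtain $|u_i|\leqslant Cr$ on $B_r(p_i)$. Pulling back via Gromov--Hausdorff $\varepsilon_i$-approximations $\Phi_i:B_r(p)\to B_r(p_i)$ with $\varepsilon_i\to 0$, the family $\{u_i\circ\Phi_i\}$ is uniformly bounded and almost-equicontinuous, so a diagonal Arzel\`a--Ascoli argument extracts a subsequence converging uniformly on compact subsets of $B_r(p)$ to a Lipschitz function $u$. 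This proves (1).

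To identify $u$ as harmonic on $X$ in the weak sense, I use the Dirichlet-integral characterization. Given a compactly supported Lipschitz test function $\varphi$ on $B_r(p)$, build approximations $\varphi_i$ on $B_r(p_i)$ by composing $\varphi$ with an inverse $\Phi_i^{-1}$ and mollifying by a short-time heat flow on $(M_i,g_i)$, chosen so that $\|\varphi_i\circ\Phi_i-\varphi\|_{C^0}\to 0$ and the gradient energies stay uniformly bounded. Since $u_i$ is harmonic on $B_r(p_i)$, $\int\langle\nabla u_i,\nabla\varphi_i\rangle\,d\mu_i=0$. The $L^2$-convergence of gradients of uniformly Lipschitz harmonic functions under noncollapsed pGH convergence (see \cite{che99,din02,ah18,aht18,gms15,mn19,zz19}) then lets us pass to the limit to obtain $\int\langle\nabla u,\nabla\varphi\rangle\,d\mu=0$ for all $\varphi\in\mathrm{Lip}_c(B_r(p))$, which is weak harmonicity of $u$ on the limit. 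The same $L^2$-convergence of gradients on compact subsets, combined with the uniform bound $|\nabla u_i|,|\nabla u|\leqslant C$, yields (3) by interpolation:
\begin{equation*}
\|\nabla u_i-\nabla u\|_{L^p}^p\leqslant (2C)^{p-2}\|\nabla u_i-\nabla u\|_{L^2}^2\longrightarrow 0,
\end{equation*}
so $u_i\to u$ in $W^{1,p}$ on compact subsets of $B_r(p)$ for every $p\in(1,\infty)$.

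Finally, for the upper semicontinuity (2), pass to a further subsequence realizing the $\liminf$ and set $L:=\liminf_i\|\nabla u_i\|_{L^\infty(B_s(x_i))}$. Each $u_i$ is then $(L+o(1))$-Lipschitz on $B_s(x_i)$, so the uniform limit $u$ is $L$-Lipschitz on $B_s(x)$. The Cheeger theory identifies the minimal weak upper gradient $|\nabla u|$ of a locally Lipschitz function on a noncollapsed Ricci-limit with its pointwise local Lipschitz constant, so evaluating at $x$ gives $|\nabla u|(x)\leqslant L$, which is (2). The main technical subtlety throughout is making precise sense of $\nabla u$, its $L^p$-norms, and its pointwise values on the possibly singular limit $X$, together with the identification of $|\nabla u|$ with the local Lipschitz constant used in (2); these are by now standard consequences of the $W^{1,2}$-calculus and Mosco-type convergence theory on Ricci-limits, so once the right framework is invoked the argument is largely bookkeeping.
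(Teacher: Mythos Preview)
The paper does not give its own proof of this proposition: it is stated as a recorded fact with the remark ``The result holds under more general assumptions. See \cite{cjn,jn} and also \cite{ah18,aht18,che99,din02,gms15,mn19,zz19}.'' Your sketch is precisely the standard Cheeger--Colding/Mosco-convergence argument that underlies those references, and the outline---Arzel\`a--Ascoli for (1), passing the weak harmonic equation to the limit via energy convergence, interpolation from $L^2$ to $L^p$ for (3), and Lipschitz-constant lower semicontinuity plus Cheeger's identification of $|\nabla u|$ with the local Lipschitz constant for (2)---is correct.

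One small point worth tightening: the displayed interpolation inequality $\|\nabla u_i-\nabla u\|_{L^p}^p\leqslant (2C)^{p-2}\|\nabla u_i-\nabla u\|_{L^2}^2$ is written as if $\nabla u_i$ and $\nabla u$ live on the same space, which they do not; the correct formulation is that $|\nabla u_i|^2\to |\nabla u|^2$ in $L^1$ in the measured-GH sense (this is the content of the Mosco/energy convergence in the cited works), and then the uniform $L^\infty$ bound upgrades this to $L^{p/2}$ convergence of $|\nabla u_i|^2$, hence $L^p$ convergence of $|\nabla u_i|$. You acknowledge this subtlety in your final paragraph, so this is a matter of phrasing rather than a gap.
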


Using the $W^{1,2}$ convergence of splitting functions, we can also define the $\varepsilon$-splitting function on the limit space $(X,d,p)$. We say $u$ is a $\varepsilon$-splitting function on $B_r(p) \subset X$ if there exist $B_r(p_i) \subset M_i \to B_r(p)$ and $\varepsilon_i$-splitting functions on $B_r(p_i)$ converging uniformly to $u$ with $\varepsilon_i \to \varepsilon$. Therefore, Theorem \ref{t:almost splitting} also holds for the limit space $(X,d,p)$.

\subsection{Uniqueness of tangent cones}\label{subs:uniqueness}

Now we focus on dimension $n=5$. Let $X^5$ be a noncollapsed Ricci-bounded limit space. Then the singular set $\cS(X^5) = \cS^1$. We will prove the uniqueness of tangent cones at the points in $\cS^1 \setminus \cS^0$ and improve the regularity estimates around those points. Similar results hold for tangent cones at infinity.

\begin{proof}[Proof of Theorem \ref{thm:isolation 1sym}]
    Suppose we have the noncollapsing convergence
    \begin{equation*}
    (M_i, g_i, p_i) \xrightarrow{pGH} (C(Y), d_{C(Y)}, y) \quad \text{ and }  \quad (N_i, \tilde{g}_i, q_i) \xrightarrow{pGH} (C(Z), d_{C(Z)}, z).
\end{equation*}
with both satisfying $|\Ric_i| \to 0$. Since we assume the cone $C(Z)$ splits off a line: $C(Z) \cong \RR \times C(Z')$, by codimension four theorem \ref{t:codim four}, we conclude that $Z'$ is indeed a smooth Einstein $3$-manifold and thus $Z' \cong \mathbb{S}^3/\Gamma$ for some $\Gamma \subset O(4)$. This implies that $Z$ is a sine suspension over $\mathbb{S}^3/\Gamma$ and thus a spherical Einstein $4$-orbifold with isolated singularities. Also by codimension four theorem \ref{t:codim four}, we have $Y$ is an Einstein $4$-orbifold. Since we assume $d_{GH}(Y,Z) < \varepsilon_0$, by the gap theorem \ref{thm:isolation orbifolds}, we conclude that $Y \cong Z$ provided $\varepsilon_0$ is small enough. This completes the proof.
\end{proof}

As a consequence, we prove the uniqueness of the tangent cone that splits off a line. In fact, this is a direct consequence of Theorem~ \ref{thm:isolation 1sym} and the fact that the space of the cross sections of the tangent cones at some fixed point is connected.  

\begin{proof}[Proof of Theorem \ref{t:local uniqueness} and Theorem \ref{t:uniqueness at infinity}]
    We prove the local uniqueness theorem \ref{t:local uniqueness}, and the proof for the uniqueness of tangent cones at infinity \ref{t:uniqueness at infinity} proceeds verbatim. Pick any $x \in \cS^1 \setminus \cS^0$. By definition there exists one tangent cone $C(Z_0)$ at $x$ that splits off a line. Let $C(Z_1)$ be a tangent cone under another rescaling sequence. Let $\cC_x \equiv \{ Y: C(Y)$ is a tangent cone at $x \}$. Since $\cC_x$ is known to be connected, there exist finitely many compact metric spaces $\{Y_i\}_{i=0}^N$ such that
    \begin{enumerate}
        \item $Y_0 \cong Z_0$ and $Y_N \cong Z_1$;
        \item $d_{GH}(Y_i, Y_{i+1}) \leqslant \varepsilon_0/2$ for any $0\leqslant i \leqslant N-1$, where $\varepsilon_0$ is the gap in Theorem \ref{thm:isolation 1sym}.
     \end{enumerate}

By iterating Theorem \ref{thm:isolation 1sym}, we have $Y_i \cong Y_{i+1}$ for each $i$ and thus $Z_0 \cong Z_1$. This proves the uniqueness of the tangent cones at $x$. The second result follows since the $0$-stratum $\cS^0$ is countable. 
\end{proof}

Next we prove the curvature estimates around each point $x \in \cS^1 \setminus \cS^0$. 

Fix any $x \in \cS^1 \setminus \cS^0$. By uniqueness Theorem \ref{t:local uniqueness}, for each $r \leqslant r_x$ for some small $r_x$, we have $B_{r}(x)$ is $(1,\varepsilon)$-symmetric with respect to $\cL_{r} = \cL_{x,r}$, with $\varepsilon \downarrow 0$ as $r \downarrow 0$. Fix some $\tau>0$. By cone-splitting Lemma \ref{l:cone splitting} and the $\varepsilon$-regularity Theorem \ref{t:eps_regularity}, we conclude that if $\varepsilon \leqslant \varepsilon_0(v,\tau)$ is small enough, then there exists some constant $\rho \ll 1$ such that the harmonic radius at $y$ has a lower bound:
\begin{equation}\label{e:harmonic radius lower bound}
    r_h(y) \ge \rho r \text{ for any } y \in B_{r}(x) \setminus B_{\tau r}(\cL_r).
\end{equation}

If we assume that the sequence $M_i$ is Einstein, then by $\varepsilon$-regularity there exists $c(r) \downarrow 0$ as $r\downarrow 0$ such that
\begin{equation}\label{e:curvature scale estimate}
    \sup_{B_{\rho r}(y)} |\Rm| \leqslant c(r) \cdot r^{-2}. 
\end{equation}

Then we can define a \emph{wedge region} $\cW(x)$ around $x$ to be 
\begin{equation}\label{e:wedge region}
    \cW(x) \equiv \cW_{\tau}(x) \equiv  \bigcup_{0<r \leqslant r_x} \{ y \in B_{r}(x) \setminus B_{\tau r}(\cL_r)  \}.
\end{equation}

\begin{proof}[Proof of Corollary \ref{c:local curvature est}]
    Pick a point $y \in \cW(x)$, set $r \equiv 2 d(x,y)$, and use \eqref{e:harmonic radius lower bound} and \eqref{e:curvature scale estimate}.
\end{proof}

\begin{remark}
    We can also take union on $\tau$ to define the rounded wedged region $\hat\cW(x) $ at $x$
    \begin{equation}
    \hat\cW(x) \equiv \bigcup_i B_{r_i}(x) \setminus B_{\tau_i r_i}(\cL_{r_i}).
\end{equation}
Similar estimates hold on the rounded wedge region, replacing $d(x,y)$ by the distance from $y$ to the splitting axis $\cL_r$. 
\end{remark}

We now consider the asymptotic setting. Let $(M,g,p)$ be a pointed 5-manifold with Euclidean volume growth, $\Vol(B_R(p)) \ge v R^5$, and satisfy that $\sup_{B_{2R}(p) \setminus B_R(p)} |\Ric| = o(R^{-2})$. If one tangent cone at infinity splits off a line, then it is unique by Theorem \ref{t:uniqueness at infinity}.

Fix a point $p\in M$. Next we construct the wedge region around infinity $\cW_p(\infty)$: for each $R \ge R_p$ for some large $R_p$, we have $B_R(p)$ is $(1,\varepsilon)$-symmetric with respect to $\cL_R = \cL_{p,R}$ with $\varepsilon\downarrow 0$ as $R \uparrow \infty$. Fix some $\tau>0$. Similarly the \textit{wedge region at infinity} $\cW(\infty) \equiv \cW_{p,\tau}(\infty)$ is defined as
\begin{equation}\label{e:wedge region at inf}
    \cW(\infty) \equiv \bigcup_{R \ge R_p} \{ y \in B_{R}(p) \setminus B_{\tau R}(\cL_R) \}.
\end{equation}

\begin{proof}[Proof of Corollary \ref{c:infinity est}]
    Fix some $p\in M$. For $y \in \cW(\infty)$ constructed as above, we set $R = 2 d(p,y)$. Then the proof is completed by \eqref{e:harmonic radius lower bound} and \eqref{e:curvature scale estimate}.
\end{proof}

Similarly, the rounded wedge region is defined as $\hat\cW(\infty) \equiv \bigcup_{\tau>0} \cW_{p,\tau}(\infty)$, where similar estimates hold. 

Finally we provide a proof of the higher dimensional Theorem \ref{thm:higher dimension unique}, similar to $5$-dimensional case.
\begin{proof}[Proof of Theorem \ref{thm:higher dimension unique}]
    Similar to the proof of \ref{thm:isolation 1sym}, by assumption, every tangent cone at $x$ can be written as $\RR^{n-5} \times C(Y)$ where $Y$ is an Einstein 4-orbifold with isolated singularities. Moreover, let $\RR^{n-5}\times C(Y_0)$ be the tangent cone at $x$ that splits off $\RR^{n-4}$. This implies that $Y_0$ is a sine suspension over $\mathbb{S}^3/\Gamma$ for some $\Gamma \subset O(4)$. Then the proof is completed by Theorem \ref{thm:isolation orbifolds} and the fact that the space of the cross sections of tangent cones at $x$ is connected.
\end{proof}

\subsection{Refined rectifiability}\label{subs:rectifiable}

Let $X^5$ be a noncollapsed Ricci-bounded limit space. Recall that in \cite{cjn,jn}, it is proven that the top stratum $\cS^1 \setminus \cS^0$ is contained in a countable union of bi-Lipschitz images from subsets of $\RR$, modulo some $\cH^1$-measure zero set. In this section, building on the uniqueness Theorem, we will improve the bi-Lipschitz structure by showing that there is no need to discard any measure zero subset, thereby establishing Theorem \ref{thm:lips curve sing}. For related work on uniqueness and the singular set we refer to \cite{cm16,nv17,HJ23,HJ24, HJ25,flb,hkm}.

For convenience, we assume that for some small $\varepsilon_0>0$
\begin{equation}\label{e:Ric small}
    |\Ric| \leqslant \varepsilon_0.
\end{equation}
By rescaling this assumption is always satisfied, without affecting the results in this section. First we note that by uniqueness Theorem \ref{t:local uniqueness}, there is a constant $N(v)>0$ and a natural stratification of the singular set:
\begin{equation}\label{e:refined stratification}
    \cS = \cS^0 \sqcup \bigsqcup_{2 \leqslant |\Gamma| \leqslant N} \cS^1_{\Gamma}, 
\end{equation}
where $\cS_{\Gamma}^1$ is the set of points whose unique tangent cone is $\RR \times \RR^4/\Gamma$, $\Gamma\subset O(4)$.

First we prove a crucial uniform symmetry propagation result, coming from the gap Theorem \ref{thm:isolation orbifolds}. 

\begin{proposition}\label{p:uniform symmetry}
    For any $v>0, \eta>0$ and $\varepsilon \leqslant \varepsilon_0(v,\eta)$, the following holds. If for some $x \in B_1(p) \cap \cS^1_{\Gamma}$ we have $d_{GH}(B_1(x), B_1(0,x')) \leqslant \varepsilon$ with $(0,x')$ the vertex of the cone $\RR \times C(\mathbb{S}^3/\Gamma)$, then $d_{GH}(B_r(x), B_r(0,x')) \leqslant \eta r$ for any $r \leqslant 1$. 
\end{proposition}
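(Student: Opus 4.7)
The plan is to argue by contradiction, with the key input being the isolation Theorem \ref{thm:isolation 1sym} for $1$-symmetric Ricci-flat cones. Shrinking $\eta$ if necessary, we may assume $\eta$ is smaller than the isolation constant $\varepsilon_0^{\mathrm{iso}}(v)$ of Theorem \ref{thm:isolation 1sym} (and handle any larger $\eta$ trivially). Suppose the statement fails: there exist noncollapsed Ricci-bounded $5$-dimensional limit spaces $(X_i,d_i)$, points $x_i\in\cS^1_{\Gamma_i}(X_i)$ (with $|\Gamma_i|\leqslant N_0(v)$, so after a subsequence $\Gamma_i=\Gamma$), scales $r_i\in(0,1]$, and $\varepsilon_i\to 0$ with
\begin{equation*}
    d_{GH}\bigl(B_1(x_i),B_1(0,x'_\Gamma)\bigr)\leqslant\varepsilon_i \quad\text{and}\quad d_{GH}\bigl(B_{r_i}(x_i),B_{r_i}(0,x'_\Gamma)\bigr)>\eta r_i.
\end{equation*}
A standard ``largest bad scale'' extraction allows us to assume $d_{GH}(B_r(x_i),B_r(0,x'_\Gamma))\leqslant\eta r$ for all $r\in(r_i,1]$; continuity of the GH-distance in the scale together with $\varepsilon_i\to 0$ then forces $r_i\to 0$.

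Next, rescale $\tilde g_i:=r_i^{-2}g_i$. The Ricci bound becomes $|\Ric_{\tilde g_i}|\leqslant 4 r_i^2\to 0$, and a diagonal extraction over the Ricci-bounded approximating $5$-manifolds of each $X_i$ produces a noncollapsed pGH limit $(X_\infty,d_\infty,x_\infty)$ that is itself a pGH limit of $5$-manifolds with $|\Ric|\to 0$. The bad-scale defect survives in $X_\infty$ as $d_{GH}(B_1(x_\infty),B_1(0,x'_\Gamma))\geqslant \eta/2$, while the closeness on $(r_i,1]$ passes to $d_{GH}(B_\rho(x_\infty),B_\rho(0,x'_\Gamma))\leqslant\eta\rho$ for every $\rho>1$. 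Rescaling this last bound by $\rho^{-1}$ and letting $\rho\to\infty$ shows that every tangent cone at infinity of $X_\infty$ at $x_\infty$ is $\eta$-close to $\RR\times C(\mathbb{S}^3/\Gamma)$ at the unit-ball scale, and another diagonal argument realizes it as a pGH limit of $5$-manifolds with vanishing Ricci. Theorem \ref{thm:isolation 1sym}, applied with $\eta$ below the isolation constant, then forces every such tangent cone to be isometric to $\RR\times C(\mathbb{S}^3/\Gamma)$; in particular, the asymptotic volume ratio satisfies $\cV_{x_\infty}(\infty)=|\Gamma|^{-1}$.

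The final step upgrades this asymptotic rigidity to a global rigidity on $X_\infty$ via a volume-density sandwich. Bishop--Gromov monotonicity on the Ricci-flat limit $X_\infty$ gives $\cV_{x_\infty}(r)\geqslant\cV_{x_\infty}(\infty)=|\Gamma|^{-1}$ for every $r>0$. Conversely, since the volume density at $x_i\in\cS^1_\Gamma$ equals $|\Gamma|^{-1}$ and is scale-invariant, Bishop--Gromov on the approximating manifolds yields $\cV^{\tilde g_i}_{x_i}(s)\leqslant|\Gamma|^{-1}+o(1)$, and Colding's volume continuity passes this to $\cV_{x_\infty}(s)\leqslant|\Gamma|^{-1}$ in the limit. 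Hence $\cV_{x_\infty}\equiv|\Gamma|^{-1}$, and Cheeger--Colding volume rigidity implies that $X_\infty$ is a metric cone with vertex $x_\infty$. A cone is its own tangent cone at infinity, so $X_\infty\cong\RR\times C(\mathbb{S}^3/\Gamma)$, contradicting the scale-$1$ defect. The delicate point is the ``largest bad scale'' extraction: it is precisely this choice that endows $X_\infty$ simultaneously with a scale-$1$ defect and uniform cone-approximation at all larger scales, thus allowing Theorem \ref{thm:isolation 1sym} to be applied to the tangent cone at infinity of $X_\infty$ rather than at the $x_i$'s themselves, which would be circular.
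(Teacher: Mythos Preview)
Your proof is correct and takes a genuinely different route from the paper's. The paper proceeds by a \emph{chain-of-scales} argument: from $x_i\in\cS^1_\Gamma$ and the scale-$1$ closeness it first obtains a two-sided volume pinching $|\cV_{x_i}(s)-|\Gamma|^{-1}|\to 0$ on all of $(0,1]$, so by the almost-volume-cone theorem every rescaling of $B_1(x_i)$ is $\delta_i$-close to a cone; it then selects a small scale $s_i$ where $B_{s_i}(x_i)$ is already close to the model (using $x_i\in\cS^1_\Gamma$), extracts via Gromov compactness a finite chain of scales $s_i=t_{i,1}<\cdots<t_{i,N}=r_i$ with consecutive rescaled balls $\delta_0/10$-close in GH, passes to limits at each node to obtain cones over Einstein $4$-orbifolds, and applies the isolation Theorem \ref{thm:isolation orbifolds} iteratively along the chain to force every cross-section to be $\mathbb{S}^4/\Gamma$, contradicting the defect at $r_i$. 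Your argument instead takes a single blow-up at the largest bad scale and uses \emph{exact} volume rigidity on the limit $X_\infty$: the tangent cone at infinity is identified via Theorem \ref{thm:isolation 1sym}, the volume density at $x_i$ caps $\cV_{x_\infty}$ from above, and the equality case of Bishop--Gromov on the Ricci-limit space forces $X_\infty$ to be the model cone. Both routes rest on the same two pillars---the density $|\Gamma|^{-1}$ at $x_i$ and the isolation of spherical $4$-orbifolds---but yours is more streamlined (one limit, one invocation of isolation) at the price of using the rigid ``volume cone $\Rightarrow$ metric cone'' statement on the limit space, whereas the paper stays entirely within the quantitative almost-rigidity framework on approximating manifolds.
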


\begin{proof}
We argue by contradiction. First note that for each $v>0$, by volume convergence \cite{col}, there are only finitely many possible $\Gamma$ such that $\RR \times C(\mathbb{S}^3/\Gamma)$ could arise as a tangent cone at some point in $B_1(p)$. Hence in the following argument we can fix the group $\Gamma$.

    We suppose that there exist $\nu >0$, $\eta_0>0$, sequences $x_i \in B_1(p_i) \cap \cS^1_{\Gamma}(X_i)$ and $r_i \leqslant 1$ such that
    \begin{equation*}
        d_{GH}(B_1(x_i),B_1(0,x')) \leqslant 1/i \quad \text{ while } \quad d_{GH}(B_{r_i}(x_i), B_{r_i}(0,x')) > \eta_0 r_i.
    \end{equation*}
    
If $\limsup_i r_i \geqslant r_0 > 0$, then it contradicts the fact that $d_{GH}(B_1(x_i),B_1(0,x')) \leqslant 1/i$ when $i$ is large enough. Hence it suffices to consider the case $r_i \to 0$. Since $d_{GH}(B_1(x), B_1(0,x')) < 1/i$, then by volume convergence theorem \cite{col} we have $|\Vol(B_1(x_i)) - \Vol(B_1(0,x'))| \to 0$ as $i \to \infty$. Since $x_i \in \cS^1_{\Gamma}$, we have that the volume density at $x$ is equal to $|\Gamma|^{-1}$. Therefore, the volume ratio satisfies a pinched condition: for any $s\in (0,1]$
\begin{equation}\label{e:Rect;vol pinch}
    |\cV_{x_i}^{-\varepsilon_0}(s) - |\Gamma|^{-1} | \leqslant c(\varepsilon_0) \varepsilon_i, \text{ with } \varepsilon_i \to 0 \text{ as } i \to \infty.  
\end{equation}
Here $\varepsilon_0$ is the constant in \eqref{e:Ric small}.

Let $\delta_0$ be the gap in Theorem \ref{thm:isolation orbifolds}. Since each $x_i \in \cS^1_{\Gamma}$, there exists a sequence $s_i \to 0$ such that 
\begin{equation*}
    d_{GH}(B_{s_i}(x_i),B_{s_i}(0,x')) \leqslant \frac{\delta_0}{10}s_i.
\end{equation*}

Without loss of generality, we may assume $r_i \ge s_i$ for each $i$. Consider $\alpha_{i}(t) := (\Bar{B}_t(x_i), t^{-1}d_i)$ for $t \in [s_i, r_i]$. By Gromov's precompactness theorem, we know that each $\alpha_i([s_i, r_i])$ is contained in the compact space $(\cM,d_{GH})$, where $\cM$ is the space of compact Ricci limit spaces of dimension $5$, diameter upper bound 1, volume lower bound $v$,  and Ricci curvature lower bound $-4$. Hence there exists some uniform constant $N(v) < \infty$ such that there exists a finite set of scales $\{t_{i,j}\}_{1\leqslant j \leqslant N}$ satisfying
    \begin{enumerate}
        \item $t_{i,1} = s_i$ and $t_{i,N} = r_i$ for each $i$.
        \item For each $i$ and each $1\leqslant j \leqslant N-1$ we have $d_{GH}(\alpha_i(t_{i,j}), \alpha_i(t_{i,j+1})) \leqslant \delta_0/10$.
    \end{enumerate}

Then let $i \to \infty$. By passing to subsequences, for each $1\leqslant j \leqslant N$, we have $\alpha_i(t_{i,j})$ converges to a metric space $W_j$. By \eqref{e:Rect;vol pinch} and Lemma \ref{t:almost volume cone}, we conclude that $W_j$ is a metric cone and thus we write $W_j= C(Z_j)$. Since $s_i, r_i \to 0$, we have $|\Ric(\alpha_i(t_{i,j}))| \to 0$ as $i \to \infty$. By the codimension four theorem \ref{t:codim four}, each $Z_j$ is an Einstein orbifold. Hence, by our construction we have
\begin{enumerate}
        \item $d_{GH}(Z_1, \mathbb{S}^4/\Gamma) \leqslant \frac{\delta_0}{10}$;
        \item $d_{GH}(Z_N,\mathbb{S}^4/\Gamma) \ge \eta_0$;
        \item For each $1\leqslant j \leqslant N$ we have $d_{GH}(Z_j, Z_{j+1}) \leqslant \frac{\delta_0}{10}$.
    \end{enumerate}

Since each $Z_j$ is an Einstein 4-orbifold, by Theorem \ref{thm:isolation orbifolds} we have $Z_j \cong \mathbb{S}^4/\Gamma$. This contradicts (2). Hence, the proof is completed.    
\end{proof}

Next we will apply the uniform symmetry result to construct a local bi-Lipschitz map around each point in $\cS^1_{\Gamma}$. Another key ingredient is the neck decomposition developed in \cite{jn,cjn} (see also \cite{nv19}).  

We define the constant $\Bar{V} = 1/|\Gamma|$, the volume density at those points in $\cS^1_{\Gamma}$. Given $x\in X$, $r>0$ and $\xi>0$, we define the volume-pinched set $\cP_{r,\xi}(x)$ as
\begin{equation*}
    \cP_{r,\xi}(x) \equiv \{ y \in B_{4r}(x) : \cV_y(\xi r) \le \Bar{V} + \xi \}.
\end{equation*}

Most constants are chosen compatibly with the neck decomposition in \cite[Section 10]{cjn}, generally satisfying
\begin{equation*}
    0<\xi < \delta < \gamma < \varepsilon < \tau \ll 1. 
\end{equation*}

\begin{proposition}\label{p:local Lipschitz}
    For any $p \in \cS^1_{\Gamma}$, there exists some function $\phi$ defined on $B_{r}(p)$ for some radius $r>0$ such that $\phi$ is bi-Lipschitz on $B_{r}(x) \cap \cS^1_{\Gamma}$: for some constant $C>1$ and any $x,y \in B_{r}(p) \cap \cS^1_{\Gamma}$,
    \begin{equation}\label{e:Recf;biLip}
        C^{-1} \cdot d(x,y) \leqslant |\phi(x) - \phi(y)| \leqslant C \cdot d(x,y).
    \end{equation}
\end{proposition}

\begin{proof}
    Fix $p \in \cS^1_{\Gamma}$. After rescaling, by Proposition \ref{p:uniform symmetry}, there exists $R_0 \gg 1$ and $\eta>0$ such that
    \begin{equation}\label{e:bilip assumption}
        d_{GH}(B_{r}(x),B_r(0,x')) \le \eta r \text{ for any } x\in B_4(p)\cap \cS^1_{\Gamma} \text{ and any } 0<r\le R_0
    \end{equation} 
with $(0,x')$ the vertex of the cone $\RR \times \RR^4/\Gamma$. Moreover, we can assume $\cV_z(\xi^{-1}) \ge \Bar{V} - \xi$ for any $z\in B_4(p)$ if $\eta$ is chosen sufficiently small. 

We claim that if $d_{GH}(B_{r}(x),B_r(0,x')) \le \eta r$ with 
$\eta \le \eta(v,\varepsilon,\gamma,\xi)$, then
\begin{equation}\label{eq: size volume pinched}
\Vol\bigl(B_{\gamma r}(\cP_{r,\xi}(x))\bigr) > \varepsilon \gamma^4 r_d^5.    
\end{equation}
Rescaling by $r_d$ and arguing by contradiction, we obtain a sequence of balls $B_1(x_i)$
converging to $B_1(0,x') \subset \RR \times \RR^4/\Gamma$ for which the above volume bound fails for fixed $v,\varepsilon,\gamma,\xi$. But
$(-4,4)\times\{x'\} \subset \cP_{1,\xi}(0,x')$
which satisfies $\Vol\bigl(B_{\gamma}(\cP_{1,\xi}(0,x'))\bigr) \ge c\,\gamma^4$ for some universal constant $c>0$. By volume convergence, this contradicts the failure of the bound
whenever $\varepsilon < c/10$, proving the claim.
    
    Hence we can apply the decomposition proposition for 1-symmetric balls by \cite[Proposition 10.5]{cjn} (see also \cite{jn}), to obtain a decomposition of $B_1(p)$
    \begin{equation*}
        B_1(p) \subset \Big( \cC_0 \cup \cN \cap B_1(x) \Big) \cup \bigcup_b B_{r_b}(x_b) \cup \bigcup_{d}B_{r_d}(x_d),
    \end{equation*}
where
\begin{enumerate}
    \item $\cN = B_2(p) \setminus \Big( \cC_0 \cup \bigcup_b B_{r_b}(x_b) \cup \bigcup_d B_{r_d}(x_d) \Big)$ is a $(\delta,\tau)$-neck region;
    \item each $B_{2r_b}(x_b)$ satisfies $r_h(x_b)>2r_b$, where $r_b$ is the harmonic radius;
    \item each $B_{2r_d}(x_d)$ satisfies $\Vol(B_{\gamma r_d}\cP_{r_d,\xi}(x_d))< \varepsilon^2 \gamma^4 r_d^5$ or $\cP_{r_d,\xi}(x_d))= \emptyset$.
\end{enumerate}

The center set of the neck region is defined to be $\cC=\cC_+ \cup \cC_0$ where $\cC_+ = \bigcup_b \{x_b\} \cup \bigcup_d\{x_d\}$ with the corresponding radius. And if $x\in \cC_0$, then $r_x = 0$. 
Recall that the associated measure $\mu$ to the neck region $\cN$ is defined by $\mu = \sum_{x\in \cC_+} r_x \delta_x + \cH^1|_{\cC_0}$. By \cite[Theorem 2.9]{cjn}, for any $x \in \cC$ and $r\ge r_x$ with $B_{2r}(x) \subset B_2(p)$, $\mu$ is Ahlfors regular: for some universal constant $A$
\begin{equation}\label{e:ahlfors}
    A^{-1} \cdot r \le \mu(B_r(x)) \le A \cdot r.
\end{equation}

We claim that $\cC_0 \cap B_2(p) = \cS^1_{\Gamma} \cap B_2(p)$. In fact, by cone-splitting \ref{l:cone splitting} and $\epsilon$-regularity \ref{t:eps_regularity}, the region $\cN \cup \bigcup_b B_{r_b}(x_b)$ is regular. It suffices to check that $B_{2r_d}(x_d)\cap \cS^1_{\Gamma}=\emptyset$. Suppose there exists some $x\in B_{2r_d}(x_d)\cap \cS^1_{\Gamma}$. Then by \eqref{e:bilip assumption} and \eqref{eq: size volume pinched}, we have $B_{r_d/2}(x)$ is $(1,\eta)$-symmetric. Therefore $\Vol(B_{\gamma r_d}\cP_{r_d,\xi}(x_d)) > \Vol(B_{\gamma r_d/2}\cP_{r_d/2,\xi}(x))>\varepsilon^2 \gamma^4r_d^5$, contradicting the definition of the $d$-balls.  This implies the claim.

Hence it suffices to find a function that is bi-Lipschitz on $\cC_0$. By \cite[Proposition 9.3]{cjn}, there exists some $\delta'$-splitting function $u$ defined on $B_4(p)$ such that for any $x,y\in \cC$
\begin{equation*}
    \frac{1}{2}\cdot d(x,y)^{2} \le |u(x) - u(y) | \le 2\cdot d(x,y).
\end{equation*}

In particular, $u$ is 1-1 from $\cC$ to some subset in $\RR$. We can now define the function
\begin{equation*}
    \phi(x) \equiv \mu\Big( \{z\in B_2(p) \cap \cC :u(z) \le u(x)\} \Big).
\end{equation*}

We will prove the bi-Lipschitz property for such $\phi$: for some universal constant $C>1$, and any $x,y\in \cC_0$
\begin{equation*}
    C^{-1} \cdot d(x,y) \le |\phi(x) - \phi(y) | \le C \cdot d(x,y).
\end{equation*}

We fix two points $x,y \in \cC_0$ with $d(x,y)=s$. Assume that $u(x)>u(y)$.
Let $\Omega \equiv \{z\in B_2(p) \cap \cC: u(y) < u(z) \le u(x) \}$. Then $|\phi(x) - \phi(y)| = \mu(\Omega)$.

By \cite[Theorem 7.2]{cjn}, there exists some number $T=T_{x,4s}>0$ such that $Tu$ is a $\delta''$-splitting function on $B_{4s}(x)$. By Theorem \ref{t:almost splitting}, $B_{2s}(x)$ is $(1,\delta''')$-symmetric with respect to $\cL_{x,2s}$ and $(Tu,v)$ is a $\delta'''$-GH map for some map $v :B_{2s}(p) \to \RR^4/\Gamma$. Hence it is easy to see that $\Omega \equiv \{z\in B_{4s}(x) \cap \cC : Tu(y) < Tu(z) \le Tu(x)\}$. This proves the upper bound for $\mu(\Omega)$ using \eqref{e:ahlfors} since in particular $\Omega \subset B_{4s}(x)$
\begin{equation}\label{e:bilip upper}
    \mu(\Omega) \le \mu(B_{4s}(x)) \le (4A)\cdot s.
\end{equation}

On the other hand, by the Reifenberg property of $\cC$ (See \cite[Definition 3.1 (n3)]{jn}), there exists some point $z\in B_{2s}(x) \cap \cC$ such that $Tu|_{B_{s/10}(z)} \subset \big(Tu(y), Tu(x) \big) \subset \RR$. Moreover, by the Lipschitz property of the radius function on $\cC$, we have $r_z \le s/100$ since $r_p = r_q =0$. Therefore,  we can apply \eqref{e:ahlfors} to obtain
\begin{equation}\label{e:bilip lower}
    \mu(\Omega) \ge \mu(B_{s/10}(z)) \ge (10A)^{-1} \cdot s.
\end{equation}

Combining \eqref{e:bilip upper} and \eqref{e:bilip lower} we complete the proof of the bi-Lipschitz property of $\phi$.

\end{proof}

\begin{proof}[Proof of Theorem \ref{thm:lips curve sing}]
    We can decompose the singular set as
    \begin{equation*}
        \cS \cap B_1(p) = \bigcup_{i \in I} (\cS^1_{\Gamma_i} \cap B_1(p)) \cup (\cS^0 \cap B_1(p)).
    \end{equation*}

Since we have the uniform lower bound of $\Vol(B_1(x))$ for $x\in B_1(p)$, the index set $I$ is finite. For each $x \in \cS^1_{\Gamma_i} \cap B_1(p)$, by Proposition \ref{p:local Lipschitz}, there exists some radius $r$ such that $B_{r}(x) \cap \cS^1_{\Gamma_i}$ is bi-Lipschitz to a subset of $\RR$. Since $X$ is separable, there exist countably many subsets $Z_j \in \cS^1$ such that each $Z_j$ is bi-Lipschitz to some subset of $\RR$ with $\cS^1 = \bigcup_{j \in J}Z_j$. The proof is completed by noting that $\cS^0$ is also countable.   
\end{proof}

We include another corollary of the uniqueness theorem here: if a curve has constant volume density, then all the tangent cones on the curve are the same. This will be useful to prove the structure theorem for curves of singularities \ref{mthm: orbifold regularity} in the next section. 

\begin{proposition}\label{p:tangent cone is continuous under constant volume ratio}
    Let $\{x_j\}$ be a sequence in $B_1(p)\cap \cS_{\Gamma}^1$ that converges to $y$. Suppose that the volume ratio of the tangent cones at $y$ be the same as the cone $\RR \times \RR^4/\Gamma$. Then $y \in \cS^1_{\Gamma}$, that is, any tangent cone at $y$ is uniquely $\RR \times \RR^4/\Gamma$.
\end{proposition}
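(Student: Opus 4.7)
The plan is to establish three claims in sequence: (1) every tangent cone at $y$ splits off a line; (2) by Theorem~\ref{t:local uniqueness}, the tangent cone is then unique and isometric to $\RR\times C(\mathbb{S}^3/\Gamma')$ for some finite $\Gamma'\subset O(4)$; (3) $\Gamma'$ is conjugate to $\Gamma$ in $O(4)$, giving $y\in\cS^1_\Gamma$.

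For (1), fix a tangent cone $(X,r_k^{-1}d,y)\to(C(\bar Z),z)$. Since $x_j\to y$, after a subsequence I may take $r_k=d(x_{j_k},y)\to 0$, and along a further subsequence the points $x_{j_k}$ converge through the GH approximations to a point $q\in C(\bar Z)$ with $d(z,q)=1$. Noncollapsed volume continuity gives $\cV_q(s)=\lim_k\cV^X_{x_{j_k}}(r_k s)$ for each $s>0$. Bishop--Gromov in its space-form form at $x_{j_k}$ combined with $\Theta(x_{j_k})=|\Gamma|^{-1}$ yields a uniform-in-$k$ upper bound $\cV^X_{x_{j_k}}(r)\leqslant |\Gamma|^{-1}(1+\varepsilon(r))$ with $\varepsilon(r)\to 0$ depending only on the dimension, so $\cV_q(s)\leqslant |\Gamma|^{-1}$ in the limit. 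The reverse inequality follows from Bishop--Gromov monotonicity on the Ricci-flat cone together with $\cV_q(s)\to \Theta(z)=|\Gamma|^{-1}$ as $s\to\infty$. Hence $\cV_q\equiv |\Gamma|^{-1}$, and by Cheeger--Colding volume-cone rigidity $q$ is a second cone vertex of $C(\bar Z)$; two distinct vertices then force $C(\bar Z)$ to split off a line.

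Claim (2) is immediate from Theorem~\ref{t:local uniqueness}, with $|\Gamma'|=|\Gamma|$ from density matching. For (3), I argue by contradiction: if $\Gamma'$ is not conjugate to $\Gamma$ in $O(4)$, then Theorem~\ref{thm:isolation orbifolds} applied to the two non-isometric spherical $4$-orbifolds gives $d_{GH}(\mathbb{S}^4/\Gamma,\mathbb{S}^4/\Gamma')\geqslant \delta_0>0$. Fix $\varepsilon<\delta_0/3$. At a common scale $r$ and index $j$ the three approximations $d_{GH}(B_r(y),B_r(0,x'_{\Gamma'}))<\varepsilon r/3$ (from uniqueness at $y$), $d_{GH}(B_r(x_j),B_r(0,x'_\Gamma))<\varepsilon r/3$ (from the unique tangent cone at $x_j$), and $d_{GH}(B_r(x_j),B_r(y))\leqslant 2d(x_j,y)<\varepsilon r/3$ combine by the triangle inequality and rescaling by $r^{-1}$ into $d_{GH}(\mathbb{S}^4/\Gamma,\mathbb{S}^4/\Gamma')<\varepsilon<\delta_0$, a contradiction.

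The hardest part will be two technical points. First, the uniform-in-$k$ upper bound in Step~(1) requires that the convergence $\cV^{-1}_x(r)\to \Theta(x)$ admit a modulus independent of the basepoint; this holds because the Bishop--Gromov correction $\Vol^{-1}(B_r(0))/(\omega_n r^n)$ depends only on the dimension and the radius. Second, in Step~(3) the three scales must be coordinated when the conic scale at $x_j$ (the largest $r$ for which $B_r(x_j)$ is $\varepsilon r/3$-close to $B_r(0,x'_\Gamma)$) shrinks faster than $d(x_j,y)$; I resolve this through a diagonal argument leveraging Proposition~\ref{p:uniform symmetry} and the axial position of $q$ forced by $\Theta(q)=|\Gamma|^{-1}$ from Step~(1), producing compatible pairs $(j_k,s_k)$ on which all three approximations hold simultaneously.
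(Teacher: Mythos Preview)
Your proof is correct in outline and takes a genuinely different route from the paper, but Step~(3) is incompletely justified and, once made rigorous, collapses back into the paper's argument.

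\textbf{Comparison.} The paper argues directly: for an arbitrary tangent cone $C(Y)$ at $y$ along scales $r_i$, choose $x_i\in\cS^1_\Gamma$ with $d(x_i,y)\ll r_i$, note that the volume ratio at $x_i$ is pinched on the whole interval $[s_i,r_i]$ (where $s_i$ is a deep scale at which $B_{s_i}(x_i)$ is close to $\RR\times\RR^4/\Gamma$), and then run the path-of-cones plus isolation argument (exactly the proof of Proposition~\ref{p:uniform symmetry}) to connect $\RR\times\RR^4/\Gamma$ at scale $s_i$ to $C(Y)$ at scale $r_i$. Your Step~(1), by contrast, is an elegant second-vertex argument: along the specific sequence $r_k=d(x_{j_k},y)$ the limit point $q$ of the $x_{j_k}$ has constant volume ratio $|\Gamma|^{-1}$ on the cone, hence is itself a cone vertex, forcing a split. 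This is a nice general technique the paper does not use, and together with Theorem~\ref{t:local uniqueness} it immediately gives $y\in\cS^1_{\Gamma'}$ with $|\Gamma'|=|\Gamma|$.

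\textbf{Two issues.} First, a minor phrasing problem in Step~(1): you write ``fix a tangent cone \dots\ after a subsequence I may take $r_k=d(x_{j_k},y)$''. You cannot change the rescaling sequence of a fixed tangent cone; what you actually show is that the tangent cone along the \emph{particular} sequence $r_k=d(x_{j_k},y)$ splits. This suffices because Theorem~\ref{t:local uniqueness} only requires one splitting tangent cone, but the claim ``every tangent cone splits'' is not what your argument establishes.

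Second, and more substantively, your resolution of the scale-coordination problem in Step~(3) is not complete. You need $d_{GH}(B_r(x_j),B_r(0,x'_\Gamma))<\varepsilon r/3$ at a scale $r$ with $d(x_j,y)\ll r$, but Proposition~\ref{p:uniform symmetry} as stated requires $B_r(x_j)$ to be close to $\RR\times\RR^4/\Gamma$ at the \emph{top} scale, which is precisely what you lack (at scale $r$ you only know closeness to $\RR\times\RR^4/\Gamma'$ via the triangle inequality through $y$). The axial position of $q$ gives $B_{r_k}(x_{j_k})$ close to $B_{r_k}(0,x'_{\Gamma'})$, not to $B_{r_k}(0,x'_\Gamma)$. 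What actually works is to observe that the \emph{proof} of Proposition~\ref{p:uniform symmetry} uses the top-scale hypothesis only to obtain volume pinching, and since $|\Gamma'|=|\Gamma|$ you have that pinching anyway; running that proof at $x_{j_k}$ on the interval $[s_k,r_k]$ then yields closeness to $\RR\times\RR^4/\Gamma$ at scale $r_k$ and the contradiction. But this is exactly the paper's direct argument, so your detour through Steps~(1)--(2), while conceptually pleasant, does not in the end avoid it.
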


\begin{proof}
    It is trivial when $\Gamma = \{id\}$. Hence we assume $\Gamma \neq \{id\}$. Let $C(Y)$ be a tangent cone at $y$ with vertex $y^*$ from a rescaling sequence $r_i \downarrow 0$. By assumption we have $\Vol(B_r(y^*)) = \Vol(B_r(0,x'))$ for any $r>0$ where $(0,x')$ is the vertex of $\RR \times \RR^4/\Gamma$. By picking large $i$ we can assume $|\Vol(B_{r_i}(y)) - \Vol(B_{r_i}(0,x'))| \leqslant \varepsilon_i r_i^5$ where $\varepsilon_i \downarrow 0$ as $i \uparrow \infty$. Since the sequence $x_j$ converges to $y$, then for each $i$ there exists $x_i$ such that $|\Vol(B_{r_i}(x_i)) - \Vol(B_{r_i}(y))| \leqslant \varepsilon_i r_i^5$ and $d_{GH}(B_{r_i}(x_i), B_{r_i}(y)) \leqslant \varepsilon_i r_i$. On the other hand, since each $x_i \in \cS^1_{\Gamma}$, there exists some $s_i < r_i$ such that $d_{GH}(B_{s_i}(x_i), B_{s_i}(0,x')) < \varepsilon_i s_i$. 

    Consider $\alpha_{i}(t) \equiv (\Bar{B}_t(x_i), t^{-1}d)$ for $t \in [s_i, r_i]$. Since $|\Vol(B_{r_i}(x_i)) - \Vol(B_{r_i}(0,x'))| < 2\varepsilon_i  r_i^5$, By Lemma \ref{t:almost volume cone}, we have $\alpha_i(t)$ is $\delta_i$-close to a cone for any $t \in [s_i,r_i]$ with $\delta_i \downarrow 0$ as $i \uparrow \infty$. Following the proof of Proposition \ref{p:uniform symmetry}, by Gromov's precompactness theorem and Theorem \ref{t:codim four}, we have that each $\alpha_i(t)$ converges to the unit ball in $C(Z_t)$ where each $Z_t$ is an Einstein 4-orbifold. Moreover, we have that $\alpha_i(s_i) \to B_1(0,x') \subset \RR \times \RR^4/\Gamma$ and $\alpha_i(r_i) \to B_1(y^*) \subset C(Y)$. By the isolation Theorem \ref{thm:isolation orbifolds}, this implies that $\alpha_i(t) \to B_1(0,x')$ for any $t\in [s_i,r_i]$ and in particular $C(Y) \cong \RR \times \RR^4/\Gamma$. The proof is completed by the uniqueness Theorem \ref{t:local uniqueness}.
\end{proof}

\begin{corollary}\label{c:curve constant volume ratio}
    Let $\gamma$ be a continuous curve in $B_1(p) \subset X$. Assume that the volume density is constant along the curve. Then there exists some $\Gamma \subset 
    O(4)$ such that all of the tangent cones along the curve are isometric to $\RR \times \RR^4/\Gamma$. 
\end{corollary}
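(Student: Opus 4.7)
The plan is to combine the constancy of the volume density $V_0$ along $\gamma$ with the refined stratification $\cS = \cS^0 \sqcup \bigsqcup_{\Gamma} \cS^1_{\Gamma}$ of Theorem \ref{t:new stratification} and a clopen argument on the connected interval $I$. First I would handle the case $V_0 = 1$ trivially: by Colding's volume-cone rigidity every point of $\gamma$ is regular, so every tangent cone is $\RR^5 = \RR \times \RR^4/\{e\}$. Assume henceforth $V_0 < 1$, so no point of $\gamma$ lies in $\cR$. Since only finitely many finite subgroups $\Gamma \subset O(4)$ satisfy $|\Gamma|^{-1} = V_0$, I set $A_{\Gamma} := \gamma^{-1}(\cS^1_{\Gamma}) \subset I$ and obtain the disjoint decomposition $I = \gamma^{-1}(\cS^0) \sqcup \bigsqcup_{\Gamma} A_{\Gamma}$.

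The first step is a direct application of Proposition \ref{p:tangent cone is continuous under constant volume ratio}: each $A_{\Gamma}$ is closed in $I$. Indeed, if $t_n \in A_{\Gamma}$ and $t_n \to t$, then $\gamma(t_n) \in \cS^1_{\Gamma}$ converges to $\gamma(t)$, the density at $\gamma(t)$ equals $V_0 = |\Gamma|^{-1}$ by hypothesis, and hence $\gamma(t) \in \cS^1_{\Gamma}$ by that proposition.

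The main and most delicate step is to show each $A_{\Gamma}$ is also \emph{open}. Fix $t_0 \in A_{\Gamma}$ and let $\delta_0$ be the isolation constant of Theorem \ref{thm:isolation orbifolds}. Since $\gamma(t_0) \in \cS^1_{\Gamma}$ has unique tangent cone $\RR \times \RR^4/\Gamma$, one can choose $r > 0$ small enough that $B_r(\gamma(t_0))$ is $\eta r$-Gromov-Hausdorff close to the ball of radius $r$ at the vertex of $\RR \times \RR^4/\Gamma$, with $\eta \ll \delta_0$. By continuity of $\gamma$, the same closeness (up to a universal constant) persists at $\gamma(t)$ for all $t$ sufficiently close to $t_0$. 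Since the density at $\gamma(t)$ equals $V_0 = |\Gamma|^{-1}$, Bishop--Gromov monotonicity forces $\cV_{\gamma(t)}(s)$ to be almost constant for every $s \leqslant r$; the almost-volume-cone Theorem \ref{t:almost volume cone} then produces cone approximations $C(Z_{t,s})$ at every such scale, each cross-section $Z_{t,s}$ being an Einstein $4$-orbifold of volume density $V_0$. Now I would reprise the interpolation argument from the proof of Proposition \ref{p:tangent cone is continuous under constant volume ratio}: Gromov's precompactness theorem supplies a uniformly bounded number of intermediate scales between $s = r$ and $s \downarrow 0$ along which consecutive cross-sections differ by less than $\delta_0/10$ in $d_{GH}$, and since $Z_{t,r}$ is already $O(\eta)$-close to $\mathbb{S}^4/\Gamma$, iteratively applying the gap Theorem \ref{thm:isolation orbifolds} forces every $Z_{t,s}$ to be isometric to $\mathbb{S}^4/\Gamma$. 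Consequently every tangent cone at $\gamma(t)$ equals $\RR \times \RR^4/\Gamma$, and by Theorem \ref{t:local uniqueness} this is in fact the unique tangent cone, so $\gamma(t) \in \cS^1_{\Gamma}$.

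The principal obstacle is that Proposition \ref{p:tangent cone is continuous under constant volume ratio} requires the approximating sequence to already lie in $\cS^1_{\Gamma}$, which one does not have when attempting to prove openness; one must therefore recycle its internal ingredients -- almost volume cone plus finite-scale interpolation controlled by the isolation gap -- using only one-sided closeness at the single scale $r$ together with the density constraint. With openness and closedness in hand, each $A_{\Gamma}$ is clopen in the connected interval $I$, so $A_{\Gamma} = I$ or $A_{\Gamma} = \emptyset$; if some $A_{\Gamma} = I$ the conclusion holds with this $\Gamma$. Otherwise $\gamma(I) \subset \cS^0$, which is countable by Theorem \ref{t:new stratification}, so the continuous image must be a single point -- a degenerate case that does not arise in the intended applications via unit-speed curves in Theorem \ref{mthm: orbifold regularity}.
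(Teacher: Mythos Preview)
Your proof is correct and shares with the paper the key scaffolding: the trivial case $V_0=1$, the decomposition of $I$ via the refined stratification, the closedness of each $A_\Gamma$ through Proposition~\ref{p:tangent cone is continuous under constant volume ratio}, and the final appeal to connectedness of $I$. The paper, however, reaches the conclusion by a shorter route that bypasses your openness step entirely. Once each $A_\Gamma$ is closed, the complement $\gamma^{-1}(\cS^0) = I \setminus \bigsqcup_\Gamma A_\Gamma$ is \emph{open} in $I$; were it nonempty it would contain an interval, whose image under a nonconstant $\gamma$ would be an uncountable connected subset of the countable (zero-dimensional) set $\cS^0$, a contradiction. Hence $\gamma^{-1}(\cS^0)=\emptyset$, and $I$ is a finite disjoint union of the \emph{closed} sets $A_\Gamma$, so connectedness forces exactly one to be all of $I$. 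Your direct proof that each $A_\Gamma$ is open is valid --- it is essentially the uniform-symmetry propagation of Proposition~\ref{p:uniform symmetry} run from a single large scale downward, using the density hypothesis in place of the a priori membership in $\cS^1_\Gamma$ --- and it has the merit of exhibiting explicitly that $\cS^1_\Gamma$ is relatively open among points of density $|\Gamma|^{-1}$; but for this corollary the paper's argument saves that work by letting the dimension estimate on $\cS^0$ do the job.
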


\begin{proof}
    Suppose that the curve has the Euclidean volume ratio. Then the whole curve is in the regular set by volume convergence Theorem \cite{col} and all the tangent cones must be Euclidean. Therefore, we can assume $\gamma \subset \cS$. We can decompose 
    \begin{equation*}
        \gamma = \cS^0(\gamma) \sqcup \bigsqcup_{i=1}^N \cS^1_{\Gamma_i}(\gamma).
    \end{equation*}

    By Proposition \ref{p:tangent cone is continuous under constant volume ratio}, each $\cS^1_{\Gamma_i}(\gamma)$ is closed relative to $\gamma$. This implies that $\cS^0(\gamma)$ is open relative to $\gamma$. Since we have the dimension estimate $\dim(\cS^0) \leqslant 0$, it implies that $\cS^0(\gamma) = \emptyset$. Since each $\cS^1_{\Gamma_i}(\gamma)$ is closed relative to $\gamma$, we have $\gamma = \cS^1_{\Gamma_i}$ for some $i$.
\end{proof}

\section{Orbifold structure for curves of singularities}\label{s:orbifold structure}

In this section, we consider a sequence $(M_i,g_i,p_i)_i$ of \emph{Einstein} $5$-metrics with 
$$\Vol(B_1(p_i))> v>0, \qquad \text{ and}\qquad|\Ric(g_i)|\leqslant 4$$
for uniform constants $v>0$. Then, one can take a subsequence converging in the Gromov-Hausdorff sense to a singular space $(X,d,p)$. 

In the previous section we analyzed the regularity of points in $\cS^1 \setminus \cS^0$.  
We now consider the case when the singularities exhaust an entire curve.  
In this setting we show that the curve of singularities in $\cS^1_{\Gamma}$ consists of orbifold singularities lying along geodesics, and that the curvature remains uniformly bounded in a neighborhood of the curve; see Theorem \ref{mthm: orbifold regularity}.

We consider a Lipschitz curve $\gamma: [-2,2] \to \cS^1_{\Gamma}$ with $\gamma(0) = x_0$ and $\Gamma \neq \{id\}$. In the following we let $(0,x')$ be cone tip of $\RR \times \RR^4/\Gamma$. By Proposition \ref{p:local Lipschitz}, we can assume $\gamma$ is $(1+10^{-10})$-bi-Lipschitz by rescaling: 
\begin{equation}\label{e:glue;curve lip}
        \Big| d(\gamma(t),\gamma(s)) - |t-s|\Big| \leqslant 10^{-10} |t-s|.
    \end{equation}

We recall the definition of 5-dimensional orbifolds with curve singularities:
\begin{defn}[$5$-orbifold with curve singularities]\label{orb Ein curve}
    We will say that a metric space $(M_o,g_o)$ is a $5$--dimensional orbifold
    with singularities along curves if there exist $\varepsilon_0>0$ and a countable
    collection of embedded smooth curves $(\gamma_k)_k$ in $M_o$, called
    \emph{singular curves}, such that the following properties hold:
    \begin{enumerate}
        \item the space $\bigl(M_o\setminus\cup_k \gamma_k,\,g_o\bigr)$ is a smooth
        manifold of dimension $5$,
        \item for each singular curve $\gamma_k$ of $M_o$ and every point
        $p\in\gamma_k$, there exist
        \begin{itemize}
            \item an open neighborhood $U_{k,p}\subset M_o$ of $p$,
            \item a finite subgroup $\Gamma_k\subset O(4)$ acting freely on $\mathbb{S}^3$,
            \item a diffeomorphism
            $\Phi_{k,p} \colon B_e(0,\varepsilon_0)\subset
                \mathbb{R}\times\bigl(\mathbb{R}^4/\Gamma_k\bigr) \longrightarrow
                U_{k,p}\subset M_o,$ such that:
        \begin{enumerate}
            \item[(a)] $\Phi_{k,p}\bigl( \mathbb{R}\times\{0\} \cap B_e(0,\varepsilon_0)\bigr)
            = \gamma_k\cap U_{k,p}$,
            \item[(b)] the pull-back metric $\Phi_{k,p}^*g_o$ lifts, on the covering
            $\mathbb{R}\times\mathbb{R}^4$, to a Riemannian metric.
        \end{enumerate}
        \end{itemize}
    \end{enumerate}
    
We say that the orbifold has regularity $C^{k,\alpha}$, (resp. is real-analytic) if all of the metrics $\Phi_{k,p}^*g_o$ are.
\end{defn}

The techniques in this section can be applied to higher dimensional case up to minor modifications, around a codimension four Lipschitz submanifold of singular set where all the tangent cones are $\RR^{n-4} \times \RR^4/\Gamma$ for the same $\Gamma$. For the gluing part \ref{s:C0 chart}, it is more technically complicated in higher dimension so for simplicity we prove it in dimension $n=5$. However, for the regularity upgrading part \ref{s:curvature estimate} we will work in all dimensions to keep track of the dimensional constants from Sobolev inequality.

\subsection{Continuous metric extension}\label{s:C0 chart}

In the first step, we establish the $C^0$ extension of the metric near the singular curve $\gamma$. Our approach follows the strategy in \cite{DS} where the codimension $4$ singularities of $6$-dimensional Kähler-Einstein metrics reduces to the isolated singularity case in dimension four. In the case of isolated singularities, one typically constructs a diffeomorphism between an annulus surrounding the singular point and a corresponding annulus in the tangent cone. These diffeomorphisms are then glued together along the radial direction. The uniqueness of tangent cone then ensures that the error decays as the radial scale tends to zero. 

Several key new difficulties have to be addressed in the case of a higher-dimensional singular set.  At each fixed radial scale, one must first glue together the various local blocks along the directions parallel to the singular set. As the radial scale tends to zero, the number of such blocks increases, making uniform estimates essential to prevent error accumulation. Moreover, the gluing procedure must be carefully implemented to avoid shrinking the annuli in proportion to the number of blocks. We will overcome these difficulties in Lemma \ref{l:uniform symmetry along curves} and Lemma \ref{l:Horizontal Gluing}.

We first state a key uniform symmetry estimate used to control errors in the gluing procedure. The key point is that the symmetry scale $r_0$ is independent of the point $x$ on the curve. 

\begin{lemma}\label{l:uniform symmetry along curves}
    Let $\eta>0$ be given, $x\in \mathcal{S}^1_\Gamma$ and $B_r(0,x')$ metric balls centered at the tip of the cone $\mathbb{R}\times \mathbb{R}^4/\Gamma$. Then 
    there exists some $r_0(\eta)$ such that the following estimate holds \begin{equation}\label{e:curve uniform symmetry}
        d_{GH}(B_r(x), B_r(0,x')) \leqslant \eta r, \text{ for any } x \in \gamma \text{ and any } r \leqslant r_0.
    \end{equation}

Moreover, for each $r_0>0$, there exists $\eta$, with $\eta \to 0$ as $r_0\to 0$, such that \eqref{e:curve uniform symmetry} holds.
\end{lemma}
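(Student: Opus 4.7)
The plan is to reduce the uniform statement to a pointwise lower semicontinuity. Define, for $x\in\gamma$,
$$R_x := \sup\{R>0 : d_{GH}(B_R(x), B_R(0,x')) \leqslant \varepsilon_0(\eta)\, R\},$$
where $\varepsilon_0(\eta)$ is the threshold from Proposition \ref{p:uniform symmetry}. By the uniqueness of tangent cones at each $x\in\gamma\subset\cS^1_\Gamma$ (Theorem \ref{t:local uniqueness}), $R_x>0$ for every $x$, and Proposition \ref{p:uniform symmetry} then ensures $d_{GH}(B_r(x),B_r(0,x')) \leqslant \eta r$ for all $r\leqslant R_x$. Consequently, once $x\mapsto R_x$ is known to be lower semicontinuous on the compact curve $\gamma$, one has $r_0:=\inf_{x\in\gamma}R_x>0$ and \eqref{e:curve uniform symmetry} holds with this $r_0$.

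To prove lower semicontinuity, fix $x_\infty\in\gamma$, pick scales $R<S<R_{x_\infty}$, and let $x_i\to x_\infty$ along $\gamma$; the goal is $R_{x_i}\geqslant R$ for $i$ large. Choose $\delta>0$ small (to be fixed later) and a $\delta S$-Gromov-Hausdorff map $\Phi_i:B_S(x_\infty)\to B_S((0,x'))\subset\RR\times\RR^4/\Gamma$ with $d_{\mathrm{cone}}(\Phi_i(x_\infty),(0,x'))\leqslant \delta S$. Setting $\bar{x}_i := \Phi_i(x_i) = (t_i,y_i)$ in the cone,
$$|y_i| \leqslant d_{\mathrm{cone}}(\bar{x}_i,(0,x')) \leqslant d(x_i,x_\infty) + 2\delta S,$$
which tends to zero as $i\to\infty$ with $\delta$ fixed. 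Since $\RR\times\RR^4/\Gamma$ is a cone whose singular axis $\RR\times\{x'\}$ passes through the tip, axis translation yields the elementary cone estimate
$$d_{GH}(B_R(\bar{x}_i),B_R((0,x'))) \leqslant C(R)\,|y_i|,$$
and composing with $\Phi_i$ gives
$$d_{GH}(B_R(x_i),B_R((0,x'))) \leqslant \delta S + C(R)\,|y_i| < \varepsilon_0(\eta)\,R$$
once $\delta$ is chosen small and $i$ is taken large. Hence $R_{x_i}\geqslant R$, so $\liminf_i R_{x_i}\geqslant R_{x_\infty}$, establishing lower semicontinuity.

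The only delicate step is the cone estimate $d_{GH}(B_R(\bar{x}_i),B_R((0,x'))) \leqslant C(R)|y_i|$: the cone $\RR\times\RR^4/\Gamma$ is isometrically translation-invariant only along its axis, so the comparison between an off-axis ball and the ball at the tip is not exact but only $C(R)|y_i|$-approximate. This is the single place where the metric \emph{cone} structure of the model is genuinely used beyond pointwise uniqueness applied at $x_\infty$, and it is what allows one to bypass any volume-density argument to place $\bar{x}_i$ near the axis. The ``moreover'' clause follows at once by setting
$$\eta(r_0) := \sup\bigl\{r^{-1}d_{GH}(B_r(x),B_r(0,x')) : x\in\gamma,\ r\leqslant r_0\bigr\},$$
since the first part of the lemma is precisely the assertion that $\eta(r_0)\to 0$ as $r_0\downarrow 0$.
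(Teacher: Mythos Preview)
Your strategy—define a ``good scale'' $R_x$, show it is lower semicontinuous, then take the infimum over the compact curve—is the same as the paper's in spirit, but the execution has a genuine quantitative gap at the step ``Choose $\delta>0$ small (to be fixed later) and a $\delta S$-Gromov--Hausdorff map $\Phi_i:B_S(x_\infty)\to B_S((0,x'))$.'' With your definition $R_x=\sup\{R: d_{GH}(B_R(x),B_R(0,x'))\le \varepsilon_0(\eta)R\}$, the only information you have at $x_\infty$ is $\varepsilon_0(\eta)$-closeness at some scale near $R_{x_\infty}$; Proposition~\ref{p:uniform symmetry} then gives $\eta$-closeness at the smaller scale $S$, but nothing better. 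So $\delta$ is bounded below by $\varepsilon_0(\eta)$ (or $\eta$), not freely small. Consequently your claim that $|y_i|\le d(x_i,x_\infty)+2\delta S$ ``tends to zero as $i\to\infty$ with $\delta$ fixed'' is simply false—the limit is $2\delta S>0$—and in your final estimate $d_{GH}(B_R(x_i),B_R(0,x'))\le \delta S + C(R)|y_i|$ the first term alone already satisfies $\delta S\ge \varepsilon_0 S>\varepsilon_0 R$ (since $S>R$), so you cannot conclude $R_{x_i}\ge R$. The cone estimate $d_{GH}(B_R(\bar x_i),B_R(0,x'))\le C(R)|y_i|$ is fine, but it never gets used because the preceding term is already too large.

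The paper repairs this with one elementary change and no cone detour: it defines $r_x$ using the threshold $\varepsilon_0/2$ \emph{at all scales $s\le r_x$} (rather than a sup over a single scale). Then for any $y\in\gamma$ with $d(x,y)<\varepsilon_0 r_x/2$, the triangle inequality inside $X$ gives
\[
d_{GH}\bigl(B_{r_x}(y),B_{r_x}(0,x')\bigr)\ \le\ d_{GH}\bigl(B_{r_x}(y),B_{r_x}(x)\bigr)+d_{GH}\bigl(B_{r_x}(x),B_{r_x}(0,x')\bigr)\ \le\ d(x,y)+\tfrac{\varepsilon_0}{2}r_x\ \le\ \varepsilon_0 r_x,
\]
and Proposition~\ref{p:uniform symmetry} at $y$ then yields $\eta$-closeness for all $r\le r_x$. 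A finite cover of the compact $\gamma$ by such neighborhoods finishes the proof. The halved threshold is precisely what creates the slack your argument lacks; your lower-semicontinuity proof would work if you likewise built in a factor of $2$ and replaced the off-axis cone estimate by this direct triangle inequality.
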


\begin{proof}
    Let $\eta>0$ be given. Then by Proposition \ref{p:uniform symmetry}, there exists some $\varepsilon_0(\eta)$ such that:
    \begin{equation*}
        \text{whenever } d_{GH}(B_{r_x}(x), B_{r_x}(0,x')) \leqslant \varepsilon_0 r_x \text{ for some } r_x>0, \text{ then } d_{GH}(B_r(x),B_r(0,x')) \leqslant \eta r \text{ for any } r\leqslant r_x. \qquad
    \end{equation*}
    Therefore we can define 
    \begin{equation*}
        r_x \equiv \sup \big\{ r: d_{GH}(B_s(x), B_s(0,x')) < \tfrac{\varepsilon_0 s}{2} \text{ for any } s \leqslant r \big\}>0.  
    \end{equation*}

    Then for any $y \in B_{\varepsilon_0 r_x/2}(x) \cap \gamma$, we have
    \begin{equation*}
        d_{GH}(B_{r_x}(y), B_{r_x}(0,x')) \leqslant d_{GH}(B_{r_x}(y), B_{r_x}(x)) + d_{GH}(B_{r_x}(x), B_{r_x}(0,x')) \leqslant \frac{\varepsilon_0 r_x}{2} + \frac{\varepsilon_0 r_x}{2} = \varepsilon_0 r_x.
    \end{equation*}
    Hence we have $d_{GH}(B_r(y),B_r(0,x')) \leqslant \eta r$ for any $r\leqslant r_x$. 

    Since $\gamma$ is a closed and thus compact subset, there exist finitely many $i$ such that 
    \begin{equation*}
        \gamma = \bigcup_{x \in \gamma} B_{\varepsilon_0 r_x /2}(x) \cap \gamma = \bigcup_{i=1}^M B_{\varepsilon_0 r_{x_i} /2}(x_i) \cap \gamma.
    \end{equation*}

    The proof is finished if we choose $r_0 = \min\{r_{x_1}, ..., r_{x_M}\}$. Moreover, note that if $\eta \to 0$, then $\varepsilon_0(\eta) \to 0$ and thus $r_x \to 0$. This proves the second claim.
    \end{proof}

Let $g_0$ denote the standard flat metric in $\RR \times \RR^4/\Gamma$ with vertex $(0,x')$. 
We define the annuli as
\begin{equation*}
    (b_1, b_2) \times  A_{a_1, a_2} \equiv \{(t, z) \in \RR \times \RR^4/\Gamma: b_1 < t < b_2,  a_1 < |z| < a_2 \}.
\end{equation*}
For convenience, we define $|x|_{\Gamma} \equiv \|z \|$ for any $x = (t,z)$.

Suppose $B_{200r}(x)$ be $(1,\eta)$-symmetric with respect to $\cL_{x,200r}$ for some $\eta < 1/10$, i.e. $$d_{GH}(B_{200r}(x), B_{200r}(0,x')) \leqslant 200\eta r$$ and $\cL_{x,200r}$ is the Gromov-Hausdorff correspondence of $(\RR \times \{0\}) \cap B_{200r}(0,x') \subset \RR \times \RR^4/\Gamma$. Then by $\varepsilon$-regularity due to Anderson \cite{and}, Colding \cite{col}, Cheeger-Naber \cite{cn}, and the uniqueness Theorem \ref{t:local uniqueness}, for any $\delta>0$, if $\eta\leqslant \eta_0(\delta)$, there exists some smooth embedding $\phi_{\eta} : [-20r, 20r] \times  {\Bar{A}}_{r, 100r} \to B_{200r}(x) \setminus B_{2\eta}(\cL_{x,200r}) 
$ satisfying
\begin{enumerate}
    \item $|r^{-2}\phi_{\eta}^* g - g_0|_{C^4} < \delta$,
    \item For any $y \in (-20r, 20r) \times {\Bar{A}}_{r, 100r}$ we have
    \begin{equation}\label{e:glue;gamma distance}
        (1-\delta) r |y|_{\Gamma} \leqslant  d( \phi_{\eta}(y), \cL_{x,200r}) \leqslant (1+\delta) r |y|_{\Gamma}.
    \end{equation}
\end{enumerate}

The following approximation by isometry lemma is standard and proven like \cite[Lemma 5.8]{DS}.

\begin{lemma}\label{l:isom appro}
    Let $\delta>0$ be given small. Then there exists some $\delta'$, with $\delta' \to 0 $ as $\delta \to 0$,  such that for any smooth map $\phi: [11,19] \times \Bar{A}_{1,100} \to \RR \times \RR^4/\Gamma$ with $|\phi^* g_0 - g_0|_{C^4} \leqslant \delta$, then there exists some isometry $P_i \in \operatorname{Isom}(\RR) \times \operatorname{Isom}(\RR^4/\Gamma)$ such that $|P_i \circ \phi -  \mathrm{id}|_{C^3} < \delta'$ in a smaller region. 
\end{lemma}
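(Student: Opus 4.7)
The plan is a standard contradiction-and-compactness argument that exploits the rigidity of isometries of the flat cone $\RR\times \RR^4/\Gamma$. Suppose the conclusion fails: there exist $\delta_0>0$ and smooth maps $\phi_i:[11,19]\times \bar{A}_{1,100}\to \RR\times \RR^4/\Gamma$ with $\delta_i:=|\phi_i^*g_0-g_0|_{C^4}\to 0$ yet $\inf_{P\in G}|P\circ \phi_i-\mathrm{id}|_{C^3(\Omega')}\geqslant \delta_0$ on a smaller region $\Omega'\Subset [11,19]\times \bar{A}_{1,100}$, where $G:=\operatorname{Isom}(\RR)\times \operatorname{Isom}(\RR^4/\Gamma)$. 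For each $i$, fix a base point $x_0\in \Omega'$ and choose $P_i\in G$ minimizing $d\bigl((P_i\circ \phi_i)(x_0),\, x_0\bigr)$; set $\tilde\phi_i:=P_i\circ \phi_i$.

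The $C^4$-near-isometric condition makes $\tilde\phi_i$ uniformly $(1+o(1))$-Lipschitz with near-orthogonal derivative, so the images $\tilde\phi_i(\Omega')$ lie in a common compact neighborhood of $x_0$. A standard bootstrap from $|\tilde\phi_i^*g_0-g_0|_{C^4}\to 0$, together with the pointwise normalization, yields uniform $C^5$ control on $\tilde\phi_i$. By Arzelà–Ascoli, a subsequence converges in $C^3_{\mathrm{loc}}$ to a smooth limit $\tilde\phi_\infty$ satisfying $\tilde\phi_\infty^*g_0=g_0$, i.e., a local isometry staying in the regular part of the target.

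The crucial step is to show that $\tilde\phi_\infty$ is the restriction of some $P_\infty\in G$. Since the target is locally flat away from the singular axis $\RR\times\{0\}$ and the image of $\tilde\phi_\infty$ remains a definite distance from this axis, one lifts $\tilde\phi_\infty$ on simply connected patches to an affine isometry $(\mathbf{v},R)\in \RR^5\rtimes O(5)$ of the universal cover $\RR^5$. Analytic continuation around the generators of $\pi_1(\bar{A}_{1,100})\cong\Gamma$ produces deck-transformation compatibility conditions that force $(\mathbf{v},R)$ to normalize $\Gamma\subset O(4)\subset O(5)$. Since $\Gamma$ acts freely on $\mathbb{S}^3$ and fixes the $\RR$-factor, a direct computation shows that this normalizer descends exactly to $G=\operatorname{Isom}(\RR)\times \operatorname{Isom}(\RR^4/\Gamma)$. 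Setting $Q_i:=P_\infty^{-1}\circ P_i\in G$ then gives $|Q_i\circ \phi_i-\mathrm{id}|_{C^3(\Omega')}\to 0$, contradicting the infimum bound $\delta_0$.

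The main obstacle is the rigidity-and-extension step: tracking the analytic continuation through the nontrivial fundamental group of the annular domain and verifying that the limiting affine Euclidean isometry genuinely descends to an element of the allowed global isometry group $G$, rather than merely lying in $\operatorname{Isom}(\RR^5)$. This is exactly where the orbifold hypothesis $\Gamma$ acting freely on $\mathbb{S}^3$ is essential.
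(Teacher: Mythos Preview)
Your contradiction--compactness--rigidity argument is precisely the standard proof the paper gestures at with its one-line ``contradiction arguments'' (and which is modeled on \cite[Lemma~5.8]{DS}); the lifting to $\RR^5$ and the normalizer computation for $\Gamma$ are the right mechanism, and your identification of the extension step as the crux is accurate.

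One step deserves slightly more care. Your normalization ``choose $P_i\in G$ minimizing $d(P_i\phi_i(x_0),x_0)$'' does not by itself force the images into a fixed compact set, because $G=\operatorname{Isom}(\RR)\times\operatorname{Isom}(\RR^4/\Gamma)$ preserves the radial coordinate $|\cdot|_\Gamma$: if $|\phi_i(x_0)|_\Gamma$ drifted to $0$ or $\infty$, no element of $G$ could bring $\phi_i(x_0)$ near $x_0$. There are two clean fixes. First, in the paper's only application (Lemma~\ref{l:Horizontal Gluing}) the map already satisfies $(1-3\delta)|x|_\Gamma\leqslant|\phi(x)|_\Gamma\leqslant(1+3\delta)|x|_\Gamma$, which supplies the missing bound directly. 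Second, and intrinsically, your own equivariance argument can be run \emph{a priori} on the lifts $\hat\phi_i$: the bootstrap gives $\hat\phi_i$ $C^2$-close to an affine map $A_i=(\mathbf v_i,R_i)$ with $R_i$ near-orthogonal, and the approximate relation $A_i\gamma\approx\rho_i(\gamma)A_i$ forces $(1-\rho_i(\gamma))\mathbf v_i$ to be bounded for every $\gamma$, hence the $\RR^4$-component of $\mathbf v_i$ is bounded since $\mathrm{Fix}(\Gamma)=\RR\times\{0\}$. After translating only in the $\RR$-factor (which lies in $G$), compactness follows. With this adjustment your proof is complete.
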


\begin{proof}
    The proof follows from contradiction arguments.
\end{proof}

Now we glue the local diffeomorphisms together along the direction parallel to the singular curve $\gamma$.  
The subtle point is that the domains $(-1,1) \times \bar{A}_{10r,90r}$ are not scale invariant; however, all estimates remain uniform in $r$, independent of the parameter $t$. Crucially, $\operatorname{Isom}(\RR) \times \operatorname{Isom}(\RR^4/\Gamma)$ is much smaller than $\operatorname{Isom}(\RR^5)$. The isometry $P$ from Lemma \ref{l:isom appro} can only be a translation or reflection in $\RR$, combined with a rotation respecting $\Gamma$-action on $\RR^4$.

\begin{lemma}\label{l:Horizontal Gluing}
For any $r$ small, there exists a smooth embedding $H_r:(-1,1) \times \bar{A}_{10r,90r} \to B_2(x_0)$ such that
\begin{align}\label{e:Horizontal gluing}
    (1- \delta) |y|_{\Gamma} \leqslant d(H_r(y), \gamma) \leqslant (1+\delta) |y|_{\Gamma}, \text{ and } |r^{-2} H_r^{*} g  - g_0|_{C^2} \leqslant \delta(r).
\end{align}
And $\delta(r) \to 0$ as $r \to 0$. 
\end{lemma}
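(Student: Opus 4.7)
My plan is to produce $H_r$ by a chart-and-glue construction: cover a tubular neighborhood of $\gamma\cap B_{1.5}(x_0)$ by order $N\asymp 1/r$ local cone-like charts at points on $\gamma$, align them pairwise using the restricted isometry group of $\mathbb{R}\times\mathbb{R}^4/\Gamma$, and interpolate by a partition of unity along the axial direction. Specifically, choose $x_1,\dots,x_N\in\gamma$ with $d(x_i,x_{i+1})\asymp 10r$. By Lemma~\ref{l:uniform symmetry along curves}, for $r$ small each $B_{200r}(x_i)$ is $(1,\eta(r))$-symmetric with $\eta(r)\to 0$, so $\varepsilon$-regularity (Theorem~\ref{t:eps_regularity}) and the uniqueness of tangent cones (Theorem~\ref{t:local uniqueness}) produce smooth embeddings
$$\phi_i:[-20r,20r]\times \bar{A}_{r,100r}\to B_{200r}(x_i)$$
satisfying $|r^{-2}\phi_i^* g - g_0|_{C^4}\leqslant \delta_0(r)$ with $\delta_0(r)\to 0$ and the axial-distance bound \eqref{e:glue;gamma distance}. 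With the $x_i$ equispaced, each $(t,z)\in (-1,1)\times \bar{A}_{10r,90r}$ lies in the image of exactly two consecutive charts (after translation in the $t$-variable).

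Next I would inductively align the charts. On each overlap, the composition $\phi_{i+1}^{-1}\circ\phi_i$ is defined between subregions of $\mathbb{R}\times\mathbb{R}^4/\Gamma$ and, by Lemma~\ref{l:isom appro}, is $C^3$-close to an isometry $P_i\in\operatorname{Isom}(\mathbb{R})\times\operatorname{Isom}(\mathbb{R}^4/\Gamma)$. Since each $\phi_j$ sends the axis $\mathbb{R}\times\{0\}$ into the $\Gamma$-fixed set $\gamma\subset\mathcal{S}^1_\Gamma$ up to error $O(r\delta_0(r))$, the isometry $P_i$ must preserve $\mathbb{R}\times\{0\}$; the orientation of $\gamma$ fixed by the ordering of the $x_i$ then forces $P_i(t,z)=(t+\tau_i,R_iz)$ with $\tau_i\approx d(x_i,x_{i+1})$ and $R_i\in \mathrm{N}_{O(4)}(\Gamma)/\Gamma$. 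Replacing each $\phi_{i+1}$ by $\widetilde{\phi}_{i+1}:= \phi_{i+1}\circ(P_1\circ\cdots\circ P_i)$ applied in the domain, the new embeddings $\widetilde{\phi}_i$ are defined on consecutive overlapping subregions of a single $\mathbb{R}\times\mathbb{R}^4/\Gamma$, and their consecutive overlap maps are $\delta_0(r)$-close to the identity in the $r$-rescaled $C^3$ norm.

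With the charts aligned, let $\{\chi_i\}$ be a smooth partition of unity on $(-1,1)$ subordinate to $t$-intervals of width $\sim 20r$ with at most two active at any point, and define on each overlap
$$H_r(t,z) := \exp_{\widetilde{\phi}_i(t,z)}\!\bigl( \chi_{i+1}(t)\,\log_{\widetilde{\phi}_i(t,z)}\widetilde{\phi}_{i+1}(t,z)\bigr),$$
where $\exp$ and its inverse $\log$ are those of $g$ on $M$ (well-defined because the two aligned charts take values $O(r\delta_0(r))$-close in $M$). At each $(t,z)$ only two of the normalized charts contribute and they agree up to $\delta_0(r)$ in the rescaled $C^3$ sense; combined with the $k$-th derivatives of $\chi_{i+1}$ of size $r^{-k}$ acting on a correction of size $r\delta_0(r)$, this yields $|r^{-2}H_r^* g - g_0|_{C^2}=O(\delta_0(r))$, independently of $N$. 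Injectivity follows since each $\widetilde{\phi}_i$ is an embedding and the $\exp$-interpolation is a small perturbation, and the distance estimate $(1\pm\delta)|y|_\Gamma$ is inherited from \eqref{e:glue;gamma distance}.

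The main obstacle is the potential accumulation of error over the $N\asymp 1/r$ overlaps: a naive gluing would produce rotational or translational drift of order $N\delta_0(r)\sim \delta_0(r)/r$, which need not tend to zero with $r$. The resolution rests on two points. First, the restricted ambient isometry group $\operatorname{Isom}(\mathbb{R})\times\operatorname{Isom}(\mathbb{R}^4/\Gamma)$, much smaller than $\operatorname{Isom}(\mathbb{R}^5)$, is forced by the fact that each $\phi_i$ must send the singular axis $\mathbb{R}\times\{0\}$ into $\gamma\subset\mathcal{S}^1_\Gamma$; this is precisely the property that would fail in flat $\mathbb{R}^5$, and is the key place where the uniqueness of tangent cones along $\gamma$ enters. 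Second, after alignment the partition-of-unity interpolation is genuinely local, so at any given point only two normalized charts interact and the final error is controlled solely by the per-overlap error $\delta_0(r)$, not by the total count $N$.
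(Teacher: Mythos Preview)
Your proposal is correct and follows essentially the same chart-and-glue strategy as the paper: cover $\gamma$ by $O(1/r)$ local cone charts, align consecutive charts via Lemma~\ref{l:isom appro} using the restricted isometry group $\operatorname{Isom}(\mathbb{R})\times\operatorname{Isom}(\mathbb{R}^4/\Gamma)$, and interpolate so that at any point only two charts interact and the error is the per-overlap $\delta_0(r)$ rather than $N\delta_0(r)$. The only difference is cosmetic: the paper interpolates in the flat model via the linear combination $h_i=(1-\chi)\,P_i\circ\phi_{i+1}^{-1}+\chi\,\phi_i^{-1}$, managing domains with pinched annuli $\Omega_i$ and proceeding inductively, whereas you first globally normalize all charts by precomposing with accumulated isometries and then interpolate in the target $M$ via the Riemannian $\exp/\log$; both yield the same uniform $C^2$ control.
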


\begin{proof}
    Let $r \in (0, 10^{-2})$ be given small. Let $\eta=\eta(r)>0$ be as in Lemma \ref{l:uniform symmetry along curves} and $\phi_{\eta,x}$ be the embedding defined in previous discussion for each $x \in \gamma$.

    Consider the point set $\{ x_i\}$ where $ x_i = \gamma(30 i r )$ for integers $i \in [-N(r), N(r)]$ with $N(r) \leqslant C/r$ and their associated maps $\phi_i \equiv \phi_{\eta, x_i}$. Define
    $A_i \equiv \phi_i( (-20r, 20r) \times {\Bar{A}}_{r, 100r}) \subset B_2(x_0).$ 

    If we choose $r$ small and thus $\eta$ small enough, then by \eqref{e:glue;curve lip} and $\varepsilon$-regularity Theorem \ref{t:eps_regularity}, for $x_1 = \gamma(30r)\in B_{100r}(x_0)$ we have
    \begin{align*}
        d_{H}(\cL_{x_0,200r} \cap B_{100r}(x_0), \cL_{x_1, 200 r}\cap B_{100 r}(x_0)) \leqslant  10^{-10} r.
    \end{align*}

    Similar inequality holds for general $x_i$ and $x_{i+1}$. Hence by \eqref{e:glue;gamma distance} we can conclude that
    \begin{enumerate}
        \item $\bigcup_i A_i \subset \big\{ x \in X: 0.9r< d(x, \gamma) < 101r\big\}.$ 
        \item For any $|i - j| > 1$, we have $A_i \cap A_j = \emptyset$.
        \item Let $D_i \equiv A_i \cap A_{i+1}$. Then we have $(11 r,19r) \times \bar{A}_{2r,99r} \subset \phi_i^{-1}(D_i)$ and $(-19r, -11r) \times \Bar{A}_{2r,99r} \subset \phi^{-1}_{i+1}(D_i)$ for any such $i$. 
    \end{enumerate}

In the following we assume $r=1$ for simplicity. First we glue $A_i$ and $A_{i+1}$ together.

 Consider the map $\Psi_i\equiv \phi_{i+1}^{-1} \circ \phi_i : [11,19] \times \Bar{A}_{2,99} \to [-20,20] \times \Bar{A}_{1,100}$. Then $\Psi_i$ satisfies 
 \begin{align*}
     (1-3\delta)|x|_{\Gamma} \leqslant |\Psi_i(x)|_{\Gamma}  \leqslant (1+3\delta) |x|_{\Gamma} \quad \text{ and } \quad |\Psi_i^* g_0 - g_0|_{C^4} \leqslant C_1 \delta,
 \end{align*}
 with constant $\delta$ as in \eqref{e:glue;gamma distance}.
By Lemma \ref{l:isom appro}, there exists some isometry $P\in  \operatorname{Isom}(\RR) \times \operatorname{Isom}(\RR^4/\Gamma)$ such that $|P \circ \Psi_i - id|_{C^3} \leqslant C_2(\delta)$ on $(12, 18) \times A_{3,98}$. Then up to some reflection in $t$-direction, we can pick $P_i = (T_{+30}, R_i)$, where $T_{+30}(t,z) = (t+30, z)$ is a translation along $t$-direction and that $R_i \in \operatorname{Isom}(\RR^4/\Gamma)$ is a rotation,  such that $|P_i \circ \Psi_i -id|_{C^3} \leqslant C_2(\delta)$.

Now we choose a smooth cut-off function $\chi \in C^{\infty}(im(\phi_i))$ using the map $\phi_i$ satisfying 
\begin{enumerate}
    \item $0\leqslant \chi \leqslant 1$ in $im(\phi_i)= A_i$.
    \item $\chi(\phi_i(t,z)) = 1$ for $z \in \bar{A}_{1+\theta(t), 100 - \theta(t)}$ and $t \in [-20, 14]$.
    \item $\chi(\phi_i(t,z)) = 0$ for $t \geqslant 16$.
    \item $\chi(\phi_i(t,z)) = 0$ for $|z| \notin \bar{A}_{2, 99}$ for $t \geqslant 9$.
\end{enumerate}
Here $\theta(t)$ is a non-decreasing function with $\theta(t) \equiv 0$ for $t \leqslant 5$ and $\theta(t) \equiv 2$ for $t \geqslant 9$. We can trivially extend $\chi$ by assuming $\chi \equiv 0$ so that $\chi$ is defined smoothly in $A_i \cup A_{i+1}$ satisfying $|\chi|_{C_g^4} \leqslant C_3$.

Consider the pinched annulus $\Omega_i$ defined as a smooth connected region $\Omega_i \equiv \Omega_{i,1} \cup \Omega_{i,2} \subset A_i \cup A_{i+1}$ where 
\begin{align*}
    \Omega_{i,1} &\equiv \{ \phi_i(t,z) : -20 \leqslant t \leqslant 20, \text{ and }  z \in \bar{A}_{1+2\theta(t), 100 -2\theta(t)} \};\\
    \Omega_{i,2} &\equiv \{ \phi_{i+1}(t,z) : -20 \leqslant t \leqslant 20, \text{ and }  z \in \bar{A}_{1+\theta(-t), 100 -\theta(-t)} \}.
\end{align*}

We construct a map defined on $\Omega_i \subset A_i \cup A_{i+1}$ by
\begin{equation}
    h_i \equiv (1 - \chi) \cdot P_i \circ \phi^{-1}_{i+1}  + \chi \cdot \phi_{i}^{-1}.
\end{equation}

Note that $h_i$ satisfies that
\begin{enumerate}
    \item $h_i(x) = P_i \circ \phi_{i+1}^{-1}(x)$ if $x \in  \Omega_i \setminus A_i$,
    \item $h_i(x) =  \phi_{i}^{-1}(x)$ if $x \in \Omega_i \setminus A_{i+1}$,
    \item $(1-C_4(\delta))|h_i(x)|_{\Gamma} \leqslant d(x, \gamma)  \leqslant (1+C_4(\delta))|h_i(x)|_{\Gamma}$,
    \item $|h_i^* g_0 - g|_{C^2_g} \leqslant C_5(\delta)$.
\end{enumerate}

Recall that $P_i = (T_{+30},R_i)$ with $R_i$ a rotation of $\RR^4/\Gamma$. Hence $h_i$ is a diffeomorphism onto $h_i(\Omega_i)$ where
\begin{enumerate}
    \item $\Big( (-20,5) \cup (25,50) \Big) \times \bar{A}_{1,100} \subset h_i(\Omega_i)$.
    \item $(-20, 50) \times \bar{A}_{10,90} \subset h_i(\Omega_i)$.
\end{enumerate}

We can then glue $\Omega_i$ and $A_{i-1}$. Since the left end of $\Omega_i$ is the same as $A_i$, and the pinched region $D_i$ is disjoint with $A_{i-1}$, we can glue these two using the same arguments with the uniform estimate: there exists a smooth embedding $H_{i-1}: \Tilde{\Omega}_{i-1} \to \RR \times \RR^4/\Gamma$ such that
\begin{enumerate}
    \item $\Big((-50, -25) \cup (25, 50) \Big)\times \bar{A}_{1,100} \subset H_{i-1}(\Tilde{\Omega}_i)$,
    \item $(-50, 50) \times \bar{A}_{10,90} \subset H_{i-1}(\Tilde{\Omega}_i)$,
    \item $(1-C_4(\delta))|H_{i-1}(x)|_{\Gamma} \leqslant d(x, \gamma)  \leqslant (1+C_4(\delta))|H_{i-1}(x)|_{\Gamma}$,
    \item $|H_{i-1}^* g_0 - g|_{C^2_g} \leqslant C_5(\delta)$.
\end{enumerate}

The same holds for the extension to the right end. Note that at each gluing, we only modify the overlapping region but keep the non-overlapping ends unchanged. Therefore, after at most finitely many times gluing, the estimates hold uniformly and we obtain a diffeomorphism satisfying the required properties.  
\end{proof}

Hence we can glue all $t$-directional blocks together with uniform estimates. Then we can glue these annuli along the $\Gamma$-direction.

\begin{prop}\label{prop:C^0 metric}
    For any small $r$, there exists a diffeomorphism $F: B^*_r(0) \subset \big(\RR \times \RR^4/\Gamma \setminus \RR \times \{0\}\big) \to B_2(x_0) \setminus \gamma$ such that $F^*g$ extends continuously to a metric tensor over the universal cover $\Hat{B}_r(0) \subset \RR \times \RR^4$. 
\end{prop}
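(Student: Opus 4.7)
I would construct $F$ by gluing the horizontal annular maps of Lemma \ref{l:Horizontal Gluing} across a dyadic sequence of radial scales $r_k := 2^{-k} r_0$ for $r_0$ sufficiently small. For each $k$, Lemma \ref{l:Horizontal Gluing} supplies a smooth embedding
\[
H_k\colon (-1,1)\times \bar A_{10 r_k,\,90 r_k} \longrightarrow B_2(x_0),\qquad \bigl|r_k^{-2} H_k^*g - g_0\bigr|_{C^2}\leqslant \delta_k,
\]
with $\delta_k := \delta(r_k)\to 0$. Because $90 r_{k+1} = 45 r_k$ and $10 r_{k+1} = 5 r_k < 10 r_k$, the shell $\Omega_k := (-1,1)\times \bar A_{10 r_k,\,45 r_k}$ lies in the domain of both $H_k$ and $H_{k+1}$, so the transition $\Psi_k := H_{k+1}^{-1}\circ H_k$ is well-defined on $\Omega_k$ and satisfies $|\Psi_k^* g_0 - g_0|_{C^2}\leqslant C\delta_k$.

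A rescaled version of Lemma \ref{l:isom appro} applied on $\Omega_k$ then yields an isometry $P_k \in \operatorname{Isom}(\RR)\times\operatorname{Isom}(\RR^4/\Gamma)$ -- necessarily a translation or reflection of the $\RR$-factor composed with a rotation of $\RR^4$ normalizing $\Gamma$ -- such that $|P_k\circ \Psi_k - \mathrm{id}|_{C^1}\leqslant \delta'_k \to 0$. Inductively set $Q_0 := \mathrm{id}$ and $Q_{k+1} := Q_k\circ P_k^{-1}$, and define the modified embeddings $\tilde H_k := H_k\circ Q_k$, whose new transitions $\tilde\Psi_k := \tilde H_{k+1}^{-1}\circ \tilde H_k$ are $C^1$-close to the identity. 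Using a smooth radial cutoff supported in $\Omega_k$ (exactly as in the horizontal gluing of Lemma \ref{l:Horizontal Gluing}), interpolate between $\tilde H_k$ and $\tilde H_{k+1}$ on each shell $(-1,1)\times \bar A_{r_{k+1},r_k}$ to produce a single smooth diffeomorphism
\[
F\colon B^*_{r_0}(0)\subset \bigl(\RR\times \RR^4/\Gamma\bigr)\setminus (\RR\times\{0\})\;\longrightarrow\; B_2(x_0)\setminus \gamma.
\]
Only two consecutive scales ever interact, so the cumulative error at scale $r_k$ is bounded by $C(\delta_k + \delta'_k)$ without any compounding.

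For the continuous extension, note that on each shell at scale $r_k$ the rescaled pullback $r_k^{-2} F^*g$ is $C^0$-close to $g_0$ with error $O(\delta_k + \delta'_k)$, which tends to $0$ as $k\to\infty$. Lifting $F^*g$ to the universal cover $\hat B_{r_0}(0)\subset \RR\times \RR^4$ -- on which the $\Gamma$-action is free away from $\RR\times\{0\}$ and $g_0$ becomes the constant flat Euclidean metric -- this uniform approach to the flat model as $|z|\to 0$ allows $F^*g$ to extend continuously across $\RR\times\{0\}$ by setting its value there equal to $g_0$.

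The hardest step is the isometry-approximation in the second paragraph: one must argue that $\Psi_k$ can be corrected by an element of the much smaller subgroup $\operatorname{Isom}(\RR)\times \operatorname{Isom}(\RR^4/\Gamma) \subsetneq \operatorname{Isom}(\RR^5)$, and not by a general Euclidean isometry. This is the crucial rigidity inherited from the nontrivial $\Gamma$-action: only isometries preserving the axis $\RR\times\{0\}$ and commuting with $\Gamma$ survive, which is precisely what prevents the transition maps from introducing ``shear'' between radial scales that would obstruct a continuous extension at the axis. A secondary technical point is that the cumulative isometries $Q_k$ need not converge in $\operatorname{Isom}(\RR)\times\operatorname{Isom}(\RR^4/\Gamma)$, but since the interpolation is purely local to each shell, only the relative errors $|\tilde\Psi_k - \mathrm{id}|$ enter the final estimate on $F^*g$, not the absolute position of $Q_k$.
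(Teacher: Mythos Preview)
Your strategy matches the paper's: take the horizontal maps $H_{r_k}$ of Lemma \ref{l:Horizontal Gluing} at dyadic scales $r_k=2^{-k}r_0$ and glue them radially (the paper does this inductively, producing $V_{i+1}$ from $V_i$ and $H_{2^{-i-1}r_0}$ by ``similar gluing arguments as in Lemma \ref{l:Horizontal Gluing}''). Your explanation of why errors do not compound and your remark that only the relative transitions $\tilde\Psi_k$ matter (not the convergence of the absolute isometries $Q_k$) are both correct and make explicit what the paper leaves implicit.

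There is, however, one genuine imprecision. You invoke ``a rescaled version of Lemma \ref{l:isom appro}'' on the entire overlap $\Omega_k=(-1,1)\times\bar A_{10r_k,45r_k}$ to obtain a \emph{single} isometry $P_k$. After rescaling by $r_k^{-1}$ this domain becomes $(-r_k^{-1},r_k^{-1})\times\bar A_{10,45}$, whose aspect ratio is $\sim r_k^{-1}$, unbounded as $k\to\infty$. Lemma \ref{l:isom appro} is proved by compactness on a fixed domain and does not apply uniformly here: a map of the form $(t,z)\mapsto(t,R(t)z)$ with $|R'(t)|\lesssim\delta_k/r_k$ satisfies $|\Psi^*g_0-g_0|\lesssim\delta_k$ pointwise, yet accumulates a total rotation $\sim\delta_k/r_k$ over $t\in(-1,1)$, and nothing guarantees $\delta_k/r_k\to0$. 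The paper's phrase ``similar gluing arguments as in Lemma \ref{l:Horizontal Gluing}'' signals that the radial gluing is also carried out \emph{block by block}, with each block of size $\sim r_k$ (hence uniform aspect ratio), so that Lemma \ref{l:isom appro} applies with uniform constants on every block. Replacing your single global $P_k$ with this block-wise procedure closes the gap without altering your overall plan.
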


\begin{proof}
    For any small $r$, by Lemma \ref{l:Horizontal Gluing}, there exists a smooth diffeomorphism $H_r$ satisfying properties \eqref{e:Horizontal gluing} with error $\delta(r) \downarrow 0$ as $r\downarrow 0$. Then we apply similar gluing arguments as in Lemma \ref{l:Horizontal Gluing} to the map $V_0\equiv H_{r_0}$ with some fixed small $r_0$, and the map $H_{2^{-1}r_0}$, to obtain a new diffeomorphism $V_1: (-1,1) \times \bar{A}_{5r_0, 90r_0} \to B_2(x_0) \setminus \gamma$ with estimates
    \begin{align*}
        (1- \delta') |y|_{\Gamma} \leqslant d(V_1(y), \gamma) \leqslant (1+\delta') |y|_{\Gamma}, \text{ and } |V_1^{*} g  - g_0|_{C^2} \leqslant \delta'.
    \end{align*}
    Then by induction we can glue the maps $V_i$ and $H_{2^{-{i}}r_0}$ together to obtain $V_{i+1}$ satisfying the same estimates above with $\delta'(i) \to 0$ as $i \to \infty$. Therefore we get a map $F: B^*_{r_0}(0) \to B_2(x_0)$, which satisfies 
    \begin{align*}
        |F^*g - g_0|_{C^0(-1,1) \times \bar{A}_{2^{-i}r_0,90r_0}} \leqslant \delta'(i).
    \end{align*}
    Since $\delta'(i) \to 0$ as $i \to \infty$, $F^*g$ extends continuously in $\Hat{B}_{r_0}(0)$. 
\end{proof}

\subsection{$L^{\infty}$ curvature bound}\label{s:curvature estimate}

In this section we prove that the curvature is bounded around the singular curve $\gamma:[-2,2] \to \cS^1_{\Gamma}$ with $\gamma(0)=x_0$. By Lemma \ref{l:uniform symmetry along curves} and rescaling, we can simply assume that there exists some small $\eta>0$ such that
\begin{equation}
        d_{GH}(B_r(x), B_r(0,x')) \leqslant \eta r, \text{ for any } x \in \gamma([-1,1]) \text{ and any } r \leqslant 1.
    \end{equation}

The main theorem of this section is the following uniform $L^{\infty}$ curvature bound:  
\begin{theorem}\label{t:bound on Rm}
If $\eta$ is small enough, then for any integer $k\ge0$, there exists some constant $C_k(\eta)$ such that
    \begin{equation}
        \sup_{B_1(x_0)}|\nabla^k\Rm| \leqslant C_k(\eta).
    \end{equation}
\end{theorem}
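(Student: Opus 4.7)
\textbf{Proof plan for Theorem \ref{t:bound on Rm}.}

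The strategy is outlined in the introduction under ``Orbifold structure'': combine an \emph{a priori} curvature growth estimate with a Bochner-Kato differential inequality, apply a codimension-$4$ Hardy inequality to absorb the singular potential, and run Moser iteration. Then bootstrap to all orders.

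\emph{Step 1: A priori polynomial curvature growth.} By Lemma \ref{l:uniform symmetry along curves}, every ball $B_r(x)$ with $x\in\gamma([-1,1])$ is $(1,\eta)$-symmetric at every scale $r\leqslant 1$. Combining the cone-splitting principle (Lemma \ref{l:cone splitting}) with $\varepsilon$-regularity (Theorem \ref{t:eps_regularity}) exactly as in Corollary \ref{c:local curvature est}, I obtain a wedge-region-type pointwise estimate
\begin{equation*}
  |\Rm|(y)\;\leqslant\;c(\eta)\,d(y,\gamma)^{-2},\qquad y\in B_1(x_0)\setminus\gamma,
\end{equation*}
where $c(\eta)\downarrow 0$ as $\eta\downarrow 0$. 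The key point is that the constant $c(\eta)$ can be made \emph{arbitrarily small} by taking $\eta$ small, since after rescaling to unit scale the uniqueness/isolation Theorem \ref{t:local uniqueness} forces the rescaled metric to converge smoothly away from the axis to the flat model $\mathbb{R}\times\mathbb{R}^4/\Gamma$.

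\emph{Step 2: Differential inequality for a sublinear power of $|\Rm|$.} For an Einstein metric with $\Ric=\Lambda g$, Bochner-Weitzenböck gives
\begin{equation*}
  \tfrac12\Delta|\Rm|^{2}\;=\;|\nabla\Rm|^{2}+\langle\Rm,\Rm\ast\Rm\rangle+C_{n}\Lambda|\Rm|^{2}.
\end{equation*}
The improved Kato inequality in dimension $n\geqslant 4$ (sharp constant $\tfrac{2}{n-1}$ at Einstein points, as in Bando-Kasue-Nakajima and refinements of Calderbank-Gauduchon-Herzlich) yields a $\delta=\delta(n)>0$ such that $|\nabla\Rm|^{2}\geqslant(1+\delta)|\nabla|\Rm||^{2}$ wherever $|\Rm|\neq 0$. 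Setting $u=|\Rm|^{1-\alpha}$ for a small $\alpha>0$ chosen in terms of $\delta$, a direct computation converts these inputs into the subsolution inequality
\begin{equation*}
  \Delta u\;\geqslant\;-C\bigl(|\Rm|+1\bigr)\,u\;\geqslant\;-\bigl(C\,c(\eta)\,d(\cdot,\gamma)^{-2}+C\bigr)\,u
\end{equation*}
on $B_1(x_0)\setminus\gamma$, where the second inequality uses Step 1.

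\emph{Step 3: Hardy inequality plus Moser iteration.} Because $\gamma$ is a Lipschitz codimension-$4$ subset of the $5$-dimensional limit, the $C^{0}$-chart of Proposition \ref{prop:C^0 metric} lets me pull back the standard codimension-$4$ Hardy inequality on $\mathbb{R}\times\mathbb{R}^{4}$:
\begin{equation*}
  \int_{B_{1}(x_0)}\frac{\varphi^{2}}{d(\cdot,\gamma)^{2}}\,dv_g\;\leqslant\;C_{H}\int_{B_{1}(x_0)}|\nabla\varphi|^{2}\,dv_g
\end{equation*}
for every $\varphi\in C^{1}_{c}(B_{1}(x_0)\setminus\gamma)$, with a uniform constant $C_{H}$. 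Multiplying the inequality of Step 2 by $u^{p-1}\eta^{2}$ ($\eta$ a cutoff) and integrating by parts produces the iteration inequality
\begin{equation*}
  \int|\nabla(\eta u^{p/2})|^{2}\;\leqslant\;Cp\int\bigl(c(\eta)d(\cdot,\gamma)^{-2}+1\bigr)\,\eta^{2}u^{p}+\text{cutoff terms}.
\end{equation*}
Applying Hardy to absorb the singular term into the left-hand side \emph{requires} $C_H \cdot C\cdot p\cdot c(\eta)<\tfrac12$, which holds for the initial $p=p_0$ once $\eta$ is fixed small enough; the resulting clean Caccioppoli estimate feeds the standard Moser scheme via the Sobolev inequality on the (metric-measure) ball, yielding $\sup_{B_{1/2}(x_0)}u\leqslant C\|u\|_{L^{p_0}(B_1(x_0))}$. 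The $L^{p_0}$ norm on the right is finite: indeed $u=|\Rm|^{1-\alpha}\leqslant C d(\cdot,\gamma)^{-2(1-\alpha)}$ is integrable on $B_1(x_0)$ because the codimension is $4$ and $2(1-\alpha)p_0<4$ for $p_0$ close to $1$. This proves $\sup_{B_{1/2}(x_0)}|\Rm|\leqslant C(\eta)$.

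\emph{Step 4: Higher derivatives.} With $|\Rm|$ bounded, Anderson's harmonic-radius estimate upgrades the $C^{0}$ chart of Proposition \ref{prop:C^0 metric} to $C^{1,\alpha}$ harmonic coordinates on $B_{3/4}(x_0)\setminus\gamma$ of definite radius. In such coordinates the Einstein equation $\Ric=\Lambda g$ becomes a quasilinear elliptic system with analytic nonlinearity, so Schauder and DeTurck-type bootstrap, combined with Morrey's real-analyticity theorem for elliptic systems, gives $C^{k,\alpha}$ -- and indeed real-analytic -- bounds on $g$ away from $\gamma$. The bounds are uniform up to $\gamma$ because the $C^0$ control of Proposition \ref{prop:C^0 metric} extends across $\gamma$ on the universal cover, yielding the claimed $\sup_{B_1(x_0)}|\nabla^{k}\Rm|\leqslant C_{k}(\eta)$.

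The technical heart is \emph{Step 3}: the Moser iteration must survive the inverse-square singular potential, which is only possible because the constant $c(\eta)$ from Step 1 can be made small enough by the isolation Theorem \ref{thm:isolation orbifolds} to beat the Hardy constant. Every other step uses techniques that are by now classical once the Einstein equation and a small-$c(\eta)$ cone approximation are in hand.
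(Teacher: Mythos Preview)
Your outline matches the paper's strategy closely, and Steps~1, 2, and~4 are essentially correct. The gap is in Step~3, precisely at the sentence ``which holds for the initial $p=p_0$ once $\eta$ is fixed small enough; the resulting clean Caccioppoli estimate feeds the standard Moser scheme.'' You correctly identify the constraint $C_H\cdot C\cdot p\cdot c(\eta)<\tfrac12$, but this constraint \emph{deteriorates} as $p\to\infty$: once $\eta$ (hence $c(\eta)$) is fixed, Hardy absorption works only for $p\lesssim 1/c(\eta)$, so the iteration halts at a finite exponent and you obtain $u\in L^{N}$ for some large $N$, not $u\in L^\infty$. The ``standard Moser scheme'' cannot proceed because at each step the Caccioppoli inequality must be re-derived for the new (larger) exponent, and Hardy no longer closes.

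The paper resolves this with a two-stage argument that you are missing. Stage~1 (Lemmas~\ref{l:Caccioppoli}--\ref{l:Higher integrability for u}): run the Hardy-based Caccioppoli for the finitely many exponents needed to reach $u\in L^{2N}$ with $N>n/2$; this is possible because only finitely many iterations are needed and one may choose $\delta=\delta(n,\varepsilon,N)$ small enough for each. Stage~2 (Lemma~\ref{l:Cacchio-2}): now $f=|\Rm|=u^{q_0}\in L^{2n}$ with a bound depending only on $\|u^{1+\varepsilon}\|_{L^2}$, so re-derive the Caccioppoli inequality bounding the potential term by H\"older and interpolation,
\[
\int \eta^2 u^{2q}\,|f| \;\leqslant\; \|f\|_{L^{2n}}\,\|\eta u^{q}\|_{L^{4n/(2n-1)}}^{2}
\;\leqslant\; \theta\,\|\nabla(\eta u^{q})\|_{L^2}^{2} + C\theta^{-\alpha(n)}\|\eta u^{q}\|_{L^2}^{2},
\]
and absorb with $\theta\sim q^{-1}$. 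This yields a Caccioppoli constant growing only polynomially in $q$, with \emph{no} further smallness requirement on $c(\eta)$, and Moser then runs to $q=\infty$ (Theorem~\ref{t:Local Moser}). Without this second stage your argument does not reach an $L^\infty$ bound.
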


By Proposition \ref{prop:C^0 metric}, we have a $C^0$ coordinate chart for the ball $B_1(x_0)$ and the metric is $C^0$-close to the standard metric in $\RR \times \RR^4/\Gamma$. Since we will focus on the curvature estimate in this section, without loss of generality we study the universal cover of the ball $B_1(x_0)$. The sobolev inequalities hold in $B_1(x_0)$ thanks to \cite{cro}. To keep track of the dimensional constants, we will work in any dimension $n\ge 5$ in this section. 

We define $B \equiv B_1^x(0^{n-4}) \times B_1^y(0^4)$ and the ball minus a segment $B^* \equiv B \setminus (B_1^x(0^{n-4}) \times \{0\})$. Also, we let $\nabla_x, \nabla_y$ denote the gradient along $x, y$ direction respectively. 

In the following we consider the equation which holds in $B^*$: 
\begin{equation}\label{e: sub-equ of u}
    \Delta u \geqslant - f u. 
\end{equation}
It will be applied to $f= |\Rm|$ and both $u =|\nabla^k\Rm|$ or $u =|\Rm|^{1-\alpha}$ for $k\in\mathbb{N}$ and some dimensional $\alpha$. 

We will adapt Moser's iteration argument to prove that $|\Rm|$ is bounded. To achieve this, we first prove the following Caccioppoli type inequality, motivated by \cite{sib85}.

\begin{lemma}\label{l:Caccioppoli}
    Let $u \in C^1(B^*)$ and $u \geqslant 0$ satisfying the equation \eqref{e: sub-equ of u} in the sense of distributions. Suppose $u \in L^{2q}$ for some $q \geqslant 1 + \varepsilon >1$ and $|f(x,y) | \leqslant \delta |y|^{-2}$ for some $\delta \leqslant \delta_0(\varepsilon,q)$. Then for any $\eta \in C^{\infty}_c(B)$ we have
    \begin{align}\label{e:Cacchio-1}
        \int_B \eta^2 |\nabla u^q|^2 \leqslant C(\varepsilon) q \int_B |\nabla \eta|^2 u^{2q}.
    \end{align}
\end{lemma}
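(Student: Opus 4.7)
The plan is a Moser-type test function argument, with two key adaptations for the codimension-four singular axis $\{y = 0\}$: (i) a cutoff $\psi_\rho$ separating the test function from the singular axis, and (ii) the sharp four-dimensional Hardy inequality $\int_{\mathbb{R}^4} v^2/|y|^2\,dy \leq \int_{\mathbb{R}^4}|\nabla_y v|^2\,dy$ (sharp constant $1$ in $d=4$), applied fiberwise to $v = \eta\psi_\rho u^q$, to absorb the singular potential $fu$.

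First, I would reduce to the case $f \geq 0$ and $u$ bounded. Since $u \geq 0$, the inequality $\Delta u \geq -fu$ implies $\Delta u \geq -|f|u$, so we may replace $f$ by $|f|$. Then by Stampacchia's truncation, $u_M := \min(u, M)$ satisfies $\Delta u_M \geq -f u_M$ distributionally on $B^*$: trivially on $\{u \geq M\}$ (where $\nabla u_M = 0$ and $-fu_M \leq 0$) and by restriction on $\{u < M\}$. Since $u_M \in L^\infty \cap L^{2q}$ and $u_M \nearrow u$, it suffices to prove the Caccioppoli estimate for $u_M$ and let $M \to \infty$ at the end.

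The main computation is then as follows. I would choose a smooth radial cutoff $\psi_\rho(|y|)$ with $\psi_\rho \equiv 0$ for $|y| \leq \rho$, $\psi_\rho \equiv 1$ for $|y| \geq 2\rho$, and $|\nabla \psi_\rho| \leq C\rho^{-1}$, and test the distributional inequality against the non-negative function $\phi := \eta^2 u_M^{2q-1}\psi_\rho^2 \in C^1_c(B^*)$. Integration by parts on $B^*$ yields
\begin{align*}
(2q-1)\int \eta^2 \psi_\rho^2 u_M^{2q-2} |\nabla u_M|^2 \leq{}& -2\int \eta \psi_\rho^2 u_M^{2q-1}\, \nabla\eta \cdot \nabla u_M \\
& -2\int \eta^2 \psi_\rho u_M^{2q-1}\, \nabla\psi_\rho \cdot \nabla u_M + \int f \eta^2 \psi_\rho^2 u_M^{2q}.
\end{align*}
The two cross terms are controlled by Young's inequality with parameter $\theta = q/2$, producing an absorbable piece $q\int \eta^2\psi_\rho^2 u_M^{2q-2}|\nabla u_M|^2$ together with boundary terms $\tfrac{2}{q}\int \psi_\rho^2 u_M^{2q}|\nabla\eta|^2$ and $\tfrac{2}{q}\int \eta^2 u_M^{2q}|\nabla\psi_\rho|^2$. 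For the $f$-term, combining $|f| \leq \delta|y|^{-2}$ with the four-dimensional Hardy inequality applied to $v = \eta\psi_\rho u_M^q$ (which vanishes near the singular axis) gives
$$\int f \eta^2 \psi_\rho^2 u_M^{2q} \leq \delta \int \frac{(\eta\psi_\rho u_M^q)^2}{|y|^2} \leq \delta \int |\nabla(\eta\psi_\rho u_M^q)|^2,$$
and expanding the gradient produces a main piece of size $\sim \delta q^2 \int \eta^2\psi_\rho^2 u_M^{2q-2}|\nabla u_M|^2$ that is absorbed into the LHS by choosing $\delta \leq \delta_0(\varepsilon,q) \sim \varepsilon/q^2$, leaving a net LHS coefficient $\gtrsim q-1 \geq \varepsilon$. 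Multiplying through by $q^2$ to form $|\nabla u_M^q|^2 = q^2 u_M^{2q-2}|\nabla u_M|^2$ and dividing by $\varepsilon$ yields, for every $\rho > 0$,
$$\int \eta^2 \psi_\rho^2 |\nabla u_M^q|^2 \leq C(\varepsilon)\,q\left[\int \psi_\rho^2 u_M^{2q}|\nabla\eta|^2 + \int \eta^2 u_M^{2q}|\nabla\psi_\rho|^2\right].$$

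Finally I would pass to the limits. As $\rho \to 0$, the cutoff error is controlled using the $L^\infty$ bound on $u_M$ and the codimension four of the singular axis:
$$\int \eta^2 u_M^{2q}|\nabla\psi_\rho|^2 \leq CM^{2q}\rho^{-2}\,\big|\{|y| < 2\rho\} \cap \supp\eta\big| \leq CM^{2q}\rho^2 \xrightarrow{\rho \to 0} 0,$$
since $|\{|y|<2\rho\}|_{\mathbb{R}^4} \sim \rho^4$ in the four-dimensional fiber; the remaining two integrals converge by monotone and dominated convergence, yielding the Caccioppoli estimate for $u_M$. Then letting $M \to \infty$, monotone convergence on the LHS (using $|\nabla u_M^q| = qu^{q-1}|\nabla u|\mathbf{1}_{\{u < M\}}$) and dominated convergence on the RHS (using $u \in L^{2q}$) give the claimed inequality. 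The main obstacle — and the reason the truncation $u_M$ is indispensable — is this $\rho \to 0$ passage: the codimension-four capacity of the singular axis forces $\int |\nabla\psi_\rho|^2 \to 0$, but this alone cannot suppress the weight $u^{2q}$ unless the latter is bounded, which is precisely why we first reduce to bounded $u_M$ and only send $M \to \infty$ after the Caccioppoli estimate is obtained.
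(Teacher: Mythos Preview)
Your proof is correct and follows the same overall architecture as the paper's: a cutoff $\psi_\rho$ near the codimension-four axis, the Hardy inequality in the $\mathbb{R}^4$-fiber to absorb the singular potential, and a tempering of $u$ to justify the limit as the cutoff is removed. The difference lies in the tempering device. You use a hard truncation $u_M=\min(u,M)$, first establishing $\Delta u_M\geq -fu_M$ (your justification here is informal but the fact is standard via approximation by smooth concave functions and the inequality $t\Phi'(t)\leq\Phi(t)$), so that $u_M\in L^\infty$ and the cutoff error $\int u_M^{2q}|\nabla\psi_\rho|^2\lesssim M^{2q}\rho^2\to 0$ directly. The paper instead uses the smooth tempering $F(t)=t^q/(1+t/K)^{q/2}$ of Sibner, which keeps $u$ itself in the test function: the point is that $F(u)\leq K^{q/2}u^{q/2}\in L^4$, so the cutoff error is killed via H\"older against $\int|\nabla\bar\eta_\varepsilon|^4\to 0$, after which one sends $K\to\infty$ by Fatou. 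Your route is arguably more elementary and closer to textbook Moser iteration; the paper's route avoids the preliminary step of transferring the differential inequality to the truncation. Both exploit the same codimension-four capacity fact, just in different $L^p$ versions.
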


\begin{proof}
    By integration by parts, the following holds for any function $\chi \in C_c^{\infty}(B^*)$ with compact support in $B^*$
    \begin{equation}\label{e:[Curv. Est] Int by part}
        \int_B \nabla \chi \cdot \nabla u \leqslant \int_B \chi f u.
    \end{equation}

We define the following test function for some large constant $K>0$
\begin{equation}
    F(t) = \frac{t^{q}}{(1 + \frac{t}{K})^{q/2}}
\end{equation}
whose motivation is that if $u\in L^{2q}$, then $F(u)\in L^4$ since we temper the large values of the function.

Note that the function $F$ is $C^{\infty}$ for $t>0$. We define $G(t) \equiv F(t) F'(t)$. Then the following properties hold by direct computation:
\begin{align}
    F(t) &\leqslant  K^{q/2} t^{q/2};\\
    tG(t) &\leqslant qF(t)^2;\label{e:[Curv. Est] tG < F}\\
    G'(t) &\geqslant c(\varepsilon) F'(t)^2. \label{e:[Curv. Est] G' > c F'}
\end{align}

Now we pick $\eta, \bar{\eta}_{\varepsilon} \in C_c^{\infty}(B)$ and further let $\bar{\eta}_{\varepsilon}$ vanish in $B_1^x \times B_{\varepsilon}^y$. Choose the cutoff function $\chi \equiv (\eta \bar{\eta}_{\varepsilon})^2 G(u)$ and by \eqref{e:[Curv. Est] Int by part} and \eqref{e:[Curv. Est] tG < F} we have
\begin{align}\label{eq:control nablaG nablau}
    \int_{B} \nabla \Big( (\eta \bar{\eta}_{\varepsilon})^2 G(u)\Big) \cdot \nabla u \leqslant \int_{B} (\eta \bar{\eta}_{\varepsilon})^2 G(u) f u \leqslant q \int_{B} (\eta \bar{\eta}_{\varepsilon})^2  |f| |F(u)|^2.
\end{align}

Note that $\nabla F(u) = F'(u) \nabla u$ and $\nabla G(u)= G'(u) \nabla u$. Hence by \eqref{e:[Curv. Est] G' > c F'} we have $\nabla G(u) \cdot \nabla u = |\nabla u|^2 G'(u) \geqslant c(\varepsilon) F'(u)^2 |\nabla u|^2 = c(\varepsilon)|\nabla F(u)|^2$. Hence
\begin{align}
    \int_{B} \nabla \Big( (\eta \bar{\eta}_{\varepsilon})^2 G(u)\Big) \cdot \nabla u &= \int_B (\eta \bar{\eta}_{\varepsilon})^2 \nabla G(u) \cdot \nabla u + \int_B G(u) \nabla (\eta \bar{\eta}_{\varepsilon})^2 \cdot \nabla u \\
    &\geqslant c(\varepsilon) \int_B (\eta \bar{\eta}_{\varepsilon})^2 |\nabla F(u)|^2- \int_B    |2 \eta \bar{\eta}_{\varepsilon} F'(u) \nabla u| \cdot  |  F(u) \nabla(\eta \bar{\eta}_{\varepsilon}) |   \\
    &= c(\varepsilon) \int_B (\eta \bar{\eta}_{\varepsilon})^2 |\nabla F(u)|^2- \int_B    |2 \eta \bar{\eta}_{\varepsilon} \nabla F(u)| \cdot  |  F(u) \nabla(\eta \bar{\eta}_{\varepsilon}) |
\end{align}

Combining these and \eqref{eq:control nablaG nablau} we obtain
\begin{align}\label{e:[Curv. Est] I_1 + I_2 inequality}
    c(\varepsilon) \int_B (\eta \bar{\eta}_{\varepsilon})^2 |\nabla F(u)|^2 \leqslant \int_{B} |2 \eta \bar{\eta}_{\varepsilon} \nabla F(u)| \cdot |\nabla(\eta \bar{\eta}_{\varepsilon}) F(u)|  + q \int_B (\eta \bar{\eta}_{\varepsilon})^2 |f| F(u)^2 \equiv I_1 + I_2.
\end{align}

By Cauchy-Schwarz inequality, for some $c'(\varepsilon)$ we have
\begin{equation}
    I_1 \leqslant \frac{c(\varepsilon)}{2} \int_B (\eta \bar{\eta}_{\varepsilon})^2 |\nabla F(u)|^2 + c'(\varepsilon) \int_B |\nabla (\eta \bar{\eta}_{\varepsilon})|^2 F(u)^2.
\end{equation}

To estimate $I_2$, we apply Hardy inequality to get
\begin{equation}\label{e:[curvatuer bound] I_2}
    I_2 \leqslant  \delta q \int_{B_1^x} \int_{B_1^y} 
    (\eta \bar{\eta}_{\varepsilon})^2 F(u)^2 |y|^{-2} dy dx \leqslant  4\delta q \int_{B_1^x} \int_{B_1^y} |\nabla_y   (\eta \bar{\eta}_{\varepsilon} \cdot F(u))|^2(x,y) dy dx. 
\end{equation}

Note that $|\nabla_y F(u)|^2 \leqslant |\nabla F(u)|^2$. Hence by choosing $\delta \leqslant \delta_0(\varepsilon,q)$ small, we can absorb all terms involving $|\nabla F|$ into the left hand side of \eqref{e:[Curv. Est] I_1 + I_2 inequality} and thus obtain
\begin{equation}
    \int_{B} (\eta \bar{\eta}_{\varepsilon})^2 |\nabla F(u)|^2 \leqslant C(\varepsilon)q \int_B |\nabla (\eta \bar{\eta}_{\varepsilon})|^2 F^2(u).
\end{equation}
Here is the only place where we require $\delta$ to be small depending on $q$. 

Note that we can choose $\bar{\eta}_{\varepsilon}$ such that $\eta \cdot \bar{\eta}_{\varepsilon} \to \eta$ and $\int_{B \cap \supp(\eta)} | \nabla \bar{\eta}_{\varepsilon}|^4 \to 0$ as $\varepsilon \to 0$ since $B\setminus \supp(\Bar{\eta}_{\varepsilon}) \subset B_1^x \times B_{2\varepsilon}^y \to B_1^x \times \{0\} \subset \RR^{n-4} \times \{0\}$. Combining $F(u) \leqslant K^{q/2} u^{q/2}$ and $u \in L^{2q}$, we have $F(u) \in L^4$ and thus
\begin{equation}
    \int_{B} |\nabla \bar{\eta}_{\varepsilon}|^2 \eta^2 F^2(u) \leqslant C(K) \Bigg(\int_{B} |\nabla \bar{\eta}_{\varepsilon}|^4 \Bigg)^{1/2} \cdot \Bigg( \int_B F(u)^4 \Bigg)^{1/2} \to 0, \text{ as } \varepsilon \to 0.
\end{equation}

Combining all, we can now conclude that
\begin{equation}
    \int_{B} \eta^2 |\nabla F(u)|^2 \leqslant C(\varepsilon)q \int_B |\nabla \eta|^2 F^2(u).
\end{equation}
Finally we let $K \to \infty$. Note that $F(u) \to u^{q}$. The proof is finished by Fatou's Lemma.    \end{proof}

As a consequence, we get higher integrability of $u$ whenever $\delta$ is small enough.
\begin{lemma}\label{l:Higher integrability for u}
    Let $u \in C^1(B^*)$ and $u \geqslant 0$ satisfying the equation \eqref{e: sub-equ of u} in the sense of distributions. Suppose $u \in L^{2q}$ for some $q \geqslant 1 + \varepsilon >1$. For any $N>1$, if $|f(x,y) | \leqslant \delta |y|^{-2}$ for some $\delta \leqslant \delta_0(n,\varepsilon,N)$, we have 
    \begin{align}
        ||u^q||_{L^{2N}(B_{1/2})} \leqslant C(n,\varepsilon,N) ||u^q||_{L^2(B_1)}
    \end{align}
\end{lemma}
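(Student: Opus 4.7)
The strategy is to combine the Caccioppoli-type bound of Lemma \ref{l:Caccioppoli} with the Euclidean Sobolev inequality in order to obtain a one-step reverse-H\"older improvement, and then iterate a \emph{bounded} number of times. Let $\theta:=n/(n-2)>1$ and, for $1/2\leqslant s<r\leqslant 1$, write $B_s\subset B_r\subset B$ for the rescaled concentric product sub-domains $B_\rho:=B_\rho^x(0^{n-4})\times B_\rho^y(0^4)$.

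First, I would pick a standard cutoff $\eta \in C_c^\infty(B_r)$ with $\eta\equiv 1$ on $B_s$ and $|\nabla\eta|\leqslant 2/(r-s)$. Applying the Gagliardo-Nirenberg-Sobolev inequality on $\RR^n$ to $\eta u^q$ (extended by zero outside $B$) gives
\begin{equation*}
\|\eta u^q\|_{L^{2^*}}^2 \leqslant C(n)\bigl(\|\eta\,\nabla u^q\|_{L^2}^2 + \|u^q\,\nabla\eta\|_{L^2}^2\bigr),
\end{equation*}
with $2^*=2n/(n-2)$, and Lemma \ref{l:Caccioppoli} then controls the first term on the right to produce the one-step improvement
\begin{equation*}
\|u\|_{L^{2q\theta}(B_s)}^{2q}\ \leqslant\ \frac{C(n,\varepsilon)\,q}{(r-s)^2}\,\|u\|_{L^{2q}(B_r)}^{2q},
\end{equation*}
whenever $\delta\leqslant\delta_0(n,\varepsilon,q)$ as required in Lemma \ref{l:Caccioppoli}.

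Second, I would iterate with Moser's choices $q_k:=\theta^k q$ and $r_k:=1/2+2^{-k-1}$, so that $r_0=1$, $r_k\downarrow 1/2$ and $r_k-r_{k+1}=2^{-k-2}$, stopping at the least integer $k_0=k_0(n,N)$ with $\theta^{k_0}\geqslant N$. Since only finitely many smallness constraints on $\delta$ are invoked, the single hypothesis $\delta\leqslant\delta_0(n,\varepsilon,N):=\min_{0\leqslant k<k_0}\delta_0(n,\varepsilon,q_k)$ suffices at every step. Composing the one-step estimates yields
\begin{equation*}
\|u\|_{L^{2q_{k_0}}(B_{1/2})}\ \leqslant\ \prod_{k=0}^{k_0-1}\Bigl(\tfrac{C(n,\varepsilon)\,q_k}{(r_k-r_{k+1})^2}\Bigr)^{1/(2q_k)}\|u\|_{L^{2q}(B_1)}\ \leqslant\ C(n,\varepsilon,N)\,\|u\|_{L^{2q}(B_1)}.
\end{equation*}
Since $q_{k_0}\geqslant Nq$ and $B_{1/2}$ has finite volume, H\"older's inequality on $B_{1/2}$ upgrades the left-hand side to $\|u\|_{L^{2qN}(B_{1/2})}$ up to a factor $C(n,N)$, and raising both sides to the power $q$ gives exactly the claimed bound $\|u^q\|_{L^{2N}(B_{1/2})}\leqslant C(n,\varepsilon,N)\|u^q\|_{L^2(B_1)}$.

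\textbf{Main obstacle.} The principal subtlety is the $q$-dependence of the smallness threshold $\delta_0(n,\varepsilon,q)$ in Lemma \ref{l:Caccioppoli}, which traces back to the need to absorb the Hardy contribution $\delta q\int(\eta\bar\eta_\varepsilon)^2 F(u)^2|y|^{-2}$ via $\delta q\leqslant c(\varepsilon)$. This is precisely what prevents an unbounded Moser iteration all the way to $L^\infty$ in a single statement and forces the target exponent $N$ to be fixed with $\delta_0$ depending on $N$; everything else in the argument is a routine Moser-style bootstrap.
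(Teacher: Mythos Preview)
Your proof is correct and follows essentially the same approach as the paper's: iterate the one-step Sobolev-plus-Caccioppoli improvement a bounded number $k_0(n,N)$ of times on the same shrinking radii $r_k=\tfrac12+2^{-k-1}$, absorbing the finitely many $q_k$-dependent smallness thresholds from Lemma~\ref{l:Caccioppoli} into a single $\delta_0(n,\varepsilon,N)$. Your explicit final H\"older adjustment from $L^{2q_{k_0}}$ down to $L^{2qN}$ and your remark identifying the $\delta q\leqslant c(\varepsilon)$ absorption as the obstruction to an unbounded iteration are clean touches that the paper leaves implicit.
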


\begin{proof}
    We will prove it using Sobolev inequality and iteration argument. We assume $q<N$. Otherwise, there is nothing to prove.

    Let $v_0 = u^q$. Define $v_k = v_{k-1}^{\frac{n}{n-2}} = u^{q \cdot (\frac{n}{n-2})^k}$. We further define the radii $r_k = \frac1{2}  +\frac1{2^{k+1}}$. Then we can define the cutoff functions $\eta_k \equiv 1$ in $B_{r_{k+1}}$ and vanishing outside $B_{r_{k}}$.

    By Sobolev inequality in $B_1 \subset \RR^n$ and \eqref{e:Cacchio-1}, with $2^* \equiv \frac{2n}{n-2}$ we have
    \begin{align*}
        ||v_0||_{L^{2^*}(B_{r_1})} \leqslant ||\eta_0 v_0||_{L^{2^*}(B_1)} \leqslant C ||\nabla (\eta_0 v_0)||_{L^2(B_1)} \leqslant C( ||\eta_0 \nabla v_0||_{L^2(B_1)} + ||v_0 \nabla \eta_0||_{L^2(B_1)})      \leqslant C_0(n,\varepsilon,N)||v_0 \nabla \eta_0||_{L^2(B_1)}.
    \end{align*}

 This implies that $v_0\in L^{2^*}(B_{r_1})$, equivalently $u \in L^{\frac{2qn}{n-2}}(B_{r_1})$. If $\frac{qn}{n-2} \geqslant N$, we finish the proof. Otherwise, we can apply \eqref{e:Cacchio-1} and Sobolev inequality again to $\eta_1 v_1$ to obtain
 \begin{align}
     ||\eta_1 v_1||_{L^{2^*}(B_{r_2})} \leqslant C(n,\varepsilon,N) ||v_1 \nabla \eta_1||_{L^2(B_{r_2})} \leqslant C_1(n,\varepsilon,N) ||v_0||_{L^{2^*}(B_{r_2})}.
 \end{align}

 Next we iterate this step for finitely many times until $N \equiv q\cdot(\frac{n}{n-2})^M > \frac{n}{2}$. Then by induction
\begin{align}
    ||\eta_M v_M||_{L^{2^*}(B_{r_{M+1}})} \leqslant C_M(n,\varepsilon,N) ||v_0||_{L^2(B_{r_{M+1}})}.
\end{align}

Since $B_{1/2} \subset B_{r_{M+1}} \subset B_1$ and $v_M = u^{q\cdot (\frac{n}{n-2})^M } = u^N$, it implies that  $u^q \in L^{2N}(B_{1/2})$ with $ ||u^q||_{L^{2N}(B_{1/2})} \leqslant C_M(n,\varepsilon,N) ||u^q||_{L^2(B_1)}$. This finishes the proof.
\end{proof}

Next we will use Moser iteration argument to obtain $L^{\infty}$-estimate for the following equation for some $q_0=q_0(n)>1$ in $B_1$
\begin{equation}\label{e:[Moser] u^{1+q}}
    \Delta u \geqslant - C u^{1+q_0}.
\end{equation}

Note that in Lemma \ref{l:Caccioppoli}, we obtain a Caccioppoli type inequality when $\delta$ is small but depends on $q$.  Here we can revise the
proof to obtain improved estimates with small $\delta$ independent of $q$, which is crucial to run the iteration argument towards $q=\infty$. The idea is as follows. By Lemma \ref{l:Higher integrability for u}, whenever $\delta(n,\varepsilon)$ is chosen small, we can obtain $L^N$-norm for $u^{q}$ with $N>n/2$, and thus for $f=u^{q_0}$ in the case of \eqref{e:[Moser] u^{1+q}}. Then in the proof of Lemma \ref{l:Caccioppoli} we replace the estimate $|f| \leqslant \delta|y|^{-2}$ by $L^N$-norm of $f=u^{q_0}$. Note that the $L^N$-estimate of $f$ does not depend on $\delta$. Hence we can get similar Caccioppoli type inequality for all $q$. Also, since we assume the bound $|f(x,y)| = |u^{q_0}(x,y)| \leqslant \delta |y|^{-2}$, we have $u \in L^{2q}$ for some $q(q_0)>1$ particularly.

\begin{lemma}\label{l:Cacchio-2}
    Let $u \in C^1(B_1^*)$ and $u \geqslant 0$ satisfying the equation \eqref{e:[Moser] u^{1+q}} in the sense of distributions. Suppose $u \in L^{2+2\varepsilon}$ for some $\varepsilon>0$, and $|u^{q_0}(x,y) | \leqslant \delta |y|^{-2}$ with $q_0$ in \eqref{e:[Moser] u^{1+q}} and some $\delta \leqslant \mathbf{\delta_0(n,q_0)}$. Then for any $q>1$, we have $u \in L^{2q}$. Moreover, there exists some $\alpha(n)>0$ such that for any $\eta \in C^{\infty}_c(B_{1/2})$,     \begin{align}\label{e:Caccip lemma 2}
    \int_{B_{1/2}} \eta^2 |\nabla u^q|^2 \leqslant C(n,\varepsilon,||u^{1+\varepsilon}||_{L^{2}(B_1)}) q^{\alpha(n)} \int_{B_{1/2}} (|\nabla \eta|^2 + \eta^2) u^{2q}.
    \end{align}
\end{lemma}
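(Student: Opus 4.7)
The plan is to refine the proof of Lemma \ref{l:Caccioppoli} by \emph{replacing the Hardy step}, which was responsible for forcing $\delta$ to shrink with $q$, by a Hölder--Sobolev--Young absorption relying only on a high-integrability bound on $u^{q_0}$ obtained from Lemma \ref{l:Higher integrability for u}. Once this $q$-independent Caccioppoli inequality is in hand, the higher-integrability conclusion $u\in L^{2q}$ for every $q>1$ follows by the same Moser iteration already used in Lemma \ref{l:Higher integrability for u}.

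\emph{Step 1: seed integrability of $f$.} I would fix a dimensional exponent $N_0=N_0(n)>n/2$ so that $2N_0':=2N_0/(N_0-1)$ satisfies $2<2N_0'<2^*=2n/(n-2)$. Starting from the hypothesis $u\in L^{2+2\varepsilon}$, apply Lemma \ref{l:Higher integrability for u} with $q=1+\varepsilon$ and target exponent chosen so that $u^{q_0}\in L^{N_0}(B_{3/4})$; this is legitimate provided the threshold $\delta_0(n,q_0)=\delta_0\bigl(n,\varepsilon,q_0N_0/(1+\varepsilon)\bigr)$ is small enough. The outcome is the \emph{fixed} bound
\[
\|f\|_{L^{N_0}(B_{3/4})}=\|u^{q_0}\|_{L^{N_0}(B_{3/4})}\leqslant K\bigl(n,\varepsilon,\|u^{1+\varepsilon}\|_{L^2(B_1)}\bigr).
\]
From here on the pointwise bound $|f|\leqslant \delta|y|^{-2}$ is discarded; only this integral bound on $f$ will be used.

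\emph{Step 2: revised Caccioppoli with $q$-independent $\delta$.} Retrace the proof of Lemma \ref{l:Caccioppoli} verbatim through \eqref{e:[Curv. Est] I_1 + I_2 inequality}. The term $I_1$ is absorbed by Cauchy--Schwarz exactly as before. For $I_2$, instead of applying Hardy's inequality, use Hölder with exponents $N_0,N_0'$:
\[
I_2\leqslant q\,\|f\|_{L^{N_0}}\,\|(\eta\bar\eta_\varepsilon)F(u)\|_{L^{2N_0'}}^{2}.
\]
Since $2<2N_0'<2^*$, interpolation combined with the Sobolev inequality applied to the compactly supported $v:=(\eta\bar\eta_\varepsilon)F(u)$ yields
\[
\|v\|_{L^{2N_0'}}^{2}\leqslant C(n)\,\|v\|_{L^2}^{2(1-\theta)}\,\|\nabla v\|_{L^2}^{2\theta}
\]
for some purely dimensional $\theta=\theta(n)\in(0,1)$. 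Young's inequality then splits this as
\[
q\|f\|_{L^{N_0}}\|v\|_{L^{2N_0'}}^{2}\leqslant \tfrac{c(\varepsilon)}{4}\,\|\nabla v\|_{L^2}^{2}+C\bigl(n,\varepsilon,\|f\|_{L^{N_0}}\bigr)\,q^{1/(1-\theta)}\,\|v\|_{L^2}^{2}.
\]
The gradient term can now be absorbed into the left-hand side of \eqref{e:[Curv. Est] I_1 + I_2 inequality} \emph{without any smallness condition on $\delta$ relative to $q$}. Setting $\alpha(n):=1/(1-\theta)$ and then passing $\bar\eta_\varepsilon\to\mathbf{1}$ along the singular segment and letting $K\to\infty$ in the truncation, exactly as at the end of Lemma \ref{l:Caccioppoli}, yields the Caccioppoli estimate \eqref{e:Caccip lemma 2}.

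\emph{Step 3: bootstrapped integrability.} With \eqref{e:Caccip lemma 2} at hand, one runs Moser iteration as in Lemma \ref{l:Higher integrability for u}: at each step Sobolev converts $\|\nabla(\eta u^{q})\|_{L^2}^{2}$ into an improvement of the integrability exponent by the fixed factor $n/(n-2)$, while the constant grows by a factor $q^{\alpha(n)}$. Iterating finitely many times starting from $u\in L^{2(1+\varepsilon)}$ reaches any desired exponent, giving $u\in L^{2q}$ for every $q>1$.

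\emph{Main obstacle.} The delicate point is ensuring that Step 2 produces only a \emph{polynomial} $q$-dependence after absorption: this rests on the fact that $N_0$, and therefore the interpolation exponent $\theta=\theta(n)$, can be chosen dimensionally once and for all, so that the resulting $\alpha(n)=1/(1-\theta)$ is independent of $q$. A secondary technical point is the cutoff removal $\bar\eta_\varepsilon\to\mathbf{1}$ in the revised argument, where Hardy's inequality is no longer available; this is handled by verifying directly that $(\eta\bar\eta_\varepsilon)F(u)\to \eta F(u)$ in $W^{1,2}$ using the $L^4$-control of $F(u)$ inherited from the truncation parameter $K$, as in the original proof.
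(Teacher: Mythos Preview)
Your proposal is correct and follows essentially the same approach as the paper: first use Lemma~\ref{l:Higher integrability for u} to place $f=u^{q_0}$ in a fixed $L^p$ space (the paper takes $p=2n$, you take $p=N_0>n/2$), then redo the Caccioppoli argument replacing the Hardy step for $I_2$ by H\"older against $\|f\|_{L^p}$, interpolation between $L^2$ and $L^{2^*}$, and Young's inequality to absorb the gradient term at the cost of a $q^{\alpha(n)}$ factor. The only cosmetic difference is that the paper asserts $u\in L^{2q}$ for all $q$ up front (via Lemma~\ref{l:Higher integrability for u}) before proving the uniform Caccioppoli, whereas you establish the Caccioppoli conditionally on $u\in L^{2q}$ and then bootstrap inductively in Step~3; your ordering is arguably the cleaner of the two.
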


\begin{proof}
We can let $f=Cu^{q_0}$ in this case. Then by Lemma \ref{l:Higher integrability for u}, we have $u \in L^{2q}$ for any $q>1$. Moreover, if we choose $\delta\leqslant \delta_0(n,\varepsilon)$, we know $f=Cu^{q_0} \in L^{2n}(B_{1/2})$ and in fact $\|f\|_{L^{2n}(B_{1/2})} \leqslant C(n,\varepsilon)\|u^{1+\varepsilon}\|_{L^2(B_1)}$.

Then we can go back to the proof of Lemma \ref{l:Caccioppoli}. Now we know $u \in L^{2q}$ and we want to prove \eqref{e:Caccip lemma 2}. Following the same steps as in the proof of Lemma \ref{l:Caccioppoli}, note that the only place where we use the estimate for $f$ is the estimate of $I_2$, see \eqref{e:[curvatuer bound] I_2}. Now using the $L^{2n}$-norm of $f$, we can compute $I_2$ over $B_{1/2}$ in \eqref{e:[Curv. Est] I_1 + I_2 inequality} as follows:
\begin{align}
    I_2 = q \int_{B_{1/2}} (\eta \bar{\eta}_{\varepsilon})^2 |f| F(u)^2 &\leqslant q \Big(\int_{B_{1/2}} |f |^{2n} \Big)^{\frac1{2n}} \cdot \Big( \int_{B_{1/2}} |(\eta \bar{\eta}_{\varepsilon})F(u)|^{\frac{4n}{2n-1}}\Big)^{1- \frac1{2n}}.
\end{align}

Since $2^* \equiv \frac{2n}{n-2} > \frac{4n}{2n-1} > 2$, then by interpolation inequality and Sobolev inequality we have
\begin{align}
    \Big( \int_{B_1} |(\eta \bar{\eta}_{\varepsilon})F(u)|^{\frac{4n}{2n-1}}\Big)^{1- \frac1{2n}} &\leqslant \theta ||(\eta \bar{\eta}_{\varepsilon})F(u)||_{L^{2^*}(B_1)}^2 + C(n) \theta^{-\alpha} ||(\eta \bar{\eta}_{\varepsilon})F(u)||_{L^2(B_1)}^2 \\
    &\leqslant C_0 \theta ||\nabla (\eta \bar{\eta}_{\varepsilon} F(u))||_{L^{2}(B_1)}^2 + C(n) \theta^{-\alpha} ||(\eta \bar{\eta}_{\varepsilon})F(u)||_{L^2(B_1)}^2,
\end{align}
where $\alpha$ is a positive constant depending on $n$ and $\theta>0$ to be chosen. This implies
\begin{align}
    I_2 \leqslant q \cdot ||f||_{L^{2n}(B_{1/2})} \Big( C_0 \theta ||\nabla (\eta \bar{\eta}_{\varepsilon} F(u))||_{L^{2}(B_1)}^2 + C(n) \theta^{-\alpha} ||(\eta \bar{\eta}_{\varepsilon})F(u)||_{L^2(B_1)}^2 \Big).
\end{align}

Choose $\theta < \Big(10C_0 q ||f||_{L^{2n}(B_1)} \Big)^{-1} c(\varepsilon)$ where $c(\varepsilon)$ is the constant in \eqref{e:[Curv. Est] G' > c F'}. Then we 
\begin{align}
    I_2 \leqslant \frac{c(\varepsilon)}{10} ||\nabla (\eta \bar{\eta}_{\varepsilon} F(u))||_{L^{2}(B_1)}^2 + C(n,\varepsilon,||f||_{L^{2n}(B_{1/2})})q^{\alpha}||(\eta \bar{\eta}_{\varepsilon})F(u)||_{L^2(B_1)}^2.
\end{align}

Then we can run the same arguments as in Lemma \ref{l:Caccioppoli}. Since $||f||_{L^{2n}(B_{1/2})} \leqslant C(n,\varepsilon) ||u^{1+\varepsilon}||_{L^2(B_1)}$, we conclude the proof by obtaining
\begin{align}
    \int_{B_{1/2}} \eta^2 |\nabla u^q|^2 \leqslant C(n,\varepsilon,||u^{1+\varepsilon}||_{L^{2}(B_1)}) q^{\alpha(n)} \int_{B_{1/2}} (|\nabla \eta|^2 + \eta^2) u^{2q}.
\end{align}
\end{proof}

Finally we can run the iteration argument to conclude the $L^{\infty}$-estimate.

\begin{theorem}\label{t:Local Moser}
    Let $u \in C^1(B_1^*)$ and $u \geqslant 0$ satisfying the equation \eqref{e:[Moser] u^{1+q}} in the sense of distributions. Suppose $|u^{q_0}(x,y) | \leqslant \delta |y|^{-2}$ for some $\delta \leqslant \mathbf{\delta_0(n,q_0)}$. Then
    \begin{align}
        ||u||_{L^{\infty}(B_{1/4})} \leqslant C(n,\delta,q_0).
    \end{align}
\end{theorem}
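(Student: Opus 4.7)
The plan is a standard Moser iteration scheme, with the three lemmas stacked above providing exactly the ingredients needed. The only real subtlety is getting the iteration started, since Lemma~\ref{l:Cacchio-2} requires an a priori $L^{2+2\varepsilon}$ integrability hypothesis on $u$, whereas Theorem~\ref{t:Local Moser} assumes only the pointwise bound $u^{q_0}\leqslant \delta|y|^{-2}$.

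First, I would establish the starting integrability. The pointwise bound gives $u\leqslant \delta^{1/q_0}|y|^{-2/q_0}$, and since $y$ lives in $\mathbb{R}^4$, the function $|y|^{-2p/q_0}$ is locally integrable whenever $p<2q_0$. Thus $u\in L^p_{\mathrm{loc}}$ for all $p<2q_0$, which already yields $u\in L^{2+2\varepsilon}(B_1)$ for some $\varepsilon=\varepsilon(q_0)>0$ as soon as $q_0>1$. In the borderline or subcritical case $q_0\leqslant 1$, one first applies Lemma~\ref{l:Higher integrability for u} with $f=Cu^{q_0}$ and $q\in(1,q_0)$ (viewing the nonlinear equation as a linear one with $|f|\leqslant C\delta|y|^{-2}$) to bootstrap $u\in L^{2q}$ up to $u\in L^{2N}$ for any prescribed $N$, at the cost of a further smallness condition $\delta\leqslant\delta_0(n,q_0)$.

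Second, with $u\in L^{2+2\varepsilon}(B_{1/2})$ secured, I invoke Lemma~\ref{l:Cacchio-2} for arbitrary $q>1$, whose constant is independent of $q$ apart from the factor $q^{\alpha(n)}$. Combining the Caccioppoli bound with the Sobolev inequality applied to $\eta u^q$ yields the reverse H\"older inequality
\begin{equation*}
\Bigl(\int (\eta u^q)^{2\chi}\Bigr)^{1/\chi}\leqslant C q^{\alpha(n)}\int (|\nabla\eta|^2+\eta^2)\,u^{2q},\qquad \chi:=\tfrac{n}{n-2}>1.
\end{equation*}
With the standard nested radii $r_k=\tfrac14+2^{-k-2}$, exponents $\mu_k=(1+\varepsilon)\chi^k$, and cutoffs $\eta_k\in C^\infty_c(B_{r_k})$ with $\eta_k\equiv 1$ on $B_{r_{k+1}}$ and $|\nabla\eta_k|\leqslant C2^k$, this gives
\begin{equation*}
\|u\|_{L^{2\mu_{k+1}}(B_{r_{k+1}})}\leqslant (Cq_k^{\alpha}\cdot 4^k)^{1/(2\mu_k)}\,\|u\|_{L^{2\mu_k}(B_{r_k})}.
\end{equation*}

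Third, I iterate: taking logarithms and summing, the series $\sum_k \mu_k^{-1}(\log \mu_k+k)$ converges because $\mu_k$ grows geometrically in $k$. Passing $k\to\infty$ yields the quantitative bound
\begin{equation*}
\|u\|_{L^\infty(B_{1/4})}\leqslant C(n,q_0,\delta)\,\|u\|_{L^{2+2\varepsilon}(B_{1/2})},
\end{equation*}
and the right-hand side is controlled by the pointwise bound from Step~1 (which depends only on $n,\delta,q_0$), completing the proof. The step I expect to require the most care is the bookkeeping in the bootstrap of Step~1 when $q_0$ is close to $1$: one must ensure that only finitely many applications of Lemma~\ref{l:Higher integrability for u} are needed (so that the cumulative smallness requirement on $\delta$ remains a function of $n$ and $q_0$ alone), and that each application is valid on a slightly shrunk ball so as to leave $B_{1/2}$ available for the final Moser iteration. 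The rest is routine once one has Lemma~\ref{l:Cacchio-2}'s crucial feature of a $q$-independent smallness threshold.
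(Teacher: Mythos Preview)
Your proposal is correct and follows essentially the same approach as the paper: obtain initial $L^{2+2\varepsilon}$ integrability from the pointwise bound $u\leqslant\delta^{1/q_0}|y|^{-2/q_0}$ (integrable in the four $y$-directions for any exponent below $2q_0$), then run a standard Moser iteration using Lemma~\ref{l:Cacchio-2} and Sobolev on nested balls. One remark: your digression into the ``subcritical case $q_0\leqslant 1$'' is unnecessary and slightly garbled (you write $q\in(1,q_0)$, which would be empty), since equation~\eqref{e:[Moser] u^{1+q}} is introduced with $q_0=q_0(n)>1$; the paper simply takes $\varepsilon\in(0,q_0-1)$ and proceeds directly.
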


\begin{proof}
Since $|u^{q_0}(x,y) | \leqslant \delta |y|^{-2}$, we have $\|u\|_{L^{2+2\varepsilon}(B_1)} \leqslant C(n,\varepsilon,\delta,q_0)$ for some $\varepsilon \in (0,q_0-1)$. Let $q_* = 1+\varepsilon$ for simplicity.

    We consider the similar setting as in the previous Lemma \ref{l:Cacchio-2}. Namely, let $v_0 = u^{q_*}$ and $v_k = v_{k-1}^{\tau} = (u^{q_*})^{\tau^k}$ with $ \tau =\frac{n}{n-2}$. We further define the radii $r_k = \frac14  +\frac1{4^{k+1}}$. Then we can define the cutoff functions $\eta_k \equiv 1$ in $B_{r_{k+1}}$ and vanishing outside $B_{r_{k}}$.
    
According to Lemma \ref{l:Cacchio-2}, since $v_k = u^{q_*\cdot \tau^k}$ and $|\nabla \eta_k|^2 \leqslant C 10^k$, we have
    \begin{align}
        \int_{B_{1/2}} \eta_k^2 |\nabla v_k|^2 \leqslant C \cdot(q_* \tau^k)^{\alpha}\int_{B_{1/2}} (|\nabla \eta_k|^2 + \eta^2) v_k^2 \leqslant C_1  (10\tau^{\alpha})^k \int_{B_{r_k}} v_k^2.
    \end{align}
where $C_1$ depends on $n, q_*, ||u^{q_*}||_{L^{2}(B_1)}$.

Then by Sobolev inequality and the inequality above, we have
    \begin{align}
        \Big(\int_{B_{r_{k+1}}} v_k^{2\tau} \Big)^{1/\tau} \leqslant \Big( \int_{B_{1/2}} |\eta_k v_k|^{2\tau} \Big)^{1/\tau} \leqslant C \int_{B_{1/2}} |\nabla(\eta_k v_k)|^2 \leqslant C \int_{B_{1/2}} |\nabla\eta_k|^2v_k^2 + \eta_k^2 |\nabla v_k|^2 \leqslant C_2 (10 \tau^{\alpha}+1)^k \int_{B_{r_{k}}} v_k^2,
    \end{align}
where $C_2$ depends on $n, q_*, ||u^{q_*}||_{L^{2}(B_1)}$.

Therefore, we can now estimate $||u^{q_*}||_{L^{2\tau^{k+1}(B_{r_{k+1}})}}$ as follows
\begin{align}
    ||u^{q_*}||_{L^{2\tau^{k+1}}(B_{r_{k+1}})} &= \Big(\int_{B_{r_{k+1}}} v_k^{2\tau} \Big)^{\frac1{2\tau^{k+1}}} \\
    &\leqslant \Big( C_2 (10 \tau^{\alpha}+1)^k \int_{B_{r_{k}}} v_k^2 \Big)^{\frac1{2\tau^k}} \\
    &\leqslant C_3^{\frac{k}{\tau^k}} ||u^{q_*}||_{L^{2\tau^k}(B_{r_{k}})},
\end{align}
where $C_3$ depends on $n,q_*,||u^{q_*}||_{L^2(B_1)}$.

Then by induction we can conclude that
\begin{align}
    ||u^{q_*}||_{L^{2\tau^{k+1}}(B_{r_{k+1}})} \leqslant C_3^{\sum_{j=1}^k \frac{j}{\tau^j}} ||u^{q_*}||_{L^2(B_{1/2})}.
\end{align}

Let $k \to \infty$, then $\tau^k \to \infty$ and $\sum_{j=1}^{\infty} \frac{j}{\tau^j} < C(n)$. Therefore,
\begin{align}
    ||u^{q_*}||_{L^{\infty}(B_{1/4})} \leqslant C(n,q_*,||u^{q_*}||_{L^2(B_1)}).
\end{align}
    This finishes the proof if we choose $\varepsilon = \frac1{2}(q_0-1)$.
\end{proof}

Finally we prove the curvature bound Theorem \ref{t:bound on Rm}.

\begin{proof}[Proof of Theorem \ref{t:bound on Rm}]
Recall that for Einstein metrics we have the following equations: there exist constants $C_k(n)>0$ and $\alpha(n)>0$ such that for any $k\in\mathbb{N}$
\begin{itemize}
    \item $\Delta |\nabla^k\Rm| \geqslant - C_k|\Rm|\,|\nabla^k\Rm|$,
    \item $\Delta |\Rm|^{1-\alpha} \geqslant - C_0|\Rm|^{2-\alpha}$.
\end{itemize}
These inequalities are classical consequences of the fact that on an Einstein metric, the curvature tensor is harmonic (seen as a $\Lambda^2$ valued $2$-form) and a Weitzenböck formula; see \cite{bkn} for example.

Let $u = |\Rm|^{1-\alpha}$ and then $u$ satisfies the equation \eqref{e:[Moser] u^{1+q}} for $q_0 = (1 -\alpha)^{-1}$. By Theorem \ref{c:local curvature est}, we have $u^{q_0}(x,y) = |\Rm|(x,y) \leqslant \delta |y|^{-2}$. Therefore, the proof of the bound for $|\Rm|$ is now completed by Theorem \ref{t:Local Moser}. The estimate for higher order derivatives will be standard; see for example \cite{DS}.
\end{proof}

\subsection{Analytic orbifold coordinates}

Recall that in the previous section \ref{s:C0 chart}, we construct the $C^0$ coordinates around curves of singularities by gluing techniques. In this section, we will improve the regularity of the coordinate charts using the curvature estimate $|\nabla^k \Rm| \leqslant C$ following \cite{DS}.

First let us recall our setting: Let $(X,p)$ be a 5-dimensional noncollapsed limit space. Let $\gamma:[-2, 2 ] \subset \RR \to X$ be a 1-Lipschitz curve with $\gamma(0) = p$ parametrized by arc length. Suppose each $\gamma(t)$ have the same tangent cone $\RR \times \RR^4/\Gamma$ with $\Gamma \neq id$. Then by Proposition \ref{prop:C^0 metric} we know $B_1(p) \setminus \gamma$ is a smooth manifold with metric $g$. 

Let $\cC^\gamma_j \equiv \{ x \in X : d(x, \gamma) = j^{-1} \}$ be the cylindrical hypersurface surrounding $\gamma$. The regularity of such cylinders is not good enough for higher order derivative estimates. So we need to construct a better set of cylinders. We define $\cC_0 \equiv (-1,1) \times \mathbb{S}^3 \subset \RR \times \RR^4$ to be the standard cylinder in $\RR^5$.

According to Lemma \ref{l:Horizontal Gluing}, we have the following immediate corollary.

\begin{lemma}\label{l:coordinate, appro cylinder}
    There exists a sequence of smooth embeddings $f_j: \cC_0 = (-1,1) \times \mathbb{S}^3 \to X$ such that the following holds:
    \begin{enumerate}
        \item $d_{GH} (\cC_{f_j}, \cC^\gamma_j) \cap B_1(p)) \leqslant \varepsilon_j$, where $\cC_{f_j} \equiv f_j(\cC_0)$;
        \item $\big| j^2 f_j^*g - h_0  \big|_{C^4_{h_0}} \leqslant \varepsilon_j$;
        \item $\big| j^{-1} A_{\cC_j} - id   \big|_{C^3_{h_0}} \leqslant \varepsilon_j$. 
    \end{enumerate}
\end{lemma}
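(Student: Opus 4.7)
The plan is to repackage Lemma \ref{l:Horizontal Gluing} directly, by extracting a cylindrical cross-section at distance $j^{-1}$ from $\gamma$. First I would, for each large $j$, select a scale $r = r_j$ from Lemma \ref{l:Horizontal Gluing} for which the distance level $j^{-1}$ lies well inside the annular range $[10 r_j, 90 r_j]$, then cover $\gamma \cap B_1(p)$ by finitely many overlapping charts $H_{r_j}$, whose uniform existence along $\gamma$ is guaranteed by Lemma \ref{l:uniform symmetry along curves}. I would then restrict each chart to the slice $\{|y|_\Gamma = j^{-1}\}$ and glue these restrictions along the $\gamma$-direction; lifting to the universal cover of the $\mathbb{S}^3/\Gamma$ factor produces the desired smooth embedding $f_j: \cC_0 = (-1, 1) \times \mathbb{S}^3 \to X$.

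The three estimates would then follow by unwinding the definitions. Estimate (1) will come from the two-sided distance bound $(1-\delta)|y|_\Gamma \leqslant d(H_{r_j}(y), \gamma) \leqslant (1+\delta)|y|_\Gamma$ of Lemma \ref{l:Horizontal Gluing}: the image $\cC_{f_j}$ lies in an $\varepsilon_j$-neighborhood of $\cC^\gamma_j \cap B_1(p)$, and a small Lipschitz correction via the flow of $\nabla d(\cdot, \gamma)$ (or, equivalently, the normal exponential map of $\gamma$) can be arranged so as to put it exactly on $\cC^\gamma_j$ without damaging the higher-order estimates. Estimate (2), the $C^4$-closeness of $j^2 f_j^* g$ to the fixed cylindrical metric $h_0$, will follow from $|r_j^{-2} H_{r_j}^* g - g_0|_{C^4} \leqslant \delta(r_j)$ combined with the fact that the restriction of the flat cone metric to the slice $\{|y|_\Gamma = j^{-1}\}$, after the natural normalization, is exactly $h_0$. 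For estimate (3), a flat-model computation gives second fundamental form $A = j \cdot \mathrm{id}$ on the sphere factor and $0$ on the interval factor, so $C^3$-closeness of the ambient metrics transfers to $|j^{-1} A_{\cC_{f_j}} - \mathrm{id}|_{C^3} \leqslant \varepsilon_j$ by continuity of the second fundamental form in $C^3$ of the metric.

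The main (and essentially only) point that needs care is uniformity of $\varepsilon_j \to 0$ as $j \to \infty$ across the gluing of finitely many charts along $\gamma$. This is exactly what Lemma \ref{l:uniform symmetry along curves} provides: the uniform symmetry scale $r_0(\eta)$ along $\gamma$ forces the error function $\delta(r)$ of Lemma \ref{l:Horizontal Gluing} to tend to $0$ uniformly with $r$, so the finitely many gluing errors stay controlled. Everything else is mechanical.
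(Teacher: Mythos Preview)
Your approach is correct and matches the paper's, which records this lemma as an immediate corollary of Lemma \ref{l:Horizontal Gluing}. One simplification: the map $H_{r_j}$ produced by Lemma \ref{l:Horizontal Gluing} is already defined over the full interval $(-1,1)$ in the $\gamma$-direction (the horizontal gluing has been carried out there, using Lemma \ref{l:uniform symmetry along curves} internally), so there is no need to cover $\gamma$ by finitely many such charts and glue again---restricting a single $H_{r_j}$ to the slice $\{|y|_\Gamma = j^{-1}\}$ and lifting already gives $f_j$.
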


Then we can construct the analytic coordinate around the curve. 
\begin{theorem}\label{t:analytic chart}
    There exists an analytic diffeomorphism $F : B^* \to B_1(p) \setminus \gamma$ such that $F^*g$ extends to an analytic metric on the whole $B$. 
\end{theorem}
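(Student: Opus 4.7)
The plan is to improve the $C^0$ diffeomorphism $F_0: B^* \to B_1(p)\setminus\gamma$ produced by Proposition \ref{prop:C^0 metric} in three stages: first to $C^\infty$ on the punctured ball $B^*$ using the uniform curvature derivative bounds, then to $C^{1,\alpha}$ across the singular axis using removable singularities for bounded curvature, and finally to real-analytic using the Einstein equation in harmonic gauge.

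\textbf{Stage 1 (smoothness on $B^*$).} On each rescaled annular shell $A_j := \{x : (2j)^{-1} \le d(x,\gamma) \le j^{-1}\}$ equipped with the rescaled metric $j^2 g$, the uniform bounds $|\nabla^k \Rm| \le C_k$ from Theorem \ref{t:bound on Rm} yield a smooth metric with uniformly bounded covariant derivatives of curvature, independent of $j$. The approximate cylinders $f_j$ of Lemma \ref{l:coordinate, appro cylinder} supply a reference $C^4$-close-to-flat coordinate structure on each such shell and, by their common construction via rescaled exponential charts, are compatible across consecutive scales. Applying Anderson's $C^{k,\alpha}$ harmonic radius estimate \cite{and90} on a uniform-size cover of each shell and gluing the resulting harmonic coordinates across neighboring shells (the transitions being uniformly $C^{k,\alpha}$-controlled), I would obtain a smooth diffeomorphism $F_1 : B^* \to B_1(p)\setminus\gamma$, differing from $F_0$ by a smooth coordinate change on $B^*$, such that $F_1^* g$ admits uniform $C^{k,\alpha}$ bounds on every shell after rescaling.

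\textbf{Stage 2 ($C^{1,\alpha}$-extension across the axis).} The $C^0$-extension of $F_0^* g$ over $B$ from Proposition \ref{prop:C^0 metric} together with $|\Rm| \le C$ from Theorem \ref{t:bound on Rm} places us in the classical removable-singularity framework for orbifold metrics across codimension-$4$ loci (cf.\ \cite{bkn,and}). Lifting to the universal cover $\widehat{B} \subset \mathbb{R} \times \mathbb{R}^4$ of $B^*$ in the $\mathbb{R}^4/\Gamma$-factor, the $\Gamma$-invariant metric $F_1^*g$ extends across the axis $B_1^x \times \{0\}$ as a $\Gamma$-invariant $C^{1,\alpha}$ tensor; the uniform symmetry propagation (Lemma \ref{l:uniform symmetry along curves}) and the uniqueness of the tangent cone $\mathbb{R} \times \mathbb{R}^4/\Gamma$ along $\gamma$ (Theorem \ref{t:local uniqueness}) are what rule out anomalous logarithmic-type terms and guarantee that the $\Gamma$-action on each tangent model is the limit of the $\Gamma$-action on neighboring shells.

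\textbf{Stage 3 (analyticity via the Einstein equation).} Once $F_1^* g$ is $C^{1,\alpha}$ on $B$ and smooth on $B^*$, I would introduce harmonic coordinates $(u^1,\ldots,u^5)$ with respect to a smooth reference metric, in which the Einstein equation $\mathrm{Ric}(g) = 4 g$ becomes the quasilinear elliptic system
\begin{equation*}
  -\tfrac{1}{2}\, g^{ab}\,\partial_a\partial_b\, g_{ij} \;=\; \mathcal{Q}(g,\partial g)_{ij} \;+\; 4\, g_{ij},
\end{equation*}
whose right-hand side is real-analytic in $(g,\partial g)$. Standard elliptic bootstrap upgrades $g$ to $C^\infty$ on $B$, and Morrey's theorem on the real-analyticity of solutions to elliptic systems with real-analytic nonlinearity then makes $g$ real-analytic on $B$. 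The change of coordinates from $F_1$ to harmonic coordinates is real-analytic because it is the solution of an analytic elliptic system, and composing $F_1$ with this change yields the desired analytic diffeomorphism $F$ such that $F^* g$ extends real-analytically to $B$.

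\textbf{Main obstacle.} Stages 1 and 3 are essentially a standard curvature-bounds-plus-DeTurck bootstrap, so the delicate step is Stage 2: making the extension across the singular axis $\Gamma$-equivariant and $C^{1,\alpha}$-regular, rather than merely continuous. Bounded curvature alone only secures $C^0$ extension together with $W^{2,p}$-type control on the regular part; to obtain $C^{1,\alpha}$ up to the axis and to guarantee that the orbifold group is constant along $\gamma$, one must genuinely use the uniqueness of tangent cones along the curve and the uniform symmetry estimate of Lemma \ref{l:uniform symmetry along curves}. Carrying out this step in a way compatible with the harmonic-coordinate normalization of Stage 3 is where the main technical work lies.
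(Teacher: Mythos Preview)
Your outline has the right endgame (obtain $C^{1,\alpha}$ regularity across the axis, then bootstrap the Einstein equation to real-analytic), but your Stages 1--2 differ from the paper and leave the step you yourself flag as the obstacle unresolved.

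The paper does not glue harmonic coordinates shell by shell. Instead, it takes the smooth cylinders $\cC_{f_j}$ from Lemma \ref{l:coordinate, appro cylinder} and builds Fermi-type coordinates by flowing along normal geodesics:
\[
F_j(z,r) = \exp_{f_j(z)}\big((r-j^{-1})\,N(f_j(z))\big).
\]
Because $|\nabla^k\Rm|\leqslant C_k$, the Jacobi field equation along each geodesic gives quantitative control on the metric coefficients: writing $F_j^*g = dr^2 + r^2 h_j(z,r)$, one obtains
\[
|h_j - h_0|\leqslant C\big(j^{-1}r^{-1}+\varepsilon_j+r^2\big),
\]
together with analogous first- and second-order bounds. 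Letting $j\to\infty$ and then $r\downarrow 0$ yields a single coordinate map $F$ on all of $B^*$ in which $r^{-2}|F^*g-g_0|_{C^0}+r^{-1}|F^*g-g_0|_{C^1}+|F^*g-g_0|_{C^2}\leqslant C$, so $F^*g$ extends $C^{1,1}$ across the axis. Your Stage 3 then applies verbatim.

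The gap in your proposal is exactly Stage 2. The removable-singularity theorems of \cite{bkn,and} are written for isolated conical points in dimension four, and bounded curvature alone does not hand you a coordinate system in which the metric components are $C^{1,\alpha}$ up to a codimension-four locus. Your Stage 1 (gluing local harmonic charts across dyadic shells) would require controlling the transition maps coherently as $r\to 0$, which reintroduces the accumulation-of-error issue already confronted in the $C^0$ horizontal gluing of Lemma \ref{l:Horizontal Gluing}; you do not indicate how to do this at higher regularity. The Jacobi-field construction sidesteps both problems: it produces one global chart directly and reduces the metric regularity across the axis to an ODE estimate that the curvature bound makes explicit.
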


\begin{proof}

First we define the maps $F_j : \cC_0 \times [j^{-1}, 1]$ as follows:
\begin{equation*}
    F_j(z,r) = \exp_{f_{j}(z)} \Big((r-j^{-1}) N(f_j(z)) \Big).
\end{equation*}
where $z=(t,\theta) \in \cC_0 = (-1,1) \times \mathbb{S}^3$ and $N(f_j(z))$ is the unit outward normal vector of $\cC_{f_j}$ at $f_j(z)$. 

We want to show that each $F_j$ is a diffeomorphism from $\cC_0 \times [j^{-1}, 1]$ to its image. To prove this, let $J(r)$ a Jacobi field along the geodesic $\alpha_z(r) \equiv F_j(z,r)$. Since we have $|\Rm| \leqslant C$ in $B_1(p) \setminus \gamma$ by Theorem \ref{t:bound on Rm}, by Rauch comparison theorem there exists some constant $C>0$ and 
\begin{equation*}
    C^{-1} |J(j^{-1})|_g \leqslant |J(r)|_g \leqslant C j |J(j^{-1})|_g.
\end{equation*}
Hence $F_j$ has no critical point in $\cC_0 \times [j^{-1}, 1]$.

We can write the metric $F_j^*g =  dr^2 + r^2 h_j(z,r)$. Let $h_0$ be the standard metric on $ \cC_0 =  (-1,1) \times \mathbb{S}^3$ and we will prove $h_j \to h_0$ as $j \to \infty$ in $C^{1,\alpha}$. 

Given $z\in (-1,1)\times \mathbb{S}^3$ and a unit tangential vector $\xi$ at $z$. Consider the Jacobi field $J(r)$ along the geodesic $\alpha_z(r) = F_j(z,r)$ satisfying
\begin{enumerate}
    \item $J(j^{-1}) =  df_j(\xi)$;
    \item $J'(j^{-1}) = A_{\cC_j}(J(j^{-1}))$.
\end{enumerate}

This implies that $J(r) = dF_j|_{(z,r)}(\xi)$. Moreover, by Lemma \ref{l:coordinate, appro cylinder} we have
\begin{enumerate}
    \item $\big| |J(j^{-1})|_g - j^{-1}    \big| \leqslant j^{-1} \varepsilon_j$;
    \item $\big| |J'(j^{-1})|_{g} - j |J(j^{-1})|_g  \big| \leqslant 2\varepsilon_j$.
\end{enumerate}

Let $\{e_1(r), ..., e_5(r) = \alpha'(r)\}$ be the parallel orthonormal frame along $\alpha_z(r)$ and write $J(r) = \sum_{k=1}^5 J_k(r) e_k(r)$. Then by the Jacobi field equation we have
\begin{equation*}
    J_k''(r) + \sum_{l} R_{k5l5}|_{\alpha_z(r)}J_l(r) = 0,
\end{equation*}
where $R_{k5l5} = \langle \Rm(e_k,e_5)e_l , e_5 \rangle$. Then
\begin{equation*}
\begin{split}
        J_k(r) &= J_k(j^{-1}) + J'_k(j^{-1}) \cdot (r- j^{-1}) - \int_{j^{-1}}^r \int_{j^{-1}}^s R_{k5l5}|_{\alpha_z(t)}J_k(t) dt ds.
\end{split}
\end{equation*}

By Theorem \ref{t:bound on Rm}, we have $|\Rm| \leqslant C_1$. This implies that for any $r \in [j^{-1},1]$ we have
\begin{equation*}
    \Big| |J(r)|_g - r \Big| \leqslant C_2 (j^{-1} + \varepsilon_j r + r^3).
\end{equation*}
Hence, at the metric level,
\begin{equation*}
    \Big| h_j - h_0 \Big|_{C^0_{h_0}} \leqslant C_2 (j^{-1} r^{-1} + \varepsilon_j  + r^2).
\end{equation*}

By differentiating the Jacobi equation and the bound $|\nabla^k \Rm| \leqslant C(k)$ we obtain the high order derivative estimates
\begin{equation*}
\begin{split}
    r^{-1} \Big|\nabla^{g_0}( h_j - h_0) \Big|_{C^0_{h_0}} \leqslant C_3 (j^{-1} r^{-2} + \varepsilon_j r^{-1}  + r); \\
    r^{-2}\Big|\nabla^{g_0} \nabla^{g_0} ( h_j - h_0) \Big|_{C^0_{h_0}} \leqslant C_4 (j^{-1} r^{-2} + \varepsilon_j r^{-1} + 1). 
\end{split}
\end{equation*}

For any fixed $r \ll 1$ , we let $j\to \infty$. Then the metric $F^*_j$ converges in $C^3$ to some map on $[r ,1] \times \cC_0$. Then we let $r \downarrow 0$ and thus we obtain a limit map $F : (0,1] \times \cC_0$ such that the metric $F^*g$ satisfies
\begin{equation*}
    r^{-2} \Big| F^*g - g_0\Big|_{C^0_{g_0}} + r^{-1} \Big| F^*g - g_0\Big|_{C^1_{g_0}} + \Big| F^* g - g_0\Big|_{C^2_{g_0}} \leqslant C_5.
\end{equation*}

This implies that $F^*g$ extends to a $C^{1,1}$ metric on the whole ball $B_1$. It is classical that if an Einstein metric is $C^{1,\alpha}$ in some coordinates, then they can be upgraded to a better set of coordinates where the metric is real-analytic. This can be proven thanks to harmonic coordinates, or coordinates in Bianchi gauge or divergence-free gauge with respect to a background flat metric.
\end{proof}

\subsection{Structure for curves of singularities}

We now prove Theorem  \ref{mthm: orbifold regularity} for curves of singularities.

\begin{proof}[Proof of Theorem \ref{mthm: orbifold regularity}]
Let $\gamma: I \to \cS(X)$ be a singular curve and $\cV(t)$ be the volume density at the point $\gamma(t)$ for $t\in I$. Since $\cV$ is lower semi-continuous with values in a finite set, then there exists a nowhere dense subset $\cI \subset I$ such that
\begin{enumerate}
    \item $I = \cI \cup \bigcup_{j=1}^{\infty} (s_j, s_{j+1})$;
    \item $\cV$ is constant on each interval $(s_j,s_{j+1})$. 
\end{enumerate}

By Corollary \ref{c:curve constant volume ratio}, for each $j$ there exists some $\Gamma_j$ such that the tangent cones on each segment $\gamma((s_j,s_{j+1}))$ are all isometric to $\RR\times \RR^4/\Gamma_j$. Combining Theorem \ref{t:bound on Rm} and \ref{t:analytic chart}, we can prove (3). To prove (2), note that the metric is invariant under the nontrivial group $\Gamma_j$ acting by isometry and fixing a curve. It is a classical consequence that this curve must be a geodesic.    
\end{proof}

\subsection{Geodesics on limits of Einstein $5$-manifolds}

We conclude by classifying all geodesics in the limit space $X$. We first recall the H\"older continuity estimate along geodesic in \cite{con12} and also \cite{deng25}.


\begin{theorem}\label{t:continuous tangent cone}
    Let $\sigma:[0,l] \to X$ be a geodesic in the limit space $X$. Then there exists $\alpha,C>0$ such that given any $\delta>0$ with $\delta l <s<t<l -\delta l$ and $Y_{\sigma(s)},Y_{\sigma(t)}$ tangent cones from the same sequence of rescalings, then we have
    \begin{equation*}
        d_{GH}(B_1(Y_{\sigma(s)}), B_1(Y_{\sigma(t)})) < \frac{C}{\delta l}|s-t|^{\alpha}.
    \end{equation*}
\end{theorem}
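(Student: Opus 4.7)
The plan is to follow the strategy of Colding--Naber \cite{con12}, with the extension of Deng \cite{deng25} that allows one to work with any geodesic in $X$ and not only limits of geodesics from the approximating sequence. The heart of the argument is an $L^2$ Hessian bound along the geodesic for the distance function to one of its endpoints, combined with the quantitative almost-splitting theorem.

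First, invoking \cite{deng25}, lift $\sigma$ to a family of near-minimizing segments $\sigma_i:[0,l]\to M_i$ converging to $\sigma$ in the pointed Gromov--Hausdorff sense, and set $p_{i,0}=\sigma_i(0)$, $p_{i,l}=\sigma_i(l)$. Consider the distance function $b_i(x) = d_{g_i}(x,p_{i,0})$ and the excess $e_i(x) = d_{g_i}(x,p_{i,0}) + d_{g_i}(x,p_{i,l}) - l$. Since $|\Ric_{g_i}|\le 4$, the Abresch--Gromoll inequality produces a constant $\beta=\beta(n)>0$ and a tubular estimate
$$e_i \;\lesssim\; (\delta l)^{-1}\,\rho^{1+\beta}$$
on a $\rho$-tube around $\sigma_i([\delta l,l-\delta l])$, the blow-up in $(\delta l)^{-1}$ being precisely what forces us to stay away from the endpoints.

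Next, I would apply the Bochner identity to a cut-off of $b_i$ and use the Laplacian comparison for $b_i$ together with the excess estimate to derive an integrated Hessian bound of the form
$$\int_{T_r(\sigma_i([s,t]))} |\nabla^2 b_i|^2\, dv_{g_i} \;\lesssim\; \frac{1}{\delta l}\,|t-s|\, r^{n-2}\,\omega(r),$$
with $\omega(r)\to 0$ as $r\to 0$. Combining this bound with the Cheeger--Colding segment inequality applied along $\sigma_i$, and using a mean-value estimate to convert the $L^2$-smallness into pointwise smallness of $|\nabla^2 b_i|$ at most sample points and scales along $\sigma_i$, I would conclude that, at scale $r=|t-s|$, the translated function
$$x\ \mapsto\ (b_i(x)-b_i(\sigma_i(s)))/r$$
is an $\eta$-splitting function in the sense of Theorem \ref{t:almost splitting} on both $B_r(\sigma_i(s))$ and $B_r(\sigma_i(t))$, with
$$\eta \;\lesssim\; (\delta l)^{-1}\,|t-s|^{\alpha'}$$
for some dimensional exponent $\alpha'>0$.

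Finally, the equivalence between $\eta$-splitting functions and Gromov--Hausdorff closeness to a metric product (Theorem \ref{t:almost splitting}) gives GH-approximations from $B_r(\sigma_i(s))$ and $B_r(\sigma_i(t))$ to products $\mathbb{R}\times W_s^i$ and $\mathbb{R}\times W_t^i$ whose $\mathbb{R}$-factors coincide with the tangent direction of $\sigma$. Passing to the limit along the given scaling sequence, the cross-sections converge to the cross-sections of $Y_{\sigma(s)}$ and $Y_{\sigma(t)}$; composing the two approximations through the shared splitting axis yields
$$d_{GH}\bigl(B_1(Y_{\sigma(s)}),B_1(Y_{\sigma(t)})\bigr) \;\lesssim\; \frac{1}{\delta l}\,|t-s|^{\alpha}$$
for some $\alpha\in(0,\alpha']$. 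The main technical obstacle is establishing the integrated Hessian estimate uniformly across all small scales while retaining the exact $(\delta l)^{-1}$ dependence dictated by the Abresch--Gromoll excess: one must choose the cut-offs for $b_i$ and the radii of the tubular neighborhoods so that the cutoff and Ricci error terms in the Bochner identity are all absorbed at the cost of that single $(\delta l)^{-1}$ factor, which is what makes the constant in the theorem linear in $(\delta l)^{-1}$ rather than a worse power.
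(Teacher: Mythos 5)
First, a point of comparison with the paper: the paper does not prove Theorem \ref{t:continuous tangent cone} at all — it is quoted as a known result, due to Colding--Naber \cite{con12} for limit geodesics and extended by Deng \cite{deng25} to arbitrary geodesics in the limit space. So the expected ``proof'' here is a citation, and what you have written is an attempt to re-derive the Colding--Naber--Deng theorem in a page. As a sketch of their strategy it has the right ingredients (excess estimate, segment inequality, $L^2$ Hessian control, almost-splitting), but two of your steps have genuine gaps. (Also, a smaller inaccuracy: Deng's contribution is an intrinsic argument in the RCD setting, not a lifting of $\sigma$ to near-minimizing segments in the $M_i$; a lift by broken geodesics does produce $\varepsilon_i$-geodesics, but attributing that reduction to \cite{deng25} misstates what that paper does and what makes arbitrary geodesics delicate.)

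The first gap is analytic: you propose to apply the Bochner identity to a cut-off of the distance function $b_i$ and deduce an integrated Hessian bound. The distance function is not smooth, its distributional Laplacian has a singular negative part on the cut locus, and one cannot extract an $L^2$ bound on $|\nabla^2 b_i|$ this way. The core technical work in \cite{con12} is precisely the construction of parabolic (heat-flow) approximations of the distance/excess functions, with quantitative closeness to $b_i$ and Laplacian control, to which Bochner can legitimately be applied; without that (or a harmonic replacement in the spirit of \cite{cc}) your claimed estimate is unjustified. The second gap is structural and more serious: in your final step you deduce closeness of $B_1(Y_{\sigma(s)})$ and $B_1(Y_{\sigma(t)})$ from the fact that balls at scale $r\sim|s-t|$ around $\sigma_i(s)$ and $\sigma_i(t)$ each admit an $\eta$-splitting with a common axis direction. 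Almost-splitting at the two points only says each ball is close to \emph{some} product $\mathbb{R}\times W$; it gives no comparison between the two cross-sections, and, worse, the tangent cones are limits along scales $r_j\to 0$ with $|s-t|$ \emph{fixed}, so an estimate at the single scale $r\sim|s-t|$ says nothing about the geometry at the much smaller scales defining $Y_{\sigma(s)}$ and $Y_{\sigma(t)}$. The actual content of \cite{con12} is a comparison of $B_r(\sigma(s))$ and $B_r(\sigma(t))$ with error Hölder in $|s-t|$ \emph{uniformly in the scale} $r$, obtained by building explicit Gromov--Hausdorff approximations from the gradient flow of the parabolic approximations and summing scale-adapted errors along the geodesic; your sketch does not address this all-scales propagation, which is the heart of the theorem. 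Given that the paper treats this as background, the appropriate fix is simply to cite \cite{con12} and \cite{deng25} rather than to reprove them.
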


As an immediate consequence, if one point of the geodesic lies in the regular set $\cR$, then the entire interior of the geodesic lies in $\cR$ \cite{con12}. In particular the regular set $\cR$ is convex. Using Theorem \ref{t:continuous tangent cone} together with our isolation Theorem \ref{thm:isolation 1sym}, we can now classify all geodesics in $X$: 

\begin{proof}[Proof of Theorem \ref{t:limit geodesic}]
    It suffices to consider the case where the entire $\sigma((0,l)) \in \cS$. By the stratification Theorem \ref{t:new stratification}, there must be some $\Gamma$ and $t_0 \in (0,l)$ such that $\sigma(t_0) \in \cS^1_{\Gamma}$ since $\cS^0$ is only countable. Then by Theorem \ref{t:continuous tangent cone} and isolation Theorem \ref{thm:isolation 1sym}, there exists some $\theta_0>0$ such that the tangent cone at $\sigma(t)$ is isometric to $\RR \times \RR^4/\Gamma$ for any $t\in (t_0-\theta_0, t_0 + \theta_0)$. By connectedness this implies that any interior point has the unique tangent cone $\RR \times \RR^4/\Gamma$. Therefore we can use Theorem \ref{thm:lips curve sing} to prove that the interior of this geodesic is removable in the sense of Theorem \ref{thm:lips curve sing}.    
\end{proof}

Theorem \ref{t:limit geodesic} therefore provides a complete classification of the interior of all geodesics. We say that a geodesic is of \emph{$\Gamma$-type} if the tangent cones at all interior points are $\RR \times \RR^4/\Gamma$ (we allow $\Gamma = \{id\}$). We now turn to the behavior of the end points. We prove that the tangent cones at end points can only have worse singularities. Similar results can be formulated for any curves thanks to Theorem \ref{thm:lips curve sing}.

\begin{proposition}
    Let $\sigma:[0,l) \to X$ be a geodesic of $\Gamma$-type in $X$. Let $C(Z)$ be a tangent cone at $\sigma(0)$. Then $Z$ has at least one singularity of the type $\RR^4/\Gamma'$ with $|\Gamma'|\ge |\Gamma|$.
\end{proposition}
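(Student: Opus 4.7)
The plan is to convert the constancy of the volume density along $\sigma$ into a density inequality inside $C(Z)$ at points of a radial ray obtained as a blowup of $\sigma$. Let $r_i \downarrow 0$ be a rescaling sequence realizing $(r_i^{-1}X, \sigma(0)) \xrightarrow{pGH} (C(Z), o)$, with $o$ the vertex. The rescaled curves $s \mapsto \sigma(r_i s)$ are unit-speed geodesics in the rescaled spaces, defined on $[0, l/r_i]$. By Arzelà-Ascoli and the classical stability of geodesics under pGH convergence, a subsequence converges uniformly on compact subsets of $[0,\infty)$ to a unit-speed geodesic ray $\tilde\sigma: [0,\infty) \to C(Z)$ based at $o$. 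Since $C(Z)$ is a metric cone, any unit-speed ray from the vertex is radial, so $\tilde\sigma(t) = (t,z_0)$ for a unique $z_0 \in Z$ and every $t \geqslant 0$.

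Next I would identify the singularity type at $z_0$. For $t > 0$, the point $(t,z_0)$ is not the vertex, so any tangent cone of $C(Z)$ at it automatically splits off the radial line. Since $C(Z)$ is itself a noncollapsed Ricci-limit in dimension five (being a tangent cone of the original Einstein sequence, which can be realized by a diagonal argument), Theorem \ref{t:local uniqueness} forces uniqueness of this tangent cone as $\RR \times \RR^4/\Gamma'$ for some finite $\Gamma' \subset O(4)$. Equivalently, $z_0 \in Z$ is an orbifold point of type $\RR^4/\Gamma'$, and the volume density of $C(Z)$ at $(t,z_0)$ equals $|\Gamma'|^{-1}$.

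The core step is then a comparison of volume densities. The Bishop-Gromov volume ratio $\cV_x(s)$ is monotone non-increasing in $s$ and converges pointwise under pGH convergence by Colding's volume continuity \cite{col}, so $\Theta(x) := \lim_{s\downarrow 0}\cV_x(s) = \sup_{s>0}\cV_x(s)$ is \emph{lower} semi-continuous and scale-invariant. For $r_i$ small enough, $\sigma(r_i t)$ is an interior point of $\sigma$, hence lies in $\cS^1_\Gamma$ (or in $\cR$ if $\Gamma=\{\mathrm{id}\}$) with $\Theta(\sigma(r_i t)) = |\Gamma|^{-1}$. Passing to the limit $\sigma(r_i t) \to \tilde\sigma(t)$ and using lower semi-continuity gives
\begin{equation*}
|\Gamma'|^{-1} \;=\; \Theta(\tilde\sigma(t)) \;\leqslant\; \liminf_{i\to\infty}\Theta(\sigma(r_i t)) \;=\; |\Gamma|^{-1},
\end{equation*}
so $|\Gamma'| \geqslant |\Gamma|$, and $z_0$ is the claimed singularity of $Z$.

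The argument is essentially an assembly of Theorem \ref{t:local uniqueness} with the classical lower semi-continuity of the density and the stability of geodesics under pGH convergence. The only points requiring any care are the identification of $\tilde\sigma$ as a radial ray in the metric cone $C(Z)$ — a standard fact for metric cones, since any unit-speed curve in a cone leaving the vertex is radial — and the fact that Theorem \ref{t:local uniqueness} is applicable to $C(Z)$ itself, which holds because iterated tangent cones of noncollapsed Ricci-limits are again noncollapsed Ricci-limits. No new technical input beyond what has already been developed in the paper is needed.
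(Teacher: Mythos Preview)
Your proof is correct and follows essentially the same route as the paper's: pass the interior points $\sigma(r_i t)$ to the limit in the tangent cone, and use Bishop--Gromov monotonicity together with Colding's volume convergence to bound the density at the limit point by $|\Gamma|^{-1}$. The paper is slightly more economical in that it does not extract the full limiting ray or invoke Theorem~\ref{t:local uniqueness}; since $Z$ is already known to be a $4$-dimensional Einstein orbifold with isolated singularities (from the proof of Theorem~\ref{thm:isolation 1sym}), the singularity type at the limit point $z_0\in Z$ is automatically $\RR^4/\Gamma'$ for some finite $\Gamma'$, and no appeal to uniqueness of tangent cones is needed.
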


\begin{proof}
    Let $C(Z)$ be the tangent cone at $\sigma(0)$ from the rescaling sequence $s_i$. By passing to a subsequence, we have $\sigma(s_i) \to z$ for some point $z \in Z$. Since $\sigma$ is of $\Gamma$-type, by Bishop-Gromov monotonicity and Colding's volume convergence theorem, the volume density at $z \in Z\subset C(Z)$ cannot be larger than that of $\RR^4/\Gamma$. This completes the proof.
\end{proof}

\appendix

\part{Appendix: Analysis in weighted spaces}\label{function spaces}

   \section{Function spaces}\label{sec:function spaces}
    
    For a tensor $s$, a point $x$, $\alpha>0$ and a Riemannian manifold $(M,g)$. The Hölder seminorm is defined as
$$ [s]_{C^\alpha(g)}(x):= \sup_{\{y\in T_xM,|y|< \textup{inj}_g(x)\}} \Big| \frac{s(x)-s(\exp^g_x(y))}{|y|^\alpha} \Big|_g.$$

For orbifolds, we consider a norm which is bounded for tensors decaying at the singular points.
\begin{defn}[Weighted Hölder norms on an orbifold]\label{norme orbifold}
    Let $\beta\in \mathbb{R}$, $k\in\mathbb{N}$, $0<\alpha<1$ and $(M_o,g_o)$ an orbifold and $S_o$ a subset of its singularities. Then, for any tensor $s$ on $M_o$, we define
    \begin{align*}
        \| s \|_{C^{k,\alpha}_{\beta}(g_o)} &:= \sup_{M_o}r_o^{-\beta}\Big(\sum_{i=0}^k r_o^{i}|\nabla_{g_o}^i s|_{g_o} + r_o^{k+\alpha}[\nabla_{g_o}^ks]_{C^\alpha(g_o)}\Big).
    \end{align*}
\end{defn}

For ALE manifolds, we will consider a norm which is bounded for tensors decaying at infinity.

\begin{defn}[Weighted Hölder norms on an ALE orbifold]\label{norme ALE}
Let $\beta\in \mathbb{R}$, $k\in\mathbb{N}$, $0<\alpha<1$ and $(N,g_b)$ be an ALE manifold and $S_b$ a subset of its singularities. Then, for all tensor $s$ on $N$, we define
   \begin{align*}
       \| s \|_{C^{k,\alpha}_{\beta}(g_b)}:= \sup_{N}r_b^\beta\Big( \sum_{i=0}^kr_b^{i}|\nabla_{g_b}^i s|_{g_b} + r_b^{k+\alpha}[\nabla_{g_b}^ks]_{C^\alpha({g_b})}\Big).
   \end{align*}
\end{defn}

On $M$, using the partition of unity 
$1= \chi_{M_o^t} + \sum_j \chi_{N_j^t}$ from Definition \ref{def cutoffs all}, we define a global norm.
\begin{defn}[Weighted Hölder norm on a naïve desingularization]\label{norme a poids M}
		Let $\beta\in \mathbb{R}$, $k\in\mathbb{N}$ and $0<\alpha<1$. We define for $s\in TM^{\otimes l_+}\otimes T^*M^{\otimes l_-}$ a tensor $(l_+,l_-)\in \mathbb{N}^2$, with $l:= l_+-l_-$ the associated conformal weight,
		$$ \|s\|_{C^{k,\alpha}_{\beta}(g^D)}:= \| \chi_{M_o^t} s \|_{C^{k,\alpha}_{\beta}(g_o)} + \sum_j T_j^{\frac{l}{2}}\|\chi_{N_{j}^t}s\|_{C^{k,\alpha}_{\beta}(g_{b_j})}.$$
\end{defn}

 With the notations of Definition \ref{orb Ein}, denote for each singular point $k$, $A_k(t,\varepsilon_0) := (\Phi_k)_*A_{{e}}(\varepsilon_0^{-1}\sqrt{t_k},\varepsilon_0)$ and $B_k(\varepsilon_0):=(\Phi_k)_*B_{{e}}(0,\varepsilon_0)$, as well as cut-off functions $\chi_{A_k(t,\varepsilon_0)}$ and $\chi_{B_k(\varepsilon_0)}$ respectively supported in $A_k(t,\varepsilon)$ and $B_k(\varepsilon_0)$, and equal to $1$ on $A_k(t,2\varepsilon_0)$ and $B_k(\varepsilon_0/2)$.

\begin{defn}[$C^{k,\alpha}_{\beta,*}$-norm on $2$-tensors]\label{def: Ckalphabeta*}
    Let $ h $ be a $2$-tensor on $(M,g^D)$, $(M_o,g_o)$ or $(N,g_b)$. We define
    $$\|h\|_{C^{k,\alpha}_{\beta,*}}:= \inf_{h_*,H_k} \|h_*\|_{C^{k,\alpha}_{\beta}} + \sum_k |H_k|_{{e}},$$
    where the infimum is taken on the $(h_*,(H_k)_k)$ satisfying $h= h_*+\sum_k \chi_{A_k(t,\varepsilon_0)}H_k$ for $(M,g^D)$ or $h= h_*+\sum_k \chi_{B_k(\varepsilon_0)}H_k$ for $(M_o,g_o)$ or $(N,g_b)$, where each $H_k$ is a constant traceless symmetric $2$-tensors on $\mathbb{R}^4\slash\Gamma_k$.
\end{defn}

\begin{defn}[$rC^{k,\alpha}_{\beta,*}$-norm of a vector fields]
    Let $ X $ a vector field on $(M,g^D)$ (respectively $(M_o,g_o)$ or $(N,g_b)$). We define its $rC^{k,\alpha}_{\beta,*}$-norm, where $r$ is the function $r_D$ (respectively $r_o$ or $r_b$) by
    $$\|X\|_{rC^{k,\alpha}_{\beta,*}}:= \inf_{X_*,X_k}\|X_*\|_{rC^{k,\alpha}_{\beta}} + \sum_k \|X_k\|_{rC^0_0(e)},$$
    where the infimum is taken among the couples $(X_*,X_k)$ satisfying $X = X_* + \sum_k\chi_{\mathcal{A}_k(t,\varepsilon)}X_k$ (respectively $X = X_* + \sum_k\chi_{B_o(\varepsilon)}X_k$ or $X = X_* + \chi_{B_b(\varepsilon)}X_k$).
\end{defn}

{ A last function space is adapted to the asymptotics of $2$-tensors and vector fields at ALE ends.

\begin{defn}[$C^{2,\alpha}_{\beta,**}$ norm on a tree of ALE orbifold]\label{norm orbifold ALE}
    Let $(N,g^B)$ a naïve gluing of a tree of Ricci-flat ALE orbifolds. Let $h$ be a $2$-tensor on $N$, and assume that $h = H^2 + H^4 + \mathcal{O}(r_b^{-4-\beta})$ for $\beta>0$, where $H^2$ and $H^4$ are homogeneous harmonic symmetric $2$-tensors with $|H^j| \propto r^{-j}$ on $\mathbb{R}^4/\Gamma$. For $\chi$, a cut-off function supported in $N\backslash K$ of Definition \ref{def orb ale}. We define the $C^{2,\alpha}_{\beta,**}$-norm of $h$ as:
    $$\|h\|_{C^{2,\alpha}_{\beta,**}}:= \|r^2H^2\|_{L^\infty(e)} +\|r^4H^4\|_{L^\infty(e)} + \big\|(1+r_b)^4(h-\chi (H^2+H^4))\big\|_{C^{2,\alpha}_{\beta,*}}.$$
\end{defn}

\begin{remark}\label{rem:gauge **}
    If for a metric $g$ such that $\|g - g^B\|_{C^{2,\alpha}_{\varepsilon}} \leqslant 1$, one has $\delta_{g} h = 0$ and $h\in C^{2,\alpha}_{\beta,**}$, then there is no term $H^2$ in $r^{-2}$ in the decomposition of $h$ since these terms are not divergence-free by \cite[Lemma 2.50]{ozuthese}. There is no term in $r^{-3}$ because of the action of $\Gamma\neq \{\mathrm{id}\}$.
\end{remark}

\begin{defn}[$r_bC^{3,\alpha}_{\beta,**}$-norm on a tree of ALE orbifold]
    Let $(N,g^B)$ a naïve gluing of a tree of Ricci-flat ALE orbifolds. Let $X$ be a vector field on $N$, and assume that $X = Y^3 + \mathcal{O}(r_b^{-3-\beta})$ for $\beta>0$ and $Y^3$ a homogeneous element of the kernel of $\delta_e\delta_e^*$ with $|Y^3|\sim r_e^{-3}$. We define its $r_bC^{3,\alpha}_{\beta,**}$-norm by
    $$\|X\|_{r_bC^{3,\alpha}_{\beta,**}}:= \sup \|r_e^3Y^3\|_{L^\infty(e)} + \big\|(1+r_b)^4(X-\chi(\varepsilon r_b) Y^3)\big\|_{r_bC^{3,\alpha}_{\beta,*}}.$$
\end{defn}}

\begin{remark}
    The norms $ C^{2,\alpha}_{\beta,**}$ and $ rC^{3,\alpha}_{\beta,**}$ only differ from the norms $ C^{2,\alpha}_{\beta,*}$ and $ rC^{3,\alpha}_{\beta,*}$ in the way they weigh a neighborhood of infinity. They are the same in the rest of the manifold.
\end{remark}

The reason why we consider these two function spaces is because we have the following mapping properties for the linearization $\bar{P}$ of $\mathbf{\Phi}$. The following statement corrects \cite[Lemma 5.5]{ozu2}.

\begin{lem}\label{lem:inverting barP with terms at infty}[{Corrected statement of \cite[Lemma 5.5]{ozu2}}]
    Let $(N,g^B)$ be a tree of ALE orbifolds. Then, there exists $C>0$ such that we have, for any $h\perp \tilde{\mathbf{O}}(g^B)$,
    \begin{equation}
    \|h\|_{C^{2,\alpha}_{\beta,**}}\leqslant C \big\|(1+r_b)^4\bar{P}_{g^B}h\big\|_{r_D^{-2}C^{\alpha}_\beta}.\label{inverse ALE term 4}
    \end{equation}
\end{lem}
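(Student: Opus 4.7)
The proof is by contradiction and compactness, following the standard invertibility argument for elliptic operators on ALE spaces in weighted Hölder spaces, but adapted to the refined norm $C^{2,\alpha}_{\beta,**}$ that separately tracks the $H^2$ and $H^4$ leading terms at infinity. Suppose the estimate fails: there is a sequence $(h_i)_{i\in\mathbb{N}}$ with $h_i\perp \tilde{\mathbf{O}}(g^B)$, $\|h_i\|_{C^{2,\alpha}_{\beta,**}}=1$, and $\varepsilon_i:=\bigl\|(1+r_b)^4\bar P_{g^B}h_i\bigr\|_{r_D^{-2}C^\alpha_\beta}\to 0$. Following Definition A.5, decompose
\begin{equation*}
 h_i=\chi\,(H^2_i+H^4_i)+\tilde h_i,
\end{equation*}
where $H^2_i,H^4_i$ lie in the finite-dimensional space of homogeneous harmonic symmetric $2$-tensors on $\mathbb{R}^4/\Gamma$ of the allowed type and $\|(1+r_b)^4\tilde h_i\|_{C^{2,\alpha}_{\beta,*}}$ is bounded. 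Up to a subsequence, $H^2_i\to H^2_\infty$, $H^4_i\to H^4_\infty$, and by local Schauder estimates applied to $\bar P_{g^B}\tilde h_i=\bar P_{g^B}h_i-\bar P_{g^B}\bigl(\chi(H^2_i+H^4_i)\bigr)$, the tensors $\tilde h_i$ converge in $C^{2,\alpha}_{\mathrm{loc}}$ on each piece of the bubble tree to some limit $\tilde h_\infty$.

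The limit $h_\infty=\chi(H^2_\infty+H^4_\infty)+\tilde h_\infty$ then satisfies $\bar P_{g^B}h_\infty=0$, the asymptotic expansion $h_\infty=H^2_\infty+H^4_\infty+\mathcal{O}(r_b^{-4-\beta})$ at infinity, and $h_\infty\perp\tilde{\mathbf{O}}(g^B)$ (orthogonality passes to the limit against compactly supported test tensors, and elements of $\tilde{\mathbf{O}}(g^B)$ are absolutely integrable against tensors of size $\mathcal{O}(r_b^{-2})$). The rigidity step is to classify such kernel elements: the $H^2$ component on $\mathbb{R}^4/\Gamma$ is not divergence-free for $\Gamma\neq\{\mathrm{id}\}$ by Remark \ref{rem:gauge **}, hence $H^2_\infty=0$; the $H^4$ leading term corresponds by \cite[Lemma 2.7]{ozu3} to an infinitesimal Ricci-flat deformation of the tree, i.e.\ after the canonical extension of Definition \ref{def: ext obst}, an element of $\tilde{\mathbf{O}}(g^B)$, which is incompatible with the orthogonality unless $H^4_\infty=0$; finally, the decaying piece with $\tilde h_\infty=\mathcal{O}(r_b^{-4-\beta})$ lies in the standard kernel of $\bar P_{g^B}$ on $C^{2,\alpha}_\beta$, which coincides with the $L^2$-kernel and hence with $\tilde{\mathbf{O}}(g^B)$, again ruled out. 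Therefore $h_\infty\equiv 0$.

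To reach a contradiction with $\|h_i\|_{C^{2,\alpha}_{\beta,**}}=1$, note that $H^2_i,H^4_i\to 0$ implies $\|\tilde h_i\|_{(1+r_b)^{-4}C^{2,\alpha}_{\beta,*}}\to 1$ along the subsequence, while locally $\tilde h_i\to 0$. Thus there exist points $x_i\in N$ and radii with $r_b(x_i)\to\infty$ (or $r_b(x_i)\to 0$ at a deeper singular point of the tree) where $(1+r_b)^4\tilde h_i$ realizes a definite fraction of the norm. Rescaling by $r_b(x_i)$ and passing to a limit, one produces a nonzero solution on the flat cone $\mathbb{R}^4/\Gamma$ (or on a deeper ALE piece) of $\bar P_e H=0$ with decay $|H|\lesssim r_e^{-4-\beta}$, which is ruled out by the indicial root analysis for $\beta\in(0,2)$. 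This contradicts the assumed normalization, concluding the proof.

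\textbf{Main obstacle.} The delicate point is the no-mass-escape step, namely controlling where the normalization of $h_i$ concentrates after $H^2_i,H^4_i\to 0$. Because the $**$ norm isolates $H^2$ and $H^4$ separately, one must ensure that concentration cannot occur in the transition regions between bubbles of the tree nor at deeper ALE ends; this requires an iterative rescaling argument along the tree, each step handled by the flat-cone Fredholm analysis at the corresponding indicial root.
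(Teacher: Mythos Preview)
The paper does not prove this lemma: it is stated in the appendix as a corrected version of \cite[Lemma~5.5]{ozu2}, with the correction explained in the remark following Lemma~\ref{lem:controle inverse mise jauge ALE}, and no argument is reproduced. So there is no proof in the paper to compare against; your proposal should be judged against the standard scheme that \cite{ozu2} presumably uses.

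Your contradiction--compactness outline is the right strategy and matches the usual proof of weighted Fredholm estimates on trees of ALE spaces: normalize, decompose according to the $**$ norm, extract a limit in the kernel, classify the kernel, and then run a blow-up to rule out mass escape. The no-mass-escape step you flag as the main obstacle is indeed the technical heart, and your description of the iterative rescaling along the tree is correct in spirit.

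There is, however, a genuine gap in your kernel classification. You eliminate $H^2_\infty$ by citing Remark~\ref{rem:gauge **}, but that remark assumes $\delta_g h=0$, which is not part of the hypothesis of the lemma. To use it you must first argue that any $h_\infty\in\ker\bar P_{g^B}$ with $h_\infty=\mathcal{O}(r_b^{-2})$ is automatically divergence-free: this requires a linearized Bianchi identity (applying $\delta_{g^B}$ to $\bar P_{g^B}h_\infty=0$ yields an elliptic equation for $\delta_{g^B}h_\infty$, and decay forces it to vanish). Without this step the appeal to Remark~\ref{rem:gauge **} is circular. A similar issue affects your $H^4_\infty$ step: \cite[Lemma~2.7]{ozu3} and Definition~\ref{def: ext obst} extend \emph{constant} tensors from singular points, not arbitrary $H^4$ asymptotics; what you actually need is that a decaying element of $\ker\bar P_{g^B}$ on the tree, once shown to be divergence-free and trace-free, restricts on the core ALE piece to an element of $\mathbf{O}(g_b)$, and that this forces the full $h_\infty$ to lie in (the $L^2$-closure of) $\tilde{\mathbf{O}}(g^B)$. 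This identification is the substance of the argument and should be made explicit.
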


We also have a similar statement for $\delta\delta^*$.

\begin{lem}[{\cite[Lemma 5.6]{ozu2}}]\label{lem:controle inverse mise jauge ALE}
    Let $(N,g^B)$ be a tree of ALE orbifold. Then, there exists $C>0$ such that for any vector field $X$ on $N$, we have
    \begin{equation}
        \|X\|_{r_BC^{3,\alpha}_{\beta,**}}\leqslant C \big\|(1+r_B)^4\delta_{g^B}\delta_{g^B}^*X\big\|_{r_B^{-1}C^{1,\alpha}_\beta}.\label{inverse delta ALE term 4}
    \end{equation}
\end{lem}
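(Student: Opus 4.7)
The estimate is of the same shape as \eqref{inverse ALE term 4} for $\bar{P}_{g^B}$, and I would prove it by the same contradiction/blow-up scheme that underlies Lemma \ref{lem:inverting barP with terms at infty}. Concretely, assume the inequality fails: there exist a sequence of gluing parameters $t^{(i)}$ with $t^{(i)}_{\max} \to 0$ (or a fixed geometry if we are on a single ALE piece) and vector fields $X_i$ with $\|X_i\|_{r_BC^{3,\alpha}_{\beta,**}} = 1$ while $\|(1+r_B)^4 \delta_{g^B}\delta_{g^B}^* X_i\|_{r_B^{-1}C^{1,\alpha}_\beta} \to 0$. Normalizing and locating the point $p_i$ where the $r_BC^{3,\alpha}_{\beta,**}$-norm is essentially attained, I would rescale centered at $p_i$ so as to extract a nontrivial limit $X_\infty$ on a pointed limit space and use that $X_\infty$ must solve $\delta\delta^* X_\infty = 0$ while still having unit weighted norm, and then rule this out by classifying harmonic vector fields on the limit (either a Ricci-flat ALE orbifold, or the flat cone $\mathbb{R}^4/\Gamma$).

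\textbf{Core analytic input on a single ALE orbifold.} The decisive point is the behaviour of $\delta_{g_b}\delta_{g_b}^*$ between weighted Hölder spaces on a Ricci-flat ALE $(N,g_b)$. Up to lower order (Ricci-flat) curvature terms, $\delta\delta^*$ is a positive elliptic operator conjugate to the Hodge–de~Rham Laplacian on $1$-forms via the musical isomorphism. The standard Lockhart–McOwen / Bartnik theory gives Fredholm estimates between $C^{k+2,\alpha}_{\beta'}$ and $C^{k,\alpha}_{\beta'-2}$ off a discrete set of exceptional weights; the relevant exceptional rate here is $r_b^{-3}$, the decay rate of the homogeneous harmonic vector fields $Y^3$ on $\mathbb{R}^4/\Gamma$ lying in $\ker \delta_e\delta_e^*$ (parametrized, via Proposition \ref{prop:duality vect fields}, by the bases used in Section \ref{sec:duality vect fields}). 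Crossing this rate forces one to subtract the leading $Y^3$ term before obtaining decay at the faster rate $r_b^{-4-\beta}$, and this is exactly what the $**$-norm encodes: decomposing $X = \chi(\varepsilon r_b) Y^3 + X_*$ with $X_* \in r_bC^{3,\alpha}_{\beta,*}$ weighted by $(1+r_b)^{-4}$, the estimate $\|r_e^3 Y^3\|_{L^\infty} + \|(1+r_b)^4 X_*\|_{r_bC^{3,\alpha}_{\beta,*}} \lesssim \|(1+r_b)^4 \delta_{g_b}\delta_{g_b}^* X\|_{r_b^{-1}C^{1,\alpha}_\beta}$ is a standard Fredholm-with-cokernel statement, and the cokernel is trivial because the only obstructions would be linearly growing Killing vector fields on $(N,g_b)$, which do not exist on Ricci-flat ALE spaces (there are no linear Killing vector fields at infinity surviving the $\Gamma$-quotient).

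\textbf{Passing to a tree of ALE singularities.} To handle $(N,g^B)$ a naïve gluing of a tree of ALE Ricci-flat orbifolds, I would use the partition of unity from Definition \ref{def cutoffs all}: write $X = \chi_{N_{j_o}^t} X + \sum_{j\in J_{p_o}\setminus\{j_o\}} \chi_{N_j^t} X$ and apply the single-piece ALE estimate to each bubble separately, paying attention to the matching regions $\mathcal{A}_k(t,\varepsilon)$. The commutator $[\delta\delta^*,\chi_{N_j^t}]X$ is a first order operator supported in the annular transition regions, where $g^B$ is close to the flat Euclidean cone; a contradiction/compactness argument analogous to the one in \cite[Lemma 5.5]{ozu2} (correcting the previous version as noted after Lemma \ref{lem:inverting barP with terms at infty}) then propagates the single-piece estimate to the full tree. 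The key is that, at the end of the deepest bubble of the tree, only the outermost $Y^3$ survives in the $**$-norm because the inner pieces see $X$ through a chart at infinity that matches the outer chart at $0$ only at the decay rate $r^{-4}$, which is faster than $r^{-3}$.

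\textbf{Main obstacle.} The principal technical difficulty is the bookkeeping of homogeneous vector fields at each exceptional rate across the different ALE pieces of the tree: at each gluing scale $T_k$ one must certify that the subtracted $Y^3$ is really the one relevant to the $C^{3,\alpha}_{\beta,**}$-norm of the \emph{outermost} ALE end, and that this identification is compatible with the cut-offs and with the commutator errors, uniformly as $t_{\max}\to 0$. Once this is set up carefully, the contradiction argument closes: the rescaled limits either live on a single Ricci-flat ALE orbifold and vanish by the Fredholm theory above, or live on $\mathbb{R}^4/\Gamma$ and are killed by the classification of decaying solutions of $\delta_e\delta_e^* Y = 0$, whose only nontrivial admissible behaviour is precisely the $Y^3$ already encoded in the $**$-norm.
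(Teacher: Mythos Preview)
The paper does not give its own proof of this lemma: it is simply quoted from \cite[Lemma 5.6]{ozu2} and stated without argument in the appendix. There is therefore nothing in the present paper to compare your proposal against. Your sketch is a reasonable outline of the standard strategy for such weighted elliptic estimates on trees of ALE spaces (contradiction/blow-up, Lockhart--McOwen Fredholm theory at the exceptional weight $r^{-3}$, triviality of the kernel/cokernel coming from the absence of slowly growing Killing fields on Ricci-flat ALE orbifolds), and is in the spirit of the arguments in \cite{ozu2} to which the paper defers; but a detailed comparison would require consulting that reference rather than this paper.
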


\begin{remark}
    In \cite{ozu2}, the $H^2$ term was omitted in the definition of the $C^{2,\alpha}_{\beta,**}$ norm, see \cite[Definition 5.3]{ozu2}. We a priori need to include these $H^2$ terms even though they will instantly be killed by our gauge condition, see Remark \ref{rem:gauge **}. In particular in their current forms, \cite[Lemma 5.5 and Corollary 5.7]{ozu2} should either include a $H^2$ term, or impose that $h$ is in gauge. 
    
    This does not affect the results of \cite{ozu2}: \cite[Corollary 5.7]{ozu2} is only applied to gluings of hyperkähler ALE metrics where there is no obstruction, and the $H^2$ term do not appear since they are not divergence-free. Concretely, in \cite{ozu2}, Lemma 5.5 and Corollary 5.7 are mentioned in Lemma 5.15, but only applied to the obstructions $\mathbf{o}_j$ which are in gauge, and they are mentioned in Propositions 6.15 and 6.16 where they are applied to hyperkähler metrics where there are no obstructions and the tensors are in gauge. 
\end{remark}

\section{Duality for vector fields}\label{sec:duality vect fields}

We will have to carefully understand the duality between elements of the kernel of $\delta_{e}\delta_{e}^*$, where we identify $1$-forms and vector fields through $X\sim g(X,\cdot)$. From \cite[Section 3.1]{ozu2}, which essentially follows \cite[Section 2]{ct}, the homogeneous $1$-forms in $\ker\delta_{e}\delta_{e}^*$ are of the form
\begin{enumerate}
    \item $r^{a^\pm_j}\psi$ with $a^\pm_j:= \pm (1+j)$, $j\in \mathbb{N}^*$, where $\psi$ is an eigenvector the Hodge Laplacian,
    \item $r^{b^{\pm}_j}d_{\mathbb{S}^3\slash\Gamma}\phi \;+\; b^\pm_j r^{b^\pm_j-1}\phi dr$, $b_j^\pm = -1 \pm (1+j)$ or 
    \item $2r^{c^{\pm}_j+2}d_{\mathbb{S}^3\slash\Gamma}\phi \;+\; c^\mp_j r^{c^\pm_j+1}\phi dr$, with $c_j^\pm = -1 \pm (1+j)$, $j\in \mathbb{N}$ and $\phi$ an eigenfunction of the Laplacian.
\end{enumerate}
Consider for each of the above tensors, $(\phi_i^j)_i$ and $(\psi_i^j)_i$ orthonormal bases of eigentensors of the eigenspaces above. Denote $X_{a_j^\pm,i}$, $X_{b_j^\pm,i}$ and $X_{c_j^\pm,i}$ the associated vector fields in $\ker \delta_e\delta_e^*$.

Among these tensors, those whose norm is in $r$ correspond to: $a_1^+$, $b_2^+$ and $c^+_0$. Those with a norm in $r^{-3}$ correspond to
and those whose norm grows like $r^{-3}$ are: $a_1^-$, $b_0^-$ and $c^-_2$.

An application of \cite[Lemma 3.4]{ozu2} yields the following result.
\begin{lem}\label{lem harmonic vect field linear}
    Let $(N,g^B)$ be a tree of ALE orbifold. Then, for any linear combination $X_1:=\alpha X_{a_1^+} + \beta X_{b_2^+} + \gamma X_{c_0^+}$ for $\alpha,\beta,\gamma\in\mathbb{R}$, there exists a unique vector field $\mathcal{X}_1$ on $N$ such that: for a cut-off $\chi$ supported in a neighborhood of infinity of $N$ where $g^B$ has ALE coordinates,
    \begin{equation}
        \left\{\begin{aligned}
            &\delta_{g^B}\delta_{g^B}^*\mathcal{X}_1 = 0,\\
            &\mathcal{X}_1-\chi X_1\in r_BC^{3,\alpha}_{\delta}, \text{ for } 0<\delta<4
        \end{aligned}\right.
    \end{equation}
    We additionally have:
    \begin{equation}\label{eq:control X1}
        \|\mathcal{X}_1-\chi X_1\|_{r_BC^{3,\alpha}_{\delta}}\leqslant C \|r^{-1}X_1\|_{L^\infty(g_{e})}
    \end{equation}
\end{lem}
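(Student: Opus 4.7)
The plan is to seek $\mathcal{X}_1$ in the form $\mathcal{X}_1 = \chi X_1 + \mathcal{Y}$, where the correction $\mathcal{Y}$ decays at infinity. With this ansatz, the equation $\delta_{g^B}\delta_{g^B}^* \mathcal{X}_1 = 0$ becomes
\begin{equation*}
\delta_{g^B}\delta_{g^B}^* \mathcal{Y} \;=\; -\,\delta_{g^B}\delta_{g^B}^*(\chi X_1),
\end{equation*}
and the task reduces to inverting $\delta_{g^B}\delta_{g^B}^*$ on the right-hand side in a weighted space compatible with the prescribed linear-growth asymptotics of $X_1$, together with the estimate in terms of $\|r^{-1}X_1\|_{L^\infty(e)}$.

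The first step is to estimate the source. Since $X_1 \in \ker \delta_e \delta_e^*$ on the asymptotic model $\mathbb{R}^4/\Gamma_\infty$ and $g^B - e = \mathcal{O}(r_B^{-4})$ at infinity with the corresponding derivative decay, writing
\begin{equation*}
\delta_{g^B}\delta_{g^B}^*(\chi X_1) \;=\; \bigl(\delta_{g^B}\delta_{g^B}^* - \delta_e\delta_e^*\bigr)(\chi X_1) \;+\; [\delta_{g^B}\delta_{g^B}^*, \chi]\,X_1,
\end{equation*}
the commutator is compactly supported in the transition annulus of $\chi$, while the difference term is pointwise bounded by $C\,r_B^{-5}\,\|r^{-1}X_1\|_{L^\infty(e)}$ (the two derivatives in $\delta\delta^*$ hit either the $\mathcal{O}(r_B^{-4})$ difference of metrics or the linearly growing $X_1$). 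For any $0<\delta<4$ this yields
\begin{equation*}
\bigl\|(1+r_B)^4\,\delta_{g^B}\delta_{g^B}^*(\chi X_1)\bigr\|_{r_B^{-1}C^{1,\alpha}_\delta} \;\leqslant\; C\,\|r^{-1}X_1\|_{L^\infty(e)},
\end{equation*}
which lands precisely in the target space of Lemma \ref{lem:controle inverse mise jauge ALE}.

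Next I would apply Lemma \ref{lem:controle inverse mise jauge ALE} (equivalently \cite[Lemma 3.4]{ozu2}) to invert $\delta_{g^B}\delta_{g^B}^*$: this produces a unique $\mathcal{Y} \in r_B C^{3,\alpha}_{\delta,**}$ with
$\|\mathcal{Y}\|_{r_B C^{3,\alpha}_{\delta,**}} \leqslant C\,\|r^{-1}X_1\|_{L^\infty(e)}.$
By construction of the $**$-norm, the leading term of $\mathcal{Y}$ at infinity is a homogeneous harmonic vector field $Y^3$ with $|Y^3|\propto r^{-3}$, and the remainder lies in $r_B C^{3,\alpha}_{\delta,*} \subset r_B C^{3,\alpha}_{\delta}$. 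Setting $\mathcal{X}_1 := \chi X_1 + \mathcal{Y}$ then produces a solution of $\delta_{g^B}\delta_{g^B}^*\mathcal{X}_1 = 0$ with $\mathcal{X}_1 - \chi X_1 \in r_B C^{3,\alpha}_\delta$ and the desired bound \eqref{eq:control X1}.

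For uniqueness, the difference $\mathcal{Z}$ of two solutions satisfies $\delta_{g^B}\delta_{g^B}^*\mathcal{Z}=0$ and $\mathcal{Z} \in r_B C^{3,\alpha}_\delta$. Pairing with $\mathcal{Z}$ and integrating by parts (the decay $\delta>0$ just barely kills the boundary term at infinity) gives $\delta_{g^B}^*\mathcal{Z}=0$, i.e. $\mathcal{Z}$ is a Killing field of $(N,g^B)$; since ALE manifolds admit no nontrivial decaying Killing field, $\mathcal{Z}=0$. The main obstacle of the argument is the careful bookkeeping at the exceptional weight $r_B^{-3}$: a priori the inversion produces a homogeneous harmonic term at this rate, and one must verify that it is compatible with the target space $r_B C^{3,\alpha}_\delta$ of the statement; this is exactly the role of the $**$-decomposition introduced in Section \ref{function spaces}, and once it is in hand the remaining analysis is standard Fredholm theory for $\delta\delta^*$ on ALE ends as developed in \cite[Section 3]{ozu2}.
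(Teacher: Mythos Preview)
Your overall strategy—set $\mathcal{X}_1=\chi X_1+\mathcal{Y}$ and solve for $\mathcal{Y}$—is what the paper intends by citing \cite[Lemma 3.4]{ozu2}, but two steps do not go through as written. First, the source does \emph{not} land in the target space of Lemma~\ref{lem:controle inverse mise jauge ALE}. Each of $X_{a_1^+},X_{b_2^+},X_{c_0^+}$ is a linear vector field in Cartesian coordinates (an infinitesimal rotation, the gradient of a harmonic quadratic, and the Euler field respectively), so $\partial^2X_1=0$ and the dominant contribution comes from the Christoffel defect $\Gamma_{g^B}-\Gamma_e=O(r^{-5})$; hence $\delta_{g^B}\delta_{g^B}^*(\chi X_1)=O(r^{-5})$ at infinity. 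But membership in $(1+r_B)^{-4}r_B^{-1}C^{1,\alpha}_\delta$ requires $O(r^{-5-\delta})$, which fails for every $\delta>0$. Lemma~\ref{lem:controle inverse mise jauge ALE} and the $**$-space are designed for sources one order \emph{better} than $r^{-5}$ (precisely in order to isolate a $Y^3$ term); here the correct tool is the plain isomorphism $\delta_{g^B}\delta_{g^B}^*:r_BC^{3,\alpha}_\delta\to r_B^{-1}C^{1,\alpha}_\delta$ for $\delta$ in the indicial gap $(0,4)$, which is what \cite[Lemma 3.4]{ozu2} supplies. Your $O(r^{-5})$ source does lie in $r_B^{-1}C^{1,\alpha}_\delta$ for all such $\delta$, the preimage sits in $r_BC^{3,\alpha}_\delta$ directly, and the bound \eqref{eq:control X1} follows from the operator norm—no $**$-bookkeeping is needed.

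Second, the uniqueness integration by parts fails for $0<\delta\leqslant 2$: the boundary term is $O(R^{3}\cdot R^{-\delta}\cdot R^{1-\delta})=O(R^{4-2\delta})$, which does not tend to zero. The remedy is to first improve the decay via the indicial gap: since $\Gamma$ acts freely on $\mathbb{S}^3$ there are no $\Gamma$-invariant homogeneous elements of $\ker\delta_e\delta_e^*$ with growth rate in $(-3,1)$, so any $\mathcal{Z}\in r_BC^{3,\alpha}_\delta$ with $\delta_{g^B}\delta_{g^B}^*\mathcal{Z}=0$ is automatically $O(r^{-3})$ at infinity, after which the boundary term is $O(R^{-4})$ and your argument concludes. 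Alternatively, uniqueness is already contained in the isomorphism statement of \cite[Lemma 3.4]{ozu2}.
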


There is a duality between  tensors in $\ker \delta_e\delta^*_e$ whose norms are growing in $r$ and in $r^{-3}$. Namely, when integrating by parts $\int_M\delta\delta^*X(Y)-\delta\delta^*Y(X)$ on an ALE manifold $M$, one gets a boundary term $$B(X,Y):=\lim_{s\to\infty}\int_{\{r=s\}} \left(\delta^*X(Y,\partial_r)-\delta^*Y(X,\partial_r)\right).$$
It turns out that if $Y = Y^3 + o(r^{-3})$ and $X = X_1 + o(r)$ for $|Y^3|\propto r^{-3}$ and $|X_1|\propto r$, then $B(X,Y) = B(X_1,Y^3)$ defines an important coupling. Explicit computations show that for three positive constants $C_a$, $C_b$ and $C_c$, one has: for any $\alpha_i,\beta_i,\gamma_i,\alpha'_k,\beta'_k,\gamma'_k\in\mathbb{R}$,
\begin{equation}\label{eq:duality vector fields}
    \sum_{i,k}B\left(\alpha_i X_{a_1^+,i} + \beta_i X_{b_2^+,i} + \gamma_i X_{c_0^+,i}\,,\,\, \alpha'_k X_{a_1^-,k} + \beta'_k X_{c_2^-,k} + \gamma'_k X_{b_0^-,k}\right) = \sum_{i}C_a \alpha_i\alpha'_i + C_b\beta_i\beta'_i + C_c\gamma_i\gamma'_i.
\end{equation}
In other words, the bilinear form $B$ essentially diagonalizes in the above basis. This is a tool to precisely estimate how large the terms in $r^{-3}$ are in the expansion of vector fields in $r_BC^{3,\alpha}_{\delta,**}$.
\begin{proposition}\label{prop:duality vect fields
}
    Let $Y = Y^3 + o(r^{-3})\in r_BC^{3,\alpha}_{\delta,**}$ for $\delta>0$, with a decomposition $Y^3=\alpha'_k X_{a_1^-,k} + \beta'_k X_{c_2^-,k} + \gamma'_k X_{b_0^-,k}$, consider $X_1 = \alpha_i X_{a_1^+,i} + \beta_i X_{b_2^+,i} + \gamma_i X_{c_0^+,i}$ and the associated $\mathcal{X}_1$ from Lemma \ref{lem harmonic vect field linear}. Then, one has 
$$\int_M\delta\delta^*Y(\mathcal{X}_1)=-\sum_{i}C_a \alpha_i\alpha'_i + C_b\beta_i\beta'_i + C_c\gamma_i\gamma'_i.$$

\vspace{-0.3cm}

\end{proposition}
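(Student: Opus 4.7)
The strategy is to prove the formula by integration by parts on the exhaustion of $(N,g^B)$ by the relatively compact domains $\Omega_s:=\{r_B\leqslant s\}$, using that $\delta_{g^B}\delta_{g^B}^*$ is formally self-adjoint, and then to identify the limit of the resulting boundary integral with the bilinear form $B(X_1,Y^3)$ via the asymptotic expansions of $\mathcal{X}_1$ and $Y$. Concretely, I would begin from the standard Green identity
\begin{equation*}
\int_{\Omega_s}\bigl[\langle \delta\delta^*Y,\mathcal{X}_1\rangle-\langle \delta\delta^*\mathcal{X}_1,Y\rangle\bigr]\,dv_{g^B} = \int_{\{r_B=s\}}\bigl[\delta^*Y(\mathcal{X}_1,\partial_r)-\delta^*\mathcal{X}_1(Y,\partial_r)\bigr]\,d\sigma,
\end{equation*}
which follows from two applications of the integration-by-parts identity defining $\delta$ as the formal adjoint of $\delta^*$ and subtraction of the interior term $\int_{\Omega_s}\langle\delta^*Y,\delta^*\mathcal{X}_1\rangle$. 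Since $\delta_{g^B}\delta_{g^B}^*\mathcal{X}_1=0$ by Lemma \ref{lem harmonic vect field linear}, the left-hand side reduces to $\int_{\Omega_s}\langle\delta_{g^B}\delta_{g^B}^*Y,\mathcal{X}_1\rangle\,dv_{g^B}$.

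Next I would check that, as $s\to +\infty$, the interior integral converges absolutely to $\int_N\delta\delta^*Y(\mathcal{X}_1)$ and the boundary term converges to $-B(X_1,Y^3)$ (with the sign dictated by the definition of $B$ recalled in Section \ref{sec:duality vect fields}). For the interior integrability, one uses $\mathcal{X}_1=\cO(r)$ while $Y=Y^3+o(r^{-3})$ with $Y^3\in\ker(\delta_e\delta_e^*)$ so that $\delta_{g^B}\delta_{g^B}^* Y$ lies in a decay class $o(r^{-5})$ on the ALE end (because the error between $g^B$ and $g_e$ is $\cO(r^{-4})$ and the non-flat correction $\delta\delta^*(Y-Y^3)$ enjoys the same decay controls coming from the $**$-norm). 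For the boundary limit, I would write $\mathcal{X}_1=\chi X_1+R_{\mathcal{X}}$ with $R_{\mathcal{X}}\in r_BC^{3,\alpha}_\delta$ for any $\delta<4$ and $Y=Y^3+R_Y$ with $R_Y=o(r^{-3})$, expand the boundary integrand into four pieces, and check using the homogeneity of the leading terms that only the pairing $\delta^*(\chi X_1)(Y^3,\partial_r)-\delta^*Y^3(\chi X_1,\partial_r)$ is of homogeneous degree $s^0$ on the sphere $\{r_B=s\}$ of $3$-volume $\sim s^3$, while each of the three remainder-containing pieces is $o(s^{-4})$ on that sphere and hence contributes $0$ in the limit. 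This exactly yields the limit $-B(X_1,Y^3)$.

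Finally, applying the explicit diagonalization \eqref{eq:duality vector fields} to $B(X_1,Y^3)$ in the chosen orthonormal bases immediately gives the claimed value
\begin{equation*}
\int_N\delta\delta^*Y(\mathcal{X}_1)=-\sum_i\bigl(C_a\alpha_i\alpha'_i+C_b\beta_i\beta'_i+C_c\gamma_i\gamma'_i\bigr).
\end{equation*}

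The only delicate point — and hence the main technical obstacle — is the asymptotic bookkeeping for the boundary integral, because a priori the expansions of $\mathcal{X}_1$ at infinity and of $Y$ can contain intermediate homogeneous modes (of growth $r^0,r^{-1},r^{-2}$ in $\mathcal{X}_1$, or decay rates strictly between $r^{-3}$ and the remainder threshold in $Y$). Any such intermediate term is itself a homogeneous element of $\ker(\delta_e\delta_e^*)$ on the flat cone, and by the degree count used above it would only contribute a nonzero limit when paired with the dual decay rate $r^{-3-j}\leftrightarrow r^{1+j}$; all other cross-pairings produce boundary integrands of strictly negative homogeneity that vanish in the limit, while the diagonal pairings with the same homogeneity vanish after integration over the sphere by orthogonality of spherical eigenmodes in the bases $(\phi_i^j),(\psi_i^j)$. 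This is precisely the mechanism already encoded in \eqref{eq:duality vector fields}, and the verification amounts to the same expansion-and-Gauss-integration computation that underlies \cite[Section 3.1]{ozu2}; no genuinely new analytic ingredient is required.
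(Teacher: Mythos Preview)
Your proposal is correct and follows exactly the same route as the paper: integrate by parts using the formal self-adjointness of $\delta\delta^*$, use $\delta_{g^B}\delta_{g^B}^*\mathcal{X}_1=0$ from Lemma~\ref{lem harmonic vect field linear} to kill one term, identify the limiting boundary integral with $B(X_1,Y^3)$, and then apply the diagonalization \eqref{eq:duality vector fields}. The paper's proof is a three-line version of your argument; your additional discussion of intermediate homogeneous modes and remainder terms in the boundary integral is sound but more detailed than what the paper records.
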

\begin{proof}
    This is a direct consequence of the following integration by parts: using \eqref{eq:duality vector fields} in the last line,
\begin{align*}
    -\int_M\delta\delta^*Y(\mathcal{X}_1)&=\int_M\delta\delta^*\mathcal{X}_1(Y)-\delta\delta^*Y(\mathcal{X}_1)= B(Y^3,X_1)\\
    &=\sum_{i}C_a \alpha_i\alpha'_i + C_b\beta_i\beta'_i + C_c\gamma_i\gamma'_i.
\end{align*}
\end{proof}

\newpage

\end{document}